\documentclass[10pt,reqno]{amsart}
\usepackage{amsfonts}
\usepackage{amsmath}
\usepackage{CJK}
\usepackage{amssymb}
\usepackage{latexsym,bm}
\usepackage[RGB]{xcolor}
\usepackage{mathrsfs}
\usepackage{amsthm}
\usepackage{hyperref}
\usepackage[normalem]{ulem}
\allowdisplaybreaks[4]
\numberwithin{equation}{section}
\textwidth=15.0cm \textheight=21cm \hoffset=-1.1cm \voffset=-0.3cm
\newtheorem{theorem}{Theorem}[section]
\newtheorem{assumption}[theorem]{Assumption}
\newtheorem{lemma}[theorem]{Lemma}
\newtheorem{remark}{Remark}[section]
\newtheorem{corollary}[theorem]{Corollary}
\newtheorem{proposition}[theorem]{Proposition}

\begin{document}
	\title[Spherically symmetric solutions for relaxed CNS]{Global Spherically Symmetric Solutions and Relaxation Limit for the Relaxed Compressible Navier-Stokes Equations}
	\author{Yuxi Hu and Mengran Yuan}
	\thanks{\noindent Yuxi Hu, Department of Mathematics, China University of Mining and Technology, Beijing, 100083 and Key Laboratory of Scientific and Engineering Computing (Ministry of Education), P.R. China, yxhu86@163.com\\
		\indent Mengran Yuan, Department of Mathematics, China University of Mining and Technology, Beijing, 100083, P.R. China,  ymengran2388@126.com
	}
	\thanks{\noindent  }
	\begin{abstract}
		This paper studies an initial boundary value problem for the multidimensional hyperbolized compressible Navier-Stokes equations, in which the classical Newtonian law is replaced by the Maxwell law. We seek spherically symmetric solutions to the studied system in an exterior domain of a ball in $\mathbb R^3$, which are a system possessing a uniform characteristic boundary.
		First, we construct an approximate system featuring a non-characteristic boundary and establish its local well-posedness. Subsequently, by defining a suitable weighted energy functional and carefully handling boundary terms, we derive uniform a priori estimates, enabling the proof of uniform global existence. Leveraging these uniform estimates alongside standard compactness arguments, we establish the global well-posedness of the original system. Additionally, we rigorously justify the global relaxation limit.  \\
		{\bf Keywords}:  Initial boundary value problem; compressible Navier-Stokes equations; Maxwell law; global solution; relaxation limit; \\
		{\bf AMS classification code}: 35A01, 35L50, 35Q30 
	\end{abstract}
	\maketitle

	\section{Introduction}\label{sec1}
	The compressible isentropic Navier-Stokes equations in the domain \(\mathbb{R}^n \times \mathbb{R}^{+}\) are derived from the fundamental principles of mass conservation and momentum balance. These equations are given by:
	
	\begin{equation} \label{1}
		\begin{cases}
			\partial_t \rho + \operatorname{div}(\rho u) = 0, \\[1mm]
			\partial_t (\rho u) + \operatorname{div}(\rho u \otimes u) + \nabla P = \operatorname{div} S,
		\end{cases}
	\end{equation}
	where \((\rho, u, P, S)\) represent the fluid density, velocity, pressure, and stress tensor, respectively. The pressure \(P\) is assumed to follow the standard \(\gamma\)-law, \(P(\rho) = A \rho^\gamma\) with \(A =1\) without loss of generality. To close the system \eqref{1}, a constitutive equation for the stress tensor \(S\) must be specified. For simple fluids, \(S\) is determined by the Newtonian law:
	
	\begin{equation}\label{h1}
		S = \mu \left( \nabla u + \nabla u^T - \frac{2}{n} \operatorname{div} u I_n \right) + \lambda \operatorname{div} u I_n.
	\end{equation}
	
	The system described by equations \eqref{1} and \eqref{h1} is commonly referred to as the classical compressible isentropic Navier-Stokes equations. However, for complex fluids, such as macromolecular or polymeric fluids, the Newtonian law of viscosity, as given by equation \eqref{h1}, is no longer applicable. In particular, for viscoelastic fluids, Maxwell \cite{maxwell1867iv} introduced a constitutive equation that combines Newton's law of viscosity with Hooke's law of elasticity:
	\begin{equation}\label{h2}
		\tau \dot{S} + S = \mu \left( \nabla u + \nabla u^T - \frac{2}{n} \operatorname{div} u I_n \right) + \lambda \operatorname{div} u I_n,
	\end{equation}
	where \(\dot{S} = \partial_t S + u \cdot \nabla S\) denotes the material derivative. The positive parameter $\tau$ is the relaxation time describing the time lag in the response of the stress tensor to velocity gradient. A fluid obeying equation \eqref{h2} is called Maxwell flow, see \cite{maxwell1867iv}. Replacing the material derivative $\dot S$ by objective derivative, we obtain an objective constitutive equation of compressible Oldroyd-B  type models:
	\begin{align}\label{h3}
		\tau \mathring {S}+S=\mu (\nabla u+\nabla u^T-\frac{2}{n} \mathrm{div} u I_n)+\lambda \operatorname{div} u I_n
	\end{align}
	where $\mathring S:=\dot S+S W(u)-W(u)S-a(S D(u)+D(u)S)$ with $W(u)=\frac{1}{2}(\nabla u-\nabla u^T)$, $D(u)=\frac{1}{2} (\nabla u+\nabla u^T)$ and $-1\le a \le 1$ be a constant. The parameters $a=1, -1, 0$  corresponds to the upper-convective, lower-convective and corotational Maxwell models, see \cite{maxwell1867iv}.
	We should mention that,  even for a simple fluid, the effect of relaxation can not always be neglected, see \cite{pelton2013viscoelastic} with the experiments of high-frequency (20GHZ) vibration of nanoscale mechanical devices immersed in water-glycerol mixtures.

	Inspired by the work of Freistühler \cite{freistuhler2020galilei}, which is based on Yong's model \cite{yong2014newtonian}, we investigate the following constitutive equations for the stress tensor \( S = S_1 + S_2 I_n \), where \( S_1 \) and \( S_2 \) satisfy:
	\begin{align}
		\tau_1 \rho \left( \partial_t S_1 + u \cdot \nabla S_1 + S_1 W(u) - W(u) S_1 \right) + S_1 &= \mu \left( \nabla u + \nabla u^T - \frac{2}{n} \mathrm{div} u \, I_n \right), \label{h6} \\
		\tau_2 \rho \left( \partial_t S_2 + u \cdot \nabla S_2 \right) + S_2 &= \lambda \mathrm{div} u. \label{h7}
	\end{align}
	Here, \( \tau_1 \) and \( \tau_2 \) represent the shear and bulk relaxation times, respectively. The objective constitutive equations \eqref{h6}-\eqref{h7} are referred to as the revised Maxwell's law. In its linearized form, this model has been demonstrated to outperform various other models in capturing the dynamic behavior of linear viscoelastic flows, as shown in \cite{chakraborty2015constitutive}. Specifically, for compressible viscoelastic fluids, Chakraborty and Sader \cite{chakraborty2015constitutive} have illustrated that this model provides a comprehensive framework for characterizing the fluid-structure interactions of nanoscale mechanical devices vibrating in simple liquids.
	
	Now, we present some known results for system\eqref{1}, \eqref{h6}-\eqref{h7}. By neglecting the quadratic nonlinear terms  and the factor $\rho$  in \eqref{h6}–\eqref{h7}, Yong \cite{yong2014newtonian} established the local well-posedness of strong solutions and the strong relaxation limit for well-prepared data. These results were subsequently extended to the non-isentropic case by Hu and Racke \cite{hu2017compressible} and to the global relaxation limit with general initial data by Peng \cite{peng2021relaxed}. Additionally, Hu and Wang \cite{HWAML} investigated the finite-time blowup of smooth solutions for certain large initial data. It is important to note that all the aforementioned results focus on the Cauchy problem. In contrast, the initial boundary value problem is more intricate due to its characteristic boundary structure. To the best of our knowledge, the only available results in this context are in the one-dimensional setting, where Hu and Li \cite{HL24} and Hu and Zhao \cite{HZ25} derived global strong solutions for the isentropic and non-isentropic systems, respectively.
	
	In this paper, we are concerned with the initial boundary value problem for the system  \eqref{1}, \eqref{h6}, and \eqref{h7}, within a multidimensional domain that exhibits spherical symmetry. Specifically, we consider the exterior domain of a ball in \(\mathbb{R}^n\) with $n=3$. Denote \(\Omega := \{x \in \mathbb{R}^3 \mid |x| > a\}\) with \(a > 0\). Without loss of generality, we set $a = 1$.
	Our objective is to seek spherically symmetric solutions of the form
	\begin{equation}
		\begin{gathered}
			u(t, x) = v(t, r) \frac{x}{r}, \quad \rho(t, x) = \rho(t, r), \\
			S_1(t, x) = [S^{ij}] = \tilde{S}_1(t, r) \left[\frac{x_i x_j}{r^2} - \frac{1}{3} \delta_{ij}\right], \quad S_2(t, x) = \tilde{S}_2(t, r),
		\end{gathered}
	\end{equation}
	where \(x = (x_1, x_2, x_3) \in \Omega\) and \(r := |x|\). By using this symmetry, we effectively reduce the original system (\ref{1}) to the following set of equations:
	\begin{equation} \label{1.4}
		\begin{cases}
			\partial_t \rho + \partial_r (\rho v) + \dfrac{2}{r} \rho v = 0, \\
			\rho \partial_t v + \rho v \partial_r v + \partial_r P = \dfrac{2}{3} \partial_r \tilde{S}_1 + \dfrac{2}{r} \tilde{S}_1 + \partial_r \tilde{S}_2, \\
			\tau_1 \rho \left(\partial_t \tilde{S}_1 + v \partial_r \tilde{S}_1 \right) + \tilde{S}_1 = 2 \mu \left(\partial_r v - \dfrac{v}{r}\right), \\
			\tau_2 \rho \left(\partial_t \tilde{S}_2 + v \partial_r \tilde{S}_2 \right) + \tilde{S}_2 = \lambda \left(\partial_r v + \dfrac{2}{r} v\right).
		\end{cases}
	\end{equation}
	The system \eqref{1.4} is supplemented with the following initial conditions
	\begin{equation}\label{1.5}
		(\rho(t, r), v(t, r), \tilde{S}_1(t, r), \tilde{S}_2(t, r))|_{t=0} = (\rho_0(r), v_0(r), \tilde{S}_{10}(r), \tilde{S}_{20}(r)), \quad r \in [1, \infty),
	\end{equation}
	and the boundary condition
	\begin{equation}\label{1.6}
		v(t, r)|_{r=1} = 0, \quad t > 0.
	\end{equation} 
	
	Note that when $\tau_1=\tau_2=0$, system (\ref{1.4}) reduces to classical isentropic Navier-Stokes equations with spherical symmetry, for which there are lots of studies on well-posedness theory and large time behavior of both strong and weak solutions,  see \cite{jiang1996global, ding2012global, Cho-Kim}. 
	In particular, for large data that are away from vacuum, Jiang \cite{jiang1996global} proved the global existence and large time behavior of spherically symmetric smooth solutions  in the exterior of a ball in $\mathbb{R}^n$ (\(n = 2\) or 3). When vacuum is taken into account, local and global smooth solutions have also been obtained under certain compatibility conditions, as demonstrated in \cite{ding2012global, Cho-Kim}.  For results concerning weak solutions, we refer to \cite{feireisl2004dynamics,hoff1992spherically,jiang2001spherically,lions1998mathematical,makino1994initial} and references cited therein.
	
	For \(\tau_1 > 0\) and \(\tau_2 > 0\), the fundamental change in the system's structure—from a hyperbolic-parabolic coupled system to a purely hyperbolic system—renders the global existence of classical solutions with large initial data highly unlikely. For instance, Hu and Wang \cite{HWAML} demonstrated that smooth solutions to the relaxed isentropic compressible Navier-Stokes system must blow up in finite time for certain large initial data. Moreover, it can be readily demonstrated that the hyperbolic system \eqref{1.4}–\eqref{1.6} features a characteristic boundary, which introduces significant complexity to the well-posedness theory, even in the local context, see \cite{Chen2007,schochet1986compressible}. 
	
	Our objective is to establish the global well-posedness of strong solutions to system \eqref{1.4}–\eqref{1.6} with small initial data. The strategy we employ is as follows: First, we construct an approximate system with a non-characteristic boundary. By verifying that the boundary is maximally nonnegative, we establish the local well-posedness of strong solutions for this approximated system. Next, we derive uniform a priori estimates and demonstrate the existence of a unique global solution for this approximated system. In this part, the estimates for second-order derivative with respect to $x$ play crucial role. To derive these estimates, we leveraged the structure of the system to construct multiple multipliers. Additionally, boundary terms emerging from integration by parts posed another  challenge. By carefully exploiting the special structure of the system and various multiplier energy methods, the boundary term can be controlled.  Finally, by taking the limit and applying standard compactness arguments, we obtain the desired results.
	
	Now, we introduce some notations. \( W^{m,p} = W^{m,p}(\Omega),\ 0 \leq m \leq \infty,\ 1 \leq p \leq \infty \), denotes the usual Sobolev space with norm \( \|\cdot\|_{W^{m,p}} \). \( H^m \) and \( L^p \) stand for \( W^{m,2} \) resp. \( W^{0,p} \). \( D^\alpha = \partial_t^{\alpha_1} \partial_r^{\alpha_2},\ \alpha = (\alpha_1, \alpha_2),\ |\alpha| = \alpha_1 + \alpha_2 \).

	The following assumptions are needed throughout the paper:
	\begin{assumption}\label{Assump1}
		\begin{enumerate}
			\item[(1)] \(\tau_1\) is the same order as \(\tau_2\), i.e., \(\tau_1 = O(\tau_2)\) as \(\tau_2 \to 0\). Without loss of generality, let \(\tau:= \tau_1 = \tau_2\).
			
			\item[(2)] The initial and boundary data satisfy compatibility condition
			\begin{equation}\label{compatibility}
				\partial_t^k v(0, r)\big|_{r=1} = 0, \quad k = 0, 1. 
			\end{equation}
			\item[(3)] The initial data is well-prepared in the following sense:
			\[
			\|r(\tilde{S}_1(0) - 2\mu(\partial_{r}v_0-\frac{v_0}{r}))\|_{H^1}
			= O(\sqrt{\tau}), 
			\quad \| r(\tilde{S}_2(0) - \lambda(\partial_{r}v_0+\frac{2v_0}{r}))\|_{H^1}
			= O(\sqrt{\tau}), \quad \text{as } \tau \to 0. 
			\]
			
			\item[(4)] \(rV_0^k \in H^{2-k}\) for \(k = 0, 1, 2\) uniformly with respect to $\tau$, where
			\[
			V_0^k:=  \partial_t^k \left(\rho - 1, v, \sqrt{\tau} \tilde{S}_1, \sqrt{\tau} \tilde{S}_2\right) (t = 0, \cdot), \quad k = 0, 1
			\]
			\[
			V_0^2:= \tau \partial_t^2 \left(\rho - 1, v, \sqrt{\tau} \tilde{S}_1, \sqrt{\tau} \tilde{S}_2\right) (t = 0, \cdot)
			\]
		\end{enumerate}  
	\end{assumption}

	The main results of this paper are as follows.
	\begin{theorem}\label{thm1}
		Let Assumption \eqref{Assump1} hold. Then, there exists a small constant $\epsilon_0>0$ such that if
		\begin{equation}
			E_0:= \sum_{k=0}^2\|rV_0^k\|_{H^{2-k}}^2<\epsilon_0,
		\end{equation}
		there exists a globally defined solution $(\rho,v,\tilde{S}_1,\tilde{S}_2)(r,t)$ to system \eqref{1.4}-\eqref{1.6} satisfying
		$$(\rho-1,v,\tilde{S}_1,\tilde{S}_2)(t,r)\in C([0,\infty),H^{2-\delta_0}(\Omega))\cap C^1([0,\infty),H^{1-\delta_0}(\Omega)) $$
		for any $\delta_0>0$ and 
		\begin{equation}\label{1.8}
			\begin{aligned}
				&\sup _{0 \leq t < \infty}\bigg( \sum_{k=0}^1\left\|r\partial_{t}^k(\rho-1, v, \sqrt{\tau} \tilde{S}_1, \sqrt{\tau} \tilde{S}_2)\right\|_{H^{2-k}}^2
				+\tau^2\left\|r\partial_{tt}(\rho-1, v, \sqrt{\tau} \tilde{S}_1, \sqrt{\tau} \tilde{S}_2)\right\|_{L^{2}}^2\bigg)\\
				&+\int_{0}^{\infty}\bigg(\sum_{\alpha=1}^2\left\|r D^\alpha(\rho, v)\right\|_{L^2}^2
				+\sum_{k=0}^1\left\|r\partial_{t}^k(\tilde{S}_1, \tilde{S}_2)\right\|_{H^{2-k}}^2
				+\tau^2\left\|r(\partial_{tt}\tilde{S}_1,\partial_{tt}\tilde{S}_2)\right\|_{L^{2}}^2\bigg)\mathrm{~d} t
				\leq C E_0
			\end{aligned}
		\end{equation}
		where C is a universal constant independent of $\tau$.
	\end{theorem}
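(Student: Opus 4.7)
The plan follows the roadmap announced in the introduction: approximate, estimate uniformly, and pass to the limit. First I would construct an approximate system by modifying \eqref{1.4} so that the boundary $r=1$ becomes non-characteristic, for instance by adding a small parabolic regularization $\epsilon\partial_r^2(\rho,v)$ together with an artificial Dirichlet condition on $\rho$, or by replacing the pure no-slip condition with a Robin-type condition that makes the boundary matrix invertible. After symmetrizing the first-order part of the system in the variables $(\rho,v,\tilde S_1,\tilde S_2)$, one checks that the resulting boundary conditions are maximally nonnegative, so that the standard Friedrichs-Kreiss theory yields local well-posedness for the approximate problem at each fixed $\epsilon>0$.

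The core of the argument is a weighted a priori estimate uniform in both $\epsilon$ and $\tau$. The weight $r$ reflects the three-dimensional volume element: multiplying \eqref{1.4} by $r^2$ recasts it as a one-dimensional system in which the singular factors $2/r$ become harmless on $\Omega=\{r\geq 1\}$. For the basic $L^2$ estimate I would test the continuity, momentum, and Maxwell equations against $r^2 P'(\rho)(\rho-1)/\rho$, $r^2 v$, $r^2\tilde S_1/(2\mu)$, and $r^2\tilde S_2/\lambda$ respectively; the stress/velocity cross terms cancel, and the relaxation damping produces the dissipation $\int_0^\infty\!\int(\tilde S_1^2+\tilde S_2^2)r^2\,\mathrm{d}r\,\mathrm{d}t$ needed on the right-hand side of \eqref{1.8}. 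All boundary terms at $r=1$ carry a factor of $v$ and therefore vanish by \eqref{1.6}; applying $\partial_t$ up to twice and using the compatibility condition \eqref{compatibility} extends this to the time-derivative norms in the theorem.

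The main obstacle, as the authors emphasize, is the second-order spatial estimate, since pure $\partial_r$-differentiation destroys the symmetrizer and generates boundary terms that no longer vanish. My approach is to avoid differentiating in $r$ as much as possible: the Maxwell equations algebraically express $\partial_r v - v/r$ and $\partial_r v + 2v/r$ in terms of $\tilde S_1,\tilde S_2$ and their material derivatives, so $\partial_r v$ inherits one extra derivative from $\tilde S_j$ and $\partial_t\tilde S_j$ at the price of a factor $\tau$; feeding this back into the momentum equation recovers $\partial_r\rho$. Iterating once more, $\partial_r^2$-norms are controlled by already-bounded mixed and time derivatives, which is precisely the structure encoded by $E_0$ and by the $\tau^2\|r\partial_{tt}(\cdots)\|_{L^2}^2$ term in \eqref{1.8}. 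Wherever genuine spatial differentiation of the hyperbolic part is unavoidable, I would construct ad hoc multipliers exploiting the $r^2$ weight and integrate by parts in $t$ rather than in $r$ so that the surviving boundary contributions either vanish thanks to \eqref{1.6} or are absorbed by the relaxation dissipation. Smallness of $E_0$ then closes the nonlinear estimates through Sobolev embeddings and Moser inequalities.

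Finally, with uniform bounds in hand, I would apply Aubin-Lions compactness to the family of approximate solutions, extract a subsequence converging strongly in $C([0,T],H^{2-\delta_0})\cap C^1([0,T],H^{1-\delta_0})$ for every finite $T$ and $\delta_0>0$, and verify that the limit is a strong solution of \eqref{1.4}-\eqref{1.6}. Since the uniform estimate is global in time and independent of $\tau$, the regularity class and the bound \eqref{1.8} are inherited by the limit, completing the proof of Theorem \ref{thm1}. I expect the boundary terms arising in the second-order estimates to be the decisive technical difficulty, and the choice of the approximate problem will need to be tailored precisely so that those terms vanish or decay as $\epsilon\to 0$.
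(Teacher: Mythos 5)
Your overall skeleton (approximate to a non-characteristic problem, prove uniform estimates, pass to the limit) matches the paper's, but two of your key steps would not go through as described. First, the approximation: the paper does not regularize parabolically or change the boundary condition; it keeps the hyperbolic structure and the condition $v(t,1)=0$ intact and only shifts the transport speed in the stress equations, replacing $v\partial_r\tilde S_i$ by $(v-\epsilon)\partial_r\tilde S_i$. This is not a cosmetic choice: it makes $\det\bigl((A^0)^{-1}A^1\bigr)\big|_{\partial\Omega}=-P'(\rho)\epsilon^2\neq0$ while preserving maximal nonnegativity, and, crucially, every $\epsilon$-term generated in the energy identities appears as $-\epsilon\int r^2\rho\,(\tfrac12 Z^2)_r$, whose boundary contribution $\tfrac{\tau\epsilon}{6\mu}\rho(t,1)Z^2(t,1)$ comes out with a \emph{favorable} sign and is in fact used as extra boundary dissipation throughout Section 3. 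A parabolic regularization with an artificial Dirichlet condition on $\rho$ would instead create boundary layers and new compatibility issues as $\epsilon\to0$, and you give no mechanism for removing them.

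Second, your assertion that ``all boundary terms at $r=1$ carry a factor of $v$ and therefore vanish'' is false as soon as one spatial derivative is taken, and this is precisely where the real difficulty lies. Already at first order the boundary term is $r^2\partial_r v\,(\partial_r P-\tfrac23\partial_r\tilde S_1-\partial_r\tilde S_2)(t,1)$, which the paper must convert, using the momentum equation restricted to $r=1$ and then the constitutive law, into $2(\tilde S_1\partial_r v)(t,1)$ and finally into quantities controlled by Gagliardo--Nirenberg plus the interior dissipation. At second order the uncancelled terms $(\partial_{rr}v)^2(t,1)$ and $(\partial_{tr}v)^2(t,1)$ survive on the right-hand side of the $\partial_{rr}$-energy inequality and require two dedicated lemmas (testing the differentiated stress equations against $r^2\partial_{tr}v$ and $r^2\partial_{rr}v$) before the estimate closes; moreover the reduction of $\partial_{rr}v$ to time derivatives via the Maxwell equations leaves an $\epsilon\int r^2(\partial_{rr}\tilde S_i)^2$ term that itself needs the second-order spatial energy estimate, a circularity the paper breaks only by combining these lemmas and choosing the smallness parameters $\eta$ and $\epsilon$ in a specific order. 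Your plan of ``integrating by parts in $t$ rather than in $r$'' does not produce the $\partial_{rr}\tilde S_i$ dissipation that the scheme needs, so as written the second-order estimate does not close.
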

	
	\begin{remark} 
		Observe that the norm 
		\(\|V_0^k\|_{H^{2-k}} = O(1)\) as \(\tau \to 0\), which follows from the assumption of well-prepared initial data. To verify this, we derive the following
		\[
		\left\|r\sqrt{\tau}\partial_{t}\tilde{S}_1(t=0,\cdot)\right\|_{H^1} 
		= \left\|r\left(\frac{2\mu(\partial_{r}v-\frac{v}{r})-\tilde{S}_1}{\sqrt{\tau}}-\sqrt{\tau}v\partial_{r}\tilde{S}_1\right)(t=0,\cdot)\right\|_{H^1} 
		= O(1), \quad \text{as } \tau \to 0,
		\]
		and
		\[
		\left\|r\tau^{\frac{3}{2}}\partial_{tt}\tilde{S}_1(t=0,\cdot)\right\|_{L^2} 
		= \left\|r\left(\sqrt{\tau}2\mu(\partial_{r}v-\frac{v}{r})-\sqrt{\tau}\tilde{S}_1-\tau^{\frac{3}{2}}v\partial_{r}\tilde{S}_1\right)_t(t=0,\cdot)\right\|_{L^2} 
		=O(1), \quad \text{as } \tau \to 0.
		\]
		The other terms can be handled similarly.
	\end{remark}
	
	Based on the uniform estimates of solutions, we have the following convergence theorem.
	\begin{theorem}\label{thm2}
		(Global Weak Convergence).
		Let $(\rho^{\tau},v^{\tau},\tilde{S}_1^{\tau},\tilde{S}_2^{\tau})$ denote the family of global solutions constructed in Theorem \ref{thm1}. Then there exist limit functions
		$$
		(\rho^{0},v^{0}) \in L^{\infty}\left(\mathbb{R}^{+} ; H^2(\Omega)\right), \quad(\tilde{S}_1^{0},\tilde{S}_2^{0}) \in L^2\left(\mathbb{R}^{+} ; H^2(\Omega)\right),
		$$
		such that along a subsequence as $\tau\rightarrow 0$:
		\begin{equation}
			\begin{aligned}
				& \left(\rho^{\tau},v^{\tau}\right) \rightharpoonup^*\left(\rho^{0},v^{0}\right) \quad \text { weakly-* in } L^{\infty}\left(\mathbb{R}^{+} ; H^2(\Omega)\right), \\
				& \left(\tilde{S}_1^{\tau},\tilde{S}_2^{\tau}\right) \rightharpoonup\left(\tilde{S}_1^{0},\tilde{S}_2^{0}\right) \quad \text { weakly in } L^2\left(\mathbb{R}^{+} ; H^2(\Omega)\right) .
			\end{aligned}
		\end{equation}
		The limit $\left(\rho^{0},v^{0}\right)$ constitutes a classical solution to the  spherical symmetric  isentropic compressible Navier-Stokes system  with boundary condition (\ref{1.6}) and initial data $\left(\rho^{0},v^{0}\right)(t=0,x)=(\rho_0,v_0)(x)$. Moreover,
		$$\tilde{S}_1^0=2\mu(\partial_{r}v^0-\frac{v^0}{r}), \quad \tilde{S}_2^0=\lambda(\partial_{r}v^0+\frac{2v^0}{r}).$$
	\end{theorem}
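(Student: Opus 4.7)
The strategy is to use the $\tau$-independent estimate \eqref{1.8} to extract weak limits by Banach--Alaoglu, to upgrade $(\rho^\tau,v^\tau)$ to strong compactness via Aubin--Lions, and finally to pass to the limit in each equation of \eqref{1.4}, recovering both the Newtonian identifications of $\tilde S_1^0,\tilde S_2^0$ and the limiting Navier--Stokes system.

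Since $r\ge 1$ on $\Omega$, the weighted bounds in \eqref{1.8} yield the $\tau$-uniform controls $(\rho^\tau-1,v^\tau)\in L^\infty(\mathbb R^+;H^2)\cap W^{1,\infty}(\mathbb R^+;H^1)$, $(\tilde S_1^\tau,\tilde S_2^\tau)\in L^2(\mathbb R^+;H^2)$, and $(\sqrt{\tau}\tilde S_i^\tau)\in L^\infty(\mathbb R^+;H^2)\cap W^{1,\infty}(\mathbb R^+;H^1)$. Banach--Alaoglu produces the weak and weak-$*$ limits asserted in the theorem along a subsequence $\tau_k\to 0$. On each bounded radial slab $\Omega_R:=\Omega\cap\{|x|<R\}$ the embedding $H^2(\Omega_R)\hookrightarrow H^1(\Omega_R)$ is compact; together with the uniform $W^{1,\infty}(H^1)$ bound on $(\rho^\tau,v^\tau)$, the Aubin--Lions lemma gives strong convergence of $(\rho^{\tau_k},v^{\tau_k})$ in $C([0,T];H^1(\Omega_R))$ for every $T,R<\infty$, hence in $C([0,T]\times\overline{\Omega_R})$ after Sobolev embedding in the radial variable. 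A diagonal extraction over exhausting $R,T$ yields local-in-$(t,x)$ uniform convergence.

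Rewriting the Maxwell laws as
\begin{equation*}
\tilde S_1^\tau-2\mu\Bigl(\partial_r v^\tau-\frac{v^\tau}{r}\Bigr)=-\sqrt{\tau}\,\rho^\tau\Bigl(\sqrt{\tau}\,\partial_t\tilde S_1^\tau+v^\tau\,\sqrt{\tau}\,\partial_r\tilde S_1^\tau\Bigr),
\end{equation*}
and analogously for $\tilde S_2^\tau$, the right-hand side is $O(\sqrt{\tau})$ in $L^\infty_{\mathrm{loc}}(L^2_{\mathrm{loc}})$ by the bounds above and therefore vanishes strongly. Combining this with the weak convergence of $(\partial_r v^\tau,v^\tau/r)$ to $(\partial_r v^0,v^0/r)$ identifies
\begin{equation*}
\tilde S_1^0=2\mu\Bigl(\partial_r v^0-\frac{v^0}{r}\Bigr),\qquad \tilde S_2^0=\lambda\Bigl(\partial_r v^0+\frac{2v^0}{r}\Bigr).
\end{equation*}
Substituting these into the momentum equation of \eqref{1.4} and passing to the limit in the remaining terms -- using strong local convergence of $(\rho^\tau,v^\tau)$ to handle the quadratic nonlinearities $\rho^\tau v^\tau\partial_r v^\tau$, the pressure $(\rho^\tau)^\gamma$, and the advection in the continuity equation, together with weak convergence for $\partial_t v^\tau$ and for $\partial_r\tilde S_i^\tau$ on the right -- yields the classical spherically symmetric isentropic Navier--Stokes system for $(\rho^0,v^0)$. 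The initial condition $(\rho^0,v^0)|_{t=0}=(\rho_0,v_0)$ is preserved by $C$-in-time strong convergence, and the boundary condition $v^0(t,1)=0$ by trace continuity.

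The main obstacle will be to globalize these arguments on the unbounded exterior domain. Since Aubin--Lions only supplies compactness on bounded slabs $\Omega_R$, I would exploit the $r$-weighted norms in \eqref{1.8} -- which force $(\rho^\tau-1,v^\tau)$ to decay at spatial infinity uniformly in $\tau$ -- as tail estimates, so that nonlinear terms and the pressure pass to the limit globally in the sense of distributions on $\Omega\times\mathbb R^+$; classical regularity of the limit then follows from Sobolev embedding of the radial profile.
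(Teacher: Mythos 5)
Your proposal is correct and follows essentially the same route as the paper: extract weak/weak-$*$ limits from the uniform bound \eqref{1.8}, upgrade $(\rho^\tau,v^\tau)$ to strong convergence by an Aubin--Lions argument, identify $\tilde S_1^0,\tilde S_2^0$ by showing the relaxation terms in $(\ref{1.4})_3$--$(\ref{1.4})_4$ vanish as $\tau\to0$, and then pass to the limit in the mass and momentum equations. Your two refinements --- localizing the compactness to slabs $\Omega_R$ with a diagonal extraction (the embedding $H^2\hookrightarrow H^{2-\delta_0}$ is not compact on the exterior domain, a point the paper glosses over) and the quantitative $O(\sqrt{\tau})$ bound on the residual $\tilde S_i^\tau-(\text{Newtonian part})$ in place of the paper's distributional vanishing of $\tau\partial_t\tilde S_i^\tau$ --- are sound and, if anything, tighter than the published argument.
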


	The paper is organized as follows. In Section 2, we construct an approximate system which possesses a non-characteristic boundary and prove its local well-posedness. In Section 3, we establish  uniform a priori estimates and  obtain a global solution for this approximated system. Based on the uniform estimates derived in Section 3 and usual compactness argument, we justify the limit and prove our main theorem in the final section.
	
	\section{Approximated system and local  existence}\label{sec2}
	In this section, we construct an approximate system with a non-characteristic boundary and establish the local existence of a unique smooth solution to the system. Following the approach of Schochet \cite{schochet1986compressible}, as adopted in \cite{HL24, HZ25}, we approximate system \eqref{1.4} as follows:
	\begin{equation} \label{9}
		\begin{cases}
			\partial_t \rho^{\epsilon}+\partial_r(\rho^{\epsilon} v^{\epsilon})+\dfrac{2}{r} \rho^{\epsilon} v^{\epsilon}=0, \\
			\rho^{\epsilon}\partial_t v^{\epsilon}+\rho^{\epsilon} v^{\epsilon} \partial_r v^{\epsilon}+\partial_r P=\dfrac{2}{3}\partial_r \tilde{S}_1^{\epsilon} +\dfrac{2}{r} \tilde{S}_1^{\epsilon}+\partial_r \tilde{S}_2^{\epsilon},\\
			\tau   \rho^{\epsilon} \left(\partial_t \tilde{S}_1^{\epsilon}+(v^{\epsilon}-\epsilon)\partial_r\tilde{S}_1^{\epsilon} \right) +\tilde{S}_1^{\epsilon}=2\mu(\partial_r v^{\epsilon}- \dfrac{v^{\epsilon}}{r})
			, \\
			\tau  \rho^{\epsilon} \left(\partial_t \tilde{S}_2^{\epsilon}+(v^{\epsilon}-\epsilon)\partial_r\tilde{S}_2^{\epsilon} \right) +\tilde{S}_2^{\epsilon}=\lambda (\partial_r v^{\epsilon}+\dfrac{2}{r}v^{\epsilon}),
		\end{cases}
	\end{equation}
	with initial condition
	\begin{equation}\label{10}
		(\rho^\epsilon,v^\epsilon,\tilde{S}_1^{\epsilon},\tilde{S}_2^{\epsilon})(0,r)=(\rho_0,v_0,\tilde{S}_{10},\tilde{S}_{20})(r),
	\end{equation}
	and boundary condition
	\begin{equation}\label{11}
		v^\epsilon(t,1)=0.
	\end{equation}
	
	We transform the above system into a first-order system for $V^\epsilon:=(\rho^\epsilon,v^\epsilon,\tilde{S}_1^{\epsilon},\tilde{S}_2^{\epsilon})^{T}$,
	\begin{equation}
		A^0(V^\epsilon)\partial_{t}V^\epsilon+A^1(V^\epsilon)\partial_{r}V^\epsilon+B(V^\epsilon)V^\epsilon=0,
	\end{equation} 
	where\\
	$A^0(V^\epsilon)=\left(\begin{array}{ccccc}
		\dfrac{P^{\prime}(\rho^\epsilon)}{\rho^\epsilon} & 0 & 0 & 0 \\
		0 & \rho^\epsilon & 0 & 0 \\
		0 & 0 & \dfrac{\tau  \rho^\epsilon}{3\mu} & 0 \\
		0 & 0 & 0 & \dfrac{\tau \rho^\epsilon}{\lambda}  
	\end{array}\right)$,
	$
	A^1(V^\epsilon)=\left(\begin{array}{ccccc}
		\dfrac{P^{\prime}(\rho^\epsilon)v}{\rho} & P^{\prime}(\rho^\epsilon) & 0 & 0 \\
		P^{\prime}(\rho^\epsilon) & \rho^\epsilon v^\epsilon & -\dfrac{2}{3} & -1 \\
		0 & -\dfrac{2}{3}  & \dfrac{\tau   \rho^\epsilon(v^\epsilon-\epsilon) }{3\mu} & 0 \\
		0 & -1 & 0 &  \dfrac{\tau \rho^\epsilon(v^\epsilon-\epsilon)}{\lambda}
	\end{array}\right)$,\\
	$B(V^\epsilon)=\left(\begin{array}{ccccc}
		0 & \dfrac{2 P^{\prime}(\rho^\epsilon)}{r} & 0 & 0  \\
		0 & 0 & -\dfrac{2}{r} & 0  \\
		0 & \dfrac{2}{3r}  & \dfrac{1}{3\mu} & 0  \\
		0 & -\dfrac{2}{r} & 0 & \dfrac{1}{\lambda}
	\end{array}\right)$,\\
	with initial condition $V^\epsilon(0,r)=V_0:=(\rho_0,v_0,\tilde{S}_{10},\tilde{S}_{20})(r)$ 
	and boundary condition
	$$MV^\epsilon\big|_{\partial\Omega}=0,~~\text{with} 
	~~M=\left(\begin{array}{ccccc}
		0 & 0 & 0 & 0 \\
		0 & 1 & 0 & 0 \\
		0 & 0 & 0 & 0 \\
		0 & 0 & 0 & 0  
	\end{array}\right).$$\\
	
	Note that $det(A^0)^{-1}A^1(V^\epsilon)\big|_{\partial\Omega}=-P^{\prime}(\rho^\epsilon)\epsilon^2\neq0$
	with $P^{\prime}(\rho^\epsilon)>0$ for any $V^\epsilon\in G:=\{(0,\infty)\times\mathbb{R}^3\}$. So, the boundary condition \eqref{11} is a non-characteristic boundary for the approximated system \eqref{9}. 
	
	Next, we need to prove that \eqref{11} satisfies maximally non-negative property. Namely, the matrix $\left.A^1\left(V^\epsilon\right) \cdot \nu\right|_{\partial \Omega}$ is positive semi-definite on the null space $N$ of $M$, yet not on any space encompassing $N$, where $\nu$ denotes the unit outward normal vector on the boundary. Here $\nu=-1$. Let $\xi=\left(\xi_1, 0, \xi_2,\xi_3\right)^T \in \operatorname{ker} M=\operatorname{span}\left\{(1,0,0,0)^T,(0,0,1,0)^T,(0,0,0,1)^T\right\}$, then 
	$$\xi^T A^1 \cdot \nu\big|_{\partial \Omega} \xi=
	\frac{\tau   \rho^{\epsilon}\epsilon}{3\mu}\xi_2^2+\frac{\tau  \rho^{\epsilon}\epsilon}{\lambda}\xi_3^2\geq0.$$
	On the other hand, the only space having $\operatorname{ker} M$ as a proper subspace is $\mathbb{R}^4$. Thus, we take $q=(1,1,0,0)^T\in \mathbb{R}^4$, and  calculate
	$$q^T A^1 \cdot \nu\big|_{\partial \Omega} q=-2P^{\prime}(\rho^{\epsilon})<0.$$
	Thus, the maximally nonnegative property is satisfied. Therefore, classical results implies local well-posedness theory\cite{schochet1986compressible}.
	
	\begin{theorem}
		Suppose $(\rho_0,v_0,\tilde{S}_{10},\tilde{S}_{20})(r)\in H^2$ satisfying the compatibility condition (\ref{compatibility}) and
		\begin{equation*}
			\min\limits_{r \in[1,\infty)} \rho_0(r)>0.
		\end{equation*}
		Then there exists a unique local solution $(\rho^\epsilon,v^\epsilon,\tilde{S}_1^{\epsilon},\tilde{S}_2^{\epsilon})$ to initial boundary value problem (\ref{9})-(\ref{11})
		on some time interval $[0, T]$ with
		$$
		\begin{array}{r}
			\left(\rho^\epsilon,v^\epsilon,\tilde{S}_1^{\epsilon},\tilde{S}_2^{\epsilon}\right) \in C^0\left([0, T], H^2\right) \cap C^1\left([0, T], H^1\right), \\
			\min \limits_{r \in[1,\infty)} \rho(t,r)>0, \quad \forall \quad t>0 . 
		\end{array}
		$$
	\end{theorem}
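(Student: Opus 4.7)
The plan is to apply the standard local well-posedness theory for first-order quasilinear symmetric hyperbolic systems with maximally non-negative, non-characteristic boundary conditions (Schochet \cite{schochet1986compressible}, built on Friedrichs--Lax--Phillips--Rauch--Massey). Almost all of the structural verification has already been carried out in the paragraphs preceding the statement, so the proof is really a matter of checking that the remaining hypotheses of that black-box theorem are satisfied and then invoking it.

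First I would confirm the symmetric hyperbolic structure: the matrices $A^0$ and $A^1$ displayed above are manifestly symmetric, and $A^0$ is strictly positive definite whenever $\rho^\epsilon > 0$, since $P'(\rho^\epsilon)/\rho^\epsilon$, $\rho^\epsilon$, $\tau\rho^\epsilon/(3\mu)$, and $\tau\rho^\epsilon/\lambda$ are all positive. By continuity of $\rho_0$ and the assumption $\min_{r\in[1,\infty)}\rho_0(r) > 0$, there is an open neighborhood $G_0 \subset G$ of the range of $V_0$ on which $A^0$ is uniformly positive definite and on which all entries of $A^0, A^1, B$ are smooth functions of $V$. The lower-order term $B(V^\epsilon)V^\epsilon$ has coefficients that are smooth in $r \in [1,\infty)$ (bounded together with all derivatives for $r\ge 1$), so it fits in the standard framework.

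Next I would verify the compatibility conditions needed for an $H^2$ solution. The framework requires $\partial_t^k V^\epsilon(0,\cdot)$, computed formally from the equation, to satisfy the boundary condition $Mu = 0$ at $r=1$ for $k=0,1$. The $k=0$ condition is exactly $v_0(1) = 0$, which is built into \eqref{compatibility}. For $k=1$, we use the second equation of \eqref{9} to express $\partial_t v^\epsilon(0,1)$ in terms of spatial derivatives of the initial data; the assumption $\partial_t v(0,r)|_{r=1} = 0$ in \eqref{compatibility} is precisely this condition. The non-characteristic property $\det A^1|_{\partial\Omega} = -P'(\rho^\epsilon)\epsilon^2 \neq 0$ and the maximal non-negativity of $\ker M$ relative to $A^1\cdot\nu|_{\partial\Omega}$ have been established directly above the theorem statement.

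With all hypotheses in hand, Schochet's local existence theorem yields a unique solution $V^\epsilon \in C^0([0,T],H^2)\cap C^1([0,T],H^1)$ for some $T = T(\epsilon, \|V_0\|_{H^2}, \min\rho_0) > 0$. Finally, the pointwise lower bound on $\rho^\epsilon$ is propagated by a simple continuity argument: since $\rho^\epsilon \in C^0([0,T];H^2) \hookrightarrow C^0([0,T]\times[1,\infty))$ and $\min\rho_0 > 0$, shrinking $T$ if necessary keeps $\min_{r\in[1,\infty)} \rho^\epsilon(t,r) > 0$ on $[0,T]$. I do not anticipate any genuine obstacle here; the only mildly delicate point is handling the unbounded radial domain $[1,\infty)$, which is standard since the coefficients of $B$ decay like $1/r$ and the symmetric hyperbolic local theory extends to exterior domains without modification once the boundary conditions at $r=1$ are in the right form.
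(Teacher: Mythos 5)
Your proposal is correct and follows essentially the same route as the paper: verify that the approximated system is symmetric hyperbolic with $A^0$ positive definite away from vacuum, check the non-characteristic and maximally non-negative boundary structure (which the paper carries out explicitly just before the theorem), and then invoke Schochet's local well-posedness theory. The extra details you supply on the compatibility conditions and on propagating the positive lower bound of $\rho^\epsilon$ by continuity are consistent with what the paper leaves implicit.
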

	\section{Uniform a priori estimates for the approximated system}\label{sec3}
	
	In this section, we derive uniform a priori estimates with respect to \(\tau\) and \(\epsilon\) for the approximated system \eqref{9}-\eqref{11}, yielding uniform global solutions by usual continuation arguments.
	
	For simplicity, we still denote $(\rho^{\epsilon}, v^{\epsilon}, \tilde{S}_1^{\epsilon},\tilde{S}_2^{\epsilon})$ by $(\rho, v,\tilde{S}_1,\tilde{S}_2)$. 
	First, we define the energy 
	
	\begin{equation}
		E(t):=\sup _{0 \leq s \leq t}\bigg( \sum_{k=0}^1\left\|r\partial_{t}^k(\rho-1, v, \sqrt{\tau  } \tilde{S}_1, \sqrt{\tau } \tilde{S}_2)(s,\cdot)\right\|_{H^{2-k}}^2
		+\tau^2\left\|r\partial_{t}^2(\rho-1, v, \sqrt{\tau  } \tilde{S}_1, \sqrt{\tau } \tilde{S}_2)(s,\cdot)\right\|_{L^{2}}^2\bigg)
	\end{equation}
	and  dissipation
	\begin{equation}
		\mathcal{D}(t):=\sum_{|\alpha|=1}^2\left\|r D^\alpha(\rho, v)\right\|_{L^2}^2
		+\sum_{k=0}^1\left\|r\partial_{t}^k(\tilde{S}_1, \tilde{S}_2)\right\|_{H^{2-k}}^2
		+\tau^2\left\|r(\partial_{tt}\tilde{S}_1,\partial_{tt}\tilde{S}_2)\right\|_{L^{2}}^2
	\end{equation}
	where $D^\alpha=\partial_{t}^{\alpha_1}\partial_{r}^{\alpha_2}$ with $|\alpha|=\alpha_1+\alpha_2$. We are aiming to prove the following proposition.
	\begin{proposition}\label{prop1}
		Let Assumption \ref{Assump1} hold and $(\rho, v, \tilde{S}_1, \tilde{S}_2)\in C^0([0,T],H^2)$ be local solutions to system (\ref{9}). Assume that there exists a small $\delta$ such that $E(t)\leq \delta$ for any $0\le t \le T$. Then, for any $0\le t \le T$, we have
		\begin{equation}
			E(t)+\int_0^t \mathcal{D}(s) \mathrm{d}s\leq C \left(E(0)+E^{\frac{3}{2}}(t)+E^{\frac{1}{2}}(t) \int_0^t \mathcal{D}(s) \mathrm{d}s\right)
		\end{equation}
		where $C$ is a constant independent of $\tau$ and $\epsilon$.
	\end{proposition}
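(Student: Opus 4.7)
The plan is to close a standard weighted energy estimate with three hierarchical tiers corresponding to $\partial_t^k$ for $k=0,1,2$, and then to recover the spatial-derivative dissipation for $(\rho,v)$ from the equations themselves. For the zeroth tier, I would multiply the mass equation by $r^2 P'(\rho)(\rho-1)/\rho$, the momentum equation by $r^2 v$, and the two stress equations by $r^2 \tilde{S}_1/(3\mu)$ and $r^2 \tilde{S}_2/\lambda$ respectively, then sum. The cross terms cancel up to commutators because the symmetrized hyperbolic structure is encoded in the matrices $A^0,A^1$ displayed in Section~2, and the boundary trace that arises is proportional to $v|_{r=1}=0$ together with the $\varepsilon$-correction $\tau\rho\varepsilon(\tilde{S}_i^2)|_{r=1}$ coming from the approximation, both of which are controlled or vanish. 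This yields $\tfrac{d}{dt}\|r(\rho-1,v,\sqrt\tau\tilde S_1,\sqrt\tau\tilde S_2)\|_{L^2}^2+\|r(\tilde S_1,\tilde S_2)\|_{L^2}^2\lesssim \text{cubic errors}$, where the $\tilde S_i^2$-dissipation comes from the undifferentiated $+\tilde S_i$ terms in \eqref{h6}-\eqref{h7}.

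Next I would differentiate the full system with respect to $t$ and $t^2$ and repeat the same symmetric multiplier argument with multipliers $r^2\partial_t V$ and $r^2\tau^2\partial_t^2 V$ respectively, where $V=(\rho-1,v,\sqrt\tau\tilde S_1,\sqrt\tau\tilde S_2)$. Since the boundary condition $\partial_t^k v|_{r=1}=0$ is preserved for $k=0,1$ by the compatibility hypothesis and propagates for $k=2$ via the equation, the boundary contributions reduce to $\varepsilon$-weighted squares of the stress traces, which are absorbed. Each commutator error $[\partial_t^k, A^j]\partial_j V$ is bilinear in $V$ and its derivatives, bounded via Sobolev embedding on $\Omega\subset\mathbb R^3$ (using the weighted norm $\|r\cdot\|_{H^s}$, which corresponds to the standard $H^s(\Omega)$ for radial functions) by $E^{1/2}(\mathcal D+E)$, producing the $E^{3/2}$ and $E^{1/2}\int\mathcal D$ terms on the right-hand side.

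At this stage I would have control of all temporal pieces of $E(t)$ and of the stress dissipation $\sum_{k=0}^{1}\|r\partial_t^k(\tilde S_1,\tilde S_2)\|_{H^{2-k}}^2+\tau^2\|r\partial_t^2(\tilde S_1,\tilde S_2)\|_{L^2}^2$. The spatial dissipation $\sum_{|\alpha|=1}^2\|rD^\alpha(\rho,v)\|_{L^2}^2$ is then recovered algebraically. Rewriting the stress equations as $2\mu(\partial_r v-\frac{v}{r})=\tilde S_1+\tau\rho(\partial_t\tilde S_1+(v-\varepsilon)\partial_r\tilde S_1)$ and analogously for $\tilde S_2$, together with the relation $\partial_r v+\frac{2}{r}v=(\partial_r v-\frac{v}{r})+\frac{3}{r}v$, expresses $\partial_r v$ and $\frac{v}{r}$ in terms of dissipative quantities; differentiating once more gives $\partial_r^2 v$. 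Then the momentum equation solves for $\partial_r\rho$ as $P'(\rho)\partial_r\rho=-\rho\partial_t v-\rho v\partial_r v+\frac{2}{3}\partial_r\tilde S_1+\frac{2}{r}\tilde S_1+\partial_r\tilde S_2$, whose right-hand side is already controlled; its $\partial_r$- and $\partial_t$-derivatives give $\partial_r^2\rho$ and $\partial_t\partial_r\rho$. Combining all of these inequalities and absorbing with a small constant produces the sought bound.

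The main obstacle, in my view, is twofold: tracking the uniformity in $\tau$ through the three tiers (it hinges on the carefully placed $\sqrt\tau$ weights inside $V$, which make the hyperbolic commutators symmetric to leading order and let the $\tau^2$ multiplier at the top tier exactly match the size of $\partial_t^2\tilde S_i\sim \tau^{-3/2}$ indicated in the remark following Theorem~\ref{thm1}), and controlling the boundary traces that appear when $\partial_r$-derivatives of $\tilde S_1,\tilde S_2$ enter the spatial dissipation recovery. The latter is handled by performing all differentiated energy identities exclusively in $\partial_t$, so that integration by parts in $r$ is only needed once at the level of the original equation, where the $\varepsilon$-modification ensures non-characteristicity and gives a sign-definite (or at worst absorbable) boundary contribution through the maximally nonnegative structure verified at the end of Section~\ref{sec2}.
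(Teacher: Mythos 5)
Your tiers $k=0,1,2$ in $\partial_t$ match the paper's Lemmas on the lower, first-order-in-$t$ and second-order-in-$t$ energies, and that part of the plan is sound. The genuine gap is your final step: the claim that all spatial derivatives can be ``recovered algebraically'' so that integration by parts in $r$ is needed only once, at the level of the undifferentiated equation. This fails for the stress variables, and the quantities you thereby lose are not optional: both $E(t)$ and $\mathcal D(t)$ contain $\|r\partial_t^k(\tilde S_1,\tilde S_2)\|_{H^{2-k}}^2$, i.e.\ the norms of $\partial_r\tilde S_i$, $\partial_{tr}\tilde S_i$ and $\partial_{rr}\tilde S_i$. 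The momentum equation only yields the single combination $\tfrac23\partial_r\tilde S_1+\tfrac2r\tilde S_1+\partial_r\tilde S_2$, not the two gradients separately; and if you instead differentiate the constitutive equations in $r$ to solve for $\partial_r\tilde S_i$, the convection term produces $\tau\rho(v-\epsilon)\partial_{rr}\tilde S_i$ and $\tau\rho\,\partial_{tr}\tilde S_i$, i.e.\ exactly the higher and mixed derivatives you have not yet controlled (your purely temporal tiers give $\partial_t^k\tilde S_i$ in $L^2$ only, not in $H^{2-k}$). One more $r$-derivative to reach $\partial_{rr}v$ and $\partial_{rr}\rho$ brings in $\partial_{rrr}\tilde S_i$, which is controlled nowhere. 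The chain is circular, and the circularity is precisely why the paper performs genuine energy estimates on the $r$-, $tr$- and $rr$-differentiated systems (its Lemmas on first-order $r$-estimates, on $\partial_{tr}$, and on $\partial_{rr}$), using a nontrivial family of auxiliary multipliers (e.g.\ multiplying the momentum equation by $2v$, the stress equation by $\tfrac2\mu\tilde S_1$, and the elaborate combinations $G_i$, $H_i$, $I_i$ in the $rr$-step) to cancel the non-coercive lower-order terms such as $-2v\partial_rP$ and $2\tilde S_1\partial_rv-2v\partial_r\tilde S_1-\tfrac{4v}{r}\tilde S_1$.

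A second, related omission: once those $r$-differentiated identities are unavoidable, the boundary contributions at $r=1$ are not merely ``$\epsilon$-weighted squares of stress traces.'' Terms like $2(\tilde S_1\partial_rv)(t,1)$, $2(\partial_t\tilde S_1\partial_{tr}v)(t,1)$ and, at second order, $\int_0^t(\partial_{rr}v)^2(t,1)\,\mathrm dt$ and $\int_0^t(\partial_{tr}v)^2(t,1)\,\mathrm dt$ appear with no sign; the paper must convert them via the constitutive equation on the boundary plus Gagliardo--Nirenberg trace interpolation, and devotes two separate lemmas to bounding the boundary integrals of $(\partial_{tr}v)^2$ and $(\partial_{rr}v)^2$ before the $rr$-estimate can be closed. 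Your proposal contains no mechanism for these terms, so as written the argument does not close the estimate for $E(t)+\int_0^t\mathcal D$.
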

	
	In the following lemmas, we suppose \(\delta\) is small enough such that
	\begin{equation}\label{hu3.4}
		\frac{3}{4} \leq \sup_{(t,r)\in[0,T]\times[1,+\infty)} \rho(t,r) \leq \frac{5}{4}.
	\end{equation}
	
	Furthermore, without loss of generality, we assume $\tau \leq 1$ and $\epsilon \ll 1$. $C$ denotes a universal constant which is independent of \(\tau\) and \(\epsilon\).
	
	\subsection{Lower energy estimates}
	With simplified notation, we have equations
	\begin{equation} \label{3.4}
		\begin{cases}
			\partial_t \rho+\partial_r(\rho v)+\dfrac{2}{r} \rho v=0, \\
			\rho\partial_t v+\rho v \partial_r v+\partial_r P=\dfrac{2}{3}\partial_r \tilde{S}_1 +\dfrac{2}{r} \tilde{S}_1+\partial_r \tilde{S}_2,\\
			\tau  \rho \left(\partial_t \tilde{S}_1+(v-\epsilon)\partial_r\tilde{S}_1 \right) +\tilde{S}_1=2\mu\left(\partial_r v- \dfrac{v}{r}\right)
			, \\
			\tau  \rho \left(\partial_t \tilde{S}_2+(v-\epsilon)\partial_r\tilde{S}_2 \right) +\tilde{S}_2=\lambda (\partial_r v+\dfrac{2}{r}v)
			.
		\end{cases}
	\end{equation}
	\begin{lemma}\label{lem1}
		There exists some constant $C$ such that for any $0\le t \le T$
		\begin{align}
			\|r(\rho-1, v, \sqrt{\tau  } \tilde{S}_1, \sqrt{\tau } \tilde{S}_2)\|_{L^2}^2+\int_0^t\|r(\tilde{S}_1,\tilde{S}_2)\|_{L^2}^2 \mathrm{d}t 
			+\int_{0}^{t}\frac{\tau \epsilon}{8 \mu} \tilde{S}_1^2(t,1)\mathrm{d}t \nonumber\\
			\leq 
			C \left(E(0)+E^{\frac{1}{2}}(t) \int_0^t \mathcal{D}(s) \mathrm{d}s\right).
		\end{align}
	\end{lemma}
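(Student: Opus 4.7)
The plan is to derive a single energy identity by multiplying each of the four equations in \eqref{3.4} by a carefully chosen multiplier (weighted by $r^2$ to match the three-dimensional $L^2$ norm under spherical symmetry), summing, and integrating over $r\in[1,\infty)$. Specifically, I would use the relative pressure potential $H(\rho)=\int_1^\rho(\rho-s)P'(s)/s\,ds$ (so that $H''(\rho)=P'(\rho)/\rho$ and $H(\rho)\sim(\rho-1)^2$ near $\rho=1$) and multiply \eqref{3.4}$_1$ by $H'(\rho)r^2$, \eqref{3.4}$_2$ by $vr^2$, \eqref{3.4}$_3$ by $\frac{1}{3\mu}\tilde S_1 r^2$, and \eqref{3.4}$_4$ by $\frac{1}{\lambda}\tilde S_2 r^2$. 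The key algebraic fact is that the pressure-work term $\int v\partial_r P\, r^2 dr$ produced by the momentum multiplier is exactly cancelled by the corresponding term from the continuity multiplier, and that the coupling between the momentum equation and the two stress equations collapses into the total derivatives $\tfrac{2}{3}\partial_r(v\tilde S_1 r^2)+\partial_r(v\tilde S_2 r^2)$, whose $r$-integrals vanish because $v|_{r=1}=0$.

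Next, I would carefully track the convective term in \eqref{3.4}$_3$ and \eqref{3.4}$_4$. Writing $(v-\epsilon)\tilde S_i\partial_r\tilde S_i=\tfrac12(v-\epsilon)\partial_r\tilde S_i^2$ and integrating by parts against the weight $\tau\rho r^2$ produces a boundary contribution at $r=1$ equal to $\tfrac{\tau\epsilon\rho(t,1)}{6\mu}\tilde S_1^2(t,1)+\tfrac{\tau\epsilon\rho(t,1)}{2\lambda}\tilde S_2^2(t,1)$, which is positive precisely because the approximation parameter $\epsilon>0$ shifts the characteristic inward; invoking \eqref{hu3.4} bounds $\rho(t,1)\ge 3/4$ from below and delivers the claimed $\tfrac{\tau\epsilon}{8\mu}\tilde S_1^2(t,1)$ term. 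Summing everything yields an identity of the form
\begin{equation*}
\frac{d}{dt}\mathcal E_0(t)+\frac{1}{3\mu}\|r\tilde S_1\|_{L^2}^2+\frac{1}{\lambda}\|r\tilde S_2\|_{L^2}^2+\frac{\tau\epsilon\rho(t,1)}{6\mu}\tilde S_1^2(t,1)+\frac{\tau\epsilon\rho(t,1)}{2\lambda}\tilde S_2^2(t,1)=\mathcal R(t),
\end{equation*}
where $\mathcal E_0(t)=\int[H(\rho)+\tfrac12\rho v^2+\tfrac{\tau\rho}{6\mu}\tilde S_1^2+\tfrac{\tau\rho}{2\lambda}\tilde S_2^2]r^2\,dr$ is equivalent to $\|r(\rho-1,v,\sqrt\tau\tilde S_1,\sqrt\tau\tilde S_2)\|_{L^2}^2$ under \eqref{hu3.4}, and $\mathcal R(t)$ collects the remainders.

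The delicate step is to check that every remainder has the product structure $E^{1/2}(t)\,\mathcal D(t)$ (so that its time integral closes in the statement). Three kinds of remainders appear: the term $\int\tau\partial_t\rho\,\tilde S_i^2 r^2dr$ generated when the time derivative crosses the $\rho$-weight, the cubic convective term $\int\rho v^3 r\,dr$ that ultimately cancels after integration by parts against $r^2$, and the interior residual $\int\partial_r(\tau\rho(v-\epsilon)r^2)\tilde S_i^2 dr$ from the convective integration by parts. For the first two, a uniform $L^\infty$ bound on $(\rho,v,\partial_r\rho,\partial_r v)$ via Sobolev embedding (using that $r\ge 1$ makes $\|f\|_{L^\infty}\lesssim\|rf\|_{H^1}$) gives a factor bounded by $E^{1/2}(t)$, while $\|r\tilde S_i\|_{L^2}^2$ lives in $\mathcal D(t)$. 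The third residual splits into a nonlinear part, treated as above, and an $\epsilon$-linear piece of size $\epsilon\|r\tilde S_i\|_{L^2}^2$; for $\epsilon$ small this is absorbed into the $\tfrac{1}{3\mu}\|r\tilde S_1\|_{L^2}^2+\tfrac{1}{\lambda}\|r\tilde S_2\|_{L^2}^2$ already present on the left. The main obstacle is precisely this bookkeeping: one must verify that no linear-in-dissipation remainder survives with an $\epsilon$-independent constant, because this is what allows the boundary dissipation $\tfrac{\tau\epsilon}{8\mu}\tilde S_1^2(t,1)$ to be kept on the left despite the degeneration $\epsilon\to 0$. Integration in time then yields the stated estimate.
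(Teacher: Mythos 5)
Your proposal is correct and follows essentially the same route as the paper: the same multipliers $r^2v$, $\frac{r^2}{3\mu}\tilde S_1$, $\frac{r^2}{\lambda}\tilde S_2$ (with the relative pressure potential playing the role of the paper's explicit $\frac{r^2}{\gamma-1}(\rho^\gamma-1-\gamma(\rho-1))$), the same exact cancellations into $\frac23\partial_r(r^2v\tilde S_1)+\partial_r(r^2v\tilde S_2)$, the same boundary terms $\frac{\tau\epsilon\rho(t,1)}{6\mu}\tilde S_1^2(t,1)$ and $\frac{\tau\epsilon\rho(t,1)}{2\lambda}\tilde S_2^2(t,1)$ from the $\epsilon$-shifted convection, and the same $E^{1/2}\mathcal D$ bookkeeping for the remainders. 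The only (harmless) deviation is that the interior residual $\frac{\tau\epsilon}{3\mu}\int 2r\rho\,\tfrac12\tilde S_1^2\,\mathrm{d}r$ is in fact nonnegative and can simply be dropped, so no absorption via smallness of $\epsilon$ is needed there.
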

	
	\begin{proof}
		Multiplying the equation $(\ref{3.4})_2$ by $r^2 v$, we have
		\begin{equation}\nonumber
			(\frac{1}{2}r^2 \rho v^2)_t+\frac{P}{\rho}(r^2 \partial_t\rho+r^2 v \partial_r\rho)=\frac{2}{3}r^2v\partial_{r}\tilde{S}_1+2rv\tilde{S}_1+r^2v\partial_{r}\tilde{S}_2.
		\end{equation}
		Integrating the above equation and using the mass equation, we have
		\begin{equation}\label{0-1}
			\frac{\mathrm{d}}{\mathrm{d} t} \int_{1}^{+\infty}\left(
			\frac{r^2}{\gamma-1}(\rho^{\gamma}-1-\gamma(\rho-1))+\frac{r^2 \rho}{2} v^2\right)\mathrm{d} r=
			\int_{1}^{+\infty}(\frac{2}{3}r^2v\partial_{r}\tilde{S}_1+2rv\tilde{S}_1
			+r^2v\partial_{r}\tilde{S}_2)\mathrm{d} r.
		\end{equation}
		
		Multiplying the equation $(\ref{3.4})_3$ by $\frac{r^2}{3 \mu}\tilde{S}_1$, and multiply the equation $(\ref{3.4})_4$ by $\frac{r^2}{\lambda} \tilde{S}_2$, then one can get
		\begin{equation}
			\begin{aligned}\label{0-2}
				&\frac{\mathrm{d}}{\mathrm{d} t} \int_{1}^{+\infty}\left(\frac{\tau   r^2 \rho}{6 \mu}   \tilde{S}_1^2+\frac{\tau  r^2 \rho}{2\lambda}  \tilde{S}_2^2\right)+\int_{1}^{+\infty} (\frac{r^2}{3 \mu}  \tilde{S}_1^2+\frac{r^2}{\lambda} \tilde{S}_2^2) \mathrm{d} r\\
				&-\epsilon\int_{1}^{+\infty}\frac{\tau   r^2}{3 \mu}  \rho \left(\frac{1}{2}\tilde{S}_1^2\right)_r\mathrm{d} r -\epsilon\int_{1}^{+\infty}\frac{\tau  r^2}{\lambda}  \rho \left(\frac{1}{2}\tilde{S}_2^2\right)_r\mathrm{d} r\\
				&=\int_{1}^{+\infty}(\frac{2}{3}r^2\partial_{r}v\tilde{S}_1-\frac{2r}{3}v\tilde{S}_1+r^2\partial_{r}v\tilde{S}_2+2rv\tilde{S}_2)\mathrm{d} r.
			\end{aligned}
		\end{equation}
		Combining the equations $(\ref{0-1})$ and $(\ref{0-2})$ and using boundary condition (\ref{1.6}), we can get the energy equality
		\begin{equation}
			\begin{aligned}
				&\frac{\mathrm{d}}{\mathrm{d} t} \int_{1}^{+\infty}\left[\frac{r^2}{\gamma-1}(\rho^{\gamma}-1-\gamma(\rho-1))+\frac{r^2 \rho}{2} v^2+\frac{\tau   r^2 \rho}{6 \mu}   \tilde{S}_1^2+\frac{\tau  r^2 \rho}{2\lambda}  \tilde{S}_2^2\right] \mathrm{d} r\\
				&-\epsilon\int_{1}^{+\infty}\frac{\tau   r^2}{3 \mu}  \rho \left(\frac{1}{2}\tilde{S}_1^2\right)_r\mathrm{d} r
				~~~~ -\epsilon\int_{1}^{+\infty}\frac{\tau  r^2}{\lambda}  \rho \left(\frac{1}{2}\tilde{S}_2^2\right)_r\mathrm{d} r +\int_{1}^{+\infty} (\frac{r^2}{3 \mu}  \tilde{S}_1^2+\frac{r^2}{\lambda} 
				\tilde{S}_2^2) \mathrm{d} r=0.
			\end{aligned}
		\end{equation}
		Besides, we have 
		\begin{equation}\nonumber
			\begin{aligned}
				-\frac{\tau \epsilon}{3 \mu} \int_{1}^{+\infty} r^2\rho \left(\frac{1}{2}\tilde{S}_1^2\right)_r\mathrm{d} r
				&=-\frac{\tau   \epsilon}{3 \mu}r^2\rho \frac{1}{2}\tilde{S}_1^2 \bigg|_1^{+\infty} +\frac{\tau \epsilon}{3 \mu} \int_{1}^{+\infty}(2r\rho+r^2\partial_{r}\rho)\frac{1}{2}\tilde{S}_1^2\mathrm{d} r\\
				&= \frac{\tau \epsilon}{6 \mu}\rho(t,1)\tilde{S}_1^2(t,1)
				+\int_{1}^{+\infty} \frac{\tau \epsilon r}{3 \mu} \rho\tilde{S}_1^2
				-C E^{\frac{1}{2}}(t) \mathcal{D}(t)\\
				&\geq \frac{\tau \epsilon}{6 \mu}\rho(t,1)\tilde{S}_1^2(t,1)
				-C E^{\frac{1}{2}}(t) \mathcal{D}(t)
			\end{aligned}
		\end{equation}
		and
		\begin{equation}\nonumber
			\begin{aligned}
				-\frac{\tau  \epsilon}{\lambda} \int_{1}^{+\infty} r^2\rho \left(\frac{1}{2}\tilde{S}_2^2\right)_r\mathrm{d} r
				&=-\frac{\tau  \epsilon}{\lambda} r^2\rho \frac{1}{2}\tilde{S}_2^2 \bigg|_1^{+\infty}
				+\frac{\tau \epsilon}{\lambda} \int_{1}^{+\infty}(2r\rho+r^2\partial_r\rho)\frac{1}{2}\tilde{S}_2^2\mathrm{d} r\\
				&= \frac{\tau \epsilon}{2\lambda}\rho(t,1)\tilde{S}_2^2(t,1) +\int_{1}^{+\infty} \frac{\tau \epsilon r}{\lambda} \rho \tilde{S}_2^2
				-C E^{\frac{1}{2}}(t) \mathcal{D}(t)\\
				&\geq-C E^{\frac{1}{2}}(t) \mathcal{D}(t)
				.
			\end{aligned}
		\end{equation}

		Combining the above result and \eqref{hu3.4}, we obtain
		\begin{equation}
			\begin{aligned}
				\frac{\mathrm{d}}{\mathrm{d} t} \int_{1}^{+\infty}\left[ \frac{r^2}{\gamma-1}\left(\rho^\gamma-1-\gamma(\rho-1)\right) 
				+\frac{r^2 \rho}{2} v^2+\frac{\tau   r^2 \rho}{6 \mu}  \tilde{S}_1^2+\frac{\tau  r^2 \rho}{2\lambda} \tilde{S}_2^2\right] \mathrm{d} r\\
				+\int_{1}^{+\infty} (\frac{r^2} {3\mu} \tilde{S}_1^2+\frac{r^2}{\lambda} \tilde{S}_2^2) \mathrm{d} r
				+\frac{\tau \epsilon}{8 \mu}\tilde{S}_1^2(t,1)\leq
				C E^{\frac{1}{2}}(t) \mathcal{D}(t).
			\end{aligned}
		\end{equation}
		Using Taylor's expansions, we know
		\begin{equation}
			\rho^\gamma-1-\gamma(\rho-1)=\frac{\gamma(\gamma-1)}{2}\xi^{\gamma-2}(\rho-1)^2,
		\end{equation}
		where $\xi\in(1,\rho)$.
		Using \eqref{hu3.4}, we get,
		\begin{equation}
			\|r(\rho-1, v, \sqrt{\tau  } \tilde{S}_1, \sqrt{\tau } \tilde{S}_2)\|_{L^2}^2+\int_0^t\|r(\tilde{S}_1,\tilde{S}_2)\|_{L^2}^2 \mathrm{d}t 
			+\int_{0}^{t}\frac{\tau \epsilon}{8 \mu}\tilde{S}_1^2(t,1)\mathrm{d}t\leq 
			C \left(E(0)+E^{\frac{1}{2}} \int_0^t \mathcal{D}(s) \mathrm{d}s\right).
		\end{equation}
		So, the proof of Lemma \ref{lem1} is finished.
	\end{proof}
	\subsection{First-order estimates}
	
	\begin{lemma}\label{lem2}
		There exists some constant $C$ such that for any $0\le t \le T$
		\begin{equation}\label{hu3.13}
			\begin{aligned}
				&\int_{1}^{+\infty}  r^2\left((\partial_{t}\rho)^2+(\partial_{t}v)^2
				+\tau  (\partial_{t}\tilde{S}_1)^2+\tau (\partial_{t}\tilde{S}_2)^2\right)\mathrm{d}r
				+ \int_0^t \int_{1}^{+\infty}r^2\left(  (\partial_{t}\tilde{S}_1)^2+ (\partial_{t}\tilde{S}_2)^2\right)\mathrm{d}r\mathrm{d}t\\
				&+\int_{0}^{t}\frac{\tau \epsilon}{8 \mu}(\partial_t\tilde{S}_1)^2(t,1)\mathrm{d}t
				\leq C\left(E(0)+ E^{\frac{1}{2}}(t) \int_{0}^{t}\mathcal{D}(s)\mathrm{~d} t\right).	
			\end{aligned}
		\end{equation}
	\end{lemma}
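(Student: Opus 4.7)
The plan is to differentiate system \eqref{3.4} in $t$ and replicate the energy argument of Lemma \ref{lem1} on the time-differentiated variables $(\partial_t\rho,\partial_t v,\partial_t\tilde{S}_1,\partial_t\tilde{S}_2)$. The structural point that makes this work is that the identity $v(t,1)\equiv 0$ differentiates to $\partial_t v(t,1)\equiv 0$ (consistent at $t=0$ thanks to the compatibility condition \eqref{compatibility}), so all boundary contributions involving $\partial_t v$ produced by integration by parts vanish, exactly as for $v$ in Lemma \ref{lem1}.

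Concretely, I apply $\partial_t$ to each of the four equations in \eqref{3.4}, then multiply the differentiated mass equation by $r^2 P'(\rho)\partial_t\rho/\rho$, the differentiated momentum equation by $r^2\partial_t v$, and the differentiated first and second relaxation equations by $r^2\partial_t\tilde{S}_1/(3\mu)$ and $r^2\partial_t\tilde{S}_2/\lambda$, respectively. Summing and integrating over $[1,+\infty)$ produces, after integration by parts, the pairwise cancellation of the principal cross terms $\partial_r(P'(\rho)\partial_t\rho)\cdot\partial_t v$ against $\partial_t\rho\cdot\partial_r\partial_t v$, and $\partial_r\partial_t\tilde{S}_i\cdot\partial_t v$ against $\partial_r\partial_t v\cdot\partial_t\tilde{S}_i$. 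What remains is the time-derivative of the natural quadratic energy for $(\partial_t\rho,\partial_t v,\sqrt{\tau}\partial_t\tilde{S}_1,\sqrt{\tau}\partial_t\tilde{S}_2)$, plus the positive dissipation $\int_1^{+\infty}r^2\bigl[(\partial_t\tilde{S}_1)^2/(3\mu)+(\partial_t\tilde{S}_2)^2/\lambda\bigr]\mathrm{d}r$, plus boundary and commutator terms.

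The $\epsilon$-convective contribution $-\tau\epsilon\rho\,\partial_r\partial_t\tilde{S}_i$ is handled by integration by parts as in Lemma \ref{lem1}: for the $\tilde{S}_1$-equation it produces $\tfrac{\tau\epsilon}{6\mu}\rho(t,1)(\partial_t\tilde{S}_1)^2(t,1)$, which after using $\rho(t,1)\ge 3/4$ from \eqref{hu3.4} yields the claimed $\tfrac{\tau\epsilon}{8\mu}(\partial_t\tilde{S}_1)^2(t,1)$ term; the analogous $\tilde{S}_2$ boundary quantity is nonnegative and simply discarded. All remaining commutators — those arising from $\partial_t(\rho v\partial_r v)$, $\partial_t(\rho(v-\epsilon)\partial_r\tilde{S}_i)$, and $\partial_t$ acting on the coefficients $P'(\rho)/\rho$ and $\rho$ — are cubic in the unknowns and bounded by placing one factor in $L^\infty$ via the weighted Sobolev embedding $\|f\|_{L^\infty([1,\infty))}\lesssim\|rf\|_{H^1}\lesssim E^{1/2}(t)$, and the other two factors in $L^2$ using $E^{1/2}$ and $\mathcal{D}^{1/2}$, for a total contribution $CE^{1/2}(t)\mathcal{D}(t)$. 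Integrating in time from $0$ to $t$ and invoking Assumption \ref{Assump1}(4) to control the initial energy by $CE(0)$ yields \eqref{hu3.13}. The main obstacle I anticipate is the accurate book-keeping of the cubic commutators — in particular the triple product $\partial_t\rho\,\partial_t v\,\partial_r v$, in which no factor is manifestly dominant — which requires transferring $r$-weights between factors via a Hardy-type manipulation so that the bound remains compatible with the $r^2$-weighted dissipation in the definition of $\mathcal{D}(t)$.
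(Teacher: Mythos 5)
Your proposal is correct and follows essentially the same route as the paper: differentiate \eqref{3.4} in $t$, use the multipliers $r^2P'(\rho)\partial_t\rho/\rho$, $r^2\partial_t v$, $\tfrac{r^2}{3\mu}\partial_t\tilde S_1$, $\tfrac{r^2}{\lambda}\partial_t\tilde S_2$, kill the boundary flux via $\partial_t v(t,1)=0$, extract $\tfrac{\tau\epsilon}{6\mu}\rho(t,1)(\partial_t\tilde S_1)^2(t,1)\ge\tfrac{\tau\epsilon}{8\mu}(\partial_t\tilde S_1)^2(t,1)$ from the $\epsilon$-convective term, discard the nonnegative $\tilde S_2$ boundary contribution, and absorb all cubic commutators into $CE^{1/2}(t)\mathcal D(t)$. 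This matches the paper's proof of Lemma \ref{lem2} in every essential step.
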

	\begin{proof}
		Taking derivative with respect to $t$ to the equations (\ref{3.4}), one obtain
		
		\begin{equation}
			\label{15}
			\begin{cases}
				\partial_{tt} \rho + \partial_{tr}(\rho v) + \dfrac{2}{r} \partial_t(\rho v) = 0, \\[6pt]
				
				\rho\partial_{tt} v + \partial_t \rho \cdot \partial_t v + \partial_t \rho \cdot v \partial_r v 
				+ \rho \cdot \partial_t( v \partial_r v) + \partial_{tr} P  = \dfrac{2}{3}\partial_{tr} \tilde{S}_1 + \dfrac{2}{r} \partial_t \tilde{S}_1 + \partial_{tr} \tilde{S}_2, \\[6pt]
				
				\tau \rho \left(\partial_{tt} \tilde{S}_1 + \partial_t ((v-\epsilon) \partial_r\tilde{S}_1) \right) 
				+ \tau \partial_t \rho \left(\partial_t \tilde{S}_1 + (v-\epsilon)\partial_r\tilde{S}_1 \right) 
				+ \partial_t \tilde{S}_1  = 2\mu\left(\partial_{tr} v - \dfrac{1}{r}\partial_t v \right), \\[6pt]
				
				\tau \rho \left(\partial_{tt} \tilde{S}_2 + \partial_t ((v-\epsilon)\partial_r\tilde{S}_2) \right) 
				+ \tau \partial_t \rho \left(\partial_t \tilde{S}_2 + (v-\epsilon)\partial_r\tilde{S}_2 \right) 
				+ \partial_t \tilde{S}_2  = \lambda \left(\partial_{tr} v + \dfrac{2}{r}\partial_t v\right).
			\end{cases}
		\end{equation}
	
		Multiplying the equations $(\ref{15})_{1, 2}$ by $r^2 \frac{P^{\prime}(\rho)}{\rho} \partial_t \rho$ and $r^2 \partial_t v$, respectively, 
		and integrating the result,  we get
		\begin{equation}\label{3.17}
			\begin{aligned}
				&\frac{\mathrm{d}}{\mathrm{d} t} \int_{1}^{+\infty}\left(\frac{r^2P^{\prime}(\rho)}{2\rho} \left(\partial_t \rho\right)^2
				+ \frac{r^2\rho}{2}(\partial_tv)^2\right) \mathrm{d} r\\
				&+\int_{1}^{+\infty}
				\left(r^2 \partial_{tr}(\rho v) \frac{\partial_t P} {\rho}
				+2 r \partial_t(\rho v) \frac{\partial_t P}{\rho}
				+r^2 \partial_{tr}P \partial_t v\right)\mathrm{d} r\\
				&
				\leq\int_{1}^{+\infty}\left(\frac{2r^2}{3} \partial_{tr} \tilde{S}_1 \partial_{t}v + 2r \partial_t \tilde{S}_1  \partial_{t}v+r^2\partial_{tr} \tilde{S}_2 \partial_{t}v\right)\mathrm{d} r
				+C E(t)^{\frac{1}{2}}\mathcal{D}(t),	
			\end{aligned}
		\end{equation}
		where
		\begin{equation}\nonumber
			\begin{aligned}
				&\int_{1}^{+\infty}(r^2  \partial_{tr}(\rho v)  \frac{\partial_t P}{\rho}
				+2 r \partial_t(\rho v)  \frac{\partial_t P}{\rho}
				+r^2 \partial_{tr}P \partial_t v)\mathrm{d} r\\
				=&\int_{1}^{+\infty}(r^2 \partial_{tr}v \partial_t P+2r \partial_{t}v\partial_tP+r^2  \partial_{tr}P  \partial_t v)\mathrm{d} r\\
				&+\int_{1}^{+\infty}(r^2  (v\partial_{tr}\rho+\partial_{r}\rho\partial_{t}v+\partial_{t}\rho\partial_{r}v )  \frac{\partial_t P}{\rho}+2 r v \partial_t\rho  \frac{\partial_t P}{\rho})\mathrm{d} r\\
				\geq&\int_{1}^{+\infty}\partial_r(r^2  \partial_{t}v  \partial_t P)\mathrm{d} r-C E(t)^{\frac{1}{2}}\mathcal{D}(t).
			\end{aligned}
		\end{equation}
		Thus, (\ref{3.17}) can be simplified to
		\begin{equation}\label{3.18}
			\begin{aligned}
				\frac{\mathrm{d}}{\mathrm{d} t}& \int_{1}^{+\infty}\left(\frac{r^2P^{\prime}(\rho)}{2\rho} \left(\partial_t \rho\right)^2
				+ \frac{r^2\rho}{2}(\partial_tv)^2\right) \mathrm{d} r
				+\int_{1}^{+\infty}
				\partial_r(r^2  \partial_{t}v  \partial_t P)\mathrm{d} r\\
				&
				\leq\int_{1}^{+\infty}\left(\frac{2r^2}{3} \partial_{tr} \tilde{S}_1 \partial_{t}v + 2r \partial_t \tilde{S}_1  \partial_{t}v+r^2\partial_{tr} \tilde{S}_2 \partial_{t}v\right)\mathrm{d} r
				+C E(t)^{\frac{1}{2}}\mathcal{D}(t).
			\end{aligned}
		\end{equation}
		
		Multiplying the equation $(\ref{15})_3$ by $\dfrac{r^2}{3 \mu} \partial_t \tilde{S}_1$ and integrating over $[1,+\infty)$ with respect to $r$, we have
		\begin{equation}\label{18}
			\begin{aligned}
				\frac{\mathrm{d}}{\mathrm{d} t}& \int_{1}^{+\infty}\left[\frac{\tau}{3 \mu} \cdot \frac{r^2 \rho}{2}\left(\partial_t \tilde{S}_1\right)^2\right]\mathrm{d} r +\int_{1}^{+\infty}(\frac{r^2 \tau}{3 \mu} \partial_t (\rho v) \partial_t \tilde{S}_1 \partial_r \tilde{S}_1
				-\frac{\tau \epsilon r^2}{3 \mu} \partial_{t}\rho\partial_t\tilde{S}_1\partial_r \tilde{S}_1)\mathrm{d} r\\
				&-\int_{1}^{+\infty}\frac{\tau  \epsilon r^2}{3 \mu}\rho(\frac{1}{2}\partial_t\tilde{S}_1^2)_r\mathrm{d} r
				+\int_{1}^{+\infty}\frac{r^2}{3 \mu}(\partial_t \tilde{S}_1)^2\mathrm{d} r
				=\int_{1}^{+\infty}(\frac{2 r^2}{3} \partial_{tr}v \partial_t \tilde{S}_1-\frac{2 r}{3} \partial_{t}v \partial_t \tilde{S}_1)\mathrm{d} r
				,
			\end{aligned}
		\end{equation}
		where
		\begin{equation}\nonumber
			\int_{1}^{+\infty}(\frac{r^2 \tau}{3 \mu} \partial_t (\rho v) \partial_t \tilde{S}_1 \partial_r \tilde{S}_1
			-\frac{\tau \epsilon r^2}{3 \mu} \partial_{t}\rho\partial_t\tilde{S}_1\partial_r \tilde{S}_1)\mathrm{d} r
			\geq -C E^{\frac{1}{2}}(t) \mathcal{D}(t),
		\end{equation}
		and
		\begin{equation}
			\begin{aligned}
				\nonumber
				-\int_{1}^{+\infty}\frac{\tau  \epsilon r^2}{3 \mu}\rho(\frac{1}{2}\partial_t\tilde{S}_1^2)_r\mathrm{d}r
				&=\frac{\tau \epsilon}{6 \mu}\rho(\partial_t\tilde{S}_1)^2(t,1)
				+\int_{1}^{+\infty}\frac{\tau  \epsilon }{3 \mu}(r^2\rho)_r\frac{1}{2}\partial_t\tilde{S}_1^2\mathrm{d}r\\
				&\geq\frac{\tau \epsilon}{6 \mu}\rho(t,1)(\partial_t\tilde{S}_1)^2(t,1) -C E^{\frac{1}{2}}(t) \mathcal{D}(t).
			\end{aligned}
		\end{equation}
		
		Multiplying the equation $(\ref{15})_4$ by $\dfrac{r^2}{\lambda} \partial_t \tilde{S}_2$ and integrating over $[1,+\infty)$ with respect to $r$, we have
		\begin{equation}
			\begin{aligned}\label{19}
				\frac{\mathrm{d}}{\mathrm{d} t}& \int_{1}^{+\infty}\left[\frac{\tau }{\lambda} \cdot \frac{r^2 \rho}{2}\left(\partial_t \tilde{S}_2\right)^2\right]\mathrm{d}r
				+\int_{1}^{+\infty}(\frac{r^2 \tau }{\lambda}   \partial_t (\rho v) \partial_t \tilde{S}_2 \partial_r \tilde{S}_2
				-\frac{\tau \epsilon r^2}{\lambda}\partial_{t}\rho\partial_t\tilde{S}_2\partial_r \tilde{S}_2)\mathrm{d}r\\
				&-\int_{1}^{+\infty}\frac{\tau \epsilon r^2}{\lambda}\rho(\frac{1}{2}\partial_t\tilde{S}_2^2)_r\mathrm{d}r
				+\int_{1}^{+\infty}\frac{r^2}{\lambda}(\partial_t \tilde{S}_2)^2\mathrm{d}r
				=\int_{1}^{+\infty}(r^2 \partial_{tr}v \partial_t \tilde{S}_2+2r \partial_{t}v \partial_t \tilde{S}_2)\mathrm{d}r
				,
			\end{aligned}
		\end{equation}
		where 
		\begin{equation}\nonumber
			\int_{1}^{+\infty}(\frac{r^2 \tau }{\lambda}   \partial_t (\rho v) \partial_t \tilde{S}_2 \partial_r \tilde{S}_2
			-\frac{\tau \epsilon r^2}{\lambda}\partial_{t}\rho\partial_t\tilde{S}_2\partial_r \tilde{S}_1)\mathrm{d}r
			\geq -C E^{\frac{1}{2}}(t) \mathcal{D}(t),
		\end{equation}
		and
		\begin{equation}
			\begin{aligned}
				\nonumber
				-\int_{1}^{+\infty}\frac{\tau \epsilon r^2}{\lambda}\rho(\frac{1}{2}\partial_t\tilde{S}_2^2)_r\mathrm{d}r
				&=\frac{\tau \epsilon}{2\lambda}\rho(t,1)(\partial_t\tilde{S}_2)^2(t,1)
				+\int_{1}^{+\infty}\frac{\tau \epsilon }{\lambda}(r^2\rho)_r\frac{1}{2}\partial_t\tilde{S}_2^2\mathrm{d}r
				\geq-C E^{\frac{1}{2}}(t) \mathcal{D}(t).
			\end{aligned}
		\end{equation}
		
		Combining the equations (\ref{3.18})--(\ref{19}) and the above estimates, we derive
		\begin{equation}\label{order-1-t}
			\begin{aligned}
				&\frac{\mathrm{d}}{\mathrm{d} t} \int_{1}^{+\infty}\left(\frac{r^2 P^{\prime}(\rho)}{2\rho} (\partial_{t}\rho)^2+\frac{r^2\rho}{2}(\partial_{t}v)^2
				+\frac{\tau  r^2 \rho}{6 \mu} (\partial_{t}\tilde{S}_1)^2 +\frac{\tau r^2 \rho}{2\lambda} (\partial_{t}\tilde{S}_2)^2\right) \mathrm{d}r\\
				&+\int_{1}^{+\infty}\left(\frac{r^2}{3 \mu}(\partial_t \tilde{S}_1)^2+\frac{r^2}{\lambda}(\partial_t \tilde{S}_2)^2\right)\mathrm{d}r
				+\frac{\tau \epsilon}{8 \mu}(\partial_t\tilde{S}_1)^2(t,1)\\
				&\leq \int_{1}^{+\infty}\left(\frac{2}{3} \partial_r(r^2 \partial_t v \partial_t \tilde{S}_1) 
				+\partial_r(r^2 \partial_t v \partial_t \tilde{S}_2)
				-\partial_r(r^2  \partial_{t}v  \partial_t P)\right)\mathrm{d}r
				+C E^{\frac{1}{2}}(t) \mathcal{D}(t)
				\leq C E^{\frac{1}{2}}(t) \mathcal{D}(t)
			\end{aligned}
		\end{equation}
		where we used the compatibility condition (\ref{compatibility}) and \eqref{hu3.4}. 
		
		By integrating the above inequality over $(0, t)$, and noticing that
		\begin{align*}
			\int_{1}^{+\infty}\left(\frac{r^2 P^{\prime}(\rho)}{2\rho} (\partial_{t}\rho)^2+\frac{r^2\rho}{2}(\partial_{t}v)^2
			+\frac{\tau  r^2 \rho}{6 \mu} (\partial_{t}\tilde{S}_1)^2 +\frac{\tau r^2 \rho}{2\lambda} (\partial_{t}\tilde{S}_2)^2\right) (t=0,r)\mathrm{d}r\\
			\leq C \|V_0^1\|_{L^2}^2\leq C E(0),
		\end{align*}
		\eqref{hu3.13} follows immediately and this finish the proof of  Lemma $\ref{lem2}$.
	\end{proof}
	
	\begin{lemma}\label{lem2-1}
		There exists some constant C such that for any $0\leq t \leq T$
		\begin{equation}
			\begin{aligned}
				&\int_{1}^{+\infty}  r^2\left((\partial_{r}\rho)^2+(\partial_{r}v)^2
				+\tau  (\partial_{r}\tilde{S}_1)^2+\tau (\partial_{r}\tilde{S}_2)^2\right)\mathrm{d}r
				+ \int_0^t \int_{1}^{+\infty}r^2\left(  (\partial_{r}\tilde{S}_1)^2+ (\partial_{r}\tilde{S}_2)^2\right)\mathrm{d}r\mathrm{d}t\\
				&+\int_{0}^{t}\frac{\tau\epsilon}{16\mu}(\partial_{r}\tilde{S}_1)^2(t,1)\mathrm{d}t
				\leq C\left(E(0)+ E^{\frac{1}{2}}(t) \int_{0}^{t}\mathcal{D}(s)\mathrm{~d} t\right)
				.	
			\end{aligned}
		\end{equation}
	\end{lemma}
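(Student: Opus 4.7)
The plan is to parallel the proof of Lemma~\ref{lem2} with $\partial_r$ in place of $\partial_t$: apply $\partial_r$ to each equation of \eqref{3.4} and test the four resulting identities against $\tfrac{r^2P'(\rho)}{\rho}\partial_r\rho$, $r^2\partial_r v$, $\tfrac{r^2}{3\mu}\partial_r\tilde{S}_1$ and $\tfrac{r^2}{\lambda}\partial_r\tilde{S}_2$, respectively. After integration, the principal parts produce $\tfrac{d}{dt}\int_1^{\infty}r^2\bigl[\tfrac{P'}{2\rho}(\partial_r\rho)^2+\tfrac{\rho}{2}(\partial_r v)^2+\tfrac{\tau\rho}{6\mu}(\partial_r\tilde{S}_1)^2+\tfrac{\tau\rho}{2\lambda}(\partial_r\tilde{S}_2)^2\bigr]\,dr$ together with the Maxwell dissipation $\int r^2\bigl[\tfrac{1}{3\mu}(\partial_r\tilde{S}_1)^2+\tfrac{1}{\lambda}(\partial_r\tilde{S}_2)^2\bigr]\,dr$. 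The symmetric-hyperbolic cancellations exploited in Lemma~\ref{lem2} carry over: the stiff cross terms $\int r^2P'\partial_r\rho\,\partial_{rr}v\,dr$ from the $\partial_r$-mass balance exactly match those produced by integration by parts of $\int r^2\partial_r v\,\partial_{rr}P\,dr$ on the momentum side, and likewise the $\partial_r v\,\partial_{rr}\tilde{S}_i$ cross terms cancel against the corresponding terms on the constitutive side. Consequently, only boundary contributions at $r=1$ and cubic interior remainders survive.

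Two boundary contributions appear at $r=1$. Integration by parts of the transport term $\int\tfrac{\tau r^2\rho(v-\epsilon)}{3\mu}\partial_{rr}\tilde{S}_1\,\partial_r\tilde{S}_1\,dr$, using $v(t,1)=0$, produces the favorable nonnegative trace $\tfrac{\tau\epsilon\rho(t,1)}{6\mu}(\partial_r\tilde{S}_1)^2(t,1)$ (and analogously for $\tilde{S}_2$); reserving a small fraction for the absorption below, what remains supplies the $\int_0^t\tfrac{\tau\epsilon}{16\mu}(\partial_r\tilde{S}_1)^2(t,1)\,dt$ term in the statement. The genuinely new difficulty is the residual trace from the $\partial_{rr}P$, $\partial_{rr}\tilde{S}_1$ and $\partial_{rr}\tilde{S}_2$ integrations by parts on the momentum side, which equals $-\partial_r v(t,1)\bigl[\partial_r P-\tfrac{2}{3}\partial_r\tilde{S}_1-\partial_r\tilde{S}_2\bigr](t,1)$; evaluating \eqref{3.4}$_2$ at $r=1$ and invoking the compatibility $\partial_t v(t,1)=0$ reduces it to $-2\partial_r v(t,1)\tilde{S}_1(t,1)$. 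Since $\partial_r v$ does not vanish at $r=1$, this is the main obstacle of the lemma.

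To control this trace, the plan is to substitute the constitutive relation at $r=1$, namely $2\mu\partial_r v(t,1)=\tilde{S}_1(t,1)+\tau\rho(\partial_t\tilde{S}_1-\epsilon\partial_r\tilde{S}_1)(t,1)$, and treat the three resulting pieces separately. The piece $\tfrac{\tau\rho\epsilon}{\mu}\tilde{S}_1\partial_r\tilde{S}_1(t,1)$ is absorbed by Young's inequality into the reserved fraction of the favorable trace $\tfrac{\tau\epsilon}{6\mu}(\partial_r\tilde{S}_1)^2(t,1)$. The piece $\tfrac{\tau\rho}{\mu}\tilde{S}_1\partial_t\tilde{S}_1(t,1)=\tfrac{\tau\rho}{2\mu}\partial_t(\tilde{S}_1^2)(t,1)$ is integrated by parts in $t$: its $t$-boundary value at time $t$ is dominated via Sobolev by $\tau\|\sqrt{\tau}\,r\tilde{S}_1\|_{H^1}^2\lesssim E(t)$, that at $t=0$ by $E(0)$, and the commutator involving $\tau\partial_t\rho$ by $CE^{1/2}(t)\int_0^t\mathcal{D}(s)\,ds$. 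The leading piece $\tfrac{1}{\mu}\tilde{S}_1^2(t,1)$ is handled by the one-dimensional trace inequality $|f(1)|^2\le 2\|rf\|_{L^2}\|r\partial_r f\|_{L^2}$ applied to $f=\tilde{S}_1$, followed by a weighted Young: the $\|r\partial_r\tilde{S}_1\|_{L^2}^2$ factor is absorbed into the interior Maxwell dissipation on the left (choosing the Young parameter smaller than $1/3$), while the $\|r\tilde{S}_1\|_{L^2}^2$ factor is dominated, via Lemma~\ref{lem1}, by $CE(0)+CE^{1/2}(t)\int_0^t\mathcal{D}(s)\,ds$.

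The remaining interior terms are cubic in the perturbation and bounded by $CE^{1/2}(t)\mathcal{D}(t)$ using Sobolev embeddings on $[1,\infty)$ together with $\rho\in[3/4,5/4]$. Integrating in $t$ and invoking $\|rV_0^0\|_{H^1}^2\le CE(0)$ for the initial data closes the estimate.
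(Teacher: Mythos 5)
Your overall route coincides with the paper's: differentiate \eqref{3.4} in $r$, test with the weighted multipliers, extract the favorable trace $\tfrac{\tau\epsilon}{6\mu}\rho(\partial_r\tilde S_1)^2(t,1)$ from the $\epsilon$-transport term, reduce the bad trace via the momentum equation and compatibility to $2(\tilde S_1\partial_r v)(t,1)$, and then substitute $(\ref{3.4})_3$ at $r=1$ and treat the three resulting pieces exactly as the paper does (Young into the reserved trace, integration by parts in $t$, and the trace inequality plus absorption into the Maxwell dissipation with Lemma \ref{lem1}). That part of your plan is sound and is the genuinely delicate part of the lemma.

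There is, however, a real gap in your interior accounting. You claim that after the symmetric-hyperbolic cancellations ``only boundary contributions at $r=1$ and cubic interior remainders survive.'' This is false in the radial setting: $\partial_r$ does not commute with the geometric source terms $\tfrac{2}{r}\rho v$, $\tfrac{2}{r}\tilde S_1$, $\tfrac{2}{r}v$, so the differentiated system $(\ref{22})$ carries the extra terms $-\tfrac{2}{r^2}\rho v$, $+\tfrac{2}{r^2}\tilde S_1$, $\mp\tfrac{2}{r^2}v$, and after testing these leave \emph{quadratic} (not cubic) uncancelled remainders such as $\int(-2v\,\partial_r P+2\tilde S_1\partial_r v-\tfrac{4v}{r}\tilde S_1-2v\,\partial_r\tilde S_2)\,\mathrm{d}r$. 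These carry no $r^2$ weight, no factor of $E^{1/2}$, and are not controlled by $\mathcal D$ (note $\mathcal D$ does not contain $\|v\|_{L^2}^2$ at this stage), so they cannot be dumped into $CE^{1/2}(t)\mathcal D(t)$ and the estimate does not close as written. The paper removes them by adding two auxiliary multiplier identities — $(\ref{3.4})_2\times 2v$ and $(\ref{3.4})_3\times\tfrac{2}{\mu}\tilde S_1$ — which enlarge the energy by $\rho v^2$ and $\tfrac{\tau\rho}{\mu}\tilde S_1^2$ and the dissipation by $\tfrac{2}{\mu}\tilde S_1^2$, and turn the leftover into the exact divergence $-\partial_r(2v\tilde S_1)$, which vanishes at the boundary. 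Some device of this kind is indispensable and is absent from your proposal. A second, smaller point: bounding the $t$-endpoint term $\tfrac{\tau\rho}{2\mu}\tilde S_1^2(t,1)$ merely by $CE(t)$ is not acceptable, since the target inequality has no $E(t)$ on the right; you must split it by Young as you do for $\tfrac{1}{\mu}\tilde S_1^2(t,1)$, absorbing $\eta\tau\|\partial_r\tilde S_1\|_{L^2}^2$ into the left-hand energy and controlling $\tau\|\tilde S_1\|_{L^2}^2$ by Lemma \ref{lem1}.
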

	\begin{proof}
		Taking derivatives with respect to $r$ to the equations (\ref{3.4}), we get
		\begin{equation}\label{22}
			\begin{cases}
				\partial_{tr} \rho+\partial_{rr}(\rho v)+\dfrac{2}{r} \partial_r(\rho v)-\dfrac{2}{r^2} (\rho v)=0, \\
				\rho\partial_{tr} v+\rho v \partial_{rr} v+\partial_{rr} P-\dfrac{2}{3}\partial_{rr}  \tilde{S}_1-\dfrac{2}{r} \partial_r \tilde{S}_1+\dfrac{2}{r^2}\tilde{S}_1-\partial_{rr} \tilde{S}_2=f_2,\\
				\tau   \rho \partial_{tr} \tilde{S}_1+\tau   \rho v \partial_{rr}\tilde{S}_1 
				-2\mu\left(\partial_{rr} v- \dfrac{1}{r}\partial_r v
				+\dfrac{1}{r^2} v \right)+\partial_r \tilde{S}_1-\tau   \epsilon\rho\partial_{rr}\tilde{S}_1= f_3, \\
				\tau  \rho \partial_{tr} \tilde{S}_2+\tau  \rho v \partial_{rr}\tilde{S}_2
				-\lambda \left(\partial_{rr} v+\dfrac{2}{r}\partial_r v-\dfrac{2}{r^2} v\right)+\partial_r \tilde{S}_2-\tau  \epsilon\rho\partial_{rr}\tilde{S}_2=f_4,
			\end{cases}
		\end{equation}
		where
		\begin{equation}
			\begin{aligned}\nonumber
				&f_2:=-\partial_r \rho \partial_t v-\partial_r \rho v \partial_r v-\rho (\partial_{r}v)^2,\\
				&f_3:=-\tau   \rho\partial_r v\partial_r \tilde{S}_1
				-\tau   \partial_r \rho \left(\partial_t \tilde{S}_1+v\partial_r\tilde{S}_1 \right)+\tau   \epsilon\partial_{r}\rho\partial_r\tilde{S}_1,\\
				&f_4:=-\tau  \rho\partial_r v\partial_r \tilde{S}_2
				-\tau  \partial_r \rho \left(\partial_t \tilde{S}_2+v\partial_r\tilde{S}_2 \right)+\tau  \epsilon\partial_{r}\rho\partial_r\tilde{S}_2.
			\end{aligned}
		\end{equation}
		
		Multiplying the equation $(\ref{22})_{1,2}$ by $r^2 \frac{P^{\prime}(\rho)}{\rho} \partial_r \rho$ and $r^2 \partial_{r}v$, respectively,  and integrating the results, we can obtain
		\begin{equation}\label{27}
			\begin{aligned}
				&\frac{\mathrm{d}}{\mathrm{d} t}\int_{1}^{+\infty}(\dfrac{P^{\prime}(\rho)r^2}{2\rho}\left(\partial_r \rho\right)^2+\dfrac{r^2\rho}{2} (\partial_{r}v)^2) \mathrm{d}r
				+\int_{1}^{+\infty}\big[
				r^2\partial_{rr}\left(\rho v\right)\frac{ \partial_r P}{\rho}
				+2r\partial_{r}\left(\rho v\right)\frac{ \partial_r P}{\rho}-2v\partial_{r}P\big]\mathrm{d}r\\
				&+\int_{1}^{+\infty}\left(r^2 \partial_{rr}P \partial_{r}v-\dfrac{2 r^2}{3} \partial_{rr}\tilde{S}_1 \partial_{r}v-2r \partial_{r}\tilde{S}_1 \partial_{r}v+2\tilde{S}_1\partial_{r}v-r^2 \partial_{rr}\tilde{S}_2\partial_{r}v\right)\mathrm{d}r\\
				&\le\int_{1}^{+\infty}r^2f_2 \partial_{r}v\mathrm{d}r+ C E(t)^{\frac{1}{2}}\mathcal{D}(t),
			\end{aligned}
		\end{equation}
		for which we have 
		\begin{equation}\nonumber
			\begin{aligned}
				&\int_{1}^{+\infty}(r^2  \partial_{rr}(\rho v)  \frac{\partial_r P}{\rho}
				+2 r \partial_r(\rho v)  \frac{\partial_r P}{\rho}
				+r^2 \partial_{rr}P \partial_r v)\mathrm{d}r\\
				=&\int_{1}^{+\infty} (r^2\partial_{rr} v\partial_r P+2r\partial_{r} v\partial_r P+r^2\partial_{rr} P\partial_r v)	\mathrm{d}r
				+\int_{1}^{+\infty}(r^2(2\partial_r\rho\partial_rv
				+v\partial_{rr}\rho)\frac{\partial_r P}{\rho}+2r v \partial_r \rho\partial_r P)\mathrm{d}r\\
				\geq&\int_{1}^{+\infty} \partial_{r}(r^2\partial_{r} v  \partial_r P)\mathrm{d}r
				-C E(t)^{\frac{1}{2}}\mathcal{D}(t),
			\end{aligned}
		\end{equation}
		and
		\begin{equation}\nonumber
			\int_{1}^{+\infty}r^2f_2 \partial_{r}v\mathrm{d}r\leq C E(t)^{\frac{1}{2}}\mathcal{D}(t).
		\end{equation}
		Now, in order to handle the term $-2v\partial_{r}P$ in (\ref{27}), we multiply $(\ref{3.4})_2$ by $2v$ and integrate over $[1,+\infty)$ to get
		\begin{equation}\label{3.29}
			\int_{1}^{+\infty}\left(\rho (v^2)_t+2\rho v^2\partial_{r}v+2v\partial_{r}P-\dfrac{4}{3}v\partial_r \tilde{S}_1-\dfrac{4 v }{r}\tilde{S}_1-2v\partial_{r}\tilde{S}_2\right)\mathrm{d}r=0.
		\end{equation}
		
		Thus, adding equation (\ref{3.29}) to (\ref{27}),
		one can get
		\begin{equation}\label{3.30}
			\begin{aligned}
				&\frac{\mathrm{d}}{\mathrm{d} t}\int_{1}^{+\infty}\left[\dfrac{P^{\prime}(\rho)r^2}{2\rho}\left(\partial_r \rho\right)^2+\dfrac{r^2\rho}{2} (\partial_{r}v)^2+\rho v^2\right]\mathrm{d}r
				+ \int_{1}^{+\infty}\partial_{r}(r^2\partial_{r} v  \partial_r P)	\mathrm{d}r\\
				&
				+\int_{1}^{+\infty}\left(-\dfrac{2 r^2}{3} \partial_{rr}\tilde{S}_1 \partial_{r}v-2r \partial_{r}\tilde{S}_1 \partial_{r}v+2\tilde{S}_1\partial_{r}v-r^2 \partial_{rr}\tilde{S}_2\partial_{r}v\right)\mathrm{d}r\\
				&+\int_{1}^{+\infty}\left(-\dfrac{4}{3}v\partial_r \tilde{S}_1-\dfrac{4 v }{r}\tilde{S}_1- 2v\partial_{r}\tilde{S}_2\right)\mathrm{d}r
				\leq C E(t)^{\frac{1}{2}}\mathcal{D}(t).
			\end{aligned}
		\end{equation}
		
		Multiplying the equation $(\ref{22})_3$ by $\dfrac{r^2}{3 \mu} \partial_r \tilde{S}_1$ and integrating over $[1,+\infty)$,  we have
		\begin{equation}\label{29}
			\begin{aligned}
				&\frac{\mathrm{d}}{\mathrm{d} t}\int_{1}^{+\infty}\left[\frac{\tau  }{3 \mu} \cdot \frac{r^2 \rho}{2}(\partial_r \tilde{S}_1)^2\right]\mathrm{d}r
				+\int_{1}^{+\infty}\frac{r^2}{3 \mu}(\partial_r \tilde{S}_1)^2\mathrm{d}r
				-\int_{1}^{+\infty}\frac{\tau  \epsilon }{3 \mu}r^2\rho(\frac{1}{2}(\partial_r \tilde{S}_1)^2)_r\mathrm{d}r\\
				&-\int_{1}^{+\infty}\bigg(\frac{2r^2}{3}\partial_{rr}v\partial_r \tilde{S}_1 -\frac{2r}{3}\partial_{r}v\partial_r \tilde{S}_1
				+\frac{2}{3}v\partial_r \tilde{S}_1\bigg)\mathrm{d}r
				=\int_{1}^{+\infty}\frac{r^2}{3\mu}f_3\partial_r \tilde{S}_1\mathrm{d}r ,
			\end{aligned}
		\end{equation}
		where 
		\begin{equation}
			\begin{aligned}
				\nonumber
				-\int_{1}^{+\infty}\frac{\tau \epsilon }{3\mu}r^2\rho(\frac{1}{2}(\partial_r\tilde{S}_1)^2)_r\mathrm{d}r
				&=\frac{\tau \epsilon }{6\mu}\rho (\partial_r\tilde{S}_1)^2(t,1)
				+\int_{1}^{+\infty}\frac{\tau \epsilon }{3\mu}(2r\rho+r^2\partial_{r}\rho)\frac{1}{2}(\partial_r\tilde{S}_1)^2\mathrm{d}r\\
				&
				\geq \frac{\tau \epsilon }{6\mu}\rho(t,1)(\partial_r\tilde{S}_1)^2(t,1)-C E^{\frac{1}{2}}(t) \mathcal{D}(t),
			\end{aligned}
		\end{equation}
		and
		\begin{equation}\nonumber
			\int_{1}^{+\infty}\frac{r^2}{3\mu}f_3\partial_r \tilde{S}_1\mathrm{d}r\leq C E^{\frac{1}{2}}(t) \mathcal{D}(t).
		\end{equation}
		Similarly,  multiplying the equation $(\ref{22})_4$ by $\dfrac{r^2}{\lambda} \partial_r \tilde{S}_2$,  we get
		\begin{equation}\label{30}
			\begin{aligned}
				&\frac{\mathrm{d}}{\mathrm{d} t}\int_{1}^{+\infty}\left[\frac{\tau }{\lambda} \cdot \frac{r^2 \rho}{2}(\partial_r \tilde{S}_2)^2\right]\mathrm{d}r
				+\int_{1}^{+\infty}\frac{r^2}{\lambda}(\partial_r \tilde{S}_2)^2\mathrm{d}r
				-\int_{1}^{+\infty}\frac{\tau \epsilon }{\lambda}r^2\rho(\frac{1}{2}(\partial_r \tilde{S}_2)^2)_r\mathrm{d}r\\
				&-\int_{1}^{+\infty}\bigg(r^2\partial_{rr}v\partial_r \tilde{S}_2+2r\partial_{r}v\partial_r \tilde{S}_2
				-2v\partial_r \tilde{S}_2\bigg)\mathrm{d}r
				=\int_{1}^{+\infty}\frac{r^2}{\lambda}f_4\partial_r \tilde{S}_2\mathrm{d}r,
			\end{aligned}
		\end{equation}
		where
		\begin{equation}
			\begin{aligned}
				\nonumber
				-\int_{1}^{+\infty}\frac{\tau \epsilon }{\lambda}r^2\rho(\frac{1}{2}(\partial_r\tilde{S}_2)^2)_r\mathrm{d}r
				&=\frac{\tau \epsilon }{2\lambda}\rho (\partial_r\tilde{S}_2)^2(t,1)
				+\int_{1}^{+\infty}\frac{\tau \epsilon }{\lambda}(2r\rho+r^2\partial_{r}\rho)\frac{1}{2}(\partial_r\tilde{S}_2)^2\mathrm{d}r\\
				&\geq-C E^{\frac{1}{2}}(t) \mathcal{D}(t),
			\end{aligned}
		\end{equation}
		and
		\begin{equation}\nonumber
			\int_{1}^{+\infty}\frac{r^2}{\lambda}f_4\partial_r \tilde{S}_2\mathrm{d}r\leq C E^{\frac{1}{2}}(t) \mathcal{D}(t).
		\end{equation}
		
		Combining the equations (\ref{3.30})--(\ref{30}) and the above estimates, we derive that
		\begin{equation}\label{3.33}
			\begin{aligned}
				&\frac{\mathrm{d}}{\mathrm{d} t}\int_{1}^{+\infty}(\dfrac{P^{\prime}(\rho)r^2}{2\rho}\left(\partial_r \rho\right)^2+\dfrac{r^2\rho}{2} (\partial_{r}v)^2+\rho v^2
				+ \frac{\tau r^2\rho}{6\mu}(\partial_r \tilde{S}_1)^2
				+ \frac{\tau r^2\rho}{2\lambda}(\partial_r \tilde{S}_2)^2) \mathrm{d}r\\
				&
				+ \int_{1}^{+\infty}(\partial_{r}(r^2\partial_{r} v  \partial_r P)	-\dfrac{2}{3}\partial_{r}(r^2\partial_{r}v\partial_{r}\tilde{S}_1)-\partial_{r}(r^2\partial_{r}v\partial_{r}\tilde{S}_2))\mathrm{d}r\\
				&+\int_{1}^{+\infty}(\frac{r^2}{3\mu}(\partial_r \tilde{S}_1)^2+\frac{r^2}{\lambda}(\partial_r \tilde{S}_2)^2)\mathrm{d}r
				+\frac{\tau\epsilon}{6\mu}\rho(t,1)(\partial_{r}\tilde{S}_1)^2(t,1)
				\\
				&+\int_{1}^{+\infty}\left(2\tilde{S}_1\partial_{r}v-2v\partial_r \tilde{S}_1-\dfrac{4 v }{r}\tilde{S}_1\right)\mathrm{d}r\\
				&\leq C E(t)^{\frac{1}{2}}\mathcal{D}(t).
			\end{aligned}
		\end{equation}
		To address the last term on the left-hand side of equation (\ref{3.33}), we
		multiply $(\ref{3.4})_3$ by $\frac{2}{\mu}\tilde{S}_1$ and add the result to equation (\ref{3.33}) to get
		\begin{equation}\label{3.34}
			\begin{aligned}
				&\frac{\mathrm{d}}{\mathrm{d} t}\int_{1}^{+\infty}(\dfrac{P^{\prime}(\rho)r^2}{2\rho}\left(\partial_r \rho\right)^2+\dfrac{r^2\rho}{2} (\partial_{r}v)^2+\rho v^2
				+ \frac{\tau r^2\rho}{6\mu}(\partial_r \tilde{S}_1)^2
				+ \frac{\tau r^2\rho}{2\lambda}(\partial_r \tilde{S}_2)^2) \mathrm{d}r\\
				&+\int_{1}^{+\infty}(\frac{r^2}{3\mu}(\partial_r \tilde{S}_1)^2+\frac{r^2}{\lambda}(\partial_r \tilde{S}_2)^2)\mathrm{d}r
				+\frac{\tau\epsilon}{6\mu}\rho(t,1)(\partial_{r}\tilde{S}_1)^2 (t,1)
				\\
				&
				+ \int_{1}^{+\infty}\left(\partial_{r}(r^2\partial_{r} v  \partial_r P)	-\dfrac{2}{3}\partial_{r}(r^2\partial_{r}v\partial_{r}\tilde{S}_1)-\partial_{r}(r^2\partial_{r}v\partial_{r}\tilde{S}_2)\right)\mathrm{d}r\\
				&+\int_{1}^{+\infty}\left(2\tilde{S}_1\partial_{r}v-2v\partial_r \tilde{S}_1-\dfrac{4 v }{r}\tilde{S}_1\right)\mathrm{d}r\\
				&+\int_{1}^{+\infty}\left[\left(\tau   \rho \left(\partial_t \tilde{S}_1+(v-\epsilon)\partial_r\tilde{S}_1 \right) +\tilde{S}_1-2\mu\left(\partial_r v- \dfrac{v}{r}\right)\right) \frac{2}{\mu}\tilde{S}_1\right] \mathrm{d}r
				\leq C E(t)^{\frac{1}{2}}\mathcal{D}(t),
			\end{aligned}
		\end{equation}
		where
		\begin{equation}\nonumber
			-\int_{1}^{+\infty}\frac{2\tau\epsilon}{\mu}\rho\tilde{S}_1
			\partial_r\tilde{S}_1\mathrm{d}r
			=\frac{\tau\epsilon}{\mu}\rho(t,1)\tilde{S}_1^2(t,1)+\int_{1}^{+\infty}\frac{\tau\epsilon}{\mu}\partial_r\rho\tilde{S}_1^2
			\mathrm{d}r
			\geq 
			-C E(t)^{\frac{1}{2}}\mathcal{D}(t).
		\end{equation}
		Integrating the equation (\ref{3.34}) over $(0,t)$ and using \eqref{hu3.4}, we have
		\begin{equation}\label{34}
			\begin{aligned}
				&\int_{1}^{+\infty}\bigg(\dfrac{r^2 P^{\prime}(\rho)}{2\rho} \left(\partial_r \rho\right)^2 +\dfrac{1}{2}\rho\left(r^2 (\partial_rv)^2+2 v^2\right)+\dfrac{\tau  }{4 \mu} \rho \left[\dfrac{2}{3} r^2(\partial_r\tilde{S}_1)^2+4 \tilde{S}_1^2\right]
				+\dfrac{\tau }{2\lambda}\rho r^2 (\partial_r \tilde{S}_2)^2\bigg)\mathrm{d}r\\
				&+\int_{0}^{t}\int_{1}^{+\infty}\bigg(\dfrac{r^2}{3\mu} \left(\partial_r\tilde{S}_1\right)^2+\dfrac{2}{\mu}\tilde{S}_1^2
				+\dfrac{r^2}{\lambda}\left(\partial_r\tilde{S}_2\right)^2
				\bigg)\mathrm{d}r\mathrm{d}t
				+\int_{0}^{t}\frac{3\tau\epsilon}{8\mu}(\partial_{r}\tilde{S}_1)^2 (t,1)\mathrm{d}t\\
				&\leq \int_{0}^{t} r^2\partial_{r}v\left(\partial_{r}P-\dfrac{2}{3}\partial_r\tilde{S}_1-\partial_r\tilde{S}_2\right)(t,1)\mathrm{d}t
				-\int_{0}^{t}2v\tilde{S}_1(t,1)\mathrm{d}t
				+C\left(E(0)+ E^{\frac{1}{2}}(t)\int_{0}^{t} \mathcal{D}(s)\mathrm{d}s\right)
				.
			\end{aligned}
		\end{equation}
		Using the boundary condition (\ref{1.6}), we know $-\int_{0}^{t}2v\tilde{S}_1(t,1)\mathrm{d}t=0$. For the first term on the right hand side of equation \eqref{34}, we have
		\begin{equation}
			\begin{aligned}
				\left[r^2\partial_r v \left( \partial_r P - \dfrac{2}{3}\partial_r \tilde{S}_1 - \partial_r \tilde{S}_2 \right)\right](t,1)
				&= r^2\partial_r v (\partial_r P - \dfrac{2}{3}\partial_r \tilde{S}_1 - \dfrac{2}{r}\tilde{S}_1 - \partial_r \tilde{S}_2 )(t,1) + (2r\tilde{S}_1 \partial_r v) (t,1) \\
				&= -[r^2\partial_r v\left( \rho\partial_t v + \rho v\partial_r v \right)](t,1)+ (2r\tilde{S}_1 \partial_r v) (t,1) \\
				&=  2(\tilde{S}_1 \partial_r v) (t,1),
			\end{aligned}
		\end{equation}
		where we used the momentum equation $(\ref{3.4})_2$ and boundary condition (\ref{1.6}).
		By using the equation $(\ref{3.4})_3$ and $\epsilon-Young$ inequality, we have 
		\begin{equation}\nonumber
			\begin{aligned}
				\int_{0}^{t}2(\tilde{S}_1 \partial_r v )(t,1) \mathrm{d}t
				=&\int_{0}^{t}\left[2\tilde{S}_1\left(\dfrac{\tau  \rho}{2\mu}(\partial_t\tilde{S}_1+(v-\epsilon)\partial_r\tilde{S}_1)+\dfrac{1}{2\mu}\tilde{S}_1+\dfrac{v}{r}\right)\right](t,1)\mathrm{d}t\\
				=&\int_{0}^{t}\left[\dfrac{\tau \rho}{\mu}(\dfrac{1}{2}\tilde{S}_1^2)_t(t,1)
				-\dfrac{\epsilon\tau }{\mu} \rho \tilde{S}_1\partial_r\tilde{S}_1(t,1)
				+\dfrac{1}{\mu}\tilde{S}_1^2(t,1)\right]\mathrm{d}t\\
				\leq& \dfrac{\tau}{2\mu} \rho\tilde{S}_1^2(t,1)\big|_0^t
				-\int_{0}^{t}\frac{\tau}{2\mu}\partial_{t}\rho\tilde{S}_1^2(t,1)\mathrm{d}t
				+\int_{0}^{t}\dfrac{1}{\mu}\tilde{S}_1^2(t,1)\mathrm{d}t\\
				&+\int_{0}^{t}\dfrac{\epsilon\tau }{\mu}\rho(t,1)\left(\eta(\partial_{r}\tilde{S}_1)^2(t,1)+C(\eta)\tilde{S}_1^2(t,1)\right)\mathrm{d}t
			\end{aligned}
		\end{equation}
		where
		$$
		\begin{aligned}
			-\int_{0}^{t}\frac{\tau}{2\mu}\partial_{t}\rho\tilde{S}_1^2(t,1)\mathrm{d}t
			&\leq
			\int_{0}^{t}\frac{\tau}{2\mu}\|\partial_{t}\rho\tilde{S}_1^2\|_{L^{\infty}}\mathrm{d}t
			\leq \int_{0}^{t}\frac{\tau}{2\mu}\|\partial_{t}\rho\|_{H^{1}}\|\tilde{S}_1\|_{H^{1}}^2\mathrm{d}t\\
			&\leq C\left(E(0)+ E^{\frac{1}{2}}(t)\int_{0}^{t} \mathcal{D}(s)\mathrm{d}s\right),
		\end{aligned}
		$$
		and
		\begin{equation}
			\begin{aligned}\nonumber
				&\int_{0}^{t}\dfrac{1}{\mu} \tilde{S}_1^2(t,1)\mathrm{d}t
				\leq \int_{0}^{t}(\dfrac{1}{\mu}\|\tilde{S}_1\|_{L^\infty}^2)\mathrm{d}t
				\leq \int_{0}^{t}\dfrac{1}{\mu}\|\partial_{r}\tilde{S}_1\|_{L^2}\|\tilde{S}_1\|_{L^2}\mathrm{d}t\\
				&\leq \int_{0}^{t}\dfrac{1}{\mu} (\eta\|\partial_{r}\tilde{S}_1\|_{L^2}^2+C(\eta)\|\tilde{S}_1\|_{L^2}^2)\mathrm{d}t
				\leq \int_{0}^{t}\dfrac{\eta}{\mu}\|\partial_{r}\tilde{S}_1\|_{L^2}^2\mathrm{d}t+C\left(E(0)+E^{\frac{1}{2}}(t) \int_{0}^t \mathcal{D}(s)\mathrm{d}s\right).
			\end{aligned}
		\end{equation}
		Here and after, $\eta$ is any positive constant to be chosen later.
		
		Combining the above results, Lemma \ref{lem1} and \eqref{hu3.4}, we derive
		\begin{equation*}
			\begin{aligned}
				&\int_{1}^{+\infty}\bigg(\dfrac{r^2 P^{\prime}(\rho)}{2\rho} \left(\partial_r \rho\right)^2 +\dfrac{\rho}{2}\left[r^2 (\partial_rv)^2+2 v^2\right]+\dfrac{\tau\rho r^2}{6\mu}   \left(\partial_r\tilde{S}_1\right)^2
				+\dfrac{\tau\rho}{\mu}  \tilde{S}_1^2
				+\dfrac{\tau \rho r^2}{2\lambda}  \left(\partial_r \tilde{S}_2\right)^2
				\bigg)\mathrm{d}r\\
				&+\int_0^t\int_{1}^{+\infty}\left(\dfrac{r^2}{4\mu} \left(\partial_r\tilde{S}_1\right)^2+\dfrac{2}{\mu}\tilde{S}_1^2
				+\dfrac{r^2}{\lambda}\left(\partial_r\tilde{S}_2\right)^2 \right)\mathrm{d}r\mathrm{d}t
				+\int_{0}^{t}\frac{\tau\epsilon}{16\mu}(\partial_{r}\tilde{S}_1)^2 (t,1)\mathrm{d}t\\
				&\leq \dfrac{\tau}{2\mu} \rho(t,1)\tilde{S}_1^2(t,1)
				+C\left(E(0)+ E^{\frac{1}{2}}(t)\int_{0}^{t} \mathcal{D}(s)\mathrm{d}s\right)
				,
			\end{aligned}
		\end{equation*}
		and, by applying the {\it Gagliardo-Nirenberg} inequality and {\it Young} inequality, we have 
		$$\begin{aligned}
			\dfrac{\tau}{2\mu} \rho(t,1)\tilde{S}_1^2(t,1)
			&\leq\dfrac{\tau}{\mu} r^2 \|\tilde{S}_1\|_{L^\infty}^2
			\leq \dfrac{\tau}{\mu}r^2\|\partial_{r}\tilde{S}_1\|_{L^2}\|\tilde{S}_1\|_{L^2}
			\leq \dfrac{\tau}{\mu}r^2 (\eta\|\partial_{r}\tilde{S}_1\|_{L^2}^2+C(\eta)\|\tilde{S}_1\|_{L^2}^2)\\
			&
			\leq \dfrac{\tau\eta}{\mu}r^2\|\partial_{r}\tilde{S}_1\|_{L^2}^2+
			C\left(E(0)+ E^{\frac{1}{2}}(t)\int_{0}^{t} \mathcal{D}(s)\mathrm{d}s\right).
		\end{aligned}$$
		By choosing $\eta<\frac{1}{12}$, we get the Lemma $\ref{lem2-1}$ immediately.
	\end{proof}
	The next lemma show the dissipative estimates of $D(\rho, v)$.
	\begin{lemma}\label{lem3.4}
		There exists a constant $C$ such that for any $0\le t \le T$ 
		\begin{equation}
			\begin{aligned}
				\int_0^t\int_{1}^{+\infty}(v^2+r^2|D(\rho, v)|^2)\mathrm{d}r\mathrm{d}t
				\leq C\left(E(0)+E^{\frac{1}{2}}(t)\int_{0}^{t} \mathcal{D}(s)\mathrm{d}s\right).
			\end{aligned}
		\end{equation}
	\end{lemma}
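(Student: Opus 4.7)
The plan is to use the constitutive laws $(\ref{3.4})_3$ and $(\ref{3.4})_4$ to algebraically recover $\partial_r v$ and $v/r$ from the relaxation variables, and then to propagate dissipation to $\rho$ through the mass and momentum equations. Because the system is purely hyperbolic once $\tau>0$, the only genuine dissipation mechanism is the relaxation of $\tilde S_1,\tilde S_2$, and every piece of $\mathcal D$-type control for $(\rho,v)$ must be extracted by a carefully chosen multiplier identity.

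\textbf{Steps 1--2 ($\partial_r v$, $v$, $\partial_t\rho$).} Solving $(\ref{3.4})_3$ and $(\ref{3.4})_4$ for $\partial_r v-v/r$ and $\partial_r v+2v/r$ and taking the combinations $3\partial_r v=2(\partial_r v-v/r)+(\partial_r v+2v/r)$ and $3(v/r)=(\partial_r v+2v/r)-(\partial_r v-v/r)$, I get
$$3\partial_r v=\frac{\tilde S_1+\tau\rho(\partial_t\tilde S_1+(v-\epsilon)\partial_r\tilde S_1)}{\mu}+\frac{\tilde S_2+\tau\rho(\partial_t\tilde S_2+(v-\epsilon)\partial_r\tilde S_2)}{\lambda}$$
and an analogous expression for $v/r$. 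Squaring, multiplying by $r^2$ and integrating on $[1,\infty)\times[0,t]$, the right-hand side is dominated by the dissipations $\int_0^t\|r\tilde S_i\|_{L^2}^2$ and $\int_0^t\|r\partial_t\tilde S_i\|_{L^2}^2$ from Lemmas \ref{lem1}--\ref{lem2} (each bounded by $C(E(0)+E^{1/2}(t)\int_0^t\mathcal D)$), the cubic piece $\tau^2\|v\|_{L^\infty}^2\|r\partial_r\tilde S_i\|_{L^2}^2$ goes into $E^{1/2}\int_0^t\mathcal D$ via $\|v\|_{L^\infty}^2\lesssim E$ and $\tau\le 1$, and the $\tau^2\epsilon^2\|r\partial_r\tilde S_i\|_{L^2}^2$ piece is handled by Lemma \ref{lem2-1}. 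Since $\int r^2(v/r)^2\,dr=\int v^2\,dr$, this simultaneously yields both $\int_0^t\int r^2(\partial_r v)^2$ and $\int_0^t\int v^2$. The mass equation then gives $\partial_t\rho=-\rho\partial_r v-v\partial_r\rho-2\rho v/r$, so
$$\int_0^t\int r^2(\partial_t\rho)^2\lesssim\int_0^t\int r^2(\partial_r v)^2+\int_0^t\int v^2+(\sup_t\|v\|_{L^\infty}^2)\int_0^t\|r\partial_r\rho\|_{L^2}^2,$$
and the last term is absorbed into $E^{1/2}\int_0^t\mathcal D$ once Step 3 makes $\|r\partial_r\rho\|_{L^2}^2$ part of $\mathcal D$.

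\textbf{Step 3 ($\partial_r\rho$, the main obstacle).} I multiply the momentum equation $(\ref{3.4})_2$ by $r^2\partial_r\rho/\rho$ and integrate over $[1,\infty)$. The coercive term is $\int r^2(P'(\rho)/\rho)(\partial_r\rho)^2\,dr$. The awkward contribution $\int r^2\partial_tv\,\partial_r\rho\,dr$ is rewritten by integrating by parts in $r$ -- the boundary at $r=1$ vanishes thanks to $\partial_tv(t,1)=0$ (from differentiating $v(t,1)=0$ in $t$), the contribution at infinity vanishes by the decay encoded in the weighted $H^2$ norms of $E$, and the resulting $\int r^2\partial_{tr}v\cdot\rho$-type quantity is recast (by integration by parts in $t$ combined with the mass equation) into a pure time-boundary piece controlled by $E(t)+E(0)$ plus space-time cross products $\int_0^t\int r^2\partial_t\rho\,\partial_r v$ that are absorbed by Cauchy--Schwarz with Steps 1--2. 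The convective term $\rho v\partial_r v\cdot r^2\partial_r\rho/\rho$ is cubic and fits into $E^{1/2}\int_0^t\mathcal D$, while the relaxation sources $(\tfrac{2}{3}\partial_r\tilde S_1+\tfrac{2}{r}\tilde S_1+\partial_r\tilde S_2)\,r^2\partial_r\rho/\rho$ are split by $\epsilon$-Young into $\eta\int r^2(\partial_r\rho)^2$ (absorbed by the coercive term) plus $C(\eta)$ times the already-controlled dissipations of $\tilde S_1$ and $\partial_r\tilde S_i$ from Lemmas \ref{lem1},\ref{lem2-1}. Choosing $\eta$ sufficiently small and integrating in $t$ closes the bound for $\int_0^t\int r^2(\partial_r\rho)^2$.

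\textbf{Step 4.} Once Step 3 is in hand, I rewrite $(\ref{3.4})_2$ as $\rho\partial_tv=-\partial_rP+\tfrac{2}{3}\partial_r\tilde S_1+\tfrac{2}{r}\tilde S_1+\partial_r\tilde S_2-\rho v\partial_rv$, take the $L^2(r^2\,dr)$ norm and integrate in $t$; every right-hand side term is already controlled by the preceding steps, yielding $\int_0^t\int r^2(\partial_tv)^2$ in the desired form. Summing Steps 1--4 gives the claim. The genuinely delicate part is Step 3: one must align the integration-by-parts manipulations in $r$ and in $t$ on $\int r^2\partial_tv\,\partial_r\rho$ so that the emerging non-dissipative pieces are dominated by $E(t)+E(0)$ only, with no spurious factors left multiplying $\int_0^t\mathcal D$, and verify the boundary behavior at both $r=1$ (via the compatibility condition) and $r=\infty$ (via decay from the weighted $H^2$ control).
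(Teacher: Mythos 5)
Your proposal is correct and takes essentially the same route as the paper: the paper likewise isolates $\partial_r v$ and $v/r$ via the combinations $\tfrac{\lambda}{\mu}(\ref{3.4})_3+(\ref{3.4})_4$ and $-\tfrac{\lambda}{2\mu}(\ref{3.4})_3+(\ref{3.4})_4$, reads $\partial_t\rho$ off the mass equation, and extracts the remaining two dissipations from the momentum equation. The only (cosmetic) difference is the order in the last step — the paper tests $(\ref{3.4})_2$ with $r^2\partial_t v$ to get $\int_0^t\int r^2(\partial_t v)^2$ and then solves algebraically for $\partial_r\rho$, while you test with $r^2\partial_r\rho/\rho$ and recover $\partial_t v$ algebraically — and in both versions the cross term $\int r^2\partial_t v\,\partial_r\rho$ is handled the same way, with the caveat that the resulting time-boundary pieces must be re-estimated through Lemmas \ref{lem1} and \ref{lem2-1} as $C(E(0)+E^{1/2}(t)\int_0^t\mathcal D)$ rather than left as a bare $CE(t)$, which would not close the final Proposition.
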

	\begin{proof}
		By manipulating $\dfrac{\lambda}{\mu}(\ref{3.4})_{3}+(\ref{3.4})_{4}$
		and applying Lemma \ref{lem2}, we derive
		
		\begin{equation}\label{dis1}
			\int_0^t\int_{1}^{+\infty}r^2(\partial_{r}v)^2
			\mathrm{d}r\mathrm{d}t
			\leq C(E(0)+E^{\frac{1}{2}}(t)\int_{0}^{t} \mathcal{D}(s)\mathrm{d}s).
		\end{equation}
		Similarly, by computing $-\dfrac{\lambda}{2\mu}(\ref{3.4})_{3}+(\ref{3.4})_{4}$, one can get
		\begin{equation}
			\int_0^t\int_{1}^{+\infty}v^2
			\mathrm{d}r\mathrm{d}t
			\leq C(E(0)+E^{\frac{1}{2}}(t)\int_{0}^{t} \mathcal{D}(s)\mathrm{d}s).
		\end{equation}
		Using mass  equation $(\ref{3.4})_{1}$, we get immediately
		\begin{equation}\label{dis2}
			\int_0^t\int_{1}^{+\infty}r^2(\partial_{t}\rho)^2
			\mathrm{d}r\mathrm{d}t
			\leq C(E(0)+E^{\frac{1}{2}}(t)\int_{0}^{t} \mathcal{D}(s)\mathrm{d}s).
		\end{equation}
		On the other hand, multiplying the equation
		$(\ref{3.4})_{2}$ by $r^2\partial_{t}v$ yields
		\begin{equation}\nonumber
			\begin{aligned}
				&\int_0^t\int_{1}^{+\infty}r^2(\partial_{t}v)^2\mathrm{d}r\mathrm{d}t\\
				=&\int_0^t\int_{1}^{+\infty}(-r^2\partial_{r}P\partial_{t}v+\frac{2r^2}{3}\partial_r\tilde{S}_1\partial_{t}v+2r\tilde{S}_1\partial_{t}v
				+r^2\partial_r\tilde{S}_2\partial_{t}v
				-r^2\rho v \partial_{r}v\partial_{t}v)\mathrm{d}r\mathrm{d}t\\
				=&-\int_0^t\frac{\mathrm{d}}{\mathrm{d}t}\int_{1}^{+\infty}r^2\partial_{r}Pv\mathrm{d}r\mathrm{d}t
				-\int_0^t\int_{1}^{+\infty}r^2\partial_{t}P\partial_{r}v\mathrm{d}r\mathrm{d}t
				-\int_0^t\int_{1}^{+\infty}2r\partial_{t}Pv\mathrm{d}r\mathrm{d}t
				\\
				&+\int_0^t\int_{1}^{+\infty}\frac{2r^2}{3}\partial_r\tilde{S}_1\partial_{t}v\mathrm{d}r\mathrm{d}t
				+\int_0^t\int_{1}^{+\infty}2r\tilde{S}_1\partial_{t}v\mathrm{d}r\mathrm{d}t\\
				&+\int_0^t\int_{1}^{+\infty}r^2\partial_r\tilde{S}_2\partial_{t}v\mathrm{d}r\mathrm{d}t
				-\int_0^t\int_{1}^{+\infty}r^2\rho v \partial_{r}v\partial_{t}v\mathrm{d}r\mathrm{d}t\\
				\leq&\frac{1}{2}\int_0^t\int_{1}^{+\infty}r^2(\partial_{t}v)^2\mathrm{d}r\mathrm{d}t
				+C\int_{1}^{+\infty}r^2((\partial_{r}\rho)^2+v^2)\mathrm{d}r
				+C(E(0)+E^{\frac{1}{2}}(t)\int_{0}^{t} \mathcal{D}(s)\mathrm{d}s)\\
				&+C\int_0^t\int_{1}^{+\infty}\left(r^2((\partial_{t}\rho)^2
				+(\partial_{r}v)^2+(\partial_r\tilde{S}_1)^2 +\tilde{S}_1^2+(\partial_r\tilde{S}_2)^2)
				+v^2\right)
				\mathrm{d}r\mathrm{d}t
			\end{aligned}
		\end{equation}
		which implies
		\begin{equation}\label{dis3}
			\int_0^t\int_{1}^{+\infty}r^2(\partial_{t}v)^2
			\mathrm{d}r\mathrm{d}t
			\leq C(E(0)+E^{\frac{1}{2}}(t)\int_{0}^{t} \mathcal{D}(s)\mathrm{d}s).
		\end{equation}
		
		By equation $(\ref{3.4})_2$, we also get
		\begin{equation}\label{dis4}
			\int_0^t\int_{1}^{+\infty}r^2(\partial_{r}\rho)^2
			\mathrm{d}r\mathrm{d}t
			\leq C(E(0)+E^{\frac{1}{2}}(t)\int_{0}^{t} \mathcal{D}(s)\mathrm{d}s).
		\end{equation}
		
		Thus, from the equations (\ref{dis1})-(\ref{dis4}), we get the desired result.
	\end{proof}
	
	Combining Lemmas \ref{lem1}-\ref{lem3.4},  we get the following lemma.
	\begin{lemma}\label{lem3.5}
		There exists some constant C such that for any $0\le t \le T$
		\begin{equation}
			\begin{aligned}
				&\sum_{|\alpha|=0}^1\left\|rD^\alpha(\rho-1, v, \sqrt{\tau} \tilde{S}_1, \sqrt{\tau} \tilde{S}_2)\right\|_{L^{2}}^2
				+\int_{0}^{t}\bigg(\left\|r D(\rho, v)\right\|_{L^2}^2
				+\sum_{|\alpha|=0}^1\left\|r D^\alpha(\tilde{S}_1, \tilde{S}_2)\right\|_{L^{2}}^2+v^2\bigg)\mathrm{~d} t
				\\&
				\leq C \left(E(0)+E^{\frac{1}{2}}(t) \int_{0}^{t}\mathcal{D}(s)\mathrm{d}s\right).
			\end{aligned}
		\end{equation}
	\end{lemma}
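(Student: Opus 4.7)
The statement aggregates exactly the content of the four preceding lemmas, so my plan is not to develop new machinery but to assemble them carefully and check that every term on the left-hand side of Lemma~\ref{lem3.5} is already controlled.

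First I would organize the left-hand side by the type of derivative: the $|\alpha|=0$ instantaneous norm $\|r(\rho-1,v,\sqrt{\tau}\tilde{S}_1,\sqrt{\tau}\tilde{S}_2)\|_{L^2}^2$ together with the time-integrated dissipation $\int_0^t\|r(\tilde{S}_1,\tilde{S}_2)\|_{L^2}^2\,\mathrm{d}s$ is exactly the content of Lemma~\ref{lem1}. The $|\alpha|=1$ piece splits into a temporal part $\|r\partial_t(\rho-1,v,\sqrt{\tau}\tilde{S}_1,\sqrt{\tau}\tilde{S}_2)\|_{L^2}^2$ plus its integrated dissipation $\int_0^t\|r\partial_t(\tilde{S}_1,\tilde{S}_2)\|_{L^2}^2\,\mathrm{d}s$, furnished by Lemma~\ref{lem2}, and a spatial part $\|r\partial_r(\rho-1,v,\sqrt{\tau}\tilde{S}_1,\sqrt{\tau}\tilde{S}_2)\|_{L^2}^2$ plus $\int_0^t\|r\partial_r(\tilde{S}_1,\tilde{S}_2)\|_{L^2}^2\,\mathrm{d}s$, furnished by Lemma~\ref{lem2-1}. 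Finally, the two outstanding time-integrated terms $\int_0^t\|rD(\rho,v)\|_{L^2}^2\,\mathrm{d}s$ and $\int_0^t\int_1^\infty v^2\,\mathrm{d}r\,\mathrm{d}s$ are precisely what Lemma~\ref{lem3.4} supplies.

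Therefore the proof reduces to adding the four inequalities and observing that each of the non-negative boundary contributions (e.g.\ $\int_0^t\frac{\tau\epsilon}{8\mu}\tilde{S}_1^2(t,1)\,\mathrm{d}t$ and the analogous $\partial_t\tilde{S}_1$, $\partial_r\tilde{S}_1$ boundary terms) appears with a favorable sign, so can simply be discarded from the left-hand side without affecting the inequality. The right-hand sides are already of the form $C(E(0)+E^{1/2}(t)\int_0^t\mathcal{D}(s)\,\mathrm{d}s)$, so their sum has the same form with a possibly enlarged universal constant $C$ independent of $\tau$ and $\epsilon$.

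There is no real obstacle here: the only tiny thing to double-check is that $\|r(\rho-1)\|_{L^2}$ truly appears in Lemma~\ref{lem1} (it does, via the Taylor expansion of $\rho^\gamma-1-\gamma(\rho-1)$ combined with $\frac34\le\rho\le\frac54$), and that the various $\sqrt{\tau}$-weighted $\tilde{S}_i$ norms match the coefficients $\frac{\tau r^2\rho}{6\mu}$, $\frac{\tau r^2\rho}{2\lambda}$ etc.\ appearing in Lemmas~\ref{lem2} and \ref{lem2-1}; using \eqref{hu3.4} these coefficients are equivalent to $\tau r^2$ up to universal constants. Consequently the sum of the four inequalities yields Lemma~\ref{lem3.5} directly.
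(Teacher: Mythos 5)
Your proposal is correct and coincides with the paper's own argument: the paper proves Lemma \ref{lem3.5} precisely by summing Lemmas \ref{lem1}, \ref{lem2}, \ref{lem2-1} and \ref{lem3.4}, discarding the favorably signed boundary terms, and absorbing the right-hand sides into a single constant. Your bookkeeping of which lemma supplies which term (including the $\int_0^t\|rD(\rho,v)\|_{L^2}^2$ and $v^2$ contributions from Lemma \ref{lem3.4}) matches the intended proof.
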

	\subsection{Second-order estimates}
	\begin{lemma}\label{lem3}
		There exists some constant C such that for any $0\le t \le T$
		\begin{equation}
			\begin{aligned}
				&\int_{1}^{+\infty} \tau^2r^2 \left((\partial_{tt}\rho)^2+(\partial_{tt}v)^2+\tau(\partial_{tt}\tilde{S}_1)^2+\tau(\partial_{tt}\tilde{S}_2)^2\right) \mathrm{d} r \\
				&+\int_0^t \int_{1}^{+\infty}\tau^2r^2\left((\partial_{tt}\tilde{S}_1)^2+(\partial_{tt}\tilde{S}_2)^2\right) \mathrm{d} r \mathrm{d} t 
				\leq C\left(E(0)+ E^{\frac{1}{2}}(t) \int_{0}^t \mathcal{D}(s)\mathrm{d}s\right).
			\end{aligned}
		\end{equation}
	\end{lemma}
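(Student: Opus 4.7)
The plan is to mimic the proof of Lemma \ref{lem2}, but applied to the twice time-differentiated system and weighted by $\tau^2$. First I would apply $\partial_{tt}$ to each equation of \eqref{3.4}; this produces a system of the form
\begin{equation*}
\begin{cases}
\partial_{ttt}\rho+\partial_{ttr}(\rho v)+\tfrac{2}{r}\partial_{tt}(\rho v)=0,\\
\rho\partial_{ttt}v+\partial_{ttr}P-\tfrac{2}{3}\partial_{ttr}\tilde S_1-\tfrac{2}{r}\partial_{tt}\tilde S_1-\partial_{ttr}\tilde S_2=g_2,\\
\tau\rho\bigl(\partial_{ttt}\tilde S_1+(v-\epsilon)\partial_{ttr}\tilde S_1\bigr)+\partial_{tt}\tilde S_1-2\mu\bigl(\partial_{ttr}v-\tfrac{1}{r}\partial_{tt}v\bigr)=g_3,\\
\tau\rho\bigl(\partial_{ttt}\tilde S_2+(v-\epsilon)\partial_{ttr}\tilde S_2\bigr)+\partial_{tt}\tilde S_2-\lambda\bigl(\partial_{ttr}v+\tfrac{2}{r}\partial_{tt}v\bigr)=g_4,
\end{cases}
\end{equation*}
where $g_2,g_3,g_4$ collect all commutator products such as $\partial_t\rho\,\partial_{tt}v$, $\rho\,\partial_t v\,\partial_{tr}v$, $\tau\,\partial_t\rho\,\partial_{tr}\tilde S_i$, etc.

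Next I would multiply these four equations respectively by the weighted multipliers
\[
\tfrac{\tau^{2}r^{2}P'(\rho)}{\rho}\partial_{tt}\rho,\qquad \tau^{2}r^{2}\partial_{tt}v,\qquad \tfrac{\tau^{2}r^{2}}{3\mu}\partial_{tt}\tilde S_1,\qquad \tfrac{\tau^{2}r^{2}}{\lambda}\partial_{tt}\tilde S_2,
\]
integrate over $[1,+\infty)$, and sum. As in Lemma \ref{lem2}, the principal terms assemble into
\[
\tfrac{d}{dt}\!\int_{1}^{+\infty}\!\!\Bigl(\tfrac{\tau^{2}r^{2}P'(\rho)}{2\rho}(\partial_{tt}\rho)^2+\tfrac{\tau^{2}r^{2}\rho}{2}(\partial_{tt}v)^2+\tfrac{\tau^{3}r^{2}\rho}{6\mu}(\partial_{tt}\tilde S_1)^2+\tfrac{\tau^{3}r^{2}\rho}{2\lambda}(\partial_{tt}\tilde S_2)^2\Bigr)dr+\int_{1}^{+\infty}\!\!\bigl(\tfrac{\tau^{2}r^{2}}{3\mu}(\partial_{tt}\tilde S_1)^2+\tfrac{\tau^{2}r^{2}}{\lambda}(\partial_{tt}\tilde S_2)^2\bigr)dr,
\]
while the pressure/velocity cross terms and the stress/velocity cross terms combine, after integration by parts, into total $r$-derivatives $\partial_r(\tau^{2}r^{2}\partial_{tt}v\,\partial_{tt}P)$ and $\partial_r(\tau^{2}r^{2}\partial_{tt}v\,\partial_{tt}\tilde S_i)$. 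Since $v(t,1)\equiv 0$ implies $\partial_{tt}v(t,1)=0$, every such boundary contribution vanishes. The $-\tau\epsilon\rho\partial_r\tilde S_i$ convective pieces, after an integration by parts identical to the ones in Lemmas \ref{lem1}--\ref{lem2}, produce a good sign boundary term $\tfrac{\tau^{3}\epsilon}{6\mu}\rho(t,1)(\partial_{tt}\tilde S_1)^2(t,1)$ plus an interior remainder absorbed into $CE^{1/2}(t)\mathcal D(t)$.

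The main obstacle is controlling the plethora of trilinear remainders in $g_2,g_3,g_4$ and from $\partial_{tt}$ of the convective transport $(v-\epsilon)\partial_r$. For a typical worst term like $\tau^{2}\int r^{2}\partial_{tt}\rho\,(\partial_t v)\,\partial_{tr}v\,dr$ (arising from the $r$-weighted energy identity), I would peel off one factor in $L^\infty$ using the 1D weighted Sobolev embedding $\|rf\|_{L^\infty}\lesssim \|rf\|_{H^1}$ applied to $\partial_t v$ (hence bounded by $E^{1/2}(t)$), and estimate the remaining two factors in $L^{2}$ by $\|r\partial_{tt}\rho\|_{L^2}\|r\partial_{tr}v\|_{L^2}$, the product of which is bounded by $\tau^{-2}E^{1/2}(t)\mathcal D(t)$; the prefactor $\tau^{2}$ restores the correct scaling. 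The same strategy applies to the commutators generated by $\partial_{tt}\bigl(\rho(v-\epsilon)\partial_r\tilde S_i\bigr)$; crucially, wherever $\partial_{tt}\tilde S_i$ appears in a commutator it is paired with two factors already controlled by $E^{1/2}(t)$, so the overall bound $CE^{1/2}(t)\mathcal D(t)$ survives.

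Finally, I integrate the resulting differential inequality over $(0,t)$. By Assumption \ref{Assump1}(4), the initial datum satisfies
\[
\int_{1}^{+\infty}\!\!\tau^{2}r^{2}\bigl((\partial_{tt}\rho)^2+(\partial_{tt}v)^2+\tau(\partial_{tt}\tilde S_1)^2+\tau(\partial_{tt}\tilde S_2)^2\bigr)(0,r)\,dr\le C\|rV_0^{2}\|_{L^{2}}^{2}\le CE(0),
\]
and dropping the nonnegative boundary contribution at $r=1$ yields the claimed inequality. This completes the proof of Lemma \ref{lem3}.
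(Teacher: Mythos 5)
Your proposal is correct and follows essentially the same route as the paper: apply $\partial_{tt}$, test with the natural symmetrizer multipliers, combine the cross terms into total $r$-derivatives that vanish because $\partial_{tt}v(t,1)=0$, extract a good-sign boundary term from the $\epsilon$-convective pieces, and control the initial data by $\|rV_0^2\|_{L^2}^2\le E(0)$; the only cosmetic difference is that you build the $\tau^2$ weight into the multipliers while the paper first derives the unweighted inequality with a $\frac{C}{\tau}E^{1/2}(t)\mathcal{D}(t)$ remainder and then multiplies by $\tau^2$. Your $\tau$-bookkeeping for the worst trilinear commutators is stated a bit loosely, but the mechanism you invoke (the $\tau^2$ prefactor absorbing the negative powers of $\tau$ coming from $\|r\partial_{tt}\cdot\|_{L^2}\lesssim\tau^{-1}E^{1/2}$) is exactly the one the paper uses.
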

	\begin{proof}
		Taking derivative with respect to $t$ twice to equation $(\ref{3.4})$ , one get
		
		\begin{equation} \label{38}
			\begin{cases}
				\partial_{ttt} \rho+\partial_{ttr}(\rho v)+\dfrac{2}{r}\partial_{tt}(\rho v)=0, \\
				\rho\partial_{ttt} v+2\partial_{t}\rho \partial_{tt}v+ \partial_{tt}\rho \partial_{t}v+\partial_{tt}\rho (v \partial_r v)+2\partial_{t}\rho\partial_{t}(v\partial_{r}v)+\rho\partial_{tt}(v\partial_{r}v)\\
				\qquad\qquad\qquad\qquad\qquad\qquad\qquad\qquad\qquad\qquad+\partial_{ttr} P=\dfrac{2}{3}\partial_{ttr}  \tilde{S}_1+\dfrac{2}{r} \partial_{tt}\tilde{S}_1+\partial_{ttr} \tilde{S}_2,\\
				\tau \rho \left(\partial_{ttt} \tilde{S}_1+\partial_{tt} ((v-\epsilon)\partial_r\tilde{S}_1) \right)
				+2\tau \partial_t \rho \left(\partial_{tt} \tilde{S}_1
				+\partial_{t} ((v-\epsilon)\partial_r\tilde{S}_1) \right) \\
				\qquad\qquad\qquad\qquad\qquad+\tau \partial_{tt} \rho \left(\partial_t \tilde{S}_1+(v-\epsilon)\partial_r\tilde{S}_1 \right) +\partial_{tt} \tilde{S}_1=2\mu\left(\partial_{ttr} v- \dfrac{1}{r}\partial_{tt} v \right), \\
				\tau  \rho \left(\partial_{ttt} \tilde{S}_2+\partial_{tt} ((v-\epsilon)\partial_r\tilde{S}_2) \right)
				+2\tau  \partial_t \rho \left(\partial_{tt} \tilde{S}_2+\partial_{t} ((v-\epsilon)\partial_r\tilde{S}_2) \right) \\
				\qquad\qquad\qquad\qquad\qquad+\tau  \partial_{tt} \rho \left(\partial_t \tilde{S}_2+(v-\epsilon)\partial_r\tilde{S}_2 \right) +\partial_{tt} \tilde{S}_2=\lambda \left(\partial_{ttr} v+\dfrac{2}{r}\partial_{tt} v\right).
			\end{cases}
		\end{equation}
		Multiplying equations
		$(\ref{38})_1$ and $(\ref{38})_2$ 
		by $r^2\frac{{P}^{\prime}(\rho)}{\rho}\partial_{tt}\rho$ and $r^2\partial_{tt}v$, respectively, and  integrating the result, we obtain
			\begin{align}\label{3.47}
				\nonumber
				&\frac{\mathrm{d}}{\mathrm{d} t}\int_{1}^{+\infty}\bigg(\frac{r^2 P^{\prime}(\rho)}{2\rho} (\partial_{tt}\rho)^2+\frac{r^2\rho}{2}(\partial_{tt}v)^2
				\bigg)\mathrm{d}r\\ \nonumber
				&+\int_{1}^{+\infty}\bigg(-[\frac{ P^{\prime}(\rho)}{\rho}]_t\frac{r^2}{2}(\partial_{tt}\rho)^2
				+2r^2\partial_{t}\rho (\partial_{tt}v)^2
				+r^2\partial_{tt}\rho [\frac{1}{2}(\partial_{t}v)^2]_t
				+r^2\partial_{tt}\rho \partial_{tt}v (\frac{v^2}{2})_r\\ \nonumber
				&\qquad\qquad\quad+r^2\partial_{t}\rho \partial_{r}v \left[(\partial_{t}v)^2\right]_t
				+2r^2\partial_{t}\rho v \partial_{tt}v \partial_{tr}v
				+r^2\rho \partial_{r}v (\partial_{tt}v)^2+r^2\rho \left[(\partial_{t}v)^2\right]_r \partial_{tt}v\bigg)\mathrm{~~d}r\\ \nonumber
				&+\int_{1}^{+\infty}\left(r^2\partial_{ttr}(\rho v)\frac{ P^{\prime}(\rho)}{\rho}\partial_{tt}\rho
				+2r\partial_{tt}(\rho v)\frac{ P^{\prime}(\rho)}{\rho}\partial_{tt}\rho+r^2\partial_{ttr}P \partial_{tt}v\right) \mathrm{d}r\\ 
				=&\int_{1}^{+\infty}\left(\frac{2}{3}r^2\partial_{tt}v\partial_{ttr} \tilde{S}_1 +2r\partial_{tt}v\partial_{tt} \tilde{S}_1+r^2\partial_{tt}v\partial_{ttr} \tilde{S}_2\right)\mathrm{d}r.
			\end{align}

		It is straightforward to see that the second term in the above equation can be estimated by $CE(t)^{\frac{1}{2}} \mathcal{D}(t)$.
		We now focus on the last term on the left-hand side of the above equation, which is given by
		\begin{equation}\nonumber
			\begin{aligned}
				&\int_{1}^{+\infty}\left(r^2\partial_{ttr}(\rho v)\frac{ P^{\prime}(\rho)}{\rho}\partial_{tt}\rho
				+2r\partial_{tt}(\rho v)\frac{ P^{\prime}(\rho)}{\rho}\partial_{tt}\rho+r^2\partial_{ttr}P \partial_{tt}v\right) \mathrm{d}r\\
				=&\int_{1}^{+\infty} \left(r^2\partial_{ttr}vP^{\prime}(\rho)\partial_{tt}\rho
				+2r\partial_{tt}vP^{\prime}(\rho)\partial_{tt}\rho+r^2\partial_{ttr}P \partial_{tt}v\right)\mathrm{d}r \\
				&+\int_{1}^{+\infty}\bigg(\frac{r^2 P^{\prime}(\rho)}{\rho}\partial_{tt}\rho
				\left(\partial_{r}\rho \partial_{tt}v+2\partial_{t}\rho \partial_{tr}v +2\partial_{tr}\rho \partial_{t}v+\partial_{tt}\rho \partial_{r}v\right)\\
				&-\left(r^2\frac{ P^{\prime}(\rho)}{\rho}v\right)_r \frac{1}{2}(\partial_{tt}\rho)^2
				+2r\frac{ P^{\prime}(\rho)}{\rho} \partial_{tt}\rho\left(2\partial_{t}\rho \partial_{t}v+\partial_{tt}\rho v\right)\bigg) \mathrm{d}r\\
				\geq&\int_{1}^{+\infty} \left(r^2\partial_{ttr}vP^{\prime}(\rho)\partial_{tt}\rho
				+2r\partial_{tt}vP^{\prime}(\rho)\partial_{tt}\rho+r^2\partial_{ttr}P \partial_{tt}v\right) \mathrm{d}r
				-C E(t)^{\frac{1}{2}} \mathcal{D}(t).
			\end{aligned}
		\end{equation}
		Furthermore,
		\begin{equation}\nonumber
			\begin{aligned}
				&\int_{1}^{+\infty} \left(r^2\partial_{ttr}vP^{\prime}(\rho)\partial_{tt}\rho
				+2r\partial_{tt}vP^{\prime}(\rho)\partial_{tt}\rho+r^2\partial_{ttr}P \partial_{tt}v\right) \mathrm{d}r\\
				=&\int_{1}^{+\infty}\left(\partial_{r}\left(r^2\partial_{tt}P\partial_{tt}v\right)-r^2P^{\prime\prime}(\rho)(\partial_{t}\rho)^2\partial_{ttr}v
				-2rP^{\prime\prime}(\rho)(\partial_{t}\rho)^2\partial_{tt}v\right)\mathrm{d}r\\
				=&\int_{1}^{+\infty}\left(r^2 \partial_{r}\left(P^{\prime\prime}(\rho)(\partial_{t}\rho)^2\right)\partial_{tt}v\right)\mathrm{d}r \ge-CE(t)^{\frac{1}{2}} \mathcal{D}(t).
			\end{aligned}
		\end{equation}
		Therefore, by combining the above estimation results, equation (\ref{3.47}) reduces to
		\begin{equation}\label{39}
			\begin{aligned}
				&\frac{\mathrm{d}}{\mathrm{d} t}\int_{1}^{+\infty}\bigg(\frac{r^2 P^{\prime}(\rho)}{2\rho} (\partial_{tt}\rho)^2+\frac{r^2\rho}{2}(\partial_{tt}v)^2
				\bigg)\mathrm{d}r\\
				&\leq\int_{1}^{+\infty}\left(\frac{2}{3}r^2\partial_{tt}v\partial_{ttr} \tilde{S}_1 +2r\partial_{tt}v\partial_{tt} \tilde{S}_1+r^2\partial_{tt}v\partial_{ttr} \tilde{S}_2\right)\mathrm{d}r+CE(t)^{\frac{1}{2}} \mathcal{D}(t).
			\end{aligned}
		\end{equation}
		
		Multiplying the equation $(\ref{38})_3$ 
		by  $\dfrac{r^2}{3\mu}\partial_{tt} \tilde{S}_1$ and integrating over $[1,+\infty)$ with respect to $r$ yields
		\begin{equation}\label{42}
			\begin{aligned}
				&\frac{\mathrm{d}}{\mathrm{d} t}\int_{1}^{+\infty}\left[\frac{\tau}{3 \mu} \cdot \frac{r^2 \rho}{2}(\partial_{tt} \tilde{S}_1)^2\right]\mathrm{d}r
				+\int_{1}^{+\infty}\frac{r^2}{3 \mu}(\partial_{tt}\tilde{S}_1)^2\mathrm{d}r\\
				&+\int_{1}^{+\infty}\bigg(\frac{2\tau}{3 \mu}r^2\partial_{t}\rho(\partial_{tt} \tilde{S}_1)^2+\frac{2\tau}{3 \mu}r^2\rho\partial_{t}v\partial_{tr} \tilde{S}_1\partial_{tt} \tilde{S}_1+\frac{\tau}{3 \mu}r^2\rho\partial_{tt}v\partial_{r} \tilde{S}_1\partial_{tt} \tilde{S}_1
				+\frac{2\tau}{3 \mu}r^2\partial_{t}\rho\partial_{t}v\partial_{r} \tilde{S}_1\partial_{tt} \tilde{S}_1\\
				&\qquad\qquad\qquad
				+\frac{2\tau}{3 \mu}r^2\partial_{t}\rho v \partial_{tr} \tilde{S}_1\partial_{tt} \tilde{S}_1
				+\frac{\tau}{3 \mu}r^2\partial_{tt}\rho[\frac{1}{2}(\partial_{t} \tilde{S}_1)^2]_t
				+\frac{\tau}{3 \mu}r^2\partial_{tt}\rho v\partial_{r} \tilde{S}_1\partial_{tt} \tilde{S}_1\bigg)\mathrm{d}r\\
				&+\int_{1}^{+\infty}
				-\frac{\tau\epsilon}{3 \mu}r^2\rho[\frac{1}{2}(\partial_{tt}\tilde{S}_1)^2]_r\mathrm{d}r
				+\int_{1}^{+\infty}\left(-\frac{\tau\epsilon}{3 \mu}r^2\partial_{tt}\rho \partial_{r}\tilde{S}_1\partial_{tt} \tilde{S}_1
				-\frac{2\tau\epsilon}{3 \mu}r^2 \partial_{t}\rho \partial_{tr}\tilde{S}_1\partial_{tt}\tilde{S}_1\right)\mathrm{d}r\\
				&=\int_{1}^{+\infty}\left(\frac{2}{3}r^2\partial_{tt} \tilde{S}_1\partial_{ttr}v
				-\frac{2}{3}r\partial_{tt} \tilde{S}_1\partial_{tt}v\right) \mathrm{d}r
				,
			\end{aligned}
		\end{equation}
		where
		\begin{equation}\nonumber
			\begin{aligned}
				&\int_{1}^{+\infty}\left(\frac{2\tau}{3 \mu}r^2\partial_{t}\rho(\partial_{tt} \tilde{S}_1)^2
				+\frac{\tau}{3 \mu}r^2\partial_{tt}\rho [\frac{1}{2}(\partial_{t} \tilde{S}_1)^2]_t\right)\mathrm{d}r
				\geq-\frac{C}{\tau}E(t)^{\frac{1}{2}} \mathcal{D}(t),\\
				&\int_{1}^{+\infty}\left(-\frac{\tau\epsilon}{3 \mu}r^2\partial_{tt}\rho \partial_{r}\tilde{S}_1\partial_{tt} \tilde{S}_1
				-\frac{2\tau\epsilon}{3 \mu}r^2 \partial_{t}\rho \partial_{tr}\tilde{S}_1\partial_{tt}\tilde{S}_1\right)\mathrm{d}r
				\geq -CE(t)^{\frac{1}{2}} \mathcal{D}(t),\\
				&\int_{1}^{+\infty}\bigg(\frac{2\tau}{3 \mu}r^2\rho\partial_{t}v\partial_{tr} \tilde{S}_1\partial_{tt} \tilde{S}_1+\frac{\tau}{3 \mu}r^2\rho\partial_{tt}v\partial_{r} \tilde{S}_1\partial_{tt} \tilde{S}_1
				+\frac{2\tau}{3 \mu}r^2\partial_{t}\rho\partial_{t}v\partial_{r} \tilde{S}_1\partial_{tt} \tilde{S}_1
				+\frac{2\tau}{3 \mu}r^2\partial_{t}\rho v \partial_{tr} \tilde{S}_1\partial_{tt} \tilde{S}_1\\
				&~~~~
				+\frac{\tau}{3 \mu}r^2\partial_{tt}\rho v\partial_{r} \tilde{S}_1\partial_{tt} \tilde{S}_1-\frac{\tau\epsilon}{3 \mu}r^2\partial_{tt}\rho \partial_{r}\tilde{S}_1\partial_{tt} \tilde{S}_1
				-\frac{2\tau\epsilon}{3 \mu}r^2\partial_{t}\rho\partial_{tr}\tilde{S}_1\partial_{tt}\tilde{S}_1\bigg)\mathrm{d}r
				\geq-CE(t)^{\frac{1}{2}} \mathcal{D}(t),
			\end{aligned}
		\end{equation}
		and
		\begin{equation}\nonumber
			\begin{aligned}
				\int_{1}^{+\infty}-\frac{\tau\epsilon}{3 \mu}r^2\rho[\frac{1}{2}(\partial_{tt}\tilde{S}_1)^2]_r \mathrm{d}r
				&=-\frac{\tau\epsilon}{3 \mu}r^2\rho\frac{1}{2}(\partial_{tt}\tilde{S}_1)^2\bigg|_{1}^{+\infty}
				+\int_{1}^{+\infty}\frac{\tau\epsilon}{3 \mu}(r^2\rho)_r\frac{1}{2}(\partial_{tt}\tilde{S}_1)^2\mathrm{d}r\\
				&=\frac{\tau\epsilon}{6\mu}\rho(t,1)(\partial_{tt}\tilde{S}_1)^2(t,1) +\int_{1}^{+\infty}\frac{\tau\epsilon}{6 \mu}(r^2\partial_{r}\rho+2r\rho)(\partial_{tt}\tilde{S}_1)^2\mathrm{d}r\\
				&\geq  \int_{1}^{+\infty}\frac{\tau\epsilon r^2}{6 \mu}\partial_{r}\rho(\partial_{tt}\tilde{S}_1)^2\mathrm{d}r
				\geq-\frac{C}{\tau}E(t)^{\frac{1}{2}} \mathcal{D}(t).
			\end{aligned}
		\end{equation}
		
		Therefore, we derive
		\begin{equation}
		\begin{aligned}\label{42aa}
			&\frac{\mathrm{d}}{\mathrm{d} t}\int_{1}^{+\infty}\frac{\tau r^2 \rho}{6\mu}(\partial_{tt} \tilde{S}_1)^2\mathrm{d}r
			+\int_{1}^{+\infty}\frac{r^2}{3 \mu}(\partial_{tt}\tilde{S}_1)^2\mathrm{d}r\\
			&\qquad
			\leq \int_{1}^{+\infty}\left(\frac{2}{3}r^2\partial_{tt} \tilde{S}_1\partial_{ttr}v
			-\frac{2}{3}r\partial_{tt} \tilde{S}_1\partial_{tt}v\right) \mathrm{d}r
			+\frac{C}{\tau}E(t)^{\frac{1}{2}} \mathcal{D}(t).
		\end{aligned}
		\end{equation}
		Multiplying the equation $(\ref{38})_4$ 
		by $\dfrac{r^2}{\lambda}\partial_{tt} \tilde{S}_2$ and integrating over $[1,+\infty)$ with respect to $r$ yields
		
			\begin{align}\label{43}
				&\frac{\mathrm{d}}{\mathrm{d} t}\int_{1}^{+\infty}\left[\frac{\tau }{\lambda} \cdot \frac{r^2 \rho}{2}(\partial_{tt} \tilde{S}_2)^2\right]\mathrm{d}r
				+\int_{1}^{+\infty}\frac{r^2}{\lambda}(\partial_{tt}{S_2})^2\mathrm{d}r \nonumber \\
				&+\int_{1}^{+\infty}\bigg(\frac{2\tau }{\lambda}r^2\partial_{t}\rho(\partial_{tt} \tilde{S}_2)^2
				+\frac{2\tau }{\lambda}r^2\rho\partial_{t}v\partial_{tr} \tilde{S}_2\partial_{tt} \tilde{S}_2
				+\frac{\tau }{\lambda}r^2\rho\partial_{tt}v\partial_{r} \tilde{S}_2\partial_{tt} \tilde{S}_2
				+\frac{2\tau }{\lambda}r^2\partial_{t}\rho\partial_{t}v\partial_{r} \tilde{S}_2\partial_{tt} \tilde{S}_2\nonumber \\
				&\qquad\qquad\qquad
				+\frac{2\tau}{\lambda}r^2\partial_{t}\rho v \partial_{tr} \tilde{S}_2\partial_{tt} \tilde{S}_2
				+\frac{\tau}{\lambda}r^2\partial_{tt}\rho[\frac{1}{2}(\partial_{t} \tilde{S}_2)^2]_t
				+\frac{\tau}{\lambda}r^2\partial_{tt}\rho v\partial_{r} \tilde{S}_2\partial_{tt} \tilde{S}_2\bigg)\mathrm{d}r\nonumber \\
				&+\int_{1}^{+\infty}
				-\frac{\tau \epsilon}{\lambda}r^2\rho [\frac{1}{2}(\partial_{tt}\tilde{S}_2)^2]_r\mathrm{d}r
				+\int_{1}^{+\infty}\left(-\frac{\tau \epsilon}{\lambda}r^2\partial_{tt}\rho \partial_{r}\tilde{S}_2\partial_{tt} \tilde{S}_2
				-\frac{2\tau\epsilon}{\lambda}r^2\partial_{t}\rho\partial_{tr}\tilde{S}_2\partial_{tt}\tilde{S}_2\right)\mathrm{d}r \nonumber \\
				\leq&\int_{1}^{+\infty}\left(r^2\partial_{tt} \tilde{S}_2\partial_{ttr}v
				+2r\partial_{tt} \tilde{S}_2\partial_{tt}v\right)\mathrm{d}r.
			\end{align}
	
		Similar to the method used for handling equation (\ref{42}), we derive
		\begin{equation}\label{43a}
			\begin{aligned}
			\frac{\mathrm{d}}{\mathrm{d} t}\int_{1}^{+\infty}\frac{\tau r^2 \rho}{2\lambda}(\partial_{tt} \tilde{S}_2)^2\mathrm{d}r
			+\int_{1}^{+\infty}\frac{r^2}{\lambda}(\partial_{tt}\tilde{S}_2)^2\mathrm{d}r
			\leq \int_{1}^{+\infty}\bigg(r^2\partial_{tt} \tilde{S}_2\partial_{ttr}v
			+2r\partial_{tt} \tilde{S}_2\partial_{tt}v\bigg)\mathrm{d}r\\
			+\frac{C}{\tau }E(t)^{\frac{1}{2}} \mathcal{D}(t).
			\end{aligned}
		\end{equation}
		
		By adding the equations $(\ref{39})$, $(\ref{42aa})$ and $(\ref{43a})$ and using $\partial_{tt}v\big|_{1}^{+\infty}=0$, we obtain
		
		\begin{equation}\label{42a}
			\begin{aligned}
				&\frac{\mathrm{d}}{\mathrm{d} t}\int_{1}^{+\infty}
				\left(\frac{r^2 P^{\prime}(\rho)}{2\rho} (\partial_{tt}\rho)^2
				+\frac{r^2\rho}{2}(\partial_{tt}v)^2
				+\frac{\tau r^2 \rho}{6\mu} (\partial_{tt}\tilde{S}_1)^2 +\frac{\tau r^2 \rho}{2\lambda}  (\partial_{tt}\tilde{S}_1)^2 \right)\mathrm{d}r\\
				&+\int_{1}^{+\infty} \left(\frac{r^2}{3 \mu}(\partial_{tt}\tilde{S}_1)^2
				+\frac{r^2}{\lambda}(\partial_{tt}\tilde{S}_2)^2\right)\mathrm{d}r \le \frac{C}{\tau} E^{\frac{1}{2}}(t) \mathcal{D}(t)
				.
			\end{aligned}
		\end{equation}

		
		Integrating the above inequality over $(0, t)$, and noticing that
		\begin{align*}\nonumber
			\int_{1}^{+\infty}\tau^2
			\left(\frac{r^2 P^{\prime}(\rho)}{2\rho} (\partial_{tt}\rho)^2
			+\frac{r^2\rho}{2}(\partial_{tt}v)^2
			+\frac{\tau r^2 \rho}{6\mu} (\partial_{tt}\tilde{S}_1)^2  
			+\frac{\tau r^2 \rho}{2\lambda}  (\partial_{tt}\tilde{S}_1)^2 \right)(t=0,r)\mathrm{d}r\\
			\leq C \|V_0^2\|_{L^2}^2\leq C E(0).
		\end{align*}
		Thus, we have
		\begin{equation}\label{order-2-tt}
			\begin{aligned}
				&\int_{1}^{+\infty} \tau^2
				\left(\frac{r^2 P^{\prime}(\rho)}{2\rho} (\partial_{tt}\rho)^2
				+\frac{r^2\rho}{2}(\partial_{tt}v)^2
				+\frac{\tau r^2 \rho}{6\mu} (\partial_{tt}\tilde{S}_1)^2 +\frac{\tau r^2 \rho}{2\lambda}  (\partial_{tt}\tilde{S}_1)^2 \right)\mathrm{d}r\\
				&+\int_{0}^{t}\int_{1}^{+\infty} \tau^2\left(\frac{r^2}{3 \mu}(\partial_{tt}\tilde{S}_1)^2
				+\frac{r^2}{\lambda}(\partial_{tt}\tilde{S}_2)^2\right)
				\mathrm{d}r\mathrm{d}t
				\leq C\left(E(0)+ E^{\frac{1}{2}}(t) \int_{0}^t \mathcal{D}(s)\mathrm{d}s\right).
			\end{aligned}
		\end{equation}
		This finish the proof of the Lemma $\ref{lem3}$.
	\end{proof}
	
	\begin{lemma}\label{lem4}
		There exists some constant $C$ such that for any $0\leq t \leq T$
		\begin{equation}
			\begin{aligned}
				&\int_{1}^{+\infty}r^2\bigg(\left(\partial_{tr} \rho\right)^2 + (\partial_{tr}v)^2+ (\partial_{t}v)^2
				+\tau (\partial_{tr}\tilde{S}_1)^2
				+\tau(\partial_{t}\tilde{S}_1)^2+\tau(\partial_{tr} \tilde{S}_2)^2\bigg)\mathrm{d}r\\
				&+\int_{0}^{t}\int_{1}^{+\infty}r^2\bigg((\partial_{tr}\tilde{S}_1)^2+ (\partial_{t}\tilde{S}_1)^2
				+(\partial_{tr}\tilde{S}_2)^2\bigg)\mathrm{d}r \mathrm{d} t\\
				&+\int_{0}^{t}\left(\frac{\tau\epsilon}{16\mu} (\partial_{tr}\tilde{S}_1)^2(t,1)
				+\frac{3\tau\epsilon}{8\lambda}(\partial_{tr}\tilde{S}_2)^2(t,1)\right)\mathrm{d}t\\
				&\leq C\left(E(0)+E^{\frac{3}{2}}(t)+ E^{\frac{1}{2}}(t)\int_{0}^{t} \mathcal{D}(s)\mathrm{d}s\right).
			\end{aligned}
		\end{equation}
	\end{lemma}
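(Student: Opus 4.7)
The plan is to mimic the argument of Lemma \ref{lem2-1} applied to the once-$t$-differentiated system \eqref{15}. First, I apply $\partial_r$ to \eqref{15} to obtain evolution equations for $(\partial_{tr}\rho,\partial_{tr}v,\partial_{tr}\tilde{S}_1,\partial_{tr}\tilde{S}_2)$ with nonlinear sources analogous to $f_2,f_3,f_4$ in \eqref{22} but carrying at least one extra $\partial_t$ on every first-order factor. Then I test these four equations against $r^2\frac{P'(\rho)}{\rho}\partial_{tr}\rho$, $r^2\partial_{tr}v$, $\frac{r^2}{3\mu}\partial_{tr}\tilde{S}_1$ and $\frac{r^2}{\lambda}\partial_{tr}\tilde{S}_2$ respectively and integrate over $[1,+\infty)$. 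The diagonal quadratic parts reproduce the advertised weighted energies together with the dissipation of $(\partial_{tr}\tilde{S}_1,\partial_{tr}\tilde{S}_2)$, while integration by parts against the $\tau\epsilon r^2\rho$ convective weight yields boundary contributions at $r=1$ matching $\tfrac{\tau\epsilon}{6\mu}\rho(t,1)(\partial_{tr}\tilde{S}_1)^2(t,1)$ and $\tfrac{\tau\epsilon}{2\lambda}\rho(t,1)(\partial_{tr}\tilde{S}_2)^2(t,1)$.

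Exactly as in Lemma \ref{lem2-1}, the coupling between the continuity and momentum blocks leaves behind an untreatable contribution $-2\partial_tv\,\partial_{tr}P$. To absorb it I multiply $(\ref{15})_2$ by $2\partial_tv$ and integrate; this simultaneously supplies the missing energy $\|r\partial_tv\|_{L^2}^2$ on the left-hand side and generates source terms $-\tfrac{4}{3}\partial_tv\,\partial_{tr}\tilde{S}_1$, $-\tfrac{4}{r}\partial_tv\,\partial_t\tilde{S}_1$, $-2\partial_tv\,\partial_{tr}\tilde{S}_2$. The singular $\tfrac{1}{r}$ piece is killed by testing $(\ref{15})_3$ against $\tfrac{2}{\mu}\partial_t\tilde{S}_1$, which also supplies the tangential dissipation $\|\partial_t\tilde{S}_1\|_{L^2}^2$ that appears in the statement.

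The main obstacle is the boundary trace at $r=1$. After all integrations by parts, the surviving trace is $r^2\partial_{tr}v\,(\partial_{tr}P-\tfrac{2}{3}\partial_{tr}\tilde{S}_1-\partial_{tr}\tilde{S}_2)(t,1)$. Using the $t$-differentiated momentum equation $(\ref{15})_2$ at $r=1$, together with $v(t,1)=0$ and compatibility condition \eqref{compatibility}, this reduces to $2(\partial_t\tilde{S}_1\,\partial_{tr}v)(t,1)$. I then solve for $\partial_{tr}v(t,1)$ from the $t$-differentiated $\tilde{S}_1$-equation $(\ref{15})_3$ evaluated at $r=1$, converting the boundary integral over $(0,t)$ into a perfect time-derivative of $\tfrac{\tau\rho}{2\mu}(\partial_t\tilde{S}_1)^2(t,1)$, a piece $\int_0^t\tfrac{1}{\mu}(\partial_t\tilde{S}_1)^2(t,1)\,\mathrm{d}t$, and a remainder proportional to $\int_0^t\tfrac{\tau\epsilon}{\mu}\rho\,\partial_t\tilde{S}_1\,\partial_{tr}\tilde{S}_1(t,1)\,\mathrm{d}t$. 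The first is controlled by interpolating $(\partial_t\tilde{S}_1)^2(t,1)\le\|\partial_{tr}\tilde{S}_1\|_{L^2}\|\partial_t\tilde{S}_1\|_{L^2}$ and Young's inequality; the second is already estimated in Lemma \ref{lem2}; and the third is absorbed into the new boundary dissipation by an $\epsilon$-Young split with sufficiently small $\eta$. The $\tilde{S}_2$-side is handled analogously and produces the coefficient $\tfrac{3\tau\epsilon}{8\lambda}(\partial_{tr}\tilde{S}_2)^2(t,1)$ on the left.

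Finally, each commutator between $\partial_r$, $\partial_t$ and the coefficients $\rho$, $v$, $\rho(v-\epsilon)$ is a triple product of two first-order factors and one second-order factor. Using $H^1\hookrightarrow L^\infty$ on $[1,+\infty)$ and the smallness hypothesis $E(t)\le\delta$, each such term is bounded either by $CE^{1/2}(t)\mathcal{D}(t)$ or by the cubic $CE^{3/2}(t)$; the latter genuinely appears at this level (unlike in Lemmas \ref{lem1}--\ref{lem3.5}) because some nonlinear factors now have to be estimated in $L^\infty$ rather than against the dissipation. Integrating over $(0,t)$, choosing $\eta$ small enough to absorb all boundary and Young-type remainders back into the left-hand side, and using Assumption \ref{Assump1}(4) to dominate the data by $CE(0)$ closes the estimate.
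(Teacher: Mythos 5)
Your proposal follows essentially the same route as the paper's proof: the same mixed $t,r$-differentiated system, the same four multipliers, the same auxiliary testing of $(\ref{15})_2$ against $2\partial_t v$ and $(\ref{15})_3$ against $\frac{2}{\mu}\partial_t\tilde{S}_1$ to absorb the $-2\partial_t v\,\partial_{tr}P$ and singular $1/r$ terms, and the same reduction of the boundary trace to $2(\partial_t\tilde{S}_1\partial_{tr}v)(t,1)$ via the momentum equation followed by substitution from $(\ref{15})_3$ at $r=1$, interpolation, and $\eta$-absorption. The only cosmetic difference is that the paper tests the continuity block against $r^2\partial_{tr}P/\rho$ rather than $r^2 P'(\rho)\partial_{tr}\rho/\rho$ (handling the extra $P''$ correction as a separate exact time derivative), and the coefficient $\frac{3\tau\epsilon}{8\lambda}$ on the $\tilde{S}_2$ boundary term comes directly from the lower bound $\rho\ge\frac34$ rather than requiring a separate absorption argument.
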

	\begin{proof}
		Taking the derivative of equation \eqref{3.4} with respect to $t$ and $r$ once, respectively, we get
		\begin{equation}\label{47}
			\begin{cases}
				\partial_{ttr} \rho+\partial_{trr}(\rho v)+\dfrac{2}{r} \partial_{tr}(\rho v)-\dfrac{2}{r^2} \partial_{t}(\rho v)=0, \\
				\rho\partial_{ttr} v+\rho v \partial_{trr} v+\partial_{trr} P-\dfrac{2}{3}\partial_{trr}  \tilde{S}_1-\dfrac{2}{r} \partial_{tr} \tilde{S}_1+\dfrac{2}{r^2}\partial_{t}\tilde{S}_1-\partial_{trr} \tilde{S}_2=g_2,\\
				\tau \rho \partial_{ttr} \tilde{S}_1+\tau \rho v \partial_{trr}\tilde{S}_1 
				-2\mu\left(\partial_{trr} v- \dfrac{1}{r}\partial_{tr} v
				+\dfrac{1}{r^2} \partial_{t}v \right)+\partial_{tr} \tilde{S}_1
				-\tau \epsilon\rho\partial_{trr}\tilde{S}_1
				= g_3, \\
				\tau  \rho \partial_{ttr} \tilde{S}_2+\tau  \rho v \partial_{trr}\tilde{S}_2
				-\lambda \left(\partial_{trr} v+\dfrac{2}{r}\partial_{tr} v-\dfrac{2}{r^2} \partial_{t}v\right)+\partial_{tr} \tilde{S}_2
				-\tau  \epsilon\rho\partial_{trr}\tilde{S}_2
				=g_4,
			\end{cases}
		\end{equation}
		where
		\begin{equation}
			\begin{aligned}\nonumber
				&g_2:=\partial_{t}f_2-\partial_{t}(\rho v)\partial_{rr} v,\\
				&g_3:=\partial_{t}f_3
				-\tau \partial_{t}\rho \partial_{tr} \tilde{S}_1
				-\tau \partial_{t}(\rho v) \partial_{rr}\tilde{S}_1
				+\tau \epsilon\partial_{t}\rho\partial_{rr}\tilde{S}_1,\\
				&g_4:=\partial_{t}f_4-\tau  \partial_{t}\rho \partial_{tr} \tilde{S}_2-\tau  \partial_{t}(\rho v) \partial_{rr}\tilde{S}_2
				+\tau  \epsilon\partial_{t}\rho\partial_{rr}\tilde{S}_2.
			\end{aligned}
		\end{equation}
		
		Multiplying the equation $(\ref{47})_1$ and $(\ref{47})_2$ 
		by  $r^2\frac{\partial_{tr}P}{\rho}$ and $r^2\partial_{tr}v$, respectively, and  integrating the result, we have
		\begin{equation}\label{48}
			\begin{aligned}
				&\frac{\mathrm{d}}{\mathrm{d} t}\int_{1}^{+\infty}\left(\dfrac{r^2 P^{\prime}(\rho)}{2\rho} \left(\partial_{tr} \rho\right)^2+\dfrac{r^2\rho}{2} (\partial_{tr}v)^2\right)\mathrm{d}r
				-\int_{1}^{+\infty}\frac{r^2}{2}[\dfrac{ P^{\prime}(\rho)}{\rho}]_t\left(\partial_{tr} \rho\right)^2\mathrm{d}r\\
				&+\frac{\mathrm{d}}{\mathrm{d} t}\int_{1}^{+\infty} \dfrac{r^2 P^{\prime\prime}(\rho)}{\rho} \partial_{t}\rho\partial_{r}\rho\partial_{tr}\rho\mathrm{~d}r
				-\int_{1}^{+\infty} [\dfrac{r^2 P^{\prime\prime}(\rho)}{\rho} \partial_{t}\rho\partial_{r}\rho]_t\partial_{tr}\rho\mathrm{~d}r\\
				&+\int_{1}^{\infty}\bigg(r^2\partial_{trr}\left(\rho v\right) \frac{\partial_{tr}P}{\rho}
				+2r \partial_{tr} (\rho v) \frac{\partial_{tr}P}{\rho}
				-2\partial_{t} (\rho v) \frac{\partial_{tr}P}{\rho}\bigg)\mathrm{d}r\\
				&+\int_{1}^{+\infty}\bigg(
				r^2 \partial_{trr}P \partial_{tr}v-\dfrac{2 r^2}{3} \partial_{trr} \tilde{S}_1\partial_{tr}v-2r \partial_{tr}\tilde{S}_1 \partial_{tr}v+2\partial_{t}\tilde{S}_1\partial_{tr}v
				-r^2\partial_{trr}\tilde{S}_2\partial_{tr}v\bigg)\mathrm{d}r\\
				&=\int_{1}^{+\infty}r^2g_2 \partial_{tr}v\mathrm{d}r.
			\end{aligned}
		\end{equation} 
		
		Firstly, we have
		\begin{equation}\nonumber
			\begin{aligned}
				-&\int_{1}^{+\infty}\frac{r^2}{2}[\dfrac{ P^{\prime}(\rho)}{\rho}]_t\left(\partial_{tr} \rho\right)^2\mathrm{d}r
				-\int_{1}^{+\infty} [\dfrac{r^2 P^{\prime\prime}(\rho)}{\rho} \partial_{t}\rho\partial_{r}\rho]_t\partial_{tr}\rho\mathrm{~d}r
				\geq -C E^{\frac{1}{2}}(t) \mathcal{D}(t),\\
				&\int_{1}^{+\infty}-2\partial_{t} (\rho v) \frac{\partial_{tr}P}{\rho}\mathrm{d}r
				\geq \int_{1}^{+\infty}-2\partial_{t}v\partial_{tr}P\mathrm{d}r
				-C E^{\frac{1}{2}}(t) \mathcal{D}(t).
			\end{aligned}
		\end{equation}
		Moreover, we have
		\begin{equation}\nonumber
			\begin{aligned}
				&\int_{1}^{\infty}\bigg(r^2\partial_{trr}\left(\rho v\right) \frac{\partial_{tr}P}{\rho}
				+2r \partial_{tr} (\rho v) \frac{\partial_{tr}P}{\rho}
				+r^2 \partial_{trr}P \partial_{tr}v\bigg)\mathrm{d}r\\
				=&\int_{1}^{\infty}\bigg(r^2\partial_{trr}v \partial_{tr}P+2r \partial_{tr} v \partial_{tr}P
				+r^2 \partial_{trr}P \partial_{tr}v\bigg)\mathrm{d}r\\
				&+\int_{1}^{\infty}\bigg(\frac{r^2\partial_{tr}P}{\rho}(2\partial_{r}\rho \partial_{tr} v+2\partial_{tr}\rho \partial_{r} v+\partial_{t}\rho \partial_{rr} v+\partial_{rr}\rho \partial_{t} v)\bigg)\mathrm{d}r
				-\int_{1}^{\infty}[\frac{r^2vP^{\prime}(\rho)}{2\rho}]_r(\partial_{tr}\rho)^2\mathrm{d}r\\
				&
				-\int_{1}^{\infty}r^2P^{\prime\prime}(\rho)[\frac{\partial_{t}\rho\partial_{r}\rho v}{\rho}]_t \partial_{rr}\rho\mathrm{d}r
				+\int_{1}^{\infty}2r \frac{\partial_{tr}P}{\rho}(\partial_{t}\rho \partial_{r} v+\partial_{r}\rho \partial_{t} v+v \partial_{tr}\rho)\mathrm{d} r\\
				\geq& \int_{1}^{\infty}\partial_{r}\left(r^2\partial_{tr}P\partial_{tr}v\right)\mathrm{d}r -C E^{\frac{1}{2}}(t) \mathcal{D}(t),
			\end{aligned}
		\end{equation}
		and
		\begin{equation}\nonumber
			\int_{1}^{+\infty}r^2g_2 \partial_{tr}v\mathrm{d}r\leq C E^{\frac{1}{2}}(t) \mathcal{D}(t).
		\end{equation}
		
		Thus, combining the above estimates, equation $(\ref{48})$ can be simplified to
		\begin{align}\label{49}
				\frac{\mathrm{d}}{\mathrm{d} t}&\int_{1}^{+\infty}\left(\dfrac{r^2 P^{\prime}(\rho)}{2\rho} \left(\partial_{tr} \rho\right)^2+\dfrac{r^2\rho}{2} (\partial_{tr}v)^2\right)\mathrm{d}r
				+\int_{1}^{+\infty}\partial_{r}\left(r^2\partial_{tr}P\partial_{tr}v\right)\mathrm{d}r
				-\int_{1}^{+\infty}2\partial_{t}v\partial_{tr}P\mathrm{d}r\nonumber \\
				+&\int_{1}^{+\infty}\bigg(-\dfrac{2 r^2}{3}\partial_{trr}\tilde{S}_1 \partial_{tr}v-2r \partial_{tr}\tilde{S}_1 \partial_{tr}v+2\partial_{t}\tilde{S}_1 \partial_{tr}v
				-r^2 \partial_{trr}\tilde{S}_2 \partial_{tr}v\bigg)\mathrm{d}r
				\nonumber \\
				\leq& C E^{\frac{1}{2}}(t) \mathcal{D}(t)
				-\frac{\mathrm{d}}{\mathrm{d} t}\int_{1}^{+\infty} \dfrac{r^2 P^{\prime\prime}(\rho)}{\rho} \partial_{t}\rho\partial_{r}\rho\partial_{tr}\rho\mathrm{~d}r.
		\end{align}
		
		Multiplying the equation $(\ref{47})_3$  by  $\dfrac{r^2}{3\mu}\partial_{tr} \tilde{S}_1$, and  integrating the result, we obtain
		\begin{equation}\label{50}
			\begin{aligned}
				&\frac{\mathrm{d}}{\mathrm{d} t}\int_{1}^{+\infty}\left[\frac{\tau}{3 \mu} \frac{r^2 \rho}{2}(\partial_{tr}\tilde{S}_1)^2\right]\mathrm{d}r
				+\int_{1}^{+\infty}\frac{r^2}{3 \mu}(\partial_{tr}\tilde{S}_1)^2 \mathrm{d}r
				-\int_{1}^{+\infty}
				\frac{\tau\epsilon}{3 \mu}r^2\rho[\frac{1}{2}(\partial_{tr}\tilde{S}_1)^2]_r\mathrm{d}r\\
				&-\int_{1}^{+\infty}\bigg(\frac{2r^2}{3}\partial_{trr}v\partial_{tr}\tilde{S}_1-\frac{2r}{3}\partial_{tr}v\partial_{tr}\tilde{S}_1
				+\frac{2}{3}\partial_{t}v\partial_{tr}\tilde{S}_1\bigg)\mathrm{d}r
				=\int_{1}^{+\infty}\frac{r^2}{3\mu}g_3\partial_{tr} \tilde{S}_1\mathrm{d}r,\\
			\end{aligned}
		\end{equation}
		where
		\begin{equation}\nonumber
			\begin{aligned}
				-\int_{1}^{+\infty}
				\frac{\tau\epsilon}{3 \mu}r^2\rho[\frac{1}{2}(\partial_{tr}\tilde{S}_1)^2]_r\mathrm{d}r
				&=\frac{\tau\epsilon}{6\mu}\rho(t,1)(\partial_{tr}\tilde{S}_1)^2(t,1)
				+\int_{1}^{+\infty}
				\frac{\tau\epsilon}{6\mu}(r^2\rho)_r(\partial_{tr}\tilde{S}_1)^2\mathrm{d}r\\
				&\geq\frac{\tau\epsilon}{6\mu}\rho(t,1)(\partial_{tr}\tilde{S}_1)^2(t,1) -C E^{\frac{1}{2}}(t) \mathcal{D}(t),\\
				\int_{1}^{+\infty}\frac{r^2}{3\mu}g_3\partial_{tr} \tilde{S}_1\mathrm{d}r
				&\leq C E^{\frac{1}{2}}(t) \mathcal{D}(t).\\
			\end{aligned}
		\end{equation}
		Similarly, multiplying the equation $(\ref{47})_4$ by  $\dfrac{r^2}{\lambda}\partial_{tr} \tilde{S}_2$, and then integrating the result, one get 
		\begin{equation}
			\begin{aligned}\label{51}
				&\frac{\mathrm{d}}{\mathrm{d} t}\int_{1}^{+\infty}\left[\frac{\tau }{\lambda} \frac{r^2 \rho}{2}(\partial_{tr}{S_2})^2\right]\mathrm{d}r
				+\int_{1}^{+\infty}\frac{r^2}{\lambda}(\partial_{tr}{S_2})^2
				\mathrm{d}r
				-\int_{1}^{+\infty}
				\frac{\tau \epsilon}{\lambda}r^2\rho[\frac{1}{2}(\partial_{tr}\tilde{S}_2)^2]_r\mathrm{d}r\\
				&-\int_{1}^{+\infty}\bigg(r^2\partial_{trr}v\partial_{tr}{S_2}
				-2r\partial_{tr}v\partial_{tr}{S_2}
				+2\partial_tv\partial_{tr}{S_2}\bigg)\mathrm{d}r
				=\int_{1}^{+\infty}\frac{r^2}{\lambda}g_4\partial_{tr} \tilde{S}_2\mathrm{d}r
				,
			\end{aligned}
		\end{equation}
		where
		\begin{equation}\nonumber
			\begin{aligned}
				-\int_{1}^{+\infty}
				\frac{\tau\epsilon}{\lambda}r^2\rho[\frac{1}{2}(\partial_{tr}\tilde{S}_2)^2]_r\mathrm{d}r
				&=\frac{\tau\epsilon}{2\lambda}\rho(t,1)(\partial_{tr}\tilde{S}_2)^2(t,1)
				+\int_{1}^{+\infty}
				\frac{\tau\epsilon}{2\lambda}(r^2\rho)_r(\partial_{tr}\tilde{S}_2)^2\mathrm{d}r\\
				&\geq\frac{\tau\epsilon}{2\lambda}\rho(t,1)(\partial_{tr}\tilde{S}_2)^2(t,1) -C E^{\frac{1}{2}}(t) \mathcal{D}(t)
				,\\
				\int_{1}^{+\infty}\frac{r^2}{\lambda}g_4\partial_{tr} \tilde{S}_2\mathrm{d}r
				&\leq C E^{\frac{1}{2}}(t) \mathcal{D}(t).\\
			\end{aligned}
		\end{equation}
		Adding equations $(\ref{49})-(\ref{51})$ together and combining the above estimates with \eqref{hu3.4},  we can obtain
		\begin{equation}\label{52}
			\begin{aligned}
				&\frac{\mathrm{d}}{\mathrm{d} t}\int_{1}^{+\infty}\bigg(\dfrac{r^2 P^{\prime}(\rho)}{2\rho} \left(\partial_{tr} \rho\right)^2 +\dfrac{r^2\rho}{2} (\partial_{tr}v)^2+\frac{\tau}{3 \mu} \frac{r^2 \rho}{2} (\partial_{tr}\tilde{S}_1)^2
				+\dfrac{\tau }{\lambda}\frac{r^2 \rho}{2} (\partial_{tr} \tilde{S}_2)^2\bigg)\mathrm{d}r\\
				&+\int_{1}^{+\infty}\bigg(\dfrac{r^2}{3\mu} (\partial_{tr}\tilde{S}_1)^2
				+\dfrac{r^2}{\lambda}(\partial_{tr}\tilde{S}_2)^2\bigg)\mathrm{d}r
				+\frac{\tau\epsilon}{8\mu} (\partial_{tr}\tilde{S}_1)^2(t,1)
				+\frac{3\tau\epsilon}{8\lambda}(\partial_{tr}\tilde{S}_2)^2(t,1)\\
				&+\int_{1}^{+\infty}\bigg(\partial_{r}\left(r^2\partial_{tr}P\partial_{tr}v\right)-\dfrac{2}{3}\partial_{r}(r^2\partial_{tr}\tilde{S}_1\partial_{tr}v) 
				-\partial_{r}(r^2\partial_{tr}\tilde{S}_2\partial_{tr}v)
				\bigg)\mathrm{d}r\\
				&+\int_{1}^{+\infty}\bigg(-2\partial_{t}v\partial_{tr}P+2\partial_{t}\tilde{S}_1\partial_{tr}v -\frac{2}{3}\partial_{t}v\partial_{tr}\tilde{S}_1+2\partial_{t}v\partial_{tr}\tilde{S}_2\bigg)\mathrm{d}r\\
				&\leq 
				C E(t)^{\frac{1}{2}}\mathcal{D}(t)
				-\frac{\mathrm{d}}{\mathrm{d} t}\int_{1}^{+\infty} \dfrac{r^2 P^{\prime\prime}(\rho)}{\rho} \partial_{t}\rho\partial_{r}\rho\partial_{tr}\rho\mathrm{~d}r.  
			\end{aligned}
		\end{equation}
		To handle the last term on the left-hand side of the above equation, by multiplying equation $(\ref{15})_2$ by $2\partial_{t}v$ and $(\ref{15})_3$ by $\dfrac{2}{\mu}\partial_{t}\tilde{S}_1$, respectively,  integrating the results, and adding the results to equation $(\ref{52})$, we get
		\begin{equation}
			\begin{aligned}\label{3.64}
				&\frac{\mathrm{d}}{\mathrm{d} t}\int_{1}^{+\infty}\bigg(\dfrac{r^2 P^{\prime}(\rho)}{2\rho} \left(\partial_{tr} \rho\right)^2 +\dfrac{r^2\rho}{2} (\partial_{tr}v)^2+\frac{\tau}{3 \mu} \frac{r^2 \rho}{2} (\partial_{tr}\tilde{S}_1)^2
				+\dfrac{\tau}{\lambda}\frac{r^2 \rho}{2} (\partial_{tr} \tilde{S}_2)^2\bigg)\mathrm{d}r\\
				&+\int_{1}^{+\infty}\bigg(\dfrac{r^2}{3\mu} (\partial_{tr}\tilde{S}_1)^2
				+\dfrac{r^2}{\lambda}(\partial_{tr}\tilde{S}_2)^2\bigg)\mathrm{d}r
				+\frac{\tau\epsilon}{8\mu} (\partial_{tr}\tilde{S}_1)^2(t,1)
				+\frac{3\tau\epsilon}{8\lambda}(\partial_{tr}\tilde{S}_2)^2(t,1)\\
				&+\int_{1}^{+\infty}\bigg(\partial_{r}\left(r^2\partial_{tr}P\partial_{tr}v\right)-\dfrac{2}{3}\partial_{r}(r^2\partial_{tr}\tilde{S}_1\partial_{tr}v) 
				-\partial_{r}(r^2\partial_{tr}\tilde{S}_2\partial_{tr}v)
				\bigg)\mathrm{d}r\\
				&+\int_{1}^{+\infty}\bigg(-2\partial_{t}v\partial_{tr}P+2\partial_{t}\tilde{S}_1\partial_{tr}v -\frac{2}{3}\partial_{t}v\partial_{tr}\tilde{S}_1+2\partial_{t}v\partial_{tr}\tilde{S}_2\bigg)\mathrm{d}r\\
				&+\int_{1}^{+\infty}\left(\rho\partial_{tt} v+\partial_{tr} P
				-(\dfrac{2}{3}\partial_{tr} \tilde{S}_1+\dfrac{2}{r} \partial_t\tilde{S}_1+\partial_{tr} \tilde{S}_2)\right) 2\partial_{t}v\mathrm{d}r
				\\
				&+\int_{1}^{+\infty}\left(\tau \rho \left(\partial_{tt} \tilde{S}_1+\partial_t ((v-\epsilon) \partial_r\tilde{S}_1) \right) +\partial_t \tilde{S}_1
				-2\mu(\partial_{tr} v- \dfrac{1}{r}\partial_t v )\right)\dfrac{2}{\mu}\partial_{t}\tilde{S}_1\mathrm{d}r
				\\
				&\leq 
				C E(t)^{\frac{1}{2}}\mathcal{D}(t)
				-\frac{\mathrm{d}}{\mathrm{d} t}\int_{1}^{+\infty} \dfrac{r^2 P^{\prime\prime}(\rho)}{\rho} \partial_{t}\rho\partial_{r}\rho\partial_{tr}\rho\mathrm{~d}r.
			\end{aligned}
		\end{equation}
		Notice that
		\begin{equation}\nonumber
			\begin{aligned}
				-\int_{1}^{+\infty}\frac{2\tau\epsilon}{\mu}\rho\partial_{t}\tilde{S}_1
				\partial_{tr}\tilde{S}_1\mathrm{d}r
				&=\frac{\tau\epsilon}{\mu}\rho(t,1)(\partial_{t}\tilde{S}_1)^2(t,1)
				+\int_{1}^{+\infty}\frac{\tau\epsilon}{\mu}\partial_r\rho(\partial_{t}\tilde{S}_1)^2
				\mathrm{d}r
				\geq 
				-C E(t)^{\frac{1}{2}}\mathcal{D}(t),
			\end{aligned}
		\end{equation}
		and we integrate equation (\ref{3.64}) over $(0,t)$ to obtain
		\begin{equation}\nonumber
			\begin{aligned}
				&\int_{1}^{+\infty}\bigg(\dfrac{r^2 P^{\prime}(\rho)}{2\rho} \left(\partial_{tr} \rho\right)^2 +\dfrac{1}{2}\rho\left[r^2 (\partial_{tr}v)^2+2 (\partial_{t}v)^2\right]+ \left[\dfrac{\tau\rho}{6\mu} r^2(\partial_{tr}\tilde{S}_1)^2+\dfrac{\tau\rho}{\mu} \partial_{t}\tilde{S}_1^2\right]
				+\dfrac{\tau r^2\rho}{2\lambda} (\partial_{tr} \tilde{S}_2)^2\bigg)\mathrm{d}r\\
				&\quad+\int_{0}^{t}\int_{1}^{+\infty}\bigg(\left[\dfrac{r^2}{3\mu}(\partial_{tr}\tilde{S}_1)^2+\dfrac{2}{\mu} \partial_{t}\tilde{S}_1^2\right]
				+\dfrac{r^2}{\lambda}(\partial_{tr}\tilde{S}_2)^2\bigg)\mathrm{d}r\mathrm{d}t\\
				&\quad+\int_{0}^{t}\left(\frac{\tau\epsilon}{8\mu}(\partial_{tr}\tilde{S}_1)^2(t,1)
				+\frac{3\tau\epsilon}{8\lambda}(\partial_{tr}\tilde{S}_2)^2(t,1)\right)\mathrm{d}t\\
				&\leq 
				\int_{0}^{t}-\left(r^2\partial_{tr}v \partial_{tr}P
				-\dfrac{2}{3} r^2  \partial_{tr}v \partial_{tr}\tilde{S}_1
				-r^2  \partial_{tr}v \partial_{tr}\tilde{S}_2
				-2\partial_{t}v\partial_{t}\tilde{S}_1\right)\bigg|_{1}^{+\infty} \mathrm{d}t\\
				&\quad+C\left(E(0)+E(t)^{\frac{3}{2}}+ E^{\frac{1}{2}}(t) \int_{0}^t \mathcal{D}(s)\mathrm{d}s\right)
				.
			\end{aligned}
		\end{equation}
		
		Noting that $2\partial_{t}v\partial_{t}\tilde{S}_1\big|_{1}^{+\infty}=0$, and using momentum equation \eqref{22}, one has
		
		\begin{equation}
			\begin{aligned}
				&r^2  \partial_{tr}v\left[-\partial_{tr}P+\frac{2}{3}\partial_{tr}\tilde{S}_1
				+\partial_{tr}\tilde{S}_2\right]\bigg|_{1}^{+\infty}\\
				&=r^2  \partial_{tr}v\left[-\partial_{tr}P+\frac{2}{3}\partial_{tr}\tilde{S}_1+\frac{2}{r}\partial_{t}\tilde{S}_1
				+\partial_{tr}\tilde{S}_2\right]\bigg|_{1}^{+\infty}-2r\partial_{t}\tilde{S}_1\partial_{tr}v\bigg|_{1}^{+\infty}\\
				&=r^2  \partial_{tr}v[\rho\partial_{tt} v+\partial_t \rho  \partial_t v+\partial_t \rho \cdot v \partial_r v+\rho \partial_t( v \partial_r v)]\bigg|_{1}^{+\infty}
				-2r\partial_{t}\tilde{S}_1\partial_{tr}v\bigg|_{1}^{+\infty}\\
				&=2(\partial_{t}\tilde{S}_1\partial_{tr}v)(t,1).
			\end{aligned}
		\end{equation}
		Furthermore, using equation $(\ref{15})_3$, we have
		\begin{equation}
			\begin{aligned}\nonumber
				&\int_{0}^{t}2(\partial_{t}\tilde{S}_1\partial_{tr}v)(t,1)\mathrm{d}t\\
				=&\int_{0}^{t} 2\partial_{t}\tilde{S}_1(t,1)
				\bigg[\frac{\tau\rho}{2\mu} \left(\partial_{tt} \tilde{S}_1+\partial_t ((v-\epsilon)\partial_r\tilde{S}_1) \right) +\frac{\tau\partial_{t}\rho}{2\mu} \left(\partial_t \tilde{S}_1+(v-\epsilon)\partial_r\tilde{S}_1 \right) \\
				&\quad+\frac{1}{2\mu}\partial_t \tilde{S}_1+\dfrac{1}{r}\partial_t v\bigg](t,1)
				\mathrm{d}t\\
				=&\frac{\tau}{2\mu}\rho(\partial_{t}\tilde{S}_1)^2(t,1)\big|_0^t
				+\int_{0}^{t}\frac{\tau}{2\mu}\partial_{t}\rho(\partial_{t}\tilde{S}_1)^2(t,1)\mathrm{d}t
				-\int_{0}^{t}\frac{\epsilon\tau}{\mu}\partial_{t}\rho\partial_{t}\tilde{S}_1\partial_{r}\tilde{S}_1(t,1)\mathrm{d}t\\
				&\quad-\int_{0}^{t}\frac{\epsilon\tau}{\mu}\rho\partial_{t}\tilde{S}_1\partial_{tr}\tilde{S}_1(t,1)\mathrm{d}t
				+\int_{0}^{t}\frac{1}{\mu}(\partial_{t}\tilde{S}_1)^2(t,1)\mathrm{d}t.
			\end{aligned}
		\end{equation}
		Now, we deal with each term on the right hand side of the above equation. Firstly, we have 
		\begin{equation}\nonumber
			\begin{aligned}
				&\int_{0}^{t}\frac{\tau}{2\mu}\partial_{t}\rho(\partial_{t}\tilde{S}_1)^2(t,1)\mathrm{d}t
				-\int_{0}^{t}\frac{\epsilon\tau}{\mu}\partial_{t}\rho\partial_{t}\tilde{S}_1\partial_{r}\tilde{S}_1(t,1)\mathrm{d}t
				\\
				\leq&\int_{0}^{t}\frac{\tau}{2\mu}\|\partial_{t}\rho(\partial_{t}\tilde{S}_1)^2\|_{L^\infty}\mathrm{d}t
				+\int_{0}^{t}\frac{\epsilon\tau}{\mu}\|\partial_{t}\rho\partial_{t}\tilde{S}_1\partial_{r}\tilde{S}_1\|_{L^\infty}\mathrm{d}t\\
				\leq&\int_{0}^{t}\frac{\tau}{2\mu}\|\partial_{t}\rho\|_{H^1}\|\partial_{t}\tilde{S}_1\|_{H^1}^2\mathrm{d}t
				+\int_{0}^{t}\frac{\epsilon\tau}{\mu}\|\partial_{t}\rho\|_{H^1}\|\partial_{t}\tilde{S}_1\|_{H^1}\|\partial_{r}\tilde{S}_1\|_{H^1}\mathrm{d}t\\
				\leq&C\left(E(0)+E^{\frac{3}{2}}(t)+E^{\frac{1}{2}}(t) \int_{0}^t \mathcal{D}(s)\mathrm{d}s\right).
			\end{aligned}
		\end{equation}
		Secondly, we know
		\begin{equation}\nonumber
			\begin{aligned}
				&-\int_{0}^{t}\frac{\epsilon\tau}{\mu}\rho\partial_{t}\tilde{S}_1\partial_{tr}\tilde{S}_1(t,1)\mathrm{d}t\\
				\leq&\frac{\epsilon\tau}{\mu}\int_{0}^{t}\rho(t,1) \left(C(\eta)(\partial_{t}\tilde{S}_1)^2(t,1)+\eta(\partial_{tr}\tilde{S}_1)^2(t,1)\right)\mathrm{d}t\\
				\leq&\frac{\epsilon\tau}{\mu}\eta\int_{0}^{t}\rho(t,1)(\partial_{tr}\tilde{S}_1)^2(t,1)\mathrm{d}t
				+C\left(E(0)+E^{\frac{1}{2}}(t) \int_{0}^t \mathcal{D}(s)\mathrm{d}s\right),
			\end{aligned}
		\end{equation}
		and
		\begin{equation}
			\begin{aligned}\nonumber
				&\int_{0}^{t}\dfrac{1}{\mu} (\partial_{t}\tilde{S}_1)^2(t,1)\mathrm{d}t
				\leq \int_{0}^{t}(\dfrac{1}{\mu}\|\partial_{t}\tilde{S}_1\|_{L^\infty}^2)\mathrm{d}t
				\leq \int_{0}^{t}\dfrac{1}{\mu}\|\partial_{tr}\tilde{S}_1\|_{L^2}\|\partial_{t}\tilde{S}_1\|_{L^2}\mathrm{d}t\\
				&\leq \int_{0}^{t}\dfrac{1}{\mu} (\eta\|\partial_{tr}\tilde{S}_1\|_{L^2}^2+C(\eta)\|\partial_{t}\tilde{S}_1\|_{L^2}^2)\mathrm{d}t
				\leq \int_{0}^{t}\dfrac{\eta}{\mu}\|\partial_{tr}\tilde{S}_1\|_{L^2}^2\mathrm{d}t+C\left(E(0)+E^{\frac{1}{2}}(t) \int_{0}^t \mathcal{D}(s)\mathrm{d}s\right).
			\end{aligned}
		\end{equation}
		Here and after, $\eta$ is any positive constant to be chosen later.

		Combining the above results, one can derive
		\begin{equation}\label{57}
			\begin{aligned}
				&\int_{1}^{+\infty}\bigg(\dfrac{r^2 P^{\prime}(\rho)}{2\rho} \left(\partial_{tr} \rho\right)^2 +\dfrac{1}{2}\rho\left[r^2 (\partial_{tr}v)^2+2 (\partial_{t}v)^2\right]
				+\dfrac{\tau\rho r^2}{6\mu}  (\partial_{tr}\tilde{S}_1)^2
				+\dfrac{\tau\rho }{\mu} (\partial_{t}\tilde{S}_1)^2
				+\dfrac{\tau\rho r^2}{2\lambda} (\partial_{tr} \tilde{S}_2)^2\bigg)\mathrm{d}r\\
				&+\int_{0}^{t}\left(\frac{\tau\epsilon}{16\mu} (\partial_{tr}\tilde{S}_1)^2(t,1)
				+\frac{3\tau\epsilon}{8\lambda}\rho(t,1)(\partial_{tr}\tilde{S}_2)^2(t,1)\right)\mathrm{d}t\\
				&+\int_{0}^{t}\int_{1}^{+\infty}\bigg(\dfrac{r^2}{4\mu}(\partial_{tr}\tilde{S}_1)^2+\frac{2}{\mu} (\partial_{t}\tilde{S}_1)^2
				+\dfrac{r^2}{\lambda}(\partial_{tr}\tilde{S}_2)^2\bigg)\mathrm{d}r\\
				&\leq \frac{\tau}{2\mu}\rho(\partial_{t}\tilde{S}_1)^2(t,1)+C\left(E(0)+E^{\frac{3}{2}}(t)+ E^{\frac{1}{2}}(t)\int_{0}^{t} \mathcal{D}(s)\mathrm{d}s\right).
			\end{aligned}
		\end{equation}
		By applying the {\it Gagliardo-Nirenberg} inequality and {\it Young} inequality, we know
		\begin{equation}
			\begin{aligned}\nonumber
				&\frac{\tau}{2\mu}\rho(\partial_{t}\tilde{S}_1)^2(t,1)
				\leq \frac{\tau}{\mu}r^2(\partial_{t}\tilde{S}_1)^2(t,1)
				\leq \frac{\tau}{\mu}r^2\|\partial_{t}\tilde{S}_1\|_{L^{\infty}}^2
				\leq\frac{\tau}{\mu}r^2\|\partial_{tr}\tilde{S}_1\|_{L^{2}}\|\partial_{t}\tilde{S}_1\|_{L^{2}}\\
				&\leq \frac{\tau}{\mu}r^2(\eta\|\partial_{tr}\tilde{S}_1\|_{L^{2}}^2+C(\eta)\|\partial_{t}\tilde{S}_1\|_{L^{2}}^2)
				\leq \frac{\tau\eta}{\mu} r^2\|\partial_{tr}\tilde{S}_1\|_{L^{2}}^2
				+C\left(E(0)+E^{\frac{1}{2}}(t) \int_{0}^t \mathcal{D}(s)\mathrm{d}s\right).
			\end{aligned}
		\end{equation}
		Choosing $\eta<\frac{1}{12}$, 
		we get the Lemma \ref{lem4} immediately.
	\end{proof}
	
	Next, before estimating the second-order partial derivative with respect to $r$, we first derive the second-order dissipation estimates for $(\rho, v)$.
	
	\begin{lemma}\label{lem5}
		There exists a constant $C$ such that
		\begin{equation}
			\begin{aligned}
				&\int_0^t\int_{1}^{+\infty}r^2\left((\partial_{tt}v)^2+(\partial_{tr}v)^2+(\partial_{tt}\rho)^2+(\partial_{tr}\rho)^2\right)\mathrm{d}r\mathrm{d}t
				\leq C\left(E(0)+E^{\frac{3}{2}}(t)+E^{\frac{1}{2}}(t)\int_{0}^{t} \mathcal{D}(s)\mathrm{d}s\right)
			\end{aligned}
		\end{equation}
		and
		\begin{equation}
			\begin{aligned}
				&\int_0^t\int_{1}^{+\infty}r^2(\partial_{rr}v)^2\mathrm{d}r\mathrm{d}t\\
				&\leq
				C\left(\epsilon\int_{0}^{t}\int_{1}^{+\infty}r^2((\partial_{rr}\tilde{S}_1)^2+(\partial_{rr}\tilde{S}_2)^2)\mathrm{d}r\mathrm{d}t+E(0)+E^{\frac{3}{2}}(t)+E^{\frac{1}{2}}(t)\int_{0}^{t} \mathcal{D}(s)\mathrm{d}s\right).
			\end{aligned}
		\end{equation}
	\end{lemma}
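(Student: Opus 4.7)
The plan is to push the proof of Lemma~\ref{lem3.4} up one derivative order. I will use the by-now familiar algebraic trick that $\tfrac{\lambda}{\mu}(\cdot)_3+(\cdot)_4$ eliminates the $v/r$ source and leaves $3\lambda\partial_r v$ on the right, while $-\tfrac{\lambda}{2\mu}(\cdot)_3+(\cdot)_4$ eliminates $\partial_r v$ and leaves $3\lambda v/r$. For the first inequality I apply these to the time-differentiated system \eqref{15}, obtaining pointwise expressions for $3\lambda\partial_{tr}v$ and $3\lambda\partial_t v/r$ in terms of $\partial_t\tilde S_j$, $\tau\rho\partial_{tt}\tilde S_j$, $\tau\partial_t\rho\bigl(\partial_t\tilde S_j+(v-\epsilon)\partial_r\tilde S_j\bigr)$ and $\tau\rho\bigl(\partial_t v\cdot\partial_r\tilde S_j+(v-\epsilon)\partial_{tr}\tilde S_j\bigr)$. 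Squaring, weighting by $r^2$ and integrating over $[1,\infty)\times[0,t]$, each term is controlled either by the dissipations already established in Lemmas~\ref{lem2}, \ref{lem2-1}, \ref{lem3} and \ref{lem4}, or, for the nonlinear convective contributions, by pulling out $\|v\|_{L^\infty}^2\leq C\|v\|_{H^1}^2\leq CE(t)$ and handing the residual integral to $E^{1/2}(t)\int_0^t\mathcal D$; the $\tau^2\epsilon^2(\partial_{tr}\tilde S_j)^2$ piece poses no obstacle because Lemma~\ref{lem4} already bounds $\int_0^t\!\int r^2(\partial_{tr}\tilde S_j)^2$ by $E(0)+E^{3/2}(t)+E^{1/2}(t)\int_0^t\mathcal D$. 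The $\partial_{tt}\rho$ dissipation then drops out of $\partial_t$ of the mass equation, $\partial_{tt}\rho=-\partial_{tr}(\rho v)-\tfrac{2}{r}\partial_t(\rho v)$, squared and integrated with the $\partial_{tr}v$ and $\partial_t v$ estimates just obtained. To get the $\partial_{tt}v$ dissipation I multiply $(15)_2$ by $r^2\partial_{tt}v$ and mirror the integration-by-parts scheme of Lemma~\ref{lem3.4}: integrate $\int r^2\partial_{tr}P\partial_{tt}v$ by parts first in $t$ and then in $r$ to transfer derivatives onto $r^2\partial_t v$ (the $r=1$ boundary term vanishes because $\partial_t v(t,1)=0$ follows from differentiating \eqref{1.6} in $t$, and the infinity boundary vanishes by decay), and use $\epsilon$-Young to absorb the resulting $\int r^2(\partial_{tt}v)^2$ appearing on the right. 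Finally, $\partial_{tr}\rho$ follows algebraically by reading the $\partial_{tr}P$ term in $(15)_2$ and using $P'(\rho)\geq c>0$ guaranteed by \eqref{hu3.4}.

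For the second inequality I apply $\partial_r$ to the master identity $3\lambda\partial_r v=\tfrac{\lambda}{\mu}\tilde S_1+\tilde S_2+\tau\rho\bigl[\tfrac{\lambda}{\mu}\partial_t\tilde S_1+\partial_t\tilde S_2+(v-\epsilon)(\tfrac{\lambda}{\mu}\partial_r\tilde S_1+\partial_r\tilde S_2)\bigr]$, obtained from $\tfrac{\lambda}{\mu}\eqref{3.4}_3+\eqref{3.4}_4$, which solves pointwise for $3\lambda\partial_{rr}v$. Squaring, weighting by $r^2$ and integrating over $[1,\infty)\times[0,t]$, all non-exceptional terms either land inside already-bounded dissipations $\int_0^t\!\int r^2\bigl[\tilde S_j^2+(\partial_r\tilde S_j)^2+(\partial_t\tilde S_j)^2+(\partial_{tr}\tilde S_j)^2\bigr]$ (Lemmas~\ref{lem1}, \ref{lem2}, \ref{lem2-1}, \ref{lem4}), or pull out $\|v\|_{L^\infty}^2,\|\partial_r\rho\|_{L^\infty}^2\le CE(t)$ to feed $E^{1/2}(t)\int_0^t\mathcal D$. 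The only problematic term is $\tau\rho(v-\epsilon)\partial_{rr}\tilde S_j$: splitting $(v-\epsilon)^2\leq 2v^2+2\epsilon^2$, the $v^2$-piece is again absorbed into $E^{1/2}(t)\int_0^t\mathcal D$ through $\|v\|_{L^\infty}^2\leq CE(t)$, while the $\epsilon^2$-piece yields $C\tau^2\epsilon^2\int_0^t\!\int r^2(\partial_{rr}\tilde S_j)^2\leq C\epsilon\int_0^t\!\int r^2(\partial_{rr}\tilde S_j)^2$ (using $\tau,\epsilon\leq 1$), which is exactly the $\epsilon$-remainder on the right of the claim.

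The main obstacle is precisely this $\tau^2\epsilon^2\partial_{rr}\tilde S_j$ contribution in the second inequality: unlike its time-differentiated counterpart $\partial_{tr}\tilde S_j$ exploited in the first part, the purely spatial norm $\int_0^t\!\int r^2(\partial_{rr}\tilde S_j)^2$ has not yet been controlled in the hierarchy built so far, so it cannot be absorbed into $E^{1/2}(t)\int_0^t\mathcal D$. It must be retained on the right as an $\epsilon$-remainder and will be swallowed by the forthcoming $\partial_{rr}\tilde S_j$ estimates, whose own coefficient structure permits closure once $\epsilon$ is chosen sufficiently small.
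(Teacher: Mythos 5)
Your proposal is correct and follows essentially the same route as the paper: the combinations $\tfrac{\lambda}{\mu}(\cdot)_3+(\cdot)_4$ applied to the $\partial_t$- and $\partial_r$-differentiated stress equations to extract $\partial_{tr}v$ and $\partial_{rr}v$, the mass and momentum equations for $\partial_{tt}\rho$, $\partial_{tt}v$ (via the same $t$-then-$r$ integration by parts against $r^2\partial_{tt}v$) and $\partial_{tr}\rho$, and the retention of the $\epsilon\int_0^t\!\int r^2(\partial_{rr}\tilde S_j)^2$ remainder in the second inequality. Your diagnosis of why that remainder cannot yet be absorbed, and must wait for the second-order spatial estimates, matches the paper's treatment exactly.
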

	\begin{proof}
		We manipulate $\dfrac{\lambda}{\mu}(\ref{15})_{3}+(\ref{15})_{4}$
		and by applying Lemmas \ref{lem1}-\ref{lem3}, we can obtain
		\begin{equation}\label{2dis1}
			\begin{aligned}
				\int_0^t\int_{1}^{+\infty}r^2(\partial_{tr}v)^2
				\mathrm{d}r\mathrm{d}t
				\leq&\int_0^t\int_{1}^{+\infty}\tau^2r^2((\partial_{tt}\tilde{S}_1)^2+\epsilon(\partial_{tr}\tilde{S}_1)^2+(\partial_{t}\tilde{S}_1)^2+\epsilon(\partial_{r}\tilde{S}_1)^2)\mathrm{d}r\mathrm{d}t\\
				&+\int_0^t\int_{1}^{+\infty}\tau^2r^2((\partial_{tt}\tilde{S}_2)^2+\epsilon(\partial_{tr}\tilde{S}_2)^2+(\partial_{t}\tilde{S}_2)^2+\epsilon(\partial_{r}\tilde{S}_2)^2)\mathrm{d}r\mathrm{d}t\\
				\leq& C(E(0)+E^{\frac{3}{2}}(t)+E^{\frac{1}{2}}(t)\int_{0}^{t} \mathcal{D}(s)\mathrm{d}s).
			\end{aligned}
		\end{equation}
		
		From equation $(\ref{15})_{1}$, we get immediately
		\begin{equation}\label{2dis2}
			\int_0^t\int_{1}^{+\infty}r^2(\partial_{tt}\rho)^2
			\mathrm{d}r\mathrm{d}t
			\leq C(E(0)+E^{\frac{3}{2}}(t)+E^{\frac{1}{2}}(t)\int_{0}^{t} \mathcal{D}(s)\mathrm{d}s).
		\end{equation}
		On the other hand, by multiplying the equation
		$(\ref{15})_{2}$ by $r^2\partial_{tt}v$, it yields
		\begin{equation}\nonumber
			\begin{aligned}
				&\int_0^t\int_{1}^{+\infty}r^2\rho(\partial_{tt}v)^2\mathrm{d}r\mathrm{d}t\\
				\leq&-\int_0^t\frac{\mathrm{d}}{\mathrm{d}t}\int_{1}^{+\infty}r^2\partial_{tr}P\partial_{t}v\mathrm{d}r\mathrm{d}t
				-\int_0^t\int_{1}^{+\infty}r^2\partial_{tt}P\partial_{tr}v\mathrm{d}r\mathrm{d}t
				-\int_0^t\int_{1}^{+\infty}2r\partial_{tt}P\partial_{t}v\mathrm{d}r\mathrm{d}t\\
				&+\int_0^t\int_{1}^{+\infty}\frac{2r^2}{3}\partial_{tr}\tilde{S}_1\partial_{tt}v\mathrm{d}r\mathrm{d}t
				+\int_0^t\int_{1}^{+\infty}2r\partial_{t}\tilde{S}_1\partial_{tt}v\mathrm{d}r\mathrm{d}t
				+\int_0^t\int_{1}^{+\infty}r^2\partial_{tr}\tilde{S}_2\partial_{tt}v\mathrm{d}r\mathrm{d}t\\
				&+C(E(0)+E^{\frac{1}{2}}(t)\int_{0}^{t} \mathcal{D}(s)\mathrm{d}s)\\
				\leq&\frac{1}{2}\int_0^t\int_{1}^{+\infty}r^2(\partial_{tt}v)^2\mathrm{d}r\mathrm{d}t
				+C\int_{1}^{+\infty}r^2((\partial_{tr}\rho)^2+(\partial_{t}v)^2)\mathrm{d}r\\
				&+C\int_0^t\int_{1}^{+\infty}r^2((\partial_{tt}\rho)^2
				+(\partial_{tr}v)^2+(\partial_{t}v)^2
				+(\partial_{tr}\tilde{S}_1)^2+(\partial_{t}\tilde{S}_1)^2+(\partial_{tr}\tilde{S}_2)^2)
				\mathrm{d}r\mathrm{d}t\\
				&+C(E(0)+E^{\frac{1}{2}}(t)\int_{0}^{t} \mathcal{D}(s)\mathrm{d}s)
			\end{aligned}
		\end{equation}
		which implies
		\begin{equation}\label{2dis3}
			\int_0^t\int_{1}^{+\infty}r^2(\partial_{tt}v)^2
			\mathrm{d}r\mathrm{d}t
			\leq C(E(0)+E^{\frac{3}{2}}(t)+E^{\frac{1}{2}}(t)\int_{0}^{t} \mathcal{D}(s)\mathrm{d}s).
		\end{equation}
		By equation $(\ref{15})_2$, we also get
		\begin{equation}\label{2dis4}
			\int_0^t\int_{1}^{+\infty}r^2(\partial_{tr}\rho)^2
			\mathrm{d}r\mathrm{d}t
			\leq C(E(0)+E^{\frac{3}{2}}(t)+E^{\frac{1}{2}}(t)\int_{0}^{t} \mathcal{D}(s)\mathrm{d}s).
		\end{equation}
		
		Furthermore, by manipulating $\dfrac{\lambda}{\mu}(\ref{22})_{3}+(\ref{22})_{4}$, we can obtain
		
		\begin{equation*}
			\begin{aligned}
				\int_0^t\int_{1}^{+\infty}r^2(\partial_{rr}v)^2
				\mathrm{d}r\mathrm{d}t
				\leq&\int_0^t\int_{1}^{+\infty}\tau^2r^2((\partial_{tr}\tilde{S}_1)^2+\epsilon(\partial_{rr}\tilde{S}_1)^2+(\partial_{t}\tilde{S}_1)^2+\epsilon(\partial_{r}\tilde{S}_1)^2)\mathrm{d}r\mathrm{d}t\\
				&+\int_0^t\int_{1}^{+\infty}\tau^2r^2((\partial_{tr}\tilde{S}_2)^2+\epsilon(\partial_{rr}\tilde{S}_2)^2+(\partial_{t}\tilde{S}_2)^2+\epsilon(\partial_{r}\tilde{S}_2)^2)\mathrm{d}r\mathrm{d}t\\
				\leq& C\epsilon\int_{0}^{t}\int_{1}^{+\infty}r^2((\partial_{rr}\tilde{S}_1)^2+(\partial_{rr}\tilde{S}_2)^2)\mathrm{d}r\mathrm{d}t
				+C\bigg(E(0)+E^{\frac{3}{2}}(t)\\
				&\qquad\qquad\qquad\qquad\qquad\qquad\qquad\qquad\qquad\qquad+E^{\frac{1}{2}}(t)\int_{0}^{t} \mathcal{D}(s)\mathrm{d}s\bigg).
			\end{aligned}
		\end{equation*}
		
		Thus,  we get the desired result.
	\end{proof}
	
	\begin{lemma}\label{lem8}
		There exists some constant C such that for any $0\leq t\leq T$
		\begin{equation}\label{sed-rr}
			\begin{aligned}
				&\int_{1}^{+\infty}\bigg(\left(r\partial_{rr} \rho+r\partial_{r} \rho\right)^2
				+r^2 (\partial_{rr}v)^2+(\partial_{r}v)^2
				+\tau\left(r^2(\partial_{rr}\tilde{S}_1)^2+(\partial_{r}\tilde{S}_1)^2+\frac{1}{ r^2}(\tilde{S}_1)^2\right)\\
				&+\tau\left(r\partial_{rr} \tilde{S}_2+2\partial_{r} \tilde{S}_2\right)^2\bigg)\mathrm{d}r\\
				&+\int_0^t\int_{1}^{+\infty}\bigg(\left(r^2(\partial_{rr}\tilde{S}_1)^2+(\partial_{r}\tilde{S}_1)^2+\frac{1}{ r^2}(\tilde{S}_1)^2\right)
				+\left(r\partial_{rr} \tilde{S}_2+2\partial_{r} \tilde{S}_2\right)^2\bigg)\mathrm{d}r\mathrm{d}t\\
				&
				+\int_{0}^{t}\left(\frac{\tau\epsilon}{4\mu}(\partial_{rr}\tilde{S}_1)^2(t,1)
				+\frac{3\tau\epsilon}{8\lambda}(\partial_{rr}\tilde{S}_2)^2(t,1)\right)\mathrm{d}t\\
				&\leq C \left(E(0)+E^{\frac{3}{2}}(t)+E^{\frac{1}{2}}(t) \int_{0}^{t}\mathcal{D}(s)\mathrm{d}s
				+\int_{0}^{t}((\partial_{rr}v)^2(t,1)+(\partial_{tr}v)^2(t,1))\mathrm{d}t\right).
				\\ 
			\end{aligned}
		\end{equation}
	\end{lemma}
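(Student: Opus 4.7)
The plan is to differentiate system \eqref{3.4} twice with respect to $r$ and then run the same type of weighted--multiplier energy estimate that was carried out in Lemmas \ref{lem1}--\ref{lem4}, but now at the $\partial_{rr}$ level. Applying $\partial_{rr}$ to each equation yields a linear system for $(\partial_{rr}\rho,\partial_{rr}v,\partial_{rr}\tilde{S}_1,\partial_{rr}\tilde{S}_2)$ driven by commutator forcing terms $h_2,h_3,h_4$ analogous to the $g_i$ in \eqref{47}. I would multiply the mass equation by $r^2\frac{P'(\rho)}{\rho}\partial_{rr}\rho$, the momentum equation by $r^2\partial_{rr}v$, and the stress equations by $\frac{r^2}{3\mu}\partial_{rr}\tilde{S}_1$ and $\frac{r^2}{\lambda}\partial_{rr}\tilde{S}_2$, then sum and integrate over $[1,+\infty)$. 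The specific quadratic combinations $(r\partial_{rr}\rho+r\partial_r\rho)^2$ and $(r\partial_{rr}\tilde{S}_2+2\partial_r\tilde{S}_2)^2=\frac{1}{r^2}\bigl(\partial_r(r^2\partial_r\tilde{S}_2)\bigr)^2$ are exactly what emerge as the $L^2$ surrogates of the three-dimensional Laplacian in radial coordinates after the $2/r$ and $2/r^2$ lower-order geometric terms produced by the differentiation are gathered through integration by parts.

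Once the time derivative of the energy is isolated, the principal cross-terms $\partial_r(r^2\partial_{rr}v\,\partial_{rr}P)$, $\frac{2}{3}\partial_r(r^2\partial_{rr}v\,\partial_{rr}\tilde{S}_1)$ and $\partial_r(r^2\partial_{rr}v\,\partial_{rr}\tilde{S}_2)$ match across the momentum and stress equations up to commutator remainders of size $E^{1/2}(t)\mathcal{D}(t)+E^{3/2}(t)$, where smallness of $E(t)$ and the pointwise bound \eqref{hu3.4} are used. The low-order dissipation $\tilde{S}_1^2/r^2$ is recovered exactly as in Lemma \ref{lem2-1}: multiplying a once $r$-differentiated version of $(\ref{3.4})_3$ by an auxiliary $\frac{2}{\mu}\partial_r\tilde{S}_1$-type multiplier absorbs the surviving $2\tilde{S}_1/r^2$ term that the double $r$-derivative produces in the momentum equation. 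The $\tau\epsilon$ viscosities, after integration by parts against $\rho$, furnish the favorable endpoint dissipations $\frac{\tau\epsilon}{4\mu}(\partial_{rr}\tilde{S}_1)^2(t,1)$ and $\frac{3\tau\epsilon}{8\lambda}(\partial_{rr}\tilde{S}_2)^2(t,1)$, with the $\tau\epsilon\,\partial_r\rho$ volume remainders swallowed into $E^{1/2}(t)\mathcal{D}(t)$ as in Lemma \ref{lem3}.

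The main obstacle is the boundary at $r=1$. Unlike the $\partial_t^k$ situation, neither $\partial_r v$ nor $\partial_{rr}v$ vanishes on $r=1$, so the integration by parts of $\partial_r(r^2\partial_{rr}v\,\partial_{rr}P)$ and its analogues leaves a non-zero trace of the form
\[
r^2\partial_{rr}v\Bigl[\partial_{rr}P-\tfrac{2}{3}\partial_{rr}\tilde{S}_1-\partial_{rr}\tilde{S}_2\Bigr](t,1).
\]
I would rewrite this trace using $(\ref{22})_2$ evaluated at $r=1$, converting it into $2(\tilde{S}_1\partial_{rr}v)(t,1)$ modulo contributions controlled by $\rho\partial_{tr}v(t,1)$ and $\rho v\,\partial_{rr}v(t,1)$. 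Applying $(\ref{3.4})_3$ at $r=1$ then allows a partial integration in $t$ that mirrors the treatment in Lemmas \ref{lem2-1} and \ref{lem4}, together with Gagliardo--Nirenberg and $\eta$--Young trace inequalities for the $\tilde{S}_1^2(t,1)$ type remainders. However, the residual boundary contribution $\int_0^t\bigl[(\partial_{rr}v)^2+(\partial_{tr}v)^2\bigr](t,1)\,\mathrm{d}t$ cannot be absorbed at this level of regularity and is therefore kept on the right-hand side, giving exactly the form stated in \eqref{sed-rr}; it will be controlled in a subsequent lemma via a boundary-to-interior trace bound tied to the already-established dissipation $\mathcal{D}(t)$.
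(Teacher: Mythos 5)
Your proposal follows essentially the same route as the paper: differentiate \eqref{3.4} twice in $r$, test with the weighted multipliers $r^2\partial_{rr}P/\rho$, $r^2\partial_{rr}v$, $\tfrac{r^2}{3\mu}\partial_{rr}\tilde{S}_1$, $\tfrac{r^2}{\lambda}\partial_{rr}\tilde{S}_2$, recover the quadratic forms $(r\partial_{rr}\rho+r\partial_r\rho)^2$ and $(r\partial_{rr}\tilde S_2+2\partial_r\tilde S_2)^2$ by coupling with auxiliary multipliers applied to the undifferentiated and once-differentiated equations, extract the $\tau\epsilon$ boundary dissipation from the $\epsilon$-viscosity terms, and convert the surviving trace at $r=1$ via the momentum equation into a residual $\int_0^t[(\partial_{rr}v)^2+(\partial_{tr}v)^2](t,1)\,\mathrm{d}t$ that is deferred to the next lemmas. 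The only caveat is that your sketch understates the bookkeeping: the paper needs a specific system of ten auxiliary multiplications (five on the $(\rho,v)$ block, five on the stress block) to make the cross terms $-4\partial_r v\,\partial_{rr}P+\tfrac{4v}{r}\partial_{rr}P$ and their $\tilde S_1,\tilde S_2$ analogues collapse into exact time derivatives and total $r$-derivatives, and the $\tilde S_1$ boundary contributions cancel within that combination rather than surviving as a $2(\tilde S_1\partial_{rr}v)(t,1)$ trace.
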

	\begin{proof}
		Taking derivative to the equations $(\ref{3.4})$ with respect to $r$ twice, we get
		\begin{equation}\label{60}
			\begin{cases}
				\partial_{trr} \rho+\partial_{rrr}(\rho v)+\dfrac{2}{r} \partial_{rr}(\rho v)-\dfrac{4}{r^2} \partial_{r}(\rho v)+\dfrac{4}{r^3}\rho v=0, \\
				\rho\partial_{trr} v+\rho v \partial_{rrr} v+\partial_{rrr} P-\dfrac{2}{3}\partial_{rrr}  \tilde{S}_1-\dfrac{2}{r} \partial_{rr} \tilde{S}_1+\dfrac{4}{r^2}\partial_{r}\tilde{S}_1-\dfrac{4}{r^3}\tilde{S}_1-\partial_{rrr} \tilde{S}_2=l_2,\\
				\tau \rho \partial_{trr} \tilde{S}_1+\tau \rho v \partial_{rrr}\tilde{S}_1 
				-2\mu\left(\partial_{rrr} v- \dfrac{1}{r}\partial_{rr} v
				+\dfrac{2}{r^2} \partial_{r}v-\dfrac{2}{r^3}v \right)+\partial_{rr} \tilde{S}_1-\tau \epsilon\rho\partial_{rrr}\tilde{S}_1
				=l_3, \\
				\tau  \rho \partial_{trr} \tilde{S}_2+\tau  \rho v \partial_{rrr}\tilde{S}_2
				-\lambda \left(\partial_{rrr} v+\dfrac{2}{r}\partial_{rr} v-\dfrac{4}{r^2} \partial_{r}v+\dfrac{4}{r^3}v\right)+\partial_{rr} \tilde{S}_2 -\tau  \epsilon\rho\partial_{rrr}\tilde{S}_2
				=l_4,
			\end{cases}
		\end{equation}
		where
		\begin{equation}
			\begin{aligned}\nonumber
				&l_2:=\partial_{r}f_2-\partial_{r}(\rho v)\partial_{rr} v,\\
				&l_3:=\partial_{r}f_3
				-\tau \partial_{r}\rho \partial_{tr} \tilde{S}_1
				-\tau \partial_{r}(\rho v) \partial_{rr}\tilde{S}_1
				+\tau \epsilon\partial_{r}\rho\partial_{rr}\tilde{S}_1,\\
				&l_4:=\partial_{r}f_4-\tau  \partial_{r}\rho \partial_{tr} \tilde{S}_2-\tau  \partial_{r}(\rho v) \partial_{rr}\tilde{S}_2
				+\tau  \epsilon\partial_{r}\rho\partial_{rr}\tilde{S}_2.
			\end{aligned}
		\end{equation}
		
		Multiplying equations $(\ref{60})_1$ and $(\ref{60})_2$ 
		by  $r^2\dfrac{\partial_{rr}P}{\rho}$ and $r^2\partial_{rr}v$, respectively, and  integrating the result, we get
		\begin{align}\label{3.66}
				&\frac{\mathrm{d}}{\mathrm{d} t}\int_{1}^{+\infty}\left(\frac{r^2 P^{\prime}(\rho)}{2\rho} \left(\partial_{rr} \rho\right)^2+\frac{r^2\rho}{2} (\partial_{rr}v)^2\right)\mathrm{d}r
				-\int_{1}^{+\infty}\frac{r^2}{2}[\frac{P^{\prime}(\rho)}{\rho}]_t\left(\partial_{rr} \rho\right)^2\mathrm{d}r \nonumber\\
				&+\frac{\mathrm{d}}{\mathrm{d} t}\int_{1}^{+\infty}\frac{r^2 P^{\prime\prime}(\rho)}{\rho} \partial_{rr} \rho(\partial_{r}\rho)^2\mathrm{d}r -\int_{1}^{+\infty}r^2[\frac{P^{\prime\prime}(\rho)}{\rho}(\partial_r\rho)^2]_t\partial_{rr}\rho\mathrm{d}r \nonumber\\
				&+\int_{1}^{+\infty}\bigg(r^2\partial_{rrr}\left(\rho v\right)  \frac{\partial_{rr}P}{\rho}+2r \partial_{rr} (\rho v) \frac{\partial_{rr}P}{\rho} 
				-4\partial_{r} (\rho v) \frac{\partial_{rr}P}{\rho}+\frac{4 v\partial_{rr}P}{r}\bigg)\mathrm{d}r \nonumber\\
				&+\int_{1}^{+\infty}\bigg(
				r^2 \partial_{rrr}P \partial_{rr}v-\frac{2 r^2}{3} \partial_{rrr}\tilde{S}_1 \partial_{rr}v-2r\partial_{rr}\tilde{S}_1\partial_{rr}v+4\partial_{r}\tilde{S}_1  \partial_{rr}v-\frac{4}{r}\tilde{S}_1\partial_{rr}v
				-r^2 \partial_{rrr}\tilde{S}_2\partial_{rr}v\bigg)\mathrm{d}r \nonumber\\
				&=\int_{1}^{+\infty}r^2l_2 \partial_{rr}v\mathrm{d}r.
		\end{align}
		Firstly, we have
		\begin{equation}\nonumber
			\begin{aligned}
				&-\int_{1}^{+\infty}\frac{r^2}{2}[\frac{P^{\prime}(\rho)}{\rho}]_t\left(\partial_{rr} \rho\right)^2\mathrm{d}r -\int_{1}^{+\infty}r^2[\frac{P^{\prime\prime}(\rho)}{\rho}(\partial_r\rho)^2]_t\partial_{rr}\rho\mathrm{d}r
				\geq -C E(t)^{\frac{1}{2}}\mathcal{D}(t),
			\end{aligned}
		\end{equation}
		and
		\begin{equation}\nonumber
			\begin{aligned}
				&\int_{1}^{+\infty}\bigg(r^2\partial_{rrr}\left(\rho v\right)  \frac{\partial_{rr}P}{\rho}+2r \partial_{rr} (\rho v) \frac{\partial_{rr}P}{\rho}+r^2 \partial_{rrr}P \partial_{rr}v 
				\bigg)\mathrm{d}r\\
				&=\int_{1}^{+\infty}(r^2 \partial_{rrr}v\partial_{rr}P+2r \partial_{rr}v\partial_{rr}P+r^2 \partial_{rrr}P \partial_{rr}v)\mathrm{d}r\\
				&~~~~+\int_{1}^{+\infty}r^2(3\partial_{r}\rho\partial_{rr}v+3\partial_{rr}\rho\partial_{r}v+v\partial_{rrr}\rho)\frac{\partial_{rr}P}{\rho} \mathrm{d}r+\int_{1}^{+\infty}2r(v\partial_{rr}\rho+2\partial_{r}\rho\partial_{r}v)\frac{\partial_{rr}P}{\rho} \mathrm{d}r\\
				&\geq \int_{1}^{+\infty}\partial_{r}(r^2 \partial_{rr}v\partial_{rr}P)\mathrm{d}r
				-C E(t)^{\frac{1}{2}}\mathcal{D}(t)
			\end{aligned}
		\end{equation}
		where we used
		\begin{equation}\nonumber
			\begin{aligned}
				\int_{1}^{+\infty}r^2v\partial_{rrr}\rho\frac{\partial_{rr}P}{\rho} \mathrm{d}r&=
				\int_{1}^{+\infty}\frac{P^\prime(\rho)}{\rho}r^2v((\partial_{rr}\rho)^2)_r
				+\int_{1}^{+\infty}\frac{P^{\prime\prime}(\rho)(\partial_{r}\rho)^2}{\rho}r^2v\partial_{rrr}\rho\\
				&=-\int_{1}^{+\infty}\frac{P^\prime(\rho)}{\rho}(r^2v)_r(\partial_{rr}\rho)^2
				-\int_{1}^{+\infty}(\frac{P^{\prime\prime}(\rho)(\partial_{r}\rho)^2}{\rho}r^2v)_r\partial_{rr}\rho\\
				&\geq-C E(t)^{\frac{1}{2}}\mathcal{D}(t).
			\end{aligned}
		\end{equation}
		Moreover, we know
		\begin{equation}\nonumber
			\begin{aligned}
				-\int_{1}^{+\infty}4\partial_{r} (\rho v) \frac{\partial_{rr}P}{\rho}\mathrm{d}r
				\geq -\int_{1}^{+\infty}4\partial_{r}v\partial_{rr}P\mathrm{d}r
				-C E(t)^{\frac{1}{2}}\mathcal{D}(t),
				~~~~\int_{1}^{+\infty}r^2l_2 \partial_{rr}v\mathrm{d}r
				\leq C E(t)^{\frac{1}{2}}\mathcal{D}(t).
			\end{aligned}
		\end{equation}
		Thus, we can sort out the equation (\ref{3.66}) to obtain
		\begin{equation}\label{72}
			\begin{aligned}
				&\frac{\mathrm{d}}{\mathrm{d} t}\int_{1}^{+\infty}\left(\frac{r^2 P^{\prime}(\rho)}{2\rho} \left(\partial_{rr} \rho\right)^2+\frac{r^2\rho}{2} (\partial_{rr}v)^2\right)\mathrm{d}r
				+\frac{\mathrm{d}}{\mathrm{d} t}\int_{1}^{+\infty}\frac{r^2 P^{\prime\prime}(\rho)}{\rho} \partial_{rr} \rho(\partial_{r}\rho)^2\mathrm{d}r\\
				&+\int_{1}^{+\infty}(-\frac{2 r^2}{3} \partial_{rrr}\tilde{S}_1 \partial_{rr}v-2r\partial_{rr}\tilde{S}_1\partial_{rr}v+4\partial_{r}\tilde{S}_1  \partial_{rr}v-\frac{4}{r}\tilde{S}_1\partial_{rr}v
				-r^2 \partial_{rrr}\tilde{S}_2 \partial_{rr}v)\mathrm{d}r\\
				&+\int_{1}^{+\infty}\partial_{r}(r^2 \partial_{rr}v\partial_{rr}P)\mathrm{d}r
				+\int_{1}^{+\infty}(-4\partial_{r}v\partial_{rr}P+\frac{4 v}{r}\partial_{rr}P)\mathrm{d}r
				\leq C E(t)^{\frac{1}{2}}\mathcal{D}(t).\\
			\end{aligned}
		\end{equation}

		The term \((-4\partial_{r}v\partial_{rr}P+\dfrac{4 v}{r}\partial_{rr}P)\) in the above equation cannot be directly estimated. To address this, we exploit the structure of system (\ref{3.4}) by performing the following multiplications:
		multiply equation $(\ref{22})_1$ by $(\dfrac{4\partial_r P}{\rho} + \dfrac{2r\partial_{rr}P}{\rho})$, $(\ref{60})_1$ by $\dfrac{2r\partial_r P}{\rho}$,  $(\ref{3.4})_2$ by $(\dfrac{4}{r^2}v - 2\partial_{rr}v - \dfrac{4}{r}\partial_r v)$,  $(\ref{22})_2$ by $(4\partial_r v + 2r\partial_{rr}v - \dfrac{4}{r}v)$, and $(\ref{60})_2$ by $(2r\partial_r v - 2v)$, respectively. 
		
		Summing these manipulated equations and combining the result with equation (\ref{72}), we obtain
			\begin{align}\label{3.81}
				&\frac{\mathrm{d}}{\mathrm{d} t}\int_{1}^{+\infty}\bigg(\frac{r^2 P^{\prime}(\rho)}{2\rho} \left(\partial_{rr} \rho\right)^2
				+\frac{r^2\rho}{2} (\partial_{rr}v)^2\bigg)\mathrm{d}r
				+\frac{\mathrm{d}}{\mathrm{d} t}\int_{1}^{+\infty}\frac{r^2 P^{\prime\prime}(\rho)}{\rho} \partial_{rr} \rho(\partial_{r}\rho)^2\mathrm{d}r
				 \nonumber \\
				&+\int_{1}^{+\infty}\partial_{r}(r^2 \partial_{rr}v\partial_{rr}P)\mathrm{d}r
				+\int_{1}^{+\infty}(-4\partial_{r}v\partial_{rr}P+\frac{4 v}{r}\partial_{rr}P)\mathrm{d}r \nonumber\\
				&+\int_{1}^{+\infty}\bigg[\left(\partial_{tr}\rho+\partial_{rr}(\rho v)+\dfrac{2}{r} \partial_r(\rho v)-\dfrac{2}{r^2} \rho v\right)(\dfrac{4\partial_{r}{P}}{\rho}+\dfrac{2r\partial_{rr}{P}}{\rho}) \nonumber\\
				&+\left(\partial_{trr}\rho+\partial_{rrr}(\rho v)+\dfrac{2}{r} \partial_{rr}(\rho v)-\dfrac{4}{r^2} \partial_{r}(\rho v)+\dfrac{4}{r^3}\rho v\right) \dfrac{2r\partial_{r}{P}}{\rho} \nonumber\\
				&+\left(\rho\partial_{t}v+\rho v \partial_r v+\partial_r P-\dfrac{2}{3}\partial_r \tilde{S}_1-\dfrac{2}{r} \tilde{S}_1-\partial_r \tilde{S}_2\right)(\dfrac{4}{r^2}v-2\partial_{rr}v-\dfrac{4}{r}\partial_{r}v) \nonumber\\
				&+\left(\rho\partial_{tr}v+\rho v \partial_{rr} v+\partial_{rr} P-\dfrac{2}{3}\partial_{rr}  \tilde{S}_1-\dfrac{2}{r} \partial_r \tilde{S}_1+\dfrac{2}{r^2}\tilde{S}_1-\partial_{rr} \tilde{S}_2-f_2\right)(4\partial_{r}v+2r\partial_{rr}v-\dfrac{4}{r}v) \nonumber\\
				&+\left(\rho\partial_{trr}v+\rho v \partial_{rrr} v+\partial_{rrr} P-\dfrac{2}{3}\partial_{rrr}  \tilde{S}_1-\dfrac{2}{r} \partial_{rr} \tilde{S}_1+\dfrac{4}{r^2}\partial_{r}\tilde{S}_1-\dfrac{4}{r^3}\tilde{S}_1-\partial_{rrr} \tilde{S}_2-l_2\right)
				(2r\partial_{r}v-2v)\bigg]\mathrm{d}r \nonumber\\
				&+\int_{1}^{+\infty}\bigg(-\frac{2 r^2}{3} \partial_{rrr}\tilde{S}_1 \partial_{rr}v-2r\partial_{rr}\tilde{S}_1\partial_{rr}v+4\partial_{r}\tilde{S}_1  \partial_{rr}v-\frac{4}{r}\tilde{S}_1\partial_{rr}v
				-r^2 \partial_{rrr}\tilde{S}_2 \partial_{rr}v\bigg)\mathrm{d}r \nonumber\\
				&\leq C E(t)^{\frac{1}{2}}\mathcal{D}(t).
			\end{align}
		
		Similar to before, we will temporarily not consider the terms involving $\tilde{S}_1$ and $\tilde{S}_2$. 
		We introduce $G_1$ and $G_2$, defined as
		\begin{equation}\nonumber
			\begin{aligned}
				G_1
				&=\partial_{tr} \rho(\dfrac{4\partial_{r}{P}}{\rho}+\dfrac{2r\partial_{rr}{P}}{\rho})
				+\partial_{trr} \rho\dfrac{2r\partial_{r}{P}}{\rho}+(\rho\partial_{t} v+\rho v \partial_{r} v)(\dfrac{4}{r^2}v-2\partial_{rr}v-\dfrac{4}{r}\partial_{r}v)\\
				&\quad+(\rho\partial_{tr} v+\rho v \partial_{rr} v)(4\partial_{r}v+2r\partial_{rr}v-\dfrac{4}{r}v)
				+(\rho\partial_{trr} v+\rho v \partial_{rrr} v)(2r\partial_{r}v-2v),
			\end{aligned}
		\end{equation}
		and
		\begin{equation}\nonumber
			\begin{aligned}
				G_2&=
				(\partial_{rr}(\rho v)+\dfrac{2}{r} \partial_r(\rho v)-\dfrac{2}{r^2} \rho v) (\dfrac{4\partial_{r}{P}}{\rho}+\dfrac{2r\partial_{rr}{P}}{\rho})\\
				&\quad+(\partial_{rrr}(\rho v)+\dfrac{2}{r} \partial_{rr}(\rho v)-\dfrac{4}{r^2} \partial_{r}(\rho v)+\dfrac{4}{r^3}\rho v) \dfrac{2r\partial_{r}{P}}{\rho}\\
				&\quad+\partial_{r} P (\dfrac{4}{r^2}v-2\partial_{rr}v-\dfrac{4}{r}\partial_{r}v)
				+\partial_{rr} P (4\partial_{r}v+2r\partial_{rr}v-\dfrac{4}{r}v)
				+\partial_{rrr} P(2r\partial_{r}v-2v).
			\end{aligned}
		\end{equation}
		Specifically, we have
			\begin{align*}
				&\int_{1}^{+\infty}G_1\mathrm{d}r\\
				\geq&\int_{1}^{+\infty}\bigg(\frac{2 P^{\prime}(\rho)}{\rho}((\partial_{r}\rho)^2)_t
				+\frac{2r P^{\prime\prime}(\rho)}{\rho}\partial_{tr}\rho(\partial_{r}\rho)^2
				+\left[\frac{2rP^{\prime}(\rho)}{\rho}\partial_{r}\rho\partial_{rr}\rho\right]_t 
				-\left[\frac{P^{\prime}(\rho)}{\rho}\right]_t2r\partial_{r}\rho\partial_{rr}\rho\bigg)\mathrm{d}r\\
				&+\int_{1}^{+\infty}\bigg(
				\frac{2\rho}{r^2}(v^2)_t -2\rho\partial_{t} v\partial_{rr} v-\frac{4\rho}{r}\partial_{t} v\partial_{r} v
				+2\rho((\partial_{r}v)^2)_t+2r\rho\partial_{tr} v\partial_{rr} v-\frac{4\rho}{r}v\partial_{tr} v\bigg)\mathrm{d}r\\
				&+\int_{1}^{+\infty}\bigg(
				2r\rho\partial_{trr}v\partial_{r} v-2\rho v\partial_{trr} v+2r\rho v \partial_{rrr}v\partial_{r} v-2\rho v^2 \partial_{rrr}v
				\bigg)\mathrm{d}r-C E(t)^{\frac{1}{2}}\mathcal{D}(t) \displaybreak\\
				\geq&\int_{1}^{+\infty}\bigg(\frac{2 P^{\prime}(\rho)}{\rho}((\partial_{r}\rho)^2)_t
				+\left[\frac{2rP^{\prime}(\rho)}{\rho}\partial_{r}\rho\partial_{rr}\rho\right]_t\bigg)\mathrm{d}r
				+\int_{1}^{+\infty}\bigg((\frac{2\rho}{r^2}v^2)_t -(2\rho v\partial_{rr} v)_t -(\frac{4\rho}{r} v\partial_{r} v)_t\\
				&\qquad\qquad\quad+(2\rho(\partial_{r} v)^2)_t
				+(2r\rho\partial_{rr}v\partial_{r} v)_t
				-(2r\rho v\partial_{r} v)_r\partial_{rr} v
				+(2\rho v^2)_r\partial_{rr} v
				\bigg)\mathrm{d}r
				-C E(t)^{\frac{1}{2}}\mathcal{D}(t)\\
				\geq& \frac{\mathrm{d}}{\mathrm{d} t}\int_{1}^{+\infty}
				\bigg(\frac{2 P^{\prime}(\rho)}{\rho}(\partial_{r}\rho)^2+\frac{P^{\prime}(\rho)}{\rho}2r\partial_{r}\rho\partial_{rr}\rho
				+2\rho(\partial_{r}v)^2+\frac{2\rho}{r^2}v^2+2r\rho\partial_{rr}v\partial_{r}v-2\rho v\partial_{rr}v\\
				&\qquad\qquad\quad-\frac{4\rho v}{r}\partial_{r}v\bigg)\mathrm{d}r
				-C E(t)^{\frac{1}{2}}\mathcal{D}(t),
			\end{align*}
		and
		\begin{equation}\nonumber
			\begin{aligned}
				&\int_{1}^{+\infty}G_2\mathrm{d}r\\
				\geq&
				\int_{1}^{+\infty}\bigg((4\partial_{r}P\partial_{rr}v+\frac{8}{r}\partial_{r}P\partial_{r}v-\frac{8}{r^2}v\partial_{r}P)+(2r\partial_{rr}P\partial_{rr}v
				+4\partial_{rr}P\partial_{r}v-\frac{4}{r}v\partial_{rr}P)\\
				&\qquad+(2r\partial_{rr}P\partial_{rrr}v+4\partial_{r}P\partial_{rr}v
				-\frac{8}{r}\partial_{r}P\partial_{r}v+\frac{8}{r^2}v\partial_{r}P)+(-2\partial_{r}P\partial_{rr}v-\frac{4}{r}\partial_{r}P\partial_{r}v+\frac{4}{r^2}v\partial_{r}P)\\
				&\qquad+(2r\partial_{rr}P\partial_{rr}v+4\partial_{rr}P\partial_{r}v-
				\frac{4}{r}v\partial_{rr}P)+(2r\partial_{rrr}P\partial_{rr}v-2v\partial_{rrr}P)\bigg)\mathrm{d}r
				-C E(t)^{\frac{1}{2}}\mathcal{D}(t)\\
				=&\int_{1}^{+\infty}\bigg(2\partial_{r}P\partial_{rr}v-\frac{4}{r}\partial_{r}P\partial_{r}v+\frac{4}{r^2}\partial_{r}Pv+2r\partial_{r}P\partial_{rrr}v
				+4\partial_{r}P\partial_{rr}v
				+4r\partial_{rr}P\partial_{rr}v+8\partial_{rr}P\partial_{r}v\\
				&\qquad-\frac{8}{r}v\partial_{rr}P+2r\partial_{rrr}P\partial_{r}v-2v\partial_{rrr}P\bigg)\mathrm{d}r
				-C E(t)^{\frac{1}{2}}\mathcal{D}(t)\\
				=&\int_{1}^{+\infty}\bigg(\partial_{r}\left(2r\partial_{r}P\partial_{rr}v\right)+\partial_{r}\left(2r\partial_{rr}P\partial_{r}v\right)+
				\partial_{r}\left(4\partial_{r}v\partial_{r}P\right)
				-2\partial_{r}\left(v\partial_{rr}P\right)
				-\partial_{r}(\frac{4}{r}v\partial_{r}P)\\
				&\qquad+(4\partial_{r}v\partial_{rr}P-\dfrac{4 v}{r}\partial_{rr}P)\bigg)\mathrm{d}r
				-C E(t)^{\frac{1}{2}}\mathcal{D}(t).
			\end{aligned}
		\end{equation}
		To simplify notation, the terms involving $\tilde{S}_1$ and $\tilde{S}_2$ in equation (\ref{3.81}) are denoted as \(G_3\), 
		\begin{align*}\nonumber
				G_3&=
				\left(-\dfrac{2}{3}\partial_r \tilde{S}_1-\dfrac{2}{r} \tilde{S}_1-\partial_r \tilde{S}_2\right)(\dfrac{4}{r^2}v-2\partial_{rr}v-\dfrac{4}{r}\partial_{r}v)\\
				&\quad+\left(-\dfrac{2}{3}\partial_{rr}  \tilde{S}_1-\dfrac{2}{r} \partial_r \tilde{S}_1+\dfrac{2}{r^2}\tilde{S}_1-\partial_{rr} \tilde{S}_2\right)(4\partial_{r}v+2r\partial_{rr}v-\dfrac{4}{r}v)\\
				&\quad+\left(-\dfrac{2}{3}\partial_{rrr}  \tilde{S}_1-\dfrac{2}{r} \partial_{rr} \tilde{S}_1+\dfrac{4}{r^2}\partial_{r}\tilde{S}_1-\dfrac{4}{r^3}\tilde{S}_1-\partial_{rrr} \tilde{S}_2\right)
				(2r\partial_{r}v-2v)\\
				&\quad+\bigg(-\frac{2 r^2}{3} \partial_{rrr}\tilde{S}_1 \partial_{rr}v-2r\partial_{rr}\tilde{S}_1\partial_{rr}v+4\partial_{r}\tilde{S}_1  \partial_{rr}v-\frac{4}{r}\tilde{S}_1\partial_{rr}v
				-r^2 \partial_{rrr}\tilde{S}_2 \partial_{rr}v\bigg)
		\end{align*}
		which will be addressed later by combining equations $(\ref{60})_3$ and $(\ref{60})_4$.
		
		Therefore, equation (\ref{3.81}) can be simplified to
		
		\begin{align}\label{61}
				\frac{\mathrm{d}}{\mathrm{d} t} &\int_{1}^{+\infty}\bigg(\frac{r^2 P^{\prime}(\rho)}{2\rho} \left(\partial_{rr} \rho\right)^2+\frac{ P^{\prime}(\rho)}{\rho}2(\partial_{r}\rho)^2+\frac{P^{\prime}(\rho)}{\rho}2r\partial_{r}\rho\partial_{rr}\rho
				+\frac{r^2\rho}{2} (\partial_{rr}v)^2+2\rho(\partial_{r}v)^2+\frac{2\rho}{r^2}v^2 \nonumber\\
				&\qquad\quad+2r\rho\partial_{rr}v\partial_{r}v-2\rho\partial_{rr}v v-\frac{4\rho v}{r}\partial_{r}v\bigg)\mathrm{d}r
				+\frac{\mathrm{d}}{\mathrm{d} t}\int_{1}^{+\infty}\frac{r^2 P^{\prime\prime}(\rho)}{\rho} \partial_{rr} \rho(\partial_{r}\rho)^2\mathrm{d}r
				+\int_{1}^{+\infty}G_3\mathrm{d}r \nonumber\\
				\leq&
				\int_{1}^{+\infty}I_1\mathrm{d}r+ C E(t)^{\frac{1}{2}}\mathcal{D}(t),
		\end{align}
		where
		
	\begin{align*}				I_1=-\bigg(\partial_{r}\left(r^2\partial_{rr}P\partial_{rr}v\right) +\partial_{r}\left(2r\partial_{r}P\partial_{rr}v\right)+\partial_{r}\left(2r\partial_{rr}P\partial_{r}v\right)+
	\partial_{r}\left(4\partial_{r}v\partial_{r}P\right)-2\partial_{r}\left(v\partial_{rr}P\right)\\
	-\partial_{r}(\frac{4}{r}v\partial_{r}P)\bigg).
	\end{align*}
		
		Multiplying the equation $(\ref{60})_3$ 
		by  $\dfrac{r^2}{3\mu}\partial_{rr} \tilde{S}_1$  and  integrating over \(r \in [1, +\infty)\), we obtain
		
		\begin{equation}\label{62}
			\begin{aligned}
				&\frac{\mathrm{d}}{\mathrm{d} t}\int_{1}^{+\infty}\left[\frac{\tau}{3 \mu} \frac{r^2 \rho}{2}(\partial_{rr} \tilde{S}_1)^2\right]\mathrm{d}r
				+\int_{1}^{+\infty}\frac{r^2}{3 \mu}(\partial_{rr} \tilde{S}_1)^2\mathrm{d}r
				-\int_{1}^{+\infty}
				\frac{\tau\epsilon}{3 \mu}r^2\rho [\frac{1}{2}(\partial_{rr}\tilde{S}_1)^2]_r\mathrm{d}r\\
				&-\int_{1}^{+\infty}\bigg(\frac{2r^2}{3}\partial_{rrr} v\partial_{rr} \tilde{S}_1- \frac{2r}{3}\partial_{rr} v\partial_{rr}\tilde{S}_1
				+\frac{4}{3} \partial_{r}v\partial_{rr} \tilde{S}_1-\frac{4}{3r}v\partial_{rr} \tilde{S}_1
				\bigg)\mathrm{d}r
				=\int_{1}^{+\infty}\frac{r^2}{3\mu}l_3\partial_{rr} \tilde{S}_1
			\end{aligned}
		\end{equation}
		where
		\begin{equation}\nonumber
			\begin{aligned}
				&-\int_{1}^{+\infty}
				\frac{\tau\epsilon}{3 \mu}r^2\rho[\frac{1}{2}(\partial_{rr}\tilde{S}_1)^2]_r\mathrm{d}r
				\geq \frac{\tau\epsilon}{6 \mu}\rho(t,1)(\partial_{rr}\tilde{S}_1)^2(t,1)
				- C E^{\frac{1}{2}}(t) \mathcal{D}(t),\\
				&\int_{1}^{+\infty}\frac{r^2}{3\mu}l_3\partial_{rr} \tilde{S}_1\mathrm{d}r
				\leq C E^{\frac{1}{2}}(t) \mathcal{D}(t).
			\end{aligned}
		\end{equation}
		
		Similarly, multiplying the equation $(\ref{60})_4$ by  $\dfrac{r^2}{\lambda}\partial_{rr} \tilde{S}_2$, we get
		
		\begin{equation}
			\begin{aligned}\label{63}
				&\frac{\mathrm{d}}{\mathrm{d} t}\int_{1}^{+\infty}\left[\frac{\tau }{\lambda} \frac{r^2 \rho}{2}(\partial_{rr} \tilde{S}_2)^2\right]\mathrm{d}r
				+\int_{1}^{+\infty}\frac{r^2}{\lambda}(\partial_{rr} \tilde{S}_2)^2\mathrm{d}r
				-\int_{1}^{+\infty}
				\frac{\tau\epsilon}{\lambda}r^2\rho [\frac{1}{2}(\partial_{rr}\tilde{S}_2)^2]_r\mathrm{d}r\\
				&-\int_{1}^{+\infty}\bigg(r^2\partial_{rrr}v\partial_{rr} \tilde{S}_2 +2r\partial_{rr}v\partial_{rr} \tilde{S}_2
				-4\partial_{r}v\partial_{rr} \tilde{S}_2+\frac{4}{r}v\partial_{rr} \tilde{S}_2\bigg)\mathrm{d}r =\int_{1}^{+\infty}\frac{r^2}{\lambda}l_4\partial_{rr}\tilde{S}_2
			\end{aligned}
		\end{equation}
		where
		\begin{equation}\nonumber
			\begin{aligned}
				&-\int_{1}^{+\infty}
				\frac{\tau \epsilon}{\lambda}r^2\rho[\frac{1}{2}(\partial_{rr}\tilde{S}_2)^2]_r\mathrm{d}r
				\geq \frac{\tau\epsilon}{2\lambda}\rho(t,1)(\partial_{rr}\tilde{S}_2)^2(t,1)
				- C E^{\frac{1}{2}}(t) \mathcal{D}(t),\\
				&\int_{1}^{+\infty}\frac{r^2}{\lambda}l_4\partial_{rr} \tilde{S}_2\mathrm{d}r
				\leq C E^{\frac{1}{2}}(t) \mathcal{D}(t).
			\end{aligned}
		\end{equation}
		Thus, by adding (\ref{62}) and (\ref{63}), we derive
		\begin{equation}\label{3.80}
			\begin{aligned}
				&\frac{\mathrm{d}}{\mathrm{d} t}\int_{1}^{+\infty}\left[\frac{\tau}{3 \mu} \frac{r^2 \rho}{2}(\partial_{rr} \tilde{S}_1)^2+\frac{\tau }{\lambda} \frac{r^2 \rho}{2}(\partial_{rr} \tilde{S}_2)^2\right]\mathrm{d}r
				+\int_{1}^{+\infty}\left(\frac{r^2}{3 \mu}(\partial_{rr} \tilde{S}_1)^2+\frac{r^2}{\lambda}(\partial_{rr} \tilde{S}_2)^2\right)\mathrm{d}r
				\\
				&-\int_{1}^{+\infty}\bigg(\frac{2r^2}{3}\partial_{rrr} v\partial_{rr} \tilde{S}_1- \frac{2r}{3}\partial_{rr} v\partial_{rr}\tilde{S}_1
				+\frac{4}{3} \partial_{r}v\partial_{rr} \tilde{S}_1-\frac{4}{3r}v\partial_{rr} \tilde{S}_1
				\bigg)\mathrm{d}r\\
				&-\int_{1}^{+\infty}\bigg(r^2\partial_{rrr}v\partial_{rr} \tilde{S}_2 +2r\partial_{rr}v\partial_{rr} \tilde{S}_2
				-4\partial_{r}v\partial_{rr} \tilde{S}_2+\frac{4}{r}v\partial_{rr} \tilde{S}_2\bigg)\mathrm{d}r\\
				&+\frac{\tau\epsilon}{6 \mu}\rho(t,1)(\partial_{rr}\tilde{S}_1)^2(t,1)
				+\frac{\tau\epsilon}{2\lambda}\rho(t,1)(\partial_{rr}\tilde{S}_2)^2(t,1)
				\leq C E^{\frac{1}{2}}(t) \mathcal{D}(t).
			\end{aligned}
		\end{equation}
		
		To address the third and fourth terms on the left-hand side of \eqref{3.80}, we exploit the structure of system (\ref{3.4}) and couple them with equation (\ref{61}) via the following operations:\\ 
		multiply equation \((\ref{3.4})_3\) by \((\dfrac{12}{\mu r^2}\tilde{S}_1 - \dfrac{2}{\mu}\partial_{rr}\tilde{S}_1 - \dfrac{4}{\mu r}\partial_{r}\tilde{S}_1)\), 
		\((\ref{22})_3\) by \((\dfrac{4}{3\mu}\partial_{r}\tilde{S}_1 + \dfrac{2r}{3\mu}\partial_{rr}\tilde{S}_1 - \dfrac{4}{\mu r}\tilde{S}_1)\),
		\((\ref{60})_3\) by \((\dfrac{2r}{3\mu}\partial_{r}\tilde{S}_1 - \dfrac{2}{\mu}\tilde{S}_1)\),
		\((\ref{22})_4\) by \((\dfrac{4}{\lambda}\partial_{r}\tilde{S}_2 + \dfrac{2r}{\lambda}\partial_{rr}\tilde{S}_2)\),
		\((\ref{60})_4\) by \(\dfrac{2r}{\lambda}\partial_{r}\tilde{S}_2\), respectively.  
		
		Upon adding the resulting equations to \((\ref{3.80})\), we obtain 
			\begin{align}\label{64}
				&\frac{\mathrm{d}}{\mathrm{d} t}\int_{1}^{+\infty}\left[\frac{\tau}{3 \mu} \frac{r^2 \rho}{2}(\partial_{rr} \tilde{S}_1)^2+\frac{\tau }{\lambda} \frac{r^2 \rho}{2}(\partial_{rr} \tilde{S}_2)^2\right]\mathrm{d}r
				+\int_{1}^{+\infty}(\frac{r^2}{3 \mu}(\partial_{rr} \tilde{S}_1)^2+\frac{r^2}{\lambda}(\partial_{rr} \tilde{S}_2)^2)\mathrm{d}r
				\nonumber\\
				&+\frac{\tau\epsilon}{6 \mu}\rho(t,1)(\partial_{rr}\tilde{S}_1)^2(t,1) +\frac{\tau\epsilon}{2\lambda}\rho(t,1)(\partial_{rr}\tilde{S}_2)^2(t,1) \nonumber\\
				&-\int_{1}^{+\infty}\bigg(\frac{2r^2}{3}\partial_{rrr} v\partial_{rr} \tilde{S}_1- \frac{2r}{3}\partial_{rr} v\partial_{rr}\tilde{S}_1
				+\frac{4}{3} \partial_{r}v\partial_{rr} \tilde{S}_1-\frac{4}{3r}v\partial_{rr} \tilde{S}_1
				\bigg)\mathrm{d}r \nonumber\\
				&-\int_{1}^{+\infty}\bigg(r^2\partial_{rrr}v\partial_{rr} \tilde{S}_2 +2r\partial_{rr}v\partial_{rr} \tilde{S}_2
				-4\partial_{r}v\partial_{rr} \tilde{S}_2+\frac{4}{r}v\partial_{rr} \tilde{S}_2\bigg)\mathrm{d}r \nonumber
				\\
				&+\int_{1}^{+\infty}\bigg[\left(\tau \rho\partial_{t}\tilde{S}_1+\tau \rho v\partial_r\tilde{S}_1  +\tilde{S}_1-2\mu(\partial_r v- \dfrac{v}{r})-\tau\epsilon\rho\partial_{r}\tilde{S}_1\right)(\dfrac{12}{\mu r^2}\tilde{S}_1-\dfrac{2}{\mu}\partial_{rr}\tilde{S}_1-\dfrac{4}{\mu r}\partial_{r}\tilde{S}_1) \nonumber\\
				&\quad
				+\bigg(\tau \rho\partial_{tr}\tilde{S}_1+\tau \rho v \partial_{rr}\tilde{S}_1 
				-2\mu\big(\partial_{rr} v- \dfrac{1}{r}\partial_r v
				+\dfrac{1}{r^2} v \big)+\partial_r \tilde{S}_1 \nonumber \\
				&\qquad\qquad\qquad\qquad\qquad\qquad\qquad\qquad\qquad\qquad\qquad\quad-\tau\epsilon\rho\partial_{rr}\tilde{S}_1
				\bigg)(\dfrac{4}{3\mu}\partial_{r}\tilde{S}_1+\dfrac{2r}{3\mu}\partial_{rr}\tilde{S}_1 -\dfrac{4}{\mu r}\tilde{S}_1) \nonumber\\
				&\quad
				+\bigg(\tau \rho\partial_{trr}\tilde{S}_1+\tau \rho v \partial_{rrr}\tilde{S}_1 
				-2\mu\big(\partial_{rrr} v- \dfrac{1}{r}\partial_{rr} v
				+\dfrac{2}{r^2} \partial_{r}v-\dfrac{2}{r^3}v \big)+\partial_{rr} \tilde{S}_1 \nonumber \\ &\qquad\qquad\qquad\qquad\qquad\qquad\qquad\qquad\qquad\qquad\qquad\quad-\tau\epsilon\rho\partial_{rrr}\tilde{S}_1\bigg)(\dfrac{2r}{3\mu}\partial_{r}\tilde{S}_1 -\dfrac{2}{\mu}\tilde{S}_1) \nonumber\\
				&\quad
				+\left(\tau \rho\partial_{tr}\tilde{S}_2+\tau  \rho v \partial_{rr}\tilde{S}_2
				-\lambda \big(\partial_{rr} v+\dfrac{2}{r}\partial_r v-\dfrac{2}{r^2} v\big)+\partial_r \tilde{S}_2 -\tau\epsilon\rho\partial_{rr}\tilde{S}_2\right)
				(\dfrac{4}{\lambda}\partial_{r}\tilde{S}_2+\dfrac{2r}{\lambda}\partial_{rr}\tilde{S}_2) \nonumber \\
				&\quad
				+\left(\tau \rho\partial_{trr}\tilde{S}_2+\tau  \rho v \partial_{rrr}\tilde{S}_2
				-\lambda \big(\partial_{rrr} v+\dfrac{2}{r}\partial_{rr} v-\dfrac{4}{r^2} \partial_{r}v+\dfrac{4}{r^3}v\big)+\partial_{rr} \tilde{S}_2-\tau\epsilon\rho\partial_{rrr}\tilde{S}_2
				\right)\dfrac{2r}{\lambda}\partial_{r}\tilde{S}_2\bigg]\mathrm{d}r \nonumber \\
				&
				~\leq ~C E^{\frac{1}{2}}(t) \mathcal{D}(t).
			\end{align}
		We rewrite the last three terms on the left-hand side of (\ref{64}) as $\int_{1}^{+\infty}(H_1+H_2+H_3+H_4)\mathrm{d}r$, where
		\begin{equation}\nonumber
			\begin{aligned}
				H_1=&	
				\left(\tau \rho\partial_{t}\tilde{S}_1+\tau \rho v\partial_r\tilde{S}_1  \right)(\dfrac{12}{\mu r^2}\tilde{S}_1-\dfrac{2}{\mu}\partial_{rr}\tilde{S}_1-\dfrac{4}{\mu r}\partial_{r}\tilde{S}_1)\\
				&
				+\left(\tau \rho\partial_{tr}\tilde{S}_1+\tau \rho v \partial_{rr}\tilde{S}_1 
				\right)(\dfrac{4}{3\mu}\partial_{r}\tilde{S}_1+\dfrac{2r}{3\mu}\partial_{rr}\tilde{S}_1 -\dfrac{4}{\mu r}\tilde{S}_1)\\
				&+\left(\tau \rho\partial_{trr}\tilde{S}_1+\tau \rho v \partial_{rrr}\tilde{S}_1 
				\right)(\dfrac{2r}{3\mu}\partial_{r}\tilde{S}_1 -\dfrac{2}{\mu}\tilde{S}_1)\\
				&
				+\left(\tau \rho\partial_{tr}\tilde{S}_2+\tau  \rho v \partial_{rr}\tilde{S}_2
				\right)
				(\dfrac{4}{\lambda}\partial_{r}\tilde{S}_2+\dfrac{2r}{\lambda}\partial_{rr}\tilde{S}_2)
				+\left(\tau \rho\partial_{trr}\tilde{S}_2+\tau  \rho v \partial_{rrr}\tilde{S}_2
				\right)\dfrac{2r}{\lambda}\partial_{r}\tilde{S}_2,
			\end{aligned}
		\end{equation}
		\begin{equation}\nonumber
			\begin{aligned}
				H_2
				=&\tilde{S}_1(\dfrac{12}{\mu r^2}\tilde{S}_1-\dfrac{2}{\mu}\partial_{rr}\tilde{S}_1-\dfrac{4}{\mu r}\partial_{r}\tilde{S}_1)
				+\partial_{r}\tilde{S}_1(\dfrac{4}{3\mu}\partial_{r}\tilde{S}_1+\dfrac{2r}{3\mu}\partial_{rr}\tilde{S}_1 -\dfrac{4}{\mu r}\tilde{S}_1)\\
				&
				+\partial_{rr}\tilde{S}_1(\dfrac{2r}{3\mu}\partial_{r}\tilde{S}_1 -\dfrac{2}{\mu}\tilde{S}_1)
				+\partial_{r}\tilde{S}_2
				(\dfrac{4}{\lambda}\partial_{r}\tilde{S}_2+\dfrac{2r}{\lambda}\partial_{rr}\tilde{S}_2)
				+\dfrac{2r}{\lambda}\partial_{rr}\tilde{S}_2\partial_{r}\tilde{S}_2,
			\end{aligned}
		\end{equation}
		\begin{equation}\nonumber
			\begin{aligned}
				H_3
				=
				&-2\mu(\partial_r v- \dfrac{v}{r})(\dfrac{12}{\mu r^2}\tilde{S}_1-\dfrac{2}{\mu}\partial_{rr}\tilde{S}_1-\dfrac{4}{\mu r}\partial_{r}\tilde{S}_1)\\
				&-2\mu\left(\partial_{rr} v- \dfrac{1}{r}\partial_r v
				+\dfrac{1}{r^2} v \right)(\dfrac{4}{3\mu}\partial_{r}\tilde{S}_1+\dfrac{2r}{3\mu}\partial_{rr}\tilde{S}_1 -\dfrac{4}{\mu r}\tilde{S}_1)\\
				&
				-2\mu\left(\partial_{rrr} v- \dfrac{1}{r}\partial_{rr} v
				+\dfrac{2}{r^2} \partial_{r}v-\dfrac{2}{r^3}v \right)(\dfrac{2r}{3\mu}\partial_{r}\tilde{S}_1 -\dfrac{2}{\mu}\tilde{S}_1)\\
				&-\lambda \left(\partial_{rr} v+\dfrac{2}{r}\partial_r v-\dfrac{2}{r^2} v\right)
				(\dfrac{4}{\lambda}\partial_{r}\tilde{S}_2+\dfrac{2r}{\lambda}\partial_{rr}\tilde{S}_2)
				-\lambda \left(\partial_{rrr} v+\dfrac{2}{r}\partial_{rr} v-\dfrac{4}{r^2} \partial_{r}v+\dfrac{4}{r^3}v\right)\dfrac{2r}{\lambda}\partial_{r}\tilde{S}_2\\
				&-\bigg(\frac{2r^2}{3}\partial_{rrr} v\partial_{rr} \tilde{S}_1- \frac{2r}{3}\partial_{rr} v\partial_{rr}\tilde{S}_1
				+\frac{4}{3} \partial_{r}v\partial_{rr} \tilde{S}_1-\frac{4}{3r}v\partial_{rr} \tilde{S}_1
				\bigg)\\
				&-\bigg(r^2\partial_{rrr}v\partial_{rr} \tilde{S}_2 +2r\partial_{rr}v\partial_{rr} \tilde{S}_2
				-4\partial_{r}v\partial_{rr} \tilde{S}_2+\frac{4}{r}v\partial_{rr} \tilde{S}_2\bigg),
			\end{aligned}
		\end{equation}
		and
		\begin{equation}\nonumber
			\begin{aligned}
				H_4=
				&-\tau\epsilon\rho\partial_{r}\tilde{S}_1(\dfrac{12}{\mu r^2}\tilde{S}_1-\dfrac{2}{\mu}\partial_{rr}\tilde{S}_1-\dfrac{4}{\mu r}\partial_{r}\tilde{S}_1)
				-\tau\epsilon\rho\partial_{rr}\tilde{S}_1
				(\dfrac{4}{3\mu}\partial_{r}\tilde{S}_1+\dfrac{2r}{3\mu}\partial_{rr}\tilde{S}_1 -\dfrac{4}{\mu r}\tilde{S}_1)\\
				&-{\tau\epsilon\rho\partial_{rrr}\tilde{S}_1}(\dfrac{2r}{3\mu}\partial_{r}\tilde{S}_1 -\dfrac{2}{\mu}\tilde{S}_1)
				-\tau\epsilon\rho\partial_{rr}\tilde{S}_2
				(\dfrac{4}{\lambda}\partial_{r}\tilde{S}_2+\dfrac{2r}{\lambda}\partial_{rr}\tilde{S}_2)
				-\tau\epsilon\rho\partial_{rrr}\tilde{S}_2
				\dfrac{2r}{\lambda}\partial_{r}\tilde{S}_2.
			\end{aligned}
		\end{equation}
		Next, we handle each \(\int_{1}^{+\infty}H_i\mathrm{d}r\) individually, \(i=1,2,3,4\).
		\begin{equation}\nonumber
			\begin{aligned}
				&\int_{1}^{+\infty}H_1\mathrm{d}r\\
				\geq&
				\int_{1}^{+\infty}\tau\rho\bigg(
				\frac{12}{\mu r^2}\partial_{t}\tilde{S}_1\tilde{S}_1
				-\frac{2}{\mu}\partial_{t}\tilde{S}_1\partial_{rr}\tilde{S}_1
				-\frac{4}{\mu r} \partial_{t}\tilde{S}_1\partial_{r}\tilde{S}_1
				+\frac{4}{3\mu} \partial_{tr}\tilde{S}_1\partial_{r}\tilde{S}_1
				+\frac{2r}{3\mu} \partial_{tr}\tilde{S}_1\partial_{rr}\tilde{S}_1
				\\
				&\qquad\qquad\quad-\frac{4}{\mu r} \partial_{tr}\tilde{S}_1\tilde{S}_1
				+\frac{2r}{3\mu} \partial_{trr}\tilde{S}_1\partial_{r}\tilde{S}_1
				-\frac{2}{\mu} \partial_{trr}\tilde{S}_1\tilde{S}_1
				+\frac{2r}{3\mu} v\partial_{rrr}\tilde{S}_1\partial_{r}\tilde{S}_1
				-\frac{2}{\mu} v\partial_{rrr}\tilde{S}_1\tilde{S}_1
				\bigg)\mathrm{d}r\\
				&+\int_{1}^{+\infty}\tau\rho\bigg(
				\frac{4}{\lambda}\partial_{tr}\tilde{S}_2\partial_{r}\tilde{S}_2
				+\frac{2r}{\lambda}\partial_{tr}\tilde{S}_2\partial_{rr}\tilde{S}_2
				+\frac{2r}{\lambda}\partial_{trr}\tilde{S}_2\partial_{r}\tilde{S}_2
				+\frac{2r}{\lambda}v\partial_{rrr}\tilde{S}_2\partial_{r}\tilde{S}_2
				\bigg)\mathrm{d}r
				-C E^{\frac{1}{2}}(t) \mathcal{D}(t)\\
				\geq&
				\int_{1}^{+\infty}\tau\rho\bigg(
				\frac{6}{\mu r^2}(\tilde{S}_1^2)_t
				-\frac{2}{\mu}(\tilde{S}_1\partial_{rr}\tilde{S}_1)_t
				-\frac{4}{\mu r}(\partial_{r}\tilde{S}_1\tilde{S}_1)_t
				+\frac{2}{3\mu} [(\partial_{r}\tilde{S}_1)^2]_t
				+\frac{2r}{3\mu} (\partial_{r}\tilde{S}_1\partial_{rr}\tilde{S}_1)_t
				\\
				&\qquad\qquad-(\frac{2r}{3\mu} v\partial_{r}\tilde{S}_1)_r\partial_{rr}\tilde{S}_1+(\frac{2}{\mu} v\tilde{S}_1)_r\partial_{rr}\tilde{S}_1
				\bigg)\mathrm{d}r\\
				&+\int_{1}^{+\infty}\tau\rho\bigg(
				\frac{2}{\lambda}[(\partial_{r} \tilde{S}_2)^2]_t
				+\frac{2r}{\lambda}(\partial_{r} \tilde{S}_2\partial_{rr} \tilde{S}_2)_t
				-(\frac{2r}{\lambda}v\partial_{r} \tilde{S}_2)_r\partial_{rr} \tilde{S}_2
				\bigg)\mathrm{d}r
				-C E^{\frac{1}{2}}(t) \mathcal{D}(t)\\
				\geq&
				\frac{\mathrm{d}}{\mathrm{d} t}\int_{1}^{+\infty}\tau\rho\bigg(
				\frac{2}{3\mu}(\partial_{r}\tilde{S}_1)^2+\frac{6}{\mu r^2}(\tilde{S}_1)^2 +\frac{2r}{3\mu}\partial_{rr}\tilde{S}_1 \partial_{r}\tilde{S}_1-\frac{2}{\mu}\partial_{rr}\tilde{S}_1\tilde{S}_1-\frac{4}{\mu r}\partial_{r}\tilde{S}_1 \tilde{S}_1
				+\frac{2}{\lambda}(\partial_{r} \tilde{S}_2)^2\\
				&\qquad\qquad\qquad\qquad\qquad\qquad\qquad\qquad\qquad\qquad\qquad\qquad\qquad\quad+\frac{2r}{\lambda}\partial_{rr} \tilde{S}_2\partial_{r} \tilde{S}_2\bigg)\mathrm{d}r
				-C E^{\frac{1}{2}}(t) \mathcal{D}(t),
			\end{aligned}
		\end{equation}
		and
			\begin{align*}
				\int_{1}^{+\infty}H_2\mathrm{d}r
				=\int_{1}^{+\infty}\bigg(\frac{4}{3\mu}(\partial_{r}\tilde{S}_1)^2+\frac{12}{\mu r^2}(\tilde{S}_1)^2 +\frac{4r}{3\mu}\partial_{rr}\tilde{S}_1 \partial_{r}\tilde{S}_1-\frac{4}{\mu}\partial_{rr}\tilde{S}_1\tilde{S}_1-\frac{8}{\mu r}\partial_{r}\tilde{S}_1 \tilde{S}_1
				+\frac{4}{\lambda}(\partial_{r}  \tilde{S}_2)^2\\
				+\frac{4r}{\lambda}\partial_{rr} \tilde{S}_2\partial_{r} \tilde{S}_2\bigg)\mathrm{d}r.
			\end{align*}
		
		To address the term $\int_{1}^{+\infty}H_3\mathrm{d}r$, we focus on $H_3+G_3$,
		
		\begin{equation*}
			\begin{aligned}
				&\int_{1}^{+\infty}(H_3+G_3)\mathrm{d}r\\
				=&\int_{1}^{+\infty}\bigg[-\frac{2}{3}(r^2\partial_{rrr}v\partial_{rr}\tilde{S}_1+r^2\partial_{rr}v\partial_{rrr}\tilde{S}_1+2r\partial_{rr}v\partial_{rr}\tilde{S}_1)\\
				&\quad\quad\quad-(\frac{4r}{3}\partial_{rr}v\partial_{rr}\tilde{S}_1+\frac{4r}{3}\partial_{r}v\partial_{rrr}\tilde{S}_1+\frac{4}{3}\partial_{r}v\partial_{rr}\tilde{S}_1)+(\frac{4}{3}\partial_{r}v\partial_{rr}\tilde{S}_1+\frac{4}{3}v\partial_{rrr}\tilde{S}_1)\\
				&\quad\quad\quad-
				(\frac{4r}{3}\partial_{rrr}v\partial_{r}\tilde{S}_1+\frac{4r}{3}\partial_{rr}v\partial_{rr}\tilde{S}_1+\frac{4}{3}\partial_{rr}v\partial_{r}\tilde{S}_1)-\frac{8}{3}(\partial_{rr}v\partial_{r}\tilde{S}_1+\partial_{r}v\partial_{rr}\tilde{S}_1)\\
				&\quad\quad\quad+ (\frac{8}{3r}v\partial_{rr}\tilde{S}_1+\frac{8}{3r}\partial_{r}v\partial_{r}\tilde{S}_1-\frac{8}{3r^2}v\partial_{r}\tilde{S}_1)
				+4(\partial_{rrr}v\tilde{S}_1+\partial_{rr}v\partial_{r}\tilde{S}_1)\\
				&\quad\quad\quad+(\frac{8}{r}\partial_{rr}v\tilde{S}_1+\frac{8}{r}\partial_{r}v
				\partial_{r}\tilde{S}_1-\frac{8}{r^2}\partial_{r}v\tilde{S}_1)
				-( \frac{8}{r^2}\partial_{r}v\tilde{S}_1+\frac{8}{r^2}v\partial_{r}\tilde{S}_1-\frac{16}{r^3}v\tilde{S}_1)\\
				&\quad\quad\quad-(r^2\partial_{rrr}v\partial_{rr}\tilde{S}_2+r^2\partial_{rr}v\partial_{rrr}\tilde{S}_2+2r\partial_{rr}v\partial_{rr}\tilde{S}_2)\\
				&\quad\quad\quad-(2r\partial_{rrr}v\partial_{r}\tilde{S}_2+2r\partial_{rr}v\partial_{rr}\tilde{S}_2+2\partial_{rr}v\partial_{r}\tilde{S}_2)\\
				&\quad\quad\quad-(2r\partial_{rr}v\partial_{rr}\tilde{S}_2+2r\partial_{r}v\partial_{rrr}\tilde{S}_2+2\partial_{r}v\partial_{rr}\tilde{S}_2)
				-(4\partial_{rr}v\partial_{r}\tilde{S}_2+4\partial_{r}v\partial_{rr}\tilde{S}_2)\\
				&\quad\quad\quad+(2\partial_{r}v\partial_{rr}\tilde{S}_2+2v\partial_{rrr}\tilde{S}_2)+(\frac{4}{r}\partial_{r}v\partial_{r}\tilde{S}_2+\frac{4}{r}v\partial_{rr}\tilde{S}_2-\frac{4}{r^2}v\partial_{r}\tilde{S}_2)\bigg]\mathrm{d}r\\
				:=&-\int_{1}^{+\infty}(I_2+I_3)\mathrm{d}r
			\end{aligned}
		\end{equation*}
		where
		
		\begin{equation}\nonumber
			\begin{aligned}
				I_2&=\bigg(\frac{2}{3}\partial_{r}(r^2\partial_{rr}v\partial_{rr}\tilde{S}_1)
				+\frac{2}{3}\partial_{r}(2r\partial_{r}v\partial_{rr}\tilde{S}_1)
				-\frac{4}{3}\partial_{r}(v\partial_{rr}\tilde{S}_1)
				+\frac{2}{3}\partial_{r}(2r\partial_{rr}v\partial_{r}\tilde{S}_1)
				+\frac{2}{3}\partial_{r}(4\partial_{r}v\partial_{r}\tilde{S}_1)\\
				&\quad\quad-\frac{4}{3}\partial_{r} (\frac{2v}{r}\partial_{r}\tilde{S}_1)
				-\frac{2}{3}\partial_{r}(6\partial_{rr}v\tilde{S}_1)
				-\frac{2}{3}\partial_{r}(\frac{12}{r}\partial_{r}v\tilde{S}_1)
				+\frac{4}{3}\partial_{r}( \frac{6v}{r^2}\tilde{S}_1)\bigg),\\
				I_3&=\bigg(\partial_{r}(r^2\partial_{rr}v\partial_{rr}\tilde{S}_2)+\partial_{r}(2r\partial_{rr}v\partial_{r}\tilde{S}_2)++\partial_{r}(2r\partial_{r}v\partial_{rr}\tilde{S}_2)+\partial_{r}(4\partial_{r}v\partial_{r}\tilde{S}_2)-2\partial_{r}(v\partial_{rr}\tilde{S}_2)\\
				&\quad\quad-2\partial_{r}(\frac{2v}{r}\partial_{r}\tilde{S}_2)\bigg).
			\end{aligned}
		\end{equation}
		
		Furthermore, we get
			\begin{align*}
				&\int_{1}^{+\infty}H_4\mathrm{~d}r\\
				=&\int_{1}^{+\infty}
				\bigg(-\tau\epsilon\rho\frac{12}{\mu r^2}\tilde{S}_1\partial_{r}\tilde{S}_1
				+\tau\epsilon\rho\frac{2}{3\mu}\partial_{r}\tilde{S}_1\partial_{rr}\tilde{S}_1
				+\tau\epsilon\rho\frac{4}{\mu r}(\partial_{r}\tilde{S}_1)^2
				-\tau\epsilon\rho\frac{2r}{3\mu}(\partial_{rr}\tilde{S}_1)^2
				+\tau\epsilon\rho\frac{4}{\mu r}\tilde{S}_1\partial_{rr}\tilde{S}_1\\
				&-\tau\epsilon\rho\frac{2r}{3\mu}\partial_{r}\tilde{S}_1\partial_{rrr}\tilde{S}_1
				+\tau\epsilon\rho\frac{2}{\mu}\tilde{S}_1\partial_{rrr}\tilde{S}_1
				-\tau\epsilon\rho\frac{4}{\lambda}\partial_{r}\tilde{S}_2\partial_{rr}\tilde{S}_2
				-\tau\epsilon\rho\frac{2r}{\lambda}(\partial_{rr}\tilde{S}_2)^2
				-\tau\epsilon\rho\frac{2r}{\lambda}\partial_{r}\tilde{S}_2\partial_{rrr}\tilde{S}_2\bigg)\mathrm{d}r\\
				\geq& C\epsilon\int_{1}^{+\infty}
				-(\tilde{S}_1^2+(\partial_{r}\tilde{S}_1)^2
				+(\partial_{rr}\tilde{S}_1)^2+(\partial_{r}\tilde{S}_2)^2
				+(\partial_{rr}\tilde{S}_2)^2)\mathrm{d}r\mathrm{d}t
				+\frac{2\tau\epsilon}{3\mu}(\rho\partial_{r}\tilde{S}_1\partial_{rr}\tilde{S}_1)(t,1)\\
				&
				-\frac{2\tau\epsilon}{\mu}(\rho\tilde{S}_1\partial_{rr}\tilde{S}_1)(t,1)
				+\frac{2\tau\epsilon}{\lambda}(\rho\partial_{r}\tilde{S}_2\partial_{rr}\tilde{S}_2)(t,1)
				-C\left(E(0)+ E^{\frac{1}{2}}(t) \int_{0}^{t}\mathcal{D}(s)\mathrm{d}s\right) \displaybreak \\
				\geq&-C\epsilon\int_{1}^{+\infty}\left((\partial_{rr}\tilde{S}_1)^2+(\partial_{rr}\tilde{S}_2)^2\right)\mathrm{d}r
				-\frac{4\tau\epsilon}{3\mu}\eta\rho(t,1)(\partial_{rr}\tilde{S}_1)^2(t,1)
				-\frac{2\tau\epsilon}{\lambda}\eta\rho(t,1)(\partial_{rr}\tilde{S}_2)^2(t,1)\\
				&-\frac{2\tau\epsilon}{\mu} C(\eta)((\tilde{S}_1)^2+(\partial_{r}\tilde{S}_1)^2)(t,1)
				-\frac{2\tau\epsilon}{\lambda} C(\eta)((\partial_{r}\tilde{S}_2)^2)(t,1)
				-C\left(E(0)+ E^{\frac{1}{2}}(t)  \int_{0}^{t}\mathcal{D}(s)\mathrm{d}s
				\right).
			\end{align*}
		Here and after, $\eta$ is any positive constant to be chosen later.	
		
		By adding (\ref{64}) to (\ref{61}) and integrating over $(0,t)$, we derive that
		\begin{equation}\label{65}
			\begin{aligned}
				&\int_{1}^{+\infty}\bigg(\frac{r^2 P^{\prime}(\rho)}{2\rho} \left(\partial_{rr} \rho\right)^2+\frac{ P^{\prime}(\rho)}{\rho}2(\partial_{r}\rho)^2+\frac{P^{\prime}(\rho)}{\rho}2r\partial_{r}\rho\partial_{rr}\rho\\
				&\quad\quad
				+\frac{r^2\rho}{2} (\partial_{rr}v)^2+2\rho(\partial_{r}v)^2+\frac{2\rho}{r^2}v^2+2r\rho\partial_{rr}v\partial_{r}v-2\rho\partial_{rr}v v-\frac{4\rho v}{r}\partial_{r}v\\
				&\quad\quad
				+\tau\rho(\frac{r^2}{6\mu}(\partial_{rr}\tilde{S}_1)^2+\frac{2}{3\mu}(\partial_{r}\tilde{S}_1)^2+\frac{6}{\mu r^2}(\tilde{S}_1)^2 +\frac{2r}{3\mu}\partial_{rr}\tilde{S}_1 \partial_{r}\tilde{S}_1-\frac{2}{\mu}\partial_{rr}\tilde{S}_1\tilde{S}_1-\frac{4}{\mu r}\partial_{r}\tilde{S}_1 \tilde{S}_1\\
				&\quad\quad
				+\frac{r^2}{2\lambda}(\partial_{rr} \tilde{S}_2)^2+\frac{2}{\lambda}(\partial_{r} \tilde{S}_2)^2+\frac{2r}{\lambda}\partial_{rr} \tilde{S}_2\partial_{r} \tilde{S}_2)\bigg)\mathrm{d}r
				\\
				&+\int_{0}^{t}\int_{1}^{+\infty}\bigg(\frac{r^2}{3\mu}(\partial_{rr}\tilde{S}_1)^2+\frac{4}{3\mu}(\partial_{r}\tilde{S}_1)^2+\frac{12}{\mu r^2}(\tilde{S}_1)^2 +\frac{4r}{3\mu}\partial_{rr}\tilde{S}_1 \partial_{r}\tilde{S}_1-\frac{4}{\mu}\partial_{rr}\tilde{S}_1\tilde{S}_1-\frac{8}{\mu r}\partial_{r}\tilde{S}_1 \tilde{S}_1\\
				&\quad\quad
				+\frac{r^2}{\lambda}(\partial_{rr}\tilde{S}_2)^2+\frac{4}{\lambda}(\partial_{r}  \tilde{S}_2)^2+\frac{4r}{\lambda}\partial_{rr} \tilde{S}_2\partial_{r} \tilde{S}_2\bigg)\mathrm{d}r\mathrm{d}t\\
				&+\int_{0}^{t}\left(\frac{\tau\epsilon}{3\mu}\rho(t,1)(\partial_{rr}\tilde{S}_1)^2(t,1)
				+\frac{3\tau\epsilon}{4\lambda}\rho(t,1)(\partial_{rr}\tilde{S}_2)^2(t,1)\right)\mathrm{d}t\\ 
				&\leq \int_{0}^{t}\int_{1}^{+\infty} \left(I_1+I_2+I_3\right)\mathrm{d}r\mathrm{d}t+C \left(E(0)+E^{\frac{3}{2}}(t)+E^{\frac{1}{2}}(t) \int_{0}^{t}\mathcal{D}(s)\mathrm{d}s
				\right).
			\end{aligned}
		\end{equation}
		
		Next, we address the first term on the right-hand side of equation (\ref{65}). By applying the momentum equation and equations $(\ref{22})_1$ and $(\ref{22})_4$, we have
		
			\begin{align*}
				&\int_{1}^{+\infty}(I_1+I_2+I_3)\mathrm{d}r\\
				=&\bigg(\frac{2}{3}r^2\partial_{rr}v\partial_{rr}\tilde{S}_1
				+\frac{4}{3}r\partial_{r}v\partial_{rr}\tilde{S}_1
				-\frac{4}{3}v\partial_{rr}\tilde{S}_1
				+\frac{4}{3}r\partial_{rr}v\partial_{r}\tilde{S}_1
				+\frac{8}{3}\partial_{r}v\partial_{r}\tilde{S}_1
				-\frac{8}{3}\frac{v}{r}\partial_{r}\tilde{S}_1\\
				&\qquad
				-4\partial_{rr}v\tilde{S}_1-\frac{8}{r}\partial_{r}v\tilde{S}_1
				+\frac{8}{r^2}v\tilde{S}_1\bigg)\bigg|_{1}^{+\infty}\\
				&+
				\bigg(r^2\partial_{rr}v\partial_{rr}\tilde{S}_2+2r\partial_{rr}v\partial_{r}\tilde{S}_2+2r\partial_{r}v\partial_{rr}\tilde{S}_2+4\partial_{r}v\partial_{r}\tilde{S}_2-2v\partial_{rr}\tilde{S}_2-\frac{4v}{r}\partial_{r}\tilde{S}_2\bigg)\bigg|_{1}^{+\infty}\\
				&-\bigg(r^2\partial_{rr}P\partial_{rr}v +2r\partial_{r}P\partial_{rr}v+2r\partial_{rr}P\partial_{r}v+
				4\partial_{r}v\partial_{r}P
				-2v\partial_{rr}P-\frac{4}{r}v\partial_{r}P\bigg)\bigg|_{1}^{+\infty}\\
				=&r^2\partial_{rr}v\left(\dfrac{2}{3}\partial_{rr} \tilde{S}_1+\dfrac{2}{r} \partial_r \tilde{S}_1-\dfrac{2}{r^2}\tilde{S}_1+\partial_{rr} \tilde{S}_2-\partial_{rr}P\right)\bigg|_{1}^{+\infty}\\
				&+2r\partial_{r}v\left(\dfrac{2}{3}\partial_{rr} \tilde{S}_1+\dfrac{2}{r} \partial_r \tilde{S}_1-\dfrac{2}{r^2}\tilde{S}_1+\partial_{rr} \tilde{S}_2-\partial_{rr}P\right)\bigg|_{1}^{+\infty}\\
				&-r\partial_{rr}v\left(\dfrac{2}{3}\partial_r \tilde{S}_1+\dfrac{2}{r} \tilde{S}_1+\partial_r \tilde{S}_2-\partial_{r}P\right)\bigg|_{1}^{+\infty}
				-2\partial_{r}v\left(\dfrac{2}{3}\partial_r \tilde{S}_1+\dfrac{2}{r} \tilde{S}_1+\partial_r \tilde{S}_2-\partial_{r}P\right)\bigg|_{1}^{+\infty}\\
				&-2v\left(\dfrac{2}{3}\partial_{rr} \tilde{S}_1+\dfrac{2}{r} \partial_r \tilde{S}_1-\dfrac{2}{r^2}\tilde{S}_1+\partial_{rr} \tilde{S}_2-\partial_{rr}P\right)\bigg|_{1}^{+\infty}
				+\frac{2}{r}v\left(\dfrac{2}{3}\partial_r \tilde{S}_1+\dfrac{2}{r} \tilde{S}_1+\partial_r \tilde{S}_2-\partial_{r}P\right)\bigg|_{1}^{+\infty}\\
				&+2r\frac{\partial_{r}P}{\rho}(\partial_{tr}\rho+2\partial_{r}\rho\partial_{r}v)\bigg|_{1}^{+\infty}
				+2r\partial_{r}\tilde{S}_2\left(\frac{\tau\rho}{\lambda}\left(\partial_{tr} \tilde{S}_2+(v-\epsilon)\partial_{rr} \tilde{S}_2\right)+\partial_{r} \tilde{S}_2\right)\bigg|_{1}^{+\infty}.
			\end{align*}

		Furthermore, using momentum equation, $(\ref{22})_4$ and the boundary condition (\ref{1.6}), we have 
		\begin{equation*}
			\begin{aligned}
				&\int_{0}^{t}\int_{1}^{+\infty}(I_1+I_2+I_3)\mathrm{d}r\mathrm{d}t\\
				=& \int_{0}^{t}\bigg[\left((r^2\partial_{rr}v+2r\partial_{r}v-2v)(\rho\partial_{tr}v+\rho v\partial_{rr}v) +(-r\partial_{rr}v-2\partial_{r}v+\frac{2v}{r})
				(\rho\partial_{t}v+\rho v \partial_{r}v)\right)\bigg|_{1}^{+\infty}\\
				&\qquad+2r\frac{\partial_{r}P}{\rho}(\partial_{tr}\rho+2\partial_{r}\rho\partial_{r}v)\bigg|_{1}^{+\infty}
				+2r\partial_{r}\tilde{S}_2\left(\frac{\tau\rho}{\lambda}\left(\partial_{tr} \tilde{S}_2+(v-\epsilon)\partial_{rr} \tilde{S}_2\right)+\partial_{r} \tilde{S}_2\right)\bigg|_{1}^{+\infty}\bigg]\mathrm{d}t\\
				=&\int_{0}^{t}
				\bigg[-(r^2\partial_{rr}v+2r\partial_{r}v)\rho\partial_{tr}v(t,1)
				-2r\frac{\partial_{r}P}{\rho}(\partial_{tr}\rho+2\partial_{r}\rho\partial_{r}v)(t,1)\\
				&\qquad-2r\partial_{r}\tilde{S}_2\left(\frac{\tau\rho}{\lambda}\left(\partial_{tr} \tilde{S}_2+(v-\epsilon)\partial_{rr} \tilde{S}_2\right)+\frac{1}{\lambda}\partial_{r} \tilde{S}_2\right)(t,1)\bigg]
				\mathrm{d}t
			\end{aligned}
		\end{equation*}
	Next, we estimate each term in the above equation.
		\begin{equation}\nonumber
			\begin{aligned}
				&\int_{0}^{t}-(r^2\partial_{rr}v+2r\partial_{r}v)\rho\partial_{tr}v(t,1)\mathrm{d}t
				=-\int_{0}^{t}(\rho\partial_{rr}v\partial_{tr}v)(t,1)\mathrm{d}t- (\rho(\partial_{r}v)^2)(t,1)\bigg|_0^t\\
				&
				\leq C\int_{0}^{t}|\partial_{rr}v\partial_{tr}v(t,1)|\mathrm{d}t+E(0)
				\leq C\int_{0}^{t}((\partial_{rr}v)^2(t,1)+(\partial_{tr}v)^2(t,1))\mathrm{d}t+E(0),
			\end{aligned}
		\end{equation}
		$$
		\begin{aligned}
			&\int_{0}^{t}-2r\frac{\partial_{r}P}{\rho}(\partial_{tr}\rho+2\partial_{r}\rho\partial_{r}v)(t,1)\mathrm{d}t\\
			=& -\frac{P^{\prime}(\rho)}{\rho}(\partial_{r}\rho)^2(t,1)\bigg|_0^t
			+\int_{0}^{t}[\frac{P^{\prime}(\rho)}{\rho}]_t(\partial_{r}\rho)^2(t,1)\mathrm{d}t-\int_{0}^{t}\frac{4\partial_{r}P}{\rho}\partial_{r}\rho\partial_{r}v(t,1)\mathrm{d}t\\
			\leq&-\frac{P^{\prime}(\rho)}{\rho}(\partial_{r}\rho)^2(t,1)
			+\int_{0}^{t}C\|\partial_{t}\rho(\partial_{r}\rho)^2\|_{L^{\infty}}\mathrm{d}t
			-\int_{0}^{t}C\|\partial_{r}v(\partial_{r}\rho)^2\|_{L^{\infty}}\mathrm{d}t\\
			\leq&\int_{0}^{t}C\|\partial_{t}\rho\|_{H^{1}}\|\partial_{r}\rho\|_{H^{1}}^2\mathrm{d}t
			-\int_{0}^{t}C\|\partial_{r}v\|_{H^{1}}\|\partial_{r}\rho\|_{H^{1}}^2\mathrm{d}t
			\leq C \left(E(0)+E^{\frac{1}{2}}(t) \int_{0}^{t}\mathcal{D}(s)\mathrm{d}s\right),
		\end{aligned}
		$$
		and
		$$
		\begin{aligned}
			&\int_{0}^{t}-2r\partial_{r}\tilde{S}_2\left(\frac{\tau\rho}{\lambda}\left(\partial_{tr} \tilde{S}_2+(v-\epsilon)\partial_{rr} \tilde{S}_2\right)+\frac{1}{\lambda}\partial_{r} \tilde{S}_2\right)(t,1)\mathrm{d}t\\
			=&-\int_{0}^{t}\frac{\tau}{\lambda}\rho(t,1)(\partial_{r}\tilde{S}_2)^2_t(t,1)\mathrm{d}t
			+\int_{0}^{t}\frac{2\epsilon\tau }{\lambda}\rho(t,1)\partial_{r}\tilde{S}_2(t,1)\partial_{rr}\tilde{S}_2(t,1)\mathrm{d}t
			-\int_{0}^{t}\frac{2}{\lambda}(\partial_{r}\tilde{S}_2)^2(t,1)\mathrm{d}t\\
			\leq&-\frac{\tau}{\lambda}\rho(t,1)(\partial_{r}\tilde{S}_2)^2(t,1)\bigg|_0^t
			+\int_{0}^{t}\frac{2\tau}{\lambda}\partial_{t}\rho(\partial_{r}\tilde{S}_2)^2(t,1)\mathrm{d}t
			+\int_{0}^{t}\frac{2\epsilon\tau }{\lambda}\rho(\eta(\partial_{rr}\tilde{S}_2)^2
			+C(\eta)(\partial_{r}\tilde{S}_2)^2)(t,1)\mathrm{d}t\\
			\leq& \int_{0}^{t}\frac{2\epsilon\tau }{\lambda}\eta\rho(t,1)(\partial_{rr}\tilde{S}_2)^2(t,1)
			+C \left(E(0)+E^{\frac{1}{2}}(t) \int_{0}^{t}\mathcal{D}(s)\mathrm{d}s\right).
		\end{aligned}
		$$
		Here and after, $\eta$ is any positive constant to be chosen later.
		
		Combining the above results and using \eqref{hu3.4}, we can obtain
		\begin{equation}\label{3.85}
			\begin{aligned}
				&\int_{1}^{+\infty}\bigg(\frac{r^2 P^{\prime}(\rho)}{2\rho} \left(\partial_{rr} \rho\right)^2+\frac{2 P^{\prime}(\rho)}{\rho}(\partial_{r}\rho)^2+\frac{2rP^{\prime}(\rho)}{\rho}\partial_{r}\rho\partial_{rr}\rho\\
				&\qquad\qquad+\frac{r^2\rho}{2} (\partial_{rr}v)^2+2\rho(\partial_{r}v)^2+\frac{2\rho}{r^2}v^2+2r\rho\partial_{rr}v\partial_{r}v-2\rho\partial_{rr}v v-\frac{4\rho v}{r}\partial_{r}v\\
				&\qquad\qquad+\tau\rho(\frac{r^2}{6\mu}(\partial_{rr}\tilde{S}_1)^2+\frac{2}{3\mu}(\partial_{r}\tilde{S}_1)^2+\frac{6}{\mu r^2}\tilde{S}_1^2 +\frac{2r}{3\mu}\partial_{rr}\tilde{S}_1 \partial_{r}\tilde{S}_1-\frac{2}{\mu}\partial_{rr}\tilde{S}_1\tilde{S}_1\\
				&\qquad\qquad-\frac{4}{\mu r}\partial_{r}\tilde{S}_1 \tilde{S}_1 +\frac{r^2}{2\lambda}(\partial_{rr} \tilde{S}_2)^2+\frac{2}{\lambda}(\partial_{r} \tilde{S}_2)^2+\frac{2r}{\lambda}\partial_{rr} \tilde{S}_2\partial_{r} \tilde{S}_2)\bigg)\mathrm{d}r\\
				&+\int_0^t\int_{1}^{+\infty}\bigg(\frac{r^2}{3\mu}(\partial_{rr}\tilde{S}_1)^2+\frac{4}{3\mu}(\partial_{r}\tilde{S}_1)^2+\frac{12}{\mu r^2}(\tilde{S}_1)^2 +\frac{4r}{3\mu}\partial_{rr}\tilde{S}_1 \partial_{r}\tilde{S}_1-\frac{4}{\mu}\partial_{rr}\tilde{S}_1\tilde{S}_1\\
				&\qquad\qquad\qquad-\frac{8}{\mu r}\partial_{r}\tilde{S}_1 \tilde{S}_1
				+\frac{r^2}{\lambda}(\partial_{rr}\tilde{S}_2)^2+\frac{4}{\lambda}(\partial_{r}  \tilde{S}_2)^2 +\frac{4r}{\lambda}\partial_{rr} \tilde{S}_2\partial_{r} \tilde{S}_2\bigg)\mathrm{d}r\mathrm{d}t\\
				&+\int_{0}^{t}\left(\frac{3\tau\epsilon}{16\mu}(\partial_{rr}\tilde{S}_1)^2(t,1)
				+\frac{9\tau\epsilon}{32\lambda}(\partial_{rr}\tilde{S}_2)^2(t,1)\right)\mathrm{d}t\\
				&\leq C \left(E(0)+E^{\frac{3}{2}}(t)+E^{\frac{1}{2}}(t) \int_{0}^{t}\mathcal{D}(s)\mathrm{d}s
				+\int_{0}^{t}((\partial_{rr}v)^2(t,1)+(\partial_{tr}v)^2(t,1))\mathrm{d}t\right).
				\\ 
			\end{aligned}
		\end{equation}
		Further, we observe that 
		\begin{equation}\nonumber
			\begin{aligned}
				&\int_{1}^{+\infty}(\frac{r^2\rho}{2} (\partial_{rr}v)^2+2\rho(\partial_{r}v)^2+\frac{2\rho}{r^2}v^2+2r\rho\partial_{rr}v\partial_{r}v-2\rho\partial_{rr}v v-\frac{4\rho v}{r}\partial_{r}v)\mathrm{d}r\\
				=&\int_{1}^{+\infty}(\frac{r^2\rho}{2} (\partial_{rr}v)^2+4\rho(\partial_{r}v)^2
				+2r\rho\partial_{rr}v\partial_{r}v
				+2\partial_{r}\rho\partial_{r}v v+\frac{2}{r}\partial_{r}\rho v^2)\mathrm{d}r\\
				\geq&\int_{1}^{+\infty}(\frac{r^2\rho}{2} (\partial_{rr}v)^2+4\rho(\partial_{r}v)^2
				+2r\rho\partial_{rr}v\partial_{r}v)\mathrm{d}r-CE^{\frac{1}{2}}(t) \mathcal{D}(t),
			\end{aligned}
		\end{equation}
		where we used the boundary condition (\ref{1.6}). 
		Moreover, we know
		\begin{equation}\nonumber
			\begin{aligned}
				&\int_{1}^{+\infty}(\frac{r^2\rho}{2} (\partial_{rr}v)^2+4\rho(\partial_{r}v)^2
				+2r\rho\partial_{rr}v\partial_{r}v)\mathrm{d}r\\
				\geq&\int_{1}^{+\infty}(\frac{r^2\rho}{2} (\partial_{rr}v)^2+4\rho(\partial_{r}v)^2
				-\frac{3r^2\rho}{8}(\partial_{rr}v)^2-\frac{8}{3}\rho(\partial_{r}v)^2)\mathrm{d}r
				=\int_{1}^{+\infty}(\frac{r^2\rho}{8} (\partial_{rr}v)^2+\frac{4}{3}\rho(\partial_{r}v)^2)\mathrm{d}r.
			\end{aligned}
		\end{equation}
		Similarly, we have 
		\begin{equation}\nonumber
			\begin{aligned}
				&\int_{1}^{+\infty}(\frac{r^2}{6\mu}(\partial_{rr}\tilde{S}_1)^2+\frac{2}{3\mu}(\partial_{r}\tilde{S}_1)^2+\frac{6}{\mu r^2}(\tilde{S}_1)^2 +\frac{2r}{3\mu}\partial_{rr}\tilde{S}_1 \partial_{r}\tilde{S}_1-\frac{2}{\mu}\partial_{rr}\tilde{S}_1\tilde{S}_1-\frac{4}{\mu r}\partial_{r}\tilde{S}_1 \tilde{S}_1)\mathrm{d}r\\
				\geq&\int_{1}^{+\infty}(\frac{r^2}{24\mu}(\partial_{rr}\tilde{S}_1)^2+\frac{16}{9\mu}(\partial_{r}\tilde{S}_1)^2+\frac{4}{\mu r^2}(\tilde{S}_1)^2 
				)\mathrm{d}r-CE^{\frac{1}{2}}(t) \mathcal{D}(t).
			\end{aligned}
		\end{equation}
		
		Thus, equation (\ref{3.85}) can be simplified to 
		\begin{equation}\label{3.87}
			\begin{aligned}
				&\int_{1}^{+\infty}\bigg(\frac{ P^{\prime}(\rho)}{2\rho}\left(r\partial_{rr} \rho+r\partial_{r} \rho\right)^2
				+\frac{r^2\rho}{8} (\partial_{rr}v)^2+\frac{4}{3}\rho(\partial_{r}v)^2
				\\
				&\qquad\qquad+\tau\rho\left(\frac{r^2}{24\mu}(\partial_{rr}\tilde{S}_1)^2+\frac{16}{9\mu}(\partial_{r}\tilde{S}_1)^2+\frac{4}{\mu r^2}(\tilde{S}_1)^2\right)+\frac{\tau\rho}{2\lambda}\left(r\partial_{rr} \tilde{S}_2+2\partial_{r} \tilde{S}_2\right)^2\bigg)\mathrm{d}r\\
				&+\int_0^t\int_{1}^{+\infty}\bigg(\frac{r^2}{12\mu}(\partial_{rr}\tilde{S}_1)^2+\frac{32}{9\mu}(\partial_{r}\tilde{S}_1)^2+\frac{8}{\mu r^2}(\tilde{S}_1)^2
				+\frac{1}{\lambda}\left(r\partial_{rr} \tilde{S}_2+2\partial_{r} \tilde{S}_2\right)^2\bigg)\mathrm{d}r\mathrm{d}t\\
				&
				+\int_{0}^{t}\left(\frac{\tau\epsilon}{4\mu}(\partial_{rr}\tilde{S}_1)^2(t,1)
				+\frac{3\tau\epsilon}{8\lambda}(\partial_{rr}\tilde{S}_2)^2(t,1)\right)\mathrm{d}t\\
				&\leq C \left(E(0)+E^{\frac{3}{2}}(t)+E^{\frac{1}{2}}(t) \int_{0}^{t}\mathcal{D}(s)\mathrm{d}s
				+\int_{0}^{t}((\partial_{rr}v)^2(t,1)+(\partial_{tr}v)^2(t,1))\mathrm{d}t\right).
				\\ 
			\end{aligned}
		\end{equation}
		Then, we can get the desired result.
	\end{proof}    
	
	To address the boundary term on the right-hand side of \eqref{3.87}, we now establish the following key lemmas.
	
	First, we give estimates of $\partial_{tr} v$ on the boundary.
	\begin{lemma}\label{lem7_tx}
		There exists a constant $C$ such that for any $0\leq t \leq T$
		\begin{equation}\label{bdy_tx}
			\begin{aligned}
				\int_{0}^{t}(\partial_{tr} v)^2(t,1)\mathrm{d}t
				\leq C\left(E(0)+E^{\frac{3}{2}}(t)+E^{\frac{1}{2}}(t) \int_{0}^{t}\mathcal{D}(s)\mathrm{d}s\right).
			\end{aligned}
		\end{equation}
		
	\end{lemma}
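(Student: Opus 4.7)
The strategy starts with the observation that at $r=1$ the boundary condition $v(t,1)=0$ forces $\partial_t^k v(t,1)=0$ for every $k\ge 0$, so the constitutive equation $(\ref{3.4})_3$ collapses to the pointwise identity
$$
2\mu\partial_r v(t,1) = \tilde S_1(t,1) + \tau\rho(t,1)\partial_t\tilde S_1(t,1) - \tau\epsilon\rho(t,1)\partial_r\tilde S_1(t,1).
$$
Differentiating in $t$ gives
$$
2\mu\partial_{tr}v(t,1) = \partial_t\tilde S_1 + \tau\rho\partial_{tt}\tilde S_1 - \tau\epsilon\rho\partial_{tr}\tilde S_1 + \tau\partial_t\rho\bigl[\partial_t\tilde S_1 - \epsilon\partial_r\tilde S_1\bigr]
$$
at $r=1$. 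Squaring and integrating in $s\in(0,t)$ reduces the task to estimating the time-integrated square of each of the four boundary terms on the right-hand side.

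For $\int_0^t(\partial_t\tilde S_1)^2(s,1)\,ds$, I apply the trace inequality $g^2(1)\le 2\|g\|_{L^2}\|\partial_r g\|_{L^2}$ with $g=r\partial_t\tilde S_1$; this reduces the boundary integral to the interior quantities $\|r\partial_t\tilde S_1\|_{L^2}$ and $\|r\partial_{tr}\tilde S_1\|_{L^2}$, both already controlled by $C(E(0)+E^{3/2}(t)+E^{1/2}(t)\int_0^t\mathcal D\,ds)$ via Lemmas \ref{lem2} and \ref{lem4}. The nonlinear boundary contributions $\tau^2(\partial_t\rho)^2[(\partial_t\tilde S_1)^2+\epsilon^2(\partial_r\tilde S_1)^2](s,1)$ are handled by putting $\partial_t\rho$ into $L^\infty$ through the Sobolev embedding $H^1\hookrightarrow L^\infty$, so that these cubic-in-energy terms fall into $E^{3/2}(t)+E^{1/2}(t)\int_0^t\mathcal D\,ds$. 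For $\int_0^t\tau^2\epsilon^2(\partial_{tr}\tilde S_1)^2(s,1)\,ds$, the extra factor $\tau\epsilon$ combined with the boundary dissipation $\int_0^t\frac{\tau\epsilon}{16\mu}(\partial_{tr}\tilde S_1)^2(s,1)\,ds\le C(E(0)+\cdots)$ already established in Lemma \ref{lem4} gives a favourable bound.

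The principal obstacle is the term $\int_0^t\tau^2\rho^2(\partial_{tt}\tilde S_1)^2(s,1)\,ds$, because a direct trace argument would demand control of $\tau\|\partial_{rtt}\tilde S_1\|_{L^2}$, a quantity absent from the dissipation $\mathcal D(t)$. I plan to resolve this by integration by parts in $t$: writing $\tau\rho\partial_{tt}\tilde S_1 = \partial_t(\tau\rho\partial_t\tilde S_1)-\tau\partial_t\rho\partial_t\tilde S_1$ inside the cross-products that arise after squaring the boundary identity, one transfers a time derivative onto the bounded trace $2\mu\partial_r v(t,1) = W(t,1)$, where $W:=\tilde S_1+\tau\rho\partial_t\tilde S_1-\tau\epsilon\rho\partial_r\tilde S_1$. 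The endpoint contributions take the form $[\tau\rho(\partial_r v)(\partial_t\tilde S_1)](s,1)\big|_0^t$, which are absorbed into $E(0)+E(t)$ by well-prepared data, while the remaining interior pieces only involve $\partial_t\tilde S_1$, $\partial_{tr}\tilde S_1$ and $\partial_{tr}v$ at $r=1$, each already treated. As a fallback, one may alternatively invoke the $r$-differentiated equation $(\ref{47})_3$ to express $\tau\rho\partial_{ttr}\tilde S_1$ in terms of $\partial_{trr}v$ plus lower-order quantities, and then use the $t$-differentiated $(\ref{22})_3$ to eliminate $\partial_{trr}v$ in favour of $\tau\rho\partial_{ttr}\tilde S_1$ itself, extracting the desired $L^2$ bound $\tau\|\partial_{rtt}\tilde S_1\|_{L^2}\lesssim \mathcal D^{1/2}+\sqrt\tau E^{1/2}$ through algebraic rearrangement.

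Assembling all estimates and using Young's inequality with a small parameter $\eta$ to absorb any residual $\eta\int_0^t(\partial_{tr}v)^2(s,1)\,ds$ contribution back to the left-hand side produces the desired inequality \eqref{bdy_tx}.
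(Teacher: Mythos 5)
Your route is genuinely different from the paper's: you work with a pointwise identity at $r=1$ obtained by evaluating $(\ref{3.4})_3$ on the boundary (where $v=\partial_t v=0$) and differentiating in $t$, whereas the paper never takes a trace of the constitutive equation. Instead it multiplies the $\partial_t\partial_r$-differentiated equation $(\ref{47})_3$ by $\frac{r^2}{3\mu}\partial_{tr}v$ and integrates over the whole domain; the quantity $\int_0^t(\partial_{tr}v)^2(s,1)\,\mathrm{d}s$ then emerges with a favourable sign from integrating $r^2\partial_{trr}v\,\partial_{tr}v$ by parts in $r$, and the dangerous term $\int_0^t\int\frac{r^2\tau}{3\mu}\rho\,\partial_{tr}v\,\partial_{ttr}\tilde S_1$ is tamed by an integration by parts in time followed by substitution of the momentum equation $(\ref{15})_2$ for $\partial_{tr}\tilde S_1$ — so a boundary trace of $\tau\partial_{tt}\tilde S_1$ is never needed.

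That trace is exactly where your argument breaks. After squaring your boundary identity you must bound $\int_0^t\tau^2\rho^2(\partial_{tt}\tilde S_1)^2(s,1)\,\mathrm{d}s$, and this quantity is not reachable within the $H^2$ framework of the paper: a trace inequality would require $\tau^2\|\partial_{ttr}\tilde S_1\|_{L^2}$, a third-order derivative absent from both $E$ and $\mathcal D$, and the only boundary dissipation of $\partial_{tt}\tilde S_1$ ever generated (in the proof of Lemma \ref{lem3}) carries the weight $\tau\epsilon$ and is discarded there; since $\epsilon\to0$ before $\tau$, a $\tau\epsilon$-weighted boundary term cannot control a $\tau^2$-weighted one uniformly. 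Neither of your proposed repairs closes this. The integration by parts in $t$ either produces $\partial_{ttt}\tilde S_1$ or shifts the derivative onto $\partial_{tr}v$, creating $\partial_{ttr}v(s,1)$ on the boundary (equally uncontrolled) together with an endpoint term $[\tau\rho\,\partial_{tr}v\,\partial_t\tilde S_1](t,1)$ whose evaluation at a fixed time needs the trace of $\partial_{tr}v$, hence $\|\partial_{trr}v\|_{L^2}$, which is not in $E(t)$. The algebraic fallback fares no better: solving $(\ref{47})_3$ for $\tau\rho\partial_{ttr}\tilde S_1$ expresses it through $\partial_{trr}v$ and $\tau\partial_{trr}\tilde S_1$, both third-order quantities outside $E$ and $\mathcal D$, so the claimed bound $\tau\|\partial_{ttr}\tilde S_1\|_{L^2}\lesssim\mathcal D^{1/2}+\sqrt{\tau}E^{1/2}$ does not follow. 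The remaining pieces of your plan (the trace estimate for $\partial_t\tilde S_1$, the use of the $\tau\epsilon$-weighted boundary dissipation from Lemma \ref{lem4} for $\tau^2\epsilon^2(\partial_{tr}\tilde S_1)^2(s,1)$, and the $L^\infty$ treatment of the cubic terms) are sound, but without a working treatment of the $\tau\rho\partial_{tt}\tilde S_1$ trace the proof does not close; you would need to abandon the pointwise identity and pass to an interior multiplier identity as the paper does.
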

	\begin{proof}
		Multiplying  $(\ref{47})_3$ by $\dfrac{r^2}{3\mu}\partial_{tr} v$  and integrating the result, we get
		\begin{align}\label{76}
				&\int_{0}^{t}\int_{1}^{+\infty}\frac{r^2\tau}{3\mu}\rho\partial_{tr} v\partial_{ttr} \tilde{S}_1\mathrm{d}r\mathrm{d}t
				+\int_{0}^{t}\int_{1}^{+\infty}\frac{r^2\tau}{3\mu}\rho (v-\epsilon)\partial_{tr} v\partial_{trr} \tilde{S}_1\mathrm{d}r\mathrm{d}t
				\nonumber \\
				&+\int_{0}^{t}\int_{1}^{+\infty}\frac{r^2}{3\mu}\partial_{tr} v\partial_{tr} \tilde{S}_1\mathrm{d}r\mathrm{d}t-\int_{0}^{t}\int_{1}^{+\infty}\frac{2}{3}\left(r^2\partial_{trr}v \partial_{tr}v-r(\partial_{tr}v)^2 +\partial_{t}v\partial_{tr}v\right)\mathrm{d}r\mathrm{d}t \nonumber \\
				\leq& C\left(E(0)+ E^{\frac{1}{2}}(t) \int_{0}^{t}\mathcal{D}(s)\mathrm{d}s\right)
				.
		\end{align}	
		We estimate each term of the above equation, respectively. Firstly, using the momentum equation $(\ref{15})_2$, we have
		\begin{align}\label{77}
				&\int_{0}^{t}\int_{1}^{+\infty}\dfrac{r^2\tau}{3\mu}\rho\partial_{tr} v\partial_{ttr} \tilde{S}_1\mathrm{d}r\mathrm{d}t \nonumber \\
				=&
				\int_{1}^{+\infty}\dfrac{r^2\tau}{3\mu}\rho\partial_{tr} v \partial_{tr} \tilde{S}_1\big|_0^{t}\mathrm{d}r-
				\int_{0}^{t}\int_{1}^{+\infty}\dfrac{r^2\tau}{3\mu}\rho\partial_{ttr} v \partial_{tr} \tilde{S}_1\mathrm{d}r\mathrm{d}t-
				\int_{0}^{t}\int_{1}^{+\infty}\dfrac{r^2\tau}{3\mu}\partial_{t}\rho
				\partial_{tr} v \partial_{tr} \tilde{S}_1\mathrm{d}r\mathrm{d}t \nonumber \\
				\geq& \int_{1}^{+\infty}\dfrac{r^2\tau}{3\mu}\rho\partial_{tr} v\partial_{tr} \tilde{S}_1\mathrm{d}r-
				\int_{0}^{t}\int_{1}^{+\infty}\dfrac{r^2\tau}{2\mu}\rho\partial_{ttr} v(\rho\partial_{tt} v+\rho v\partial_{tr} v+\partial_{tr} P-\dfrac{2}{r} \partial_t \tilde{S}_1
				-\partial_{tr} \tilde{S}_2)\mathrm{d}r\mathrm{d}t \nonumber \\
				&-C\left(E(0)+ E^{\frac{1}{2}}(t) \int_{0}^{t}\mathcal{D}(s)\mathrm{d}s\right)
			\end{align}
		where 
		\begin{equation}\nonumber
			\begin{aligned}
				\int_{1}^{+\infty}\dfrac{r^2\tau}{3\mu}\rho\partial_{tr} v\partial_{tr} \tilde{S}_1\mathrm{d}r\geq&-C\int_{1}^{+\infty}r^2\rho ((\partial_{tr} v)^2+\tau(\partial_{tr} \tilde{S}_1)^2)\mathrm{d}r\\
				\geq&-C\left(E(0)+ E^{\frac{3}{2}}(t)+ E^{\frac{1}{2}}(t) \int_{0}^{t}\mathcal{D}(s)\mathrm{d}s\right),
			\end{aligned}
		\end{equation}
		\begin{equation}\nonumber
			\begin{aligned}
				&-\int_{0}^{t}\int_{1}^{+\infty}\dfrac{r^2\tau}{2\mu}\rho^2\partial_{ttr} v\partial_{tt} v\mathrm{d}r\mathrm{d}t\\
				=&\int_{0}^{t}\int_{1}^{+\infty}\dfrac{\tau}{4\mu}(2r\rho^2+2r^2\rho
				\partial_{r}\rho)(\partial_{tt}v)^2\mathrm{d}r\mathrm{d}t
				\geq-C\left(E(0)+ E^{\frac{1}{2}}(t) \int_{0}^{t}\mathcal{D}(s)\mathrm{d}s\right),
			\end{aligned}
		\end{equation}
		\begin{align*}
		&-\int_{0}^{t}\int_{1}^{+\infty}\dfrac{r^2\tau}{2\mu}\rho^2v\partial_{ttr} v\partial_{tr} v\mathrm{d}r\mathrm{d}t\\
		=&-\int_{1}^{+\infty}\dfrac{\tau}{4\mu}r^2\rho^2v(\partial_{tr} v)^2\big|_0^t\mathrm{d}r 
		+\int_{0}^{t}\int_{1}^{+\infty}\dfrac{r^2\tau}{4\mu}\partial_{t}(\rho^2v)
		(\partial_{tr} v)^2\mathrm{d}r\mathrm{d}t\\
		\geq&-C\left(E(0)+ E^{\frac{3}{2}}(t)+ E^{\frac{1}{2}}(t) \int_{0}^{t}\mathcal{D}(s)\mathrm{d}s\right),
		\end{align*}
		and 
			\begin{align*}
				&-\int_{0}^{t}\int_{1}^{+\infty}\dfrac{\tau r^2}{2\mu}\rho\partial_{ttr} v\partial_{tr} P\mathrm{d}r\mathrm{d}t\\
				=&-\int_{1}^{+\infty}\dfrac{\tau r^2} {2\mu} \rho\partial_{tr} v\partial_{tr} P\big|_0^t\mathrm{d}r 
				+\int_{0}^{t}\int_{1}^{+\infty}\dfrac{\tau r^2}{2\mu}\rho\partial_{tr} v \partial_{ttr} P \mathrm{d}r \mathrm{d}t
				+\int_{0}^{t}\int_{1}^{+\infty}\dfrac{\tau r^2}{2\mu}\partial_{tr} v \partial_{t}\rho\partial_{tr} P \mathrm{d}r\mathrm{d}t\\
				\geq&-C\int_{1}^{+\infty} r^2((\partial_{tr} v)^2+(\partial_{tr} \rho)^2)\mathrm{d}r +\int_{0}^{t}\int_{1}^{+\infty}\dfrac{\tau r^2}{2\mu} \rho P^{\prime}(\rho) \partial_{ttr}\rho \partial_{tr} v\mathrm{d}r\mathrm{d}t\\&
				-C\left(E(0)+ E^{\frac{1}{2}}(t) \int_{0}^{t}\mathcal{D}(s)\mathrm{d}s\right)\\
				\geq&-\int_{0}^{t}\int_{1}^{+\infty}\dfrac{\tau r^2}{2\mu} \rho P^{\prime}(\rho)
				\partial_{tr}v(\partial_{trr}(\rho v)+\dfrac{2}{r} \partial_{tr}(\rho v)-\dfrac{2}{r^2} \partial_{t}(\rho v))\mathrm{d}r\mathrm{d}t\\
				&-C\left(E(0)+ E^{\frac{3}{2}}(t)+ E^{\frac{1}{2}}(t) \int_{0}^{t}\mathcal{D}(s)\mathrm{d}s\right)\\
				\geq&
				-\int_{0}^{t}\int_{1}^{+\infty}\dfrac{\tau r^2}{2\mu}\rho P^{\prime}(\rho)
				v\partial_{tr}v\partial_{trr}\rho\mathrm{d}r\mathrm{d}t
				-C\left(E(0)+ E^{\frac{3}{2}}(t)+ E^{\frac{1}{2}}(t) \int_{0}^{t}\mathcal{D}(s)\mathrm{d}s\right).
			\end{align*}
		
		Note that there is still a three-order term $\int_{0}^{t}\int_{1}^{+\infty}\frac{\tau r^2}{2\mu}\rho P^{\prime}(\rho) v\partial_{tr}v \partial_{trr}\rho \mathrm{d}r\mathrm{d}t$ in the above inequality, which can be canceled out by the convective term, as shown in \eqref{103}. 
		
	Next, we handle the remaining two terms in inequality \eqref{77}. Applying Lemma \ref{lem2} and Lemmas \ref{lem3}-\ref{lem5}, we have
		
		\begin{equation}\nonumber
			\begin{aligned}
				&\int_{0}^{t}\int_{1}^{+\infty}\dfrac{\tau r}{\mu}\rho \partial_{ttr}v \partial_{t}\tilde{S}_1\mathrm{d}r\mathrm{d}t\\
				=&
				\int_{1}^{+\infty}\dfrac{\tau r}{\mu}\rho \partial_{tr}v \partial_{t}\tilde{S}_1\big|_0^{t}\mathrm{d}r
				-\int_{0}^{t}\int_{1}^{+\infty}\dfrac{\tau r}{\mu}\rho \partial_{tr}v \partial_{tt}\tilde{S}_1\mathrm{d}r\mathrm{d}t
				-\int_{0}^{t}\int_{1}^{+\infty}\dfrac{\tau r}{\mu}\partial_{t}\rho \partial_{tr}v \partial_{t}\tilde{S}_1\mathrm{d}r\mathrm{d}t\\
				\geq&
				-C\int_{1}^{+\infty}r^2\rho((\partial_{tr}v)^2+(\partial_{t}\tilde{S}_1)^2)\mathrm{d}r
				-C\int_{0}^{t}\int_{1}^{+\infty}r^2\rho((\partial_{tr}v)^2+(\partial_{tt}\tilde{S}_1)^2)\mathrm{d}r\mathrm{d}t\\&
				-C\left(E(0)+ E^{\frac{1}{2}}(t) \int_{0}^{t}\mathcal{D}(s)\mathrm{d}s\right)\\
				\geq&-C\left(E(0)+ E^{\frac{3}{2}}(t)+ E^{\frac{1}{2}}(t) \int_{0}^{t}\mathcal{D}(s)\mathrm{d}s\right).
			\end{aligned}
		\end{equation}
		In view of equation $(\ref{47})_4$ and Lemmas \ref{lem3.4}, \ref{lem4} and \ref{lem5}, we have	
		\begin{equation}\nonumber
			\begin{aligned}
				&\int_{0}^{t}\int_{1}^{+\infty}\dfrac{\tau r^2}{2\mu}\rho \partial_{ttr}v \partial_{tr}\tilde{S}_2\mathrm{d}r\mathrm{d}t\\
				=&
				\int_{1}^{+\infty}\dfrac{\tau r^2}{2\mu}\rho \partial_{tr}v \partial_{tr}\tilde{S}_2\big|_0^t\mathrm{d}r
				-\int_{0}^{t}\int_{1}^{+\infty}\dfrac{\tau r^2}{2\mu}\rho \partial_{tr}v \partial_{ttr}\tilde{S}_2\mathrm{d}r\mathrm{d} t
				-\int_{0}^{t}\int_{1}^{+\infty}\dfrac{\tau r^2}{2\mu}\partial_{t}\rho \partial_{tr}v \partial_{t}\tilde{S}_2\mathrm{d}r\mathrm{d} t\\
				\geq &
				-C\int_{1}^{+\infty}r^2\rho ((\partial_{tr}v)^2+ (\partial_{tr}\tilde{S}_2)^2)\mathrm{d}r
				-C(E(0)+E^{\frac{1}{2}}(t) \int_{0}^{t}\mathcal{D}(s)\mathrm{d}s)\\
				&-\int_{0}^{t}\int_{1}^{+\infty}\dfrac{r^2}{2\mu}\partial_{tr}v
				\left(\lambda(\partial_{trr} v+\dfrac{2}{r}\partial_{tr} v
				-\dfrac{2}{r^2} \partial_{t}v)-\partial_{tr} \tilde{S}_2
				-\tau \rho (v-\epsilon)\rho\partial_{trr}\tilde{S}_2\right)\mathrm{d}r\mathrm{d} t
				\\
				\geq &
				\int_{0}^{t}\frac{\lambda}{4\mu} (\partial_{tr}v)^2(t,1)\mathrm{d} t
				-\int_{0}^{t}\int_{1}^{+\infty}\dfrac{\lambda r}{2\mu} (\partial_{tr}v)^2\mathrm{d}r\mathrm{d} t
				+\int_{0}^{t}\int_{1}^{+\infty}\dfrac{\lambda}{\mu}  \partial_{t}v \partial_{tr}v \mathrm{d}r\mathrm{d} t
				\\
				&+\int_{0}^{t}\int_{1}^{+\infty}\dfrac{r^2}{\mu} \partial_{tr}v\partial_{tr} \tilde{S}_2 \mathrm{d}r\mathrm{d} t
				+\int_{0}^{t}\int_{1}^{+\infty}\dfrac{r^2\tau}{2\mu} \rho(v-\epsilon) \partial_{tr}v\partial_{trr} \tilde{S}_2 \mathrm{d}r\mathrm{d} t\\
				&-C\left(E(0)+ E^{\frac{3}{2}}(t)+ E^{\frac{1}{2}}(t) \int_{0}^{t}\mathcal{D}(s)\mathrm{d}s\right)\\
				\geq &
				\int_{0}^{t}\int_{1}^{+\infty}\dfrac{r^2\tau}{2\mu} \rho(v-\epsilon) \partial_{tr}v\partial_{trr} \tilde{S}_2 \mathrm{d}r\mathrm{d}t
				-C\left(E(0)+ E^{\frac{3}{2}}(t)+ E^{\frac{1}{2}}(t) \int_{0}^{t}\mathcal{D}(s)\mathrm{d}s\right)
			\end{aligned}
		\end{equation}
		
		Finally, we have
		\begin{equation}
			\begin{aligned}
				&\int_{0}^{t}\int_{1}^{+\infty}\dfrac{r^2\tau}{3\mu}\rho\partial_{tr} v\partial_{ttr} \tilde{S}_1\mathrm{d}r\mathrm{d}t\\
				\geq&
				\int_{0}^{t}\frac{\lambda}{4\mu} (\partial_{tr}v)^2(t,1)\mathrm{d} t
				+\int_{0}^{t}\int_{1}^{+\infty}\dfrac{r^2\tau}{2\mu} \rho(v-\epsilon) \partial_{tr}v\partial_{trr} \tilde{S}_2 \mathrm{d}r\mathrm{d}t\\
				&-\int_{0}^{t}\int_{1}^{+\infty}\dfrac{\tau r^2}{2\mu}\rho P^{\prime}(\rho)
				v\partial_{tr}v\partial_{trr}\rho\mathrm{d}r\mathrm{d}t
				-C\left(E(0)+ E^{\frac{3}{2}}(t)+ E^{\frac{1}{2}}(t) \int_{0}^{t}\mathcal{D}(s)\mathrm{d}s\right).	
			\end{aligned}
		\end{equation}	
		
		For the second term on the left-hand side of equation (\ref{76}), by applying equation $(\ref{47})_2$, we get
		\begin{align}\label{103}
				&\int_{0}^{t}\int_{1}^{+\infty}\dfrac{r^2\tau}{3\mu}\rho (v-\epsilon)\partial_{tr} v\partial_{trr} \tilde{S}_1\mathrm{d}r\mathrm{d}t \nonumber \\
				=&\int_{0}^{t}\int_{1}^{+\infty}\dfrac{r^2\tau}{2\mu}\rho (v-\epsilon)\partial_{tr} v(\rho\partial_{ttr} v+\rho v\partial_{trr} v+\partial_{trr} P-\dfrac{2}{r} \partial_{tr} \tilde{S}_1+\dfrac{2}{r^2} \partial_{t} \tilde{S}_1
				-\partial_{trr} \tilde{S}_2-g_2)
				\mathrm{~d}r\mathrm{d}t \nonumber\\
				\geq& 
				\int_{0}^{t}\int_{1}^{+\infty}\dfrac{r^2\tau}{2\mu}\rho P^{\prime}(\rho)(v-\epsilon) \partial_{tr}v\partial_{trr}\rho \mathrm{d}r\mathrm{d}t
				-\int_{0}^{t}\int_{1}^{+\infty}\dfrac{r^2\tau}{2\mu}\rho (v-\epsilon) \partial_{tr}v\partial_{trr} \tilde{S}_2 \mathrm{d}r\mathrm{d}t \nonumber \\ 
				&
				-C\left(E(0)+ E^{\frac{3}{2}}(t)+ E^{\frac{1}{2}}(t) \int_{0}^{t}\mathcal{D}(s)\mathrm{d}s\right).
		\end{align}
		
		For the third term on the left-hand side of equation (\ref{76}), we get
		\begin{equation}
			\begin{aligned}
				\int_{0}^{t}\int_{1}^{+\infty}\dfrac{r^2}{3\mu}\partial_{tr} v\partial_{tr} \tilde{S}_1\mathrm{d}r\mathrm{d}t
				&\geq -C\int_{0}^{t}\int_{1}^{+\infty} r^2((\partial_{tr} v)^2
				+(\partial_{tr} \tilde{S}_1)^2) \mathrm{d}r\mathrm{d}t\\
				&\geq -C\left(E(0)+ E^{\frac{3}{2}}(t)+ E^{\frac{1}{2}}(t) \int_{0}^{t}\mathcal{D}(s)\mathrm{d}s\right).
			\end{aligned}
		\end{equation}
		
		For the last term on the left-hand side of equation (\ref{76}), we get
		\begin{equation}
			\begin{aligned}
				&-\int_{0}^{t}\int_{1}^{+\infty}\frac{2}{3}\left(r^2\partial_{trr}v \partial_{tr}v-r(\partial_{tr}v)^2 +\partial_{t}v\partial_{tr}v\right)\mathrm{d}r\mathrm{d}t\\
				&\geq \int_{0}^{t}\frac{1}{3}(\partial_{tr} v)^2(t,1)\mathrm{d}t- C\left(E(0)+ E^{\frac{3}{2}}(t)+ E^{\frac{1}{2}}(t) \int_{0}^{t}\mathcal{D}(s)\mathrm{d}s\right).
			\end{aligned}
		\end{equation}
		
		Combining the above results, we have
		\begin{equation}\nonumber
			\begin{aligned}
				\int_{0}^{t}\frac{1}{3}(\partial_{tr} v)^2(t,1)\mathrm{d}t
				\leq
				\int_{0}^{t}\int_{1}^{+\infty}\dfrac{r^2\tau\epsilon}{2\mu}\rho P^{\prime}(\rho)\partial_{tr}v\partial_{trr}\rho \mathrm{d}r\mathrm{d}t
				+C\left(E(0)+ E^{\frac{3}{2}}(t)+ E^{\frac{1}{2}}(t) \int_{0}^{t}\mathcal{D}(s)\mathrm{d}s\right).
			\end{aligned}
		\end{equation}
		
		In view of equation $(\ref{47})_1$ and Lemmas \ref{lem3.4}, \ref{lem4} and \ref{lem5}, we derive
		\begin{equation}\nonumber
			\begin{aligned}
				&\int_{0}^{t}\int_{1}^{+\infty}\dfrac{r^2\tau\epsilon}{2\mu}\rho P^{\prime}(\rho)\partial_{tr}v\partial_{trr}\rho \mathrm{d}r\mathrm{d}t\\
				=&-\int_{0}^{t}\dfrac{r^2\tau\epsilon}{2\mu}\rho P^{\prime}(\rho)\partial_{tr}v\partial_{tr}\rho(t,1)\mathrm{d}t-
				\int_{0}^{t}\int_{1}^{+\infty}\dfrac{\tau\epsilon}{2\mu}[r^2\rho P^{\prime}(\rho)\partial_{tr}v]_r\partial_{tr}\rho \mathrm{d}r\mathrm{d}t\\
				\leq&-\int_{0}^{t}\dfrac{\tau\epsilon}{2\mu}\rho P^{\prime}(\rho)\partial_{tr}v\partial_{tr}\rho(t,1)\mathrm{d}t
				+\int_{0}^{t}\int_{1}^{+\infty}\dfrac{\tau\epsilon}{2\mu}r^2P^{\prime}(\rho)\partial_{tr}\rho(\partial_{ttr} \rho+v\partial_{trr}\rho+\frac{2}{r}\rho\partial_{tr}v-\frac{2}{r^2}\rho \partial_{t}v)\mathrm{d}r\mathrm{d}t\\
				&-
				\int_{0}^{t}\int_{1}^{+\infty}\dfrac{\tau\epsilon}{2\mu}2r\rho P^{\prime}(\rho)\partial_{tr}v\partial_{tr}\rho \mathrm{d}r\mathrm{d}t
				+ C\left(E(0)+E^{\frac{1}{2}}(t) \int_{0}^{t}\mathcal{D}(s)\mathrm{d}s\right)
				\\
				\leq&\dfrac{C\tau\epsilon}{2\mu}\int_{0}^{t}\left((\partial_{tr}v)^2+(\partial_{tr}\rho)^2\right)(t,1) \mathrm{d}t
				+\dfrac{C\tau\epsilon}{\mu}\int_{0}^{t}\int_{1}^{+\infty}\left((\partial_{t}v)^2+(\partial_{tr}v)^2+(\partial_{tr}\rho)^2\right)
				\mathrm{d}r\mathrm{d}t
				\\
				&+ C\left(E(0)+E^{\frac{3}{2}}(t) +E^{\frac{1}{2}}(t) \int_{0}^{t}\mathcal{D}(s)\mathrm{d}s\right)\\
				\leq&\dfrac{C\tau\epsilon}{2\mu}\int_{0}^{t}(\partial_{tr}v)^2(t,1) \mathrm{d}t
				+ C\left(E(0)+E^{\frac{3}{2}}(t) +E^{\frac{1}{2}}(t) \int_{0}^{t}\mathcal{D}(s)\mathrm{d}s\right),
			\end{aligned}
		\end{equation}
		where we use equation $(\ref{15})_2$, leading to
		\begin{equation}\nonumber
			\begin{aligned}	
				\dfrac{C\tau\epsilon}{2\mu}\int_{0}^{t}(\partial_{tr}\rho)^2(t,1) \mathrm{d}t
				&\leq \dfrac{C\tau\epsilon}{2\mu}\int_{0}^{t}((\partial_{tr}\tilde{S}_1)^2+(\partial_{tr}\tilde{S}_2)^2+(\partial_{t}\tilde{S}_1)^2)(t,1)\mathrm{d}t\\
				&\leq C\left(E(0)+E^{\frac{3}{2}}(t) +E^{\frac{1}{2}}(t) \int_{0}^{t}\mathcal{D}(s)\mathrm{d}s\right).
			\end{aligned}
		\end{equation}
		Thus, we can obtain
		\begin{equation}\label{89}
			\begin{aligned}
				\int_{0}^{t}(\partial_{tr} v)^2(t,1)\mathrm{d}t
				\leq C\left(E(0)+E^{\frac{3}{2}}(t)+E^{\frac{1}{2}}(t) \int_{0}^{t}\mathcal{D}(s)\mathrm{d}s\right).
			\end{aligned}
		\end{equation}
	This finish the proof of Lemma \ref{lem7_tx}.
	\end{proof}                                                                                                                                                                                                       
	\begin{lemma}\label{lem7_xx}
		There exists a constant $C$ such that for any $0\leq t \leq T$
		\begin{align}\label{bdy}
				&\int_{0}^{t}(\partial_{rr} v)^2(t,1)\mathrm{~d}t \nonumber\\
				\leq& \frac{\tau C}{2\mu}(\eta+\epsilon)\int_{1}^{+\infty}r^2\rho(\partial_{rr} v)^2\mathrm{d}r
				+\eta\int_{0}^{t}\int_{1}^{+\infty}\frac{r^2}{2\mu}\left((\partial_{rr}\tilde{S}_1)^2+(\partial_{rr}\tilde{S}_2)^2\right)\mathrm{d}r\mathrm{d}t \nonumber\\
				&
				+\frac{C\tau\epsilon}{\mu}\int_{0}^{t}\eta\left(
				(\partial_{rr} \tilde{S}_1)^2+(\partial_{rr} \tilde{S}_2)^2\right)(t,1)\mathrm{d}t
				+C(\eta)\epsilon\int_{0}^{t}\int_{1}^{+\infty}r^2((\partial_{rr}\tilde{S}_1)^2+(\partial_{rr}\tilde{S}_2)^2)\mathrm{d}r\mathrm{d}t \nonumber\\
				&
				+C\left(E(0)+ E^{\frac{3}{2}}(t)+ E^{\frac{1}{2}}(t) \int_{0}^{t}\mathcal{D}(s)\mathrm{d}s\right)
			\end{align}
		where $\eta$ is any small constant to be chosen later.
	\end{lemma}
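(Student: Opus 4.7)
The plan is to mimic the strategy of Lemma \ref{lem7_tx}: differentiate the $\tilde S_1$-equation twice in $r$ (equation $(\ref{60})_3$), multiply by $\frac{r^2}{3\mu}\partial_{rr}v$, and integrate over $[1,+\infty)\times[0,t]$. The key mechanism is that the third-order term $-2\mu\partial_{rrr}v\cdot\frac{r^2}{3\mu}\partial_{rr}v$, after integration by parts in $r$, produces the desired left-hand-side contribution
$$-\int_{1}^{+\infty}\tfrac{2r^2}{3}\partial_{rrr}v\,\partial_{rr}v\,\mathrm{d}r
=\tfrac{1}{3}(\partial_{rr}v)^2(t,1)+\int_{1}^{+\infty}\tfrac{2r}{3}(\partial_{rr}v)^2\,\mathrm{d}r,$$
the interior piece being absorbed into $E(t)$ via Lemma \ref{lem8}'s left-hand side. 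All remaining terms must then be put on the right side of \eqref{bdy}.

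The relaxation term $\tau\rho\partial_{trr}\tilde S_1\cdot\frac{r^2}{3\mu}\partial_{rr}v$ is integrated by parts in $t$: the time-boundary piece $\big[\int_1^{+\infty}\frac{r^2\tau}{3\mu}\rho\,\partial_{rr}\tilde S_1\partial_{rr}v\,\mathrm{d}r\big]_0^t$ is split by Young's inequality with small parameter $\eta$, giving precisely the contribution $\frac{\tau C}{2\mu}\eta\int r^2\rho(\partial_{rr}v)^2\,\mathrm{d}r$ that appears in \eqref{bdy}, while its companion $C(\eta)\tau\int r^2\rho(\partial_{rr}\tilde S_1)^2\,\mathrm{d}r$ is bounded by $E(0)+E(t)$. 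The resulting interior integral containing $\partial_{trr}v$ is reduced to already-controlled dissipation by invoking $(\ref{22})_2$ and Lemmas \ref{lem3}--\ref{lem5}. The convective term $\tau\rho v\,\partial_{rrr}\tilde S_1\cdot\frac{r^2}{3\mu}\partial_{rr}v$ is treated exactly as in Lemma \ref{lem7_tx}: $\partial_{rrr}\tilde S_1$ is expressed via $(\ref{60})_2$ to trade it for $\partial_{rrr}P$, $\partial_{rrr}\tilde S_2$ and $\rho\partial_{trr}v$; the $\partial_{rrr}\rho$ hidden in $\partial_{rrr}P$ is then eliminated through the mass equation $(\ref{60})_1$, leaving only interior contributions of order $(\partial_{rr}\rho,\partial_{rr}\tilde S_{1,2},\partial_{rr}v,\partial_{tr}v)$ which are all controlled by earlier lemmas and by Lemma \ref{lem7_tx}.

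The main obstacle, and the source of the nontrivial right-hand side in \eqref{bdy}, is the artificial viscosity term $-\tau\epsilon\rho\,\partial_{rrr}\tilde S_1\cdot\frac{r^2}{3\mu}\partial_{rr}v$. Integration by parts in $r$ produces the boundary contribution $\frac{\tau\epsilon}{3\mu}\rho\,\partial_{rr}\tilde S_1\,\partial_{rr}v(t,1)$, which by Young's inequality supplies simultaneously the terms $\frac{\tau C}{2\mu}\epsilon\int r^2\rho(\partial_{rr}v)^2\,\mathrm{d}r$ and $\frac{C\tau\epsilon}{\mu}\eta(\partial_{rr}\tilde S_1)^2(t,1)$ in \eqref{bdy}. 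The remaining interior integral splits into a piece containing $\partial_{rrr}v$, absorbed via Young into $\eta\int_0^t\int\frac{r^2}{2\mu}(\partial_{rr}\tilde S_1)^2\,\mathrm{d}r\,\mathrm{d}t$, and a piece containing $\partial_r\rho$, bounded by $C(\eta)\epsilon\int_0^t\int r^2(\partial_{rr}\tilde S_1)^2\,\mathrm{d}r\,\mathrm{d}t$. Carrying out the identical program starting from $(\ref{60})_4$ multiplied by $\frac{r^2}{\lambda}\partial_{rr}v$ supplies the $\tilde S_2$-analogue of every contribution above. Finally, collecting all the estimates, using Lemma \ref{lem7_tx} to dispose of any residual $(\partial_{tr}v)^2(t,1)$-type boundary terms introduced along the way, and applying the bounds of Lemmas \ref{lem1}--\ref{lem5} to the remaining interior residuals, yields \eqref{bdy}.
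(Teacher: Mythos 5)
Your skeleton is the same as the paper's: test $(\ref{60})_3$ against $\tfrac{r^2}{3\mu}\partial_{rr}v$, harvest the boundary term $\tfrac13(\partial_{rr}v)^2(t,1)$ from the third-order velocity term, integrate the relaxation term by parts in $t$, and eliminate $\partial_{rrr}\tilde S_1$, $\partial_{rrr}P$, $\partial_{rrr}\rho$ through $(\ref{60})_2$ and $(\ref{60})_1$. However, there is a genuine gap in your treatment of the time-slice term produced by the integration by parts in $t$. You propose to estimate $\int_1^{+\infty}\tfrac{r^2\tau}{3\mu}\rho\,\partial_{rr}v\,\partial_{rr}\tilde S_1(t,\cdot)\,\mathrm{d}r$ directly by Young, keeping $\eta(\partial_{rr}v)^2$ and paying $C(\eta)\tau\int r^2\rho(\partial_{rr}\tilde S_1)^2\,\mathrm{d}r$, which you then claim is "bounded by $E(0)+E(t)$". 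This does not close. The quantity $\tau\|r\partial_{rr}\tilde S_1(t)\|_{L^2}^2$ has no independent closed bound at this stage: its only control comes from Lemma \ref{lem8}, whose right-hand side contains precisely $\int_0^t(\partial_{rr}v)^2(t,1)\,\mathrm{d}t$, so invoking it is circular; and leaving the term as $C(\eta)E(t)$ destroys Proposition \ref{prop1}, since an inequality of the form $E+\int\mathcal D\le C(E(0)+E+\cdots)$ with a non-small constant cannot be closed by smallness of $E$. Nor can the term be carried explicitly into \eqref{bdy} and absorbed in Corollary \ref{cor1}: there \eqref{bdy} is multiplied by $C+1$, and $(C+1)C(\eta)\tau\int r^2\rho(\partial_{rr}\tilde S_1)^2$ (with $C(\eta)$ large as $\eta\to0$) cannot be dominated by the coefficient $\tfrac{\tau}{24\mu}$ on the left of \eqref{sed-rr}; reversing the Young split fails for the same reason, uniformly in $\tau\le1$. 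The paper's fix is structural, not a Young inequality: it collects \emph{all} the time-slice terms $\int\tfrac{\tau r^2}{2\mu}\rho\,\partial_{rr}v\,\bigl(\partial_{rr}P-\tfrac23\partial_{rr}\tilde S_1-\tfrac2r\partial_r\tilde S_1+\tfrac{2}{r^2}\tilde S_1-\partial_{rr}\tilde S_2\bigr)\,\mathrm{d}r$ and uses the once-differentiated momentum equation $(\ref{22})_2$ to replace the bracket by $f_2-\rho\partial_{tr}v-\rho v\partial_{rr}v$; only then is Young applied, and the companion $C(\eta)\int r^2\rho(\partial_{tr}v)^2\,\mathrm{d}r$ is controlled by Lemma \ref{lem4}, which is already closed. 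This recombination is the key idea of the proof and is missing from your proposal.

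Two smaller points. First, your accounting of the $\epsilon$-terms in \eqref{bdy} is off: Young applied to the boundary product $\tau\epsilon\rho\,\partial_{rr}\tilde S_1\partial_{rr}v(t,1)$ yields two \emph{boundary} terms at $r=1$ (one of which, $\tau\epsilon\,C(\eta)(\partial_{rr}v)^2(t,1)$, must be absorbed into the left-hand side using the smallness of $\tau\epsilon$), not the volume integral $\epsilon\int r^2\rho(\partial_{rr}v)^2\,\mathrm{d}r$; the latter actually originates from the $-\epsilon$ part of the convective term $\tfrac{\tau r^2}{2\mu}\rho(v-\epsilon)\partial_{rr}v\cdot\rho\partial_{trr}v$ after a time integration, and the boundary terms $\tfrac{C\tau\epsilon}{\mu}\eta(\partial_{rr}\tilde S_i)^2(t,1)$ come from converting $(\partial_{rr}\rho)^2(t,1)$ via the momentum equation. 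Second, also testing $(\ref{60})_4$ against $\tfrac{r^2}{\lambda}\partial_{rr}v$ is unnecessary (the single identity already produces the boundary term with the correct sign); the paper uses $(\ref{60})_4$ only as a substitution for $\partial_{trr}\tilde S_2$. Neither of these is fatal, but the time-slice issue above is.
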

	\begin{proof}
		Multiplying the equation $(\ref{60})_3$ by $\frac{r^2}{3\mu}\partial_{rr} v$
		and integrating over $(0,t)\times[1,+\infty)$, we get
		\begin{align}\label{70}
				&\int_{0}^{t}\int_{1}^{+\infty}\frac{r^2\tau}{3\mu}\rho\partial_{rr} v\partial_{trr} \tilde{S}_1\mathrm{d}r\mathrm{d}t +\int_{0}^{t}\int_{1}^{+\infty}\dfrac{r^2\tau}{3\mu}\rho (v-\epsilon) \partial_{rr} v\partial_{rrr} \tilde{S}_1\mathrm{d}r\mathrm{d}t\nonumber \\
				& +\int_{0}^{t}\int_{1}^{+\infty}\frac{r^2}{3\mu}\partial_{rr} v\partial_{rr} \tilde{S}_1\mathrm{d}r\mathrm{d}t 
				-\int_{0}^{t}\int_{1}^{+\infty}\frac{2}{3}\left(r^2\partial_{rrr}v \partial_{rr}v-r(\partial_{rr} v)^2+2\partial_{r}v\partial_{rr}v-\frac{2}{r}v\partial_{rr} v\right)\mathrm{d}r\mathrm{d}t \nonumber\\
				&\leq C\left(E(0)+ E^{\frac{1}{2}}(t) \int_{0}^{t}\mathcal{D}(s)\mathrm{d}s\right).
		\end{align}	
		We estimate each term of the above equation, respectively. Firstly, by using equation $(\ref{22})_2$, we have
		\begin{align}\label{71}
				&\int_{0}^{t}\int_{1}^{+\infty}\dfrac{r^2\tau}{3\mu}\rho\partial_{rr} v\partial_{trr} \tilde{S}_1\mathrm{d}r\mathrm{d}t \nonumber \\
				=&
				\int_{1}^{+\infty}\dfrac{r^2\tau}{3\mu}\rho\partial_{rr} v\partial_{rr} \tilde{S}_1\big|_0^t\mathrm{d}r-
				\int_{0}^{t}\int_{1}^{+\infty}\dfrac{r^2\tau}{3\mu}\rho\partial_{trr} v\partial_{rr} \tilde{S}_1\mathrm{d}r\mathrm{d}t-
				\int_{0}^{t}\int_{1}^{+\infty}\dfrac{r^2\tau}{3\mu}\partial_{t}\rho\partial_{rr} v\partial_{rr} \tilde{S}_1\mathrm{d}r\mathrm{d}t \nonumber \\
				\geq& \int_{1}^{+\infty}\dfrac{r^2\tau}{3\mu}\rho\partial_{rr} v\partial_{rr} \tilde{S}_1\mathrm{d}r-
				\int_{0}^{t}\int_{1}^{+\infty}\dfrac{r^2\tau}{2\mu}\rho\partial_{trr} v(\rho\partial_{tr} v+\rho v\partial_{rr} v+\partial_{rr} P-\dfrac{2}{r} \partial_r \tilde{S}_1
				+\dfrac{2}{r^2}\tilde{S}_1 \nonumber \\
				&\qquad\qquad\qquad\qquad\qquad\qquad\qquad\qquad-\partial_{rr} \tilde{S}_2)\mathrm{d}r\mathrm{d}t-C\left(E(0)+ E^{\frac{1}{2}}(t) \int_{0}^{t}\mathcal{D}(s)\mathrm{d}s\right),
			\end{align}
		where 
		\begin{equation}\nonumber
			\begin{aligned}
				&-\int_{0}^{t}\int_{1}^{+\infty}\dfrac{r^2\tau}{2\mu}\rho^2\partial_{trr} v\partial_{tr} v\mathrm{d}r\mathrm{d}t
				-\int_{0}^{t}\int_{1}^{+\infty}\dfrac{r^2\tau}{2\mu}\rho^2v\partial_{trr} v\partial_{rr} v\mathrm{d}r\mathrm{d}t\\
				\geq&\int_{0}^{t}\frac{\tau}{4\mu}\rho^2(\partial_{tr}v)^2(t,1)\mathrm{d}t
				+\int_{0}^{t}\int_{1}^{+\infty}\dfrac{\tau}{4\mu}(2r\rho^2+2r^2\rho\partial_{r}\rho)(\partial_{tr}v)^2\mathrm{d}r\mathrm{d}t
				-C\left(E(0)+ E^{\frac{1}{2}}(t) \int_{0}^{t}\mathcal{D}(s)\mathrm{d}s\right)\\
				\geq& -C\left(E(0)+ E^{\frac{3}{2}}(t)+ E^{\frac{1}{2}}(t) \int_{0}^{t}\mathcal{D}(s)\mathrm{d}s\right),
			\end{aligned}
		\end{equation}
		and 
		\begin{equation}\nonumber
			\begin{aligned}
				&-\int_{0}^{t}\int_{1}^{+\infty}\dfrac{\tau r^2}{2\mu}\rho\partial_{trr} v\partial_{rr} P\mathrm{d}r\mathrm{d}t\\
				\geq&\int_{1}^{+\infty}\frac{\tau r^2}{2\mu}\rho\partial_{rr} v\partial_{rr} P\mathrm{d}r +\int_{0}^{t}\int_{1}^{+\infty}\dfrac{\tau r^2}{2\mu}\rho\partial_{rr} v P^{\prime}(\rho)\partial_{trr} \rho \mathrm{d}r\mathrm{d}t
				-C\left(E(0)+ E^{\frac{1}{2}}(t) \int_{0}^{t}\mathcal{D}(s)\mathrm{d}s\right).
			\end{aligned}
		\end{equation}	
		By using equation $(\ref{60})_1$, Lemmas \ref{lem3.4} and \ref{lem5}, we have
	
			\begin{align*}
				&\int_{0}^{t}\int_{1}^{+\infty}\dfrac{\tau r^2}{2\mu}\rho\partial_{rr} v P^{\prime}(\rho)\partial_{trr} \rho \mathrm{d}r\mathrm{d}t\\
				=&-\int_{0}^{t}\int_{1}^{+\infty}\frac{\tau r^2}{2\mu}\rho\partial_{rr} v P^{\prime}(\rho)(\partial_{rrr}(\rho v)+\frac{2}{r} \partial_{rr}(\rho v)-\frac{4}{r^2} \partial_{r}(\rho v)+\frac{4}{r^3}\rho v)\mathrm{d}r\mathrm{d}t\\
				\geq&
				\int_{0}^{t}\dfrac{\tau r^2}{4\mu}P^{\prime}(\rho)\rho^2(\partial_{rr} v)^2(t,1)\mathrm{d}t
				-\int_{0}^{t}\int_{1}^{+\infty}\dfrac{\tau r}{2\mu}
				\rho^2 P^{\prime}(\rho)(\partial_{rr} v)^2\mathrm{d}r\mathrm{d}t\\
				&-\int_{0}^{t}\int_{1}^{+\infty}\dfrac{\tau r^2}{2\mu}
				\rho P^{\prime}(\rho)v\partial_{rr} v\partial_{rrr}\rho\mathrm{d}r\mathrm{d}t
				+\int_{0}^{t}\int_{1}^{+\infty}\dfrac{2\tau}{\mu}
				\rho^2 P^{\prime}(\rho)\partial_{rr} v\partial_{r} v\mathrm{d}t\\
				&
				-\int_{0}^{t}\int_{1}^{+\infty}\dfrac{2\tau}{\mu r}
				\rho^2 P^{\prime}(\rho)v\partial_{rr}v\mathrm{d}r\mathrm{d}t
				-C\left(E(0)+E^{\frac{1}{2}}(t) \int_{0}^{t}\mathcal{D}(s)\mathrm{d}s\right)
				\displaybreak \\
				\geq&
				-\int_{0}^{t}\int_{1}^{+\infty}\dfrac{\tau r^2}{2\mu}
				\rho P^{\prime}(\rho)v\partial_{rr} v\partial_{rrr}\rho\mathrm{d}r\mathrm{d}t
				-C\int_{0}^{t}\int_{1}^{+\infty}r^2((\partial_{rr} v)^2+(\partial_{r} v)^2+v^2)\mathrm{d}r\mathrm{d}t\\
				&\qquad\qquad\qquad\qquad\qquad\qquad\qquad\qquad\qquad
				-C\left(E(0)+ E^{\frac{3}{2}}(t)+ E^{\frac{1}{2}}(t) \int_{0}^{t}\mathcal{D}(s)\mathrm{d}s\right)\\
				\geq&-\int_{0}^{t}\int_{1}^{+\infty}\dfrac{\tau r^2}{2\mu}
				\rho P^{\prime}(\rho)v\partial_{rr} v\partial_{rrr}\rho\mathrm{d}r\mathrm{d}t
				-C\epsilon\int_{0}^{t}\int_{1}^{+\infty}r^2((\partial_{rr}\tilde{S}_1)^2+(\partial_{rr}\tilde{S}_2)^2)\mathrm{d}r\mathrm{d}t\\
				&\qquad\qquad\qquad\qquad\qquad\qquad\qquad\qquad\qquad
				-C\left(E(0)+ E^{\frac{3}{2}}(t)+ E^{\frac{1}{2}}(t) \int_{0}^{t}\mathcal{D}(s)\mathrm{d}s\right)
			\end{align*}
		since $P^{\prime}(\rho)>0$. 
		Next, we continue to handle the remaining terms in (\ref{71}). By applying Lemmas \ref{lem4} and  $\ref{lem5}$, we have	 
		\begin{equation}\nonumber
			\begin{aligned}
				&\int_{0}^{t}\int_{1}^{+\infty}\frac{r\tau}{\mu}\rho\partial_{trr}v
				\partial_{r} \tilde{S}_1\mathrm{d}r\mathrm{d}t\\
				=&\int_{1}^{+\infty} \frac{r\tau}{\mu}\rho\partial_{rr}v
				\partial_{r} \tilde{S}_1\big|_0^t\mathrm{d}r
				-\int_{0}^{t}\int_{1}^{+\infty} \frac{r\tau}{\mu}
				\rho\partial_{tr} \tilde{S}_1\partial_{rr}v\mathrm{d}r\mathrm{d}t
				-\int_{0}^{t}\int_{1}^{+\infty} \frac{r\tau}{\mu}
				\partial_{t}\rho\partial_{r} \tilde{S}_1 \partial_{rr}v\mathrm{d}r\mathrm{d}t\\
				\geq&\int_{1}^{+\infty} \frac{r\tau}{\mu}\rho\partial_{rr}v
				\partial_{r} \tilde{S}_1\mathrm{d}r
				-C\int_{0}^{t}\int_{1}^{+\infty} r^2((\partial_{rr}v)^2+(\partial_{tr} \tilde{S}_1)^2)\mathrm{d}r\mathrm{d}t
				-C\left(E(0)+ E^{\frac{1}{2}}(t) \int_{0}^{t}\mathcal{D}(s)\mathrm{d}s\right)\\
				\geq&\int_{1}^{+\infty} \frac{r\tau}{\mu}\rho\partial_{rr}v
				\partial_{r} \tilde{S}_1\mathrm{d}r
				-C\epsilon\int_{0}^{t}\int_{1}^{+\infty}r^2((\partial_{rr} \tilde{S}_1)^2
				+(\partial_{rr} \tilde{S}_2)^2)\mathrm{d}r\mathrm{d}t\\
				&\qquad\qquad\qquad\qquad\qquad\qquad\qquad\qquad\qquad\qquad\qquad\quad-C\left(E(0)+ E^{\frac{3}{2}}(t)+ E^{\frac{1}{2}}(t) \int_{0}^{t}\mathcal{D}(s)\mathrm{d}s\right).
			\end{aligned}
		\end{equation}
		By applying Lemmas \ref{lem2} and  $\ref{lem5}$, we have	
			\begin{align*}
				&-\int_{0}^{t}\int_{1}^{+\infty}\frac{\tau}{\mu}\rho\partial_{trr}v
				\tilde{S}_1\mathrm{d}r\mathrm{d}t\\
				=&-\int_{1}^{+\infty} \frac{\tau}{\mu}\rho\partial_{rr}v \tilde{S}_1\big|_0^t\mathrm{d}r
				+\int_{0}^{t}\int_{1}^{+\infty} \frac{\tau}{\mu}
				\rho \partial_{t}\tilde{S}_1\partial_{rr}v\mathrm{d}r\mathrm{d}t +\int_{0}^{t}\int_{1}^{+\infty} \frac{\tau}{\mu}
				\partial_{t}\rho \partial_{rr}v\tilde{S}_1\mathrm{d}r\mathrm{d}t\\
				\geq&-\int_{1}^{+\infty} \frac{\tau}{\mu}\rho\partial_{rr}v \tilde{S}_1\mathrm{d}r
				-\int_{0}^{t}\int_{1}^{+\infty} \frac{\tau\rho}{2\mu}((\partial_{rr}v)^2
				+(\partial_{t}\tilde{S}_1)^2) \mathrm{d}r
				+\int_{0}^{t}\int_{1}^{+\infty} \frac{\tau}{\mu}
				\partial_{t}\rho \partial_{rr}v\tilde{S}_1\mathrm{d}r\mathrm{d}t\\
				\geq&-\int_{1}^{+\infty} \frac{\tau}{\mu}\rho\partial_{rr}v \tilde{S}_1\mathrm{d}r
				-C\epsilon\int_{0}^{t}\int_{1}^{+\infty}r^2((\partial_{rr} \tilde{S}_1)^2
				+(\partial_{rr} \tilde{S}_2)^2)\mathrm{d}r\mathrm{d}t\\
				&\qquad\qquad\qquad\qquad\qquad\qquad\qquad\qquad\qquad\qquad\qquad-C\left(E(0)+ E^{\frac{3}{2}}(t)+ E^{\frac{1}{2}}(t) \int_{0}^{t}\mathcal{D}(s)\mathrm{d}s\right).
			\end{align*}
	
		In view of equation $(\ref{60})_4$ and Lemmas \ref{lem3.4} and \ref{lem5}, we have
			\begin{align*}
				&\int_{0}^{t}\int_{1}^{+\infty}\frac{r^2\tau}{2\mu}\rho\partial_{trr}v
				\partial_{rr} \tilde{S}_2\mathrm{d}r\mathrm{d}t\\
				=&\int_{1}^{+\infty} \frac{r^2\tau}{2\mu}\rho\partial_{rr}v
				\partial_{rr} \tilde{S}_2\big|_0^t\mathrm{d}r
				-\int_{0}^{t}\int_{1}^{+\infty} \frac{r^2\tau}{2\mu}\rho\partial_{rr}v
				\partial_{trr} \tilde{S}_2\mathrm{d}r\mathrm{d}t
				-\int_{0}^{t}\int_{1}^{+\infty} \frac{r^2\tau}{2\mu}\partial_{t}\rho\partial_{rr}v
				\partial_{rr} \tilde{S}_2\mathrm{d}r\mathrm{d}t\\
				\geq&	\int_{1}^{+\infty} \frac{r^2\tau}{2\mu}\rho\partial_{rr}v
				\partial_{rr} \tilde{S}_2\mathrm{d}r
				-C\left(E(0)+ E^{\frac{1}{2}}(t) \int_{0}^{t}\mathcal{D}(s)\mathrm{d}s\right)\\
				&-\int_{0}^{t}\int_{1}^{+\infty}\frac{r^2}{2\mu}\partial_{rr}v
				(\lambda(\partial_{rrr} v+\dfrac{2}{r}\partial_{rr} v-\dfrac{4}{r^2} \partial_{r}v+\dfrac{4}{r^3}v)-\tau  \rho (v-\epsilon) \partial_{rrr}\tilde{S}_2 
				-\partial_{rr} \tilde{S}_2)\mathrm{d}r\mathrm{d}t \displaybreak \\
				\geq&\int_{1}^{+\infty}\frac{r^2\tau}{2\mu}\rho\partial_{rr}v\partial_{rr} \tilde{S}_2\mathrm{d}r
				+\int_{0}^{t}\int_{1}^{+\infty}\dfrac{r^2\tau}{2\mu}\rho  (v-\epsilon)  \partial_{rr}v\partial_{rrr} \tilde{S}_2 \mathrm{d}r\mathrm{d}t
				-\eta\int_{0}^{t}\int_{1}^{+\infty}\frac{r^2}{2\mu}(\partial_{rr} \tilde{S}_2)^2\mathrm{d}r\mathrm{d}t\\
				&
				-C(\eta)\epsilon\int_{0}^{t}\int_{1}^{+\infty}r^2((\partial_{rr} \tilde{S}_1)^2
				+(\partial_{rr} \tilde{S}_2)^2)\mathrm{d}r\mathrm{d}t
				-C\left(E(0)+ E^{\frac{3}{2}}(t)+ E^{\frac{1}{2}}(t) \int_{0}^{t}\mathcal{D}(s)\mathrm{d}s\right).
			\end{align*}
		
		For the second term on the left-hand side of (\ref{70}), by applying $(\ref{60})_2$, we get
		\begin{align}\label{88}
				&\int_{0}^{t}\int_{1}^{+\infty}\dfrac{r^2\tau}{3\mu}\rho (v-\epsilon)\partial_{rr} v\partial_{rrr} \tilde{S}_1\mathrm{d}r\mathrm{d}t \nonumber\\
				=&\int_{0}^{t}\int_{1}^{+\infty}\dfrac{r^2\tau}{2\mu}\rho (v-\epsilon)\partial_{rr} v\big(\rho\partial_{trr} v+\rho v\partial_{rrr} v+\partial_{rrr} P-\dfrac{2}{r} \partial_{rr} \tilde{S}_1+\dfrac{4}{r^2} \partial_{r} \tilde{S}_1
				+\dfrac{4}{r^3}\tilde{S}_1-\partial_{rrr} \tilde{S}_2 \nonumber\\
				&\qquad\qquad\qquad\qquad\qquad\qquad\qquad\qquad\qquad\qquad\qquad\qquad\qquad\qquad\qquad\qquad\qquad\qquad-l_2\big)
				\mathrm{d}r\mathrm{d}t \nonumber\\
				\geq&\int_{1}^{+\infty}-\dfrac{\tau\epsilon}{4\mu}r^2\rho^2(\partial_{rr} v)^2\mathrm{d}r
				-C\epsilon\int_{0}^{t}\int_{1}^{+\infty}r^2\left((\partial_{rr} \tilde{S}_1)^2+(\partial_{rr} \tilde{S}_2)^2\right)\mathrm{d}r\mathrm{d}t
				\nonumber\\
				&+\int_{0}^{t}\int_{1}^{+\infty}\dfrac{\tau r^2}{2\mu}\rho v \partial_{rr}v P^{\prime}(\rho) \partial_{rrr}\rho\mathrm{d}r\mathrm{d}t
				-\int_{0}^{t}\int_{1}^{+\infty}\dfrac{r^2\tau}{2\mu}\rho (v-\epsilon) \partial_{rr}v\partial_{rrr} \tilde{S}_2 \mathrm{d}r\mathrm{d}t \nonumber\\
				&-\int_{0}^{t}\int_{1}^{+\infty}\dfrac{r^2\tau\epsilon}{2\mu}\rho P^{\prime}(\rho)\partial_{rr}v\partial_{rrr}\rho \mathrm{d}r\mathrm{d}t
				-C\left(E(0)+ E^{\frac{1}{2}}(t) \int_{0}^{t}\mathcal{D}(s)\mathrm{d}s\right)
			\end{align}
		where we use Lemmas \ref{lem1}, \ref{lem3} and \ref{lem5}, leading to
		\begin{equation}\nonumber
			\begin{aligned}
				&-\int_{0}^{t}\int_{1}^{+\infty}\frac{\tau r}{\mu}\rho(v-\epsilon) \partial_{rr} \tilde{S}_1\partial_{rr}v\mathrm{d}r\mathrm{d}t\\
				\geq&-\frac{\tau\epsilon}{\mu} \int_{0}^{t}\int_{1}^{+\infty}Cr^2 \left((\partial_{rr}v)^2+(\partial_{rr} \tilde{S}_1)^2\right)\mathrm{d}r\mathrm{d}t
				-C\left(E(0)+ E^{\frac{1}{2}}(t) \int_{0}^{t}\mathcal{D}(s)\mathrm{d}s\right)\\
				\geq&-C\epsilon\int_{0}^{t}\int_{1}^{+\infty}r^2\left((\partial_{rr} \tilde{S}_1)^2+(\partial_{rr} \tilde{S}_2)^2\right)\mathrm{d}r\mathrm{d}t
				-C\left(E(0)+ E^{\frac{3}{2}}(t)+ E^{\frac{1}{2}}(t) \int_{0}^{t}\mathcal{D}(s)\mathrm{d}s\right),
			\end{aligned}
		\end{equation}
		and
		\begin{equation}\nonumber
			\begin{aligned}
				&-\int_{0}^{t}\int_{1}^{+\infty}\frac{2\epsilon\tau}{\mu}\rho\partial_{r} \tilde{S}_1\partial_{rr}v\mathrm{d}r\mathrm{d}t
				-\int_{0}^{t}\int_{1}^{+\infty}\dfrac{2\tau\epsilon}{\mu r}\rho \partial_{rr}v\tilde{S}_1\mathrm{d}r\mathrm{d}t\\
				\geq&-C\epsilon\int_{0}^{t}\int_{1}^{+\infty}r^2\left((\partial_{rr} \tilde{S}_1)^2+(\partial_{rr} \tilde{S}_2)^2\right)\mathrm{d}r\mathrm{d}t
				-C\left(E(0)+ E^{\frac{3}{2}}(t)+ E^{\frac{1}{2}}(t) \int_{0}^{t}\mathcal{D}(s)\mathrm{d}s\right).
			\end{aligned}
		\end{equation}
		
		For the third term on the left-hand side of equation (\ref{70}), we have
		\begin{equation}
			\begin{aligned}
				&\int_{0}^{t}\int_{1}^{+\infty}\dfrac{r^2}{3\mu}\partial_{rr} v\partial_{rr} \tilde{S}_1\mathrm{d}r\mathrm{d}t
				\geq \int_{0}^{t}\int_{1}^{+\infty}\left(-C(\eta) \dfrac{r^2}{3\mu}(\partial_{rr} v)^2
				- \eta \dfrac{r^2}{3\mu}(\partial_{rr} \tilde{S}_1)^2\right) \mathrm{d}r\mathrm{d}t\\
				\geq& -C(\eta)\epsilon\int_{0}^{t}\int_{1}^{+\infty}r^2\left((\partial_{rr} \tilde{S}_1)^2+(\partial_{rr} \tilde{S}_2)^2\right)\mathrm{d}r\mathrm{d}t
				-\eta \int_{0}^{t}\int_{1}^{+\infty}\dfrac{r^2}{3\mu}(\partial_{rr} \tilde{S}_1)^2\mathrm{d}r\mathrm{d}t
			\end{aligned}
		\end{equation}
		where $\eta$ is sufficiently small.
		For the last term on the left-hand side of equation (\ref{70}), we have
			\begin{align*}
				&-\int_{0}^{t}\int_{1}^{+\infty}\frac{2}{3}\left(r^2\partial_{rrr}v \partial_{rr}v-r(\partial_{rr} v)^2+2\partial_{r}v\partial_{rr}v-\frac{2}{r}v\partial_{rr} v\right)\mathrm{d}r\mathrm{d}t\\
				\geq&\int_{0}^{t}\frac{1}{3}(\partial_{rr} v)^2(t,1) \mathrm{d}t
				-C\bigg(\epsilon\int_{0}^{t}\int_{1}^{+\infty}r^2\left((\partial_{rr} \tilde{S}_1)^2+(\partial_{rr} \tilde{S}_2)^2\right)\mathrm{d}r\mathrm{d}t\\
				&\qquad\qquad\qquad\qquad\qquad\qquad\quad+
				E(0)+ E^{\frac{3}{2}}(t)+ E^{\frac{1}{2}}(t) \int_{0}^{t}\mathcal{D}(s)\mathrm{d}s\bigg).
			\end{align*}

		Combining the above result, one can obtain
			\begin{align*}
				&\int_{0}^{t}\frac{1}{3}
				(\partial_{rr}v)^2(t,1)\mathrm{d}t\\
				\leq& \int_{1}^{+\infty}\left(
				\frac{\tau r^2}{2\mu}\rho\partial_{rr} v (\partial_{rr} P
				-\frac{2}{3}\partial_{rr} \tilde{S}_1
				-\frac{2}{r}\partial_{r}\tilde{S}_1+\frac{2}{r^2}\tilde{S}_1-\partial_{rr}\tilde{S}_2 ) 
				+\frac{\epsilon\tau r^2}{4\mu}\rho^2(\partial_{rr} v)^2 \right)\mathrm{d}r\\
				&+\eta\int_{0}^{t}\int_{1}^{+\infty}
				\frac{r^2}{2\mu}\left((\partial_{rr} \tilde{S}_1)^2+(\partial_{rr} \tilde{S}_2)^2\right)
				\mathrm{d}r\mathrm{d}t
				+\int_{0}^{t}\int_{1}^{+\infty}\dfrac{r^2\tau\epsilon}{2\mu}\rho P^{\prime}(\rho)\partial_{rr}v\partial_{rrr}\rho \mathrm{d}r\mathrm{d}t\\
				&+C(\eta)\epsilon\int_{0}^{t}\int_{1}^{+\infty}r^2\left((\partial_{rr} \tilde{S}_1)^2+(\partial_{rr} \tilde{S}_2)^2\right)\mathrm{d}r\mathrm{d}t\\
				\leq&\frac{\tau C}{2\mu}\int_{1}^{+\infty} r^2\rho \left(\eta(\partial_{rr}v)^2+C(\eta)(\partial_{tr}v)^2
				+ \epsilon (\partial_{rr}v)^2 \right)\mathrm{d}r
				+\eta\int_{0}^{t}\int_{1}^{+\infty}
				\frac{r^2}{2\mu}\left((\partial_{rr} \tilde{S}_1)^2+(\partial_{rr} \tilde{S}_2)^2\right)
				\mathrm{d}r\mathrm{d}t\\
				&+\int_{0}^{t}\int_{1}^{+\infty}\dfrac{r^2\tau\epsilon}{2\mu}\rho P^{\prime}(\rho)\partial_{rr}v\partial_{rrr}\rho \mathrm{d}r\mathrm{d}t
				+C(\eta)\epsilon\int_{0}^{t}\int_{1}^{+\infty}r^2\left((\partial_{rr} \tilde{S}_1)^2+(\partial_{rr} \tilde{S}_2)^2\right)\mathrm{d}r\mathrm{d}t\\
				&
				+C\left(E(0)+ E^{\frac{3}{2}}(t)+ E^{\frac{1}{2}}(t) \int_{0}^{t}\mathcal{D}(s)\mathrm{d}s\right).
			\end{align*}
		In view of equation $(\ref{60})_1$ and $(\ref{60})_2$ and Lemmas \ref{lem5}, we obtain
		\begin{equation}\nonumber
			\begin{aligned}
				&\int_{0}^{t}\int_{1}^{+\infty}\dfrac{r^2\tau\epsilon}{2\mu}\rho P^{\prime}(\rho)\partial_{rr}v\partial_{rrr}\rho \mathrm{d}r\mathrm{d}t\\
				=&-\int_{0}^{t}\dfrac{r^2\tau\epsilon}{2\mu}\rho P^{\prime}(\rho)\partial_{rr}v\partial_{rr}\rho(t,1)\mathrm{d}t-
				\int_{0}^{t}\int_{1}^{+\infty}\dfrac{\tau\epsilon}{2\mu}[r^2\rho P^{\prime}(\rho)\partial_{rr}v]_r\partial_{rr}\rho \mathrm{d}r\mathrm{d}t\\
				\leq&-\int_{0}^{t}\dfrac{r^2\tau\epsilon}{2\mu}\rho P^{\prime}(\rho)\partial_{rr}v\partial_{rr}\rho(t,1)\mathrm{d}t
				+\int_{0}^{t}\int_{1}^{+\infty}\dfrac{r^2\tau\epsilon}{2\mu}P^{\prime}(\rho)(\partial_{trr} \rho+v\partial_{rrr}\rho+\frac{2}{r}\rho\partial_{rr}v+\frac{4}{r^3}\rho v)\partial_{rr}\rho\mathrm{d}r\mathrm{d}t\\
				&-
				\int_{0}^{t}\int_{1}^{+\infty}\dfrac{\tau\epsilon}{2\mu}2r\rho P^{\prime}(\rho)\partial_{rr}v\partial_{rr}\rho \mathrm{d}r\mathrm{d}t
				+ CE^{\frac{1}{2}}(t) \int_{0}^{t}\mathcal{D}(s)\mathrm{d}s
				\\
				\leq& \frac{C\tau\epsilon}{2\mu}\int_{0}^{t}\left(C(\eta)(\partial_{rr}v)^2(t,1)+\eta(\partial_{rr}\rho)^2(t,1)\right)\mathrm{d}t
				-\int_{1}^{+\infty}\dfrac{\tau\epsilon}{4\mu}P^{\prime}(\rho)(\partial_{rr}\rho)^2\big|_0^t\mathrm{d}r\\
				&
				+ C\left(E(0)+E^{\frac{3}{2}}(t) +E^{\frac{1}{2}}(t) \int_{0}^{t}\mathcal{D}(s)\mathrm{d}s\right)\\
				\leq& \frac{C\tau\epsilon}{2\mu}\int_{0}^{t}C(\eta)(\partial_{rr}v)^2(t,1)\mathrm{d}t
				+\frac{C\tau\epsilon}{\mu}\int_{0}^{t}\eta\left(
				(\partial_{tr}v)^2+(\partial_{rr} \tilde{S}_1)^2+(\partial_{rr} \tilde{S}_2)^2+(\partial_{r} \tilde{S}_1)^2+(\tilde{S}_1)^2\right)(t,1)\mathrm{d}t\\
				&-\int_{1}^{+\infty}\dfrac{\tau\epsilon}{4\mu}P^{\prime}(\rho)(\partial_{rr}\rho)^2\mathrm{d}r
				+ C\left(E(0)+E^{\frac{3}{2}}(t) +E^{\frac{1}{2}}(t) \int_{0}^{t}\mathcal{D}(s)\mathrm{d}s\right)\\
				\leq&
				\frac{C\tau\epsilon}{2\mu}\int_{0}^{t}C(\eta)(\partial_{rr}v)^2(t,1)\mathrm{d}t
				+\frac{C\tau\epsilon}{\mu}\int_{0}^{t}\eta\left(
				(\partial_{rr} \tilde{S}_1)^2+(\partial_{rr} \tilde{S}_2)^2\right)(t,1)\mathrm{d}t\\
				&
				+ C\left(E(0)+E^{\frac{3}{2}}(t) +E^{\frac{1}{2}}(t) \int_{0}^{t}\mathcal{D}(s)\mathrm{d}s\right)
			\end{aligned}
		\end{equation}
		since $P^{\prime}(\rho)>0$.
	
		Thus, 
		by using Lemmas $\ref{lem1}, \ref{lem2-1}$ and \ref{lem5}, we finally have
		
		\begin{equation}
		\begin{aligned}\label{82}
				&\int_{0}^{t}(\partial_{rr} v)^2(t,1)\mathrm{~d}t \\
				\leq& 
				\frac{\tau C}{2\mu}(\eta+\epsilon)\int_{1}^{+\infty} r^2\rho(\partial_{rr}v)^2\mathrm{d}r +\eta\int_{0}^{t}\int_{1}^{+\infty}\frac{r^2}{2\mu}\left((\partial_{rr}\tilde{S}_1)^2+(\partial_{rr}\tilde{S}_2)^2\right)\mathrm{d}r\mathrm{d}t \\&
				+\frac{C\tau\epsilon}{2\mu}\int_{0}^{t}C(\eta)(\partial_{rr}v)^2(t,1)\mathrm{d}t
				+\frac{C\tau\epsilon}{\mu}\int_{0}^{t}\eta\left(
				(\partial_{rr} \tilde{S}_1)^2+(\partial_{rr} \tilde{S}_2)^2\right)(t,1)\mathrm{d}t \\&
				+C(\eta)\epsilon\int_{0}^{t}\int_{1}^{+\infty}\left((\partial_{rr} \tilde{S}_1)^2+(\partial_{rr} \tilde{S}_2)^2\right)\mathrm{d}r\mathrm{d}t
				+C\left(E(0)+ E^{\frac{3}{2}}(t)+ E^{\frac{1}{2}}(t) \int_{0}^{t}\mathcal{D}(s)\mathrm{d}s\right)
		\end{aligned}
		\end{equation}
	where \(\tau\epsilon\) is sufficiently small. Thus, the desired result can be obtained immediately.
	\end{proof}
	
	Next, according to Lemmas \ref{lem8},\ref{lem7_tx} and \ref{lem7_xx}, we can obtain the following corollary.
	\begin{corollary}\label{cor1}
		There exists a constant C such that for any $0\leq t \leq T$
		\begin{equation}
			\begin{aligned}
				&\int_{1}^{+\infty}\bigg(\left(r\partial_{rr} \rho+r\partial_{r} \rho\right)^2
				+r^2 (\partial_{rr}v)^2+(\partial_{r}v)^2
				+\tau\left(r^2(\partial_{rr}\tilde{S}_1)^2+(\partial_{r}\tilde{S}_1)^2+\frac{1}{ r^2}(\tilde{S}_1)^2\right)\\
				&\qquad\qquad
				+\tau\left(r\partial_{rr} \tilde{S}_2+2\partial_{r} \tilde{S}_2\right)^2\bigg)\mathrm{d}r
				+\int_{0}^{t}\left((\partial_{rr}v)^2(t,1)+(\partial_{tr}v)^2(t,1)
				\right)\mathrm{d}t\\
				&+\int_0^t\int_{1}^{+\infty}\bigg((r^2(\partial_{rr}\tilde{S}_1)^2+(\partial_{r}\tilde{S}_1)^2+\frac{1}{ r^2}(\tilde{S}_1)^2)
				+\left(r\partial_{rr} \tilde{S}_2+2\partial_{r} \tilde{S}_2\right)^2\bigg)\mathrm{d}r\mathrm{d}t\\
				&\leq C \left(E(0)+E^{\frac{3}{2}}(t)+E^{\frac{1}{2}}(t) \int_{0}^{t}\mathcal{D}(s)\mathrm{d}s\right).	
			\end{aligned}
		\end{equation}
	\end{corollary}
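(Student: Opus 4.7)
The plan is to obtain Corollary \ref{cor1} by combining the three preceding lemmas: Lemma \ref{lem8} provides the master estimate for the second-order spatial derivatives, but its right-hand side carries the two boundary contributions $\int_0^t (\partial_{rr}v)^2(t,1)\,dt$ and $\int_0^t (\partial_{tr}v)^2(t,1)\,dt$; Lemmas \ref{lem7_tx} and \ref{lem7_xx} are exactly the tools designed to remove these boundary terms.

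First I would substitute the bound from Lemma \ref{lem7_tx} directly into Lemma \ref{lem8}. This is straightforward because \eqref{bdy_tx} is already of the form $C(E(0)+E^{3/2}(t)+E^{1/2}(t)\int_0^t\mathcal{D}(s)\,ds)$, which matches the target right-hand side. Next I would insert the bound from Lemma \ref{lem7_xx} for the remaining boundary term $\int_0^t(\partial_{rr}v)^2(t,1)\,dt$. Unlike the $\partial_{tr}v$ estimate, the right-hand side of \eqref{bdy} contains three groups of terms that are not yet of the target form: (i) the volume term $\frac{\tau C}{2\mu}(\eta+\epsilon)\int_1^{+\infty}r^2\rho(\partial_{rr}v)^2\,dr$, (ii) the dissipative volume terms $\eta\int_0^t\int_1^{+\infty}\frac{r^2}{2\mu}((\partial_{rr}\tilde S_1)^2+(\partial_{rr}\tilde S_2)^2)\,dr\,dt$ together with the $\epsilon C(\eta)$-weighted version of the same integrand, and (iii) the boundary terms $\frac{C\tau\epsilon}{\mu}\int_0^t \eta((\partial_{rr}\tilde S_1)^2+(\partial_{rr}\tilde S_2)^2)(t,1)\,dt$.

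These three groups must all be absorbed into the left-hand side of \eqref{sed-rr}. Since \eqref{hu3.4} gives $\rho\geq 3/4$, the term $\frac{r^2\rho}{8}(\partial_{rr}v)^2$ on the left of \eqref{sed-rr} controls (i) provided $\tau(\eta+\epsilon)$ is chosen sufficiently small relative to the universal constants. The dissipative terms in (ii) are absorbed into the left-hand side terms $\int_0^t\int_1^{+\infty}\frac{r^2}{12\mu}(\partial_{rr}\tilde S_1)^2+\frac{1}{\lambda}(r\partial_{rr}\tilde S_2+2\partial_r\tilde S_2)^2\,dr\,dt$ after expanding the square for $\tilde S_2$ and again choosing $\eta$ and $\epsilon$ small. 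The boundary terms in (iii) are exactly what the coefficients $\frac{\tau\epsilon}{4\mu}$ and $\frac{3\tau\epsilon}{8\lambda}$ on the left-hand side of \eqref{sed-rr} were designed to control, so they are absorbed by requiring $\eta\ll 1$.

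The main obstacle here is bookkeeping: one must verify that the small constants $\eta$ and $\epsilon$ can be chosen uniformly (independent of $\tau$) so that all three groups of terms are strictly dominated by the corresponding pieces of the left-hand side in Lemma \ref{lem8}. The delicate point is the interaction in group (i), where the coefficient $\frac{\tau C}{2\mu}(\eta+\epsilon)$ multiplies an instantaneous energy quantity; since the corresponding left-hand quantity in \eqref{sed-rr} is only $\frac{r^2\rho}{8}(\partial_{rr}v)^2$ without a factor of $\tau$, one can clearly absorb it for all $\tau\leq 1$ by fixing $\eta$ and $\epsilon$ small. After these absorptions, the combined inequality takes exactly the form asserted in Corollary \ref{cor1}, and the proof is complete.
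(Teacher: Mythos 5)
Your proposal is correct and follows essentially the same route as the paper: the authors likewise add the estimate of Lemma \ref{lem7_xx} (scaled by $C+1$) to Lemma \ref{lem8}, fix $\eta$ small to absorb the $\eta$-weighted interior and boundary dissipation terms, then fix $\epsilon$ small to absorb the remaining $\epsilon$-weighted terms, and finally invoke Lemma \ref{lem7_tx} for the $\partial_{tr}v$ boundary contribution. The absorption checks you flag (groups (i)--(iii)) are exactly the ones the paper performs.
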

	
	\begin{proof}
		Indeed, multiplying  (\ref{bdy}) by $C+1$ and adding it to (\ref{sed-rr}).
		Choose a sufficiently small $\eta$ such that $\frac{(C+1)}{2\mu}\eta<\frac{1}{2}$,
		and then fix $\eta$, we have 
		\begin{equation}
			\begin{aligned}
				&\int_{1}^{+\infty}\bigg(\left(r\partial_{rr} \rho+r\partial_{r} \rho\right)^2
				+\frac{1}{2}r^2 (\partial_{rr}v)^2+(\partial_{r}v)^2
				+\tau\left(r^2(\partial_{rr}\tilde{S}_1)^2+(\partial_{r}\tilde{S}_1)^2+\frac{1}{ r^2}(\tilde{S}_1)^2\right)\\
				&\qquad\qquad
				+\tau\left(r\partial_{rr} \tilde{S}_2+2\partial_{r} \tilde{S}_2\right)^2\bigg)\mathrm{d}r
				+\int_{0}^{t}\left((\partial_{rr}v)^2(t,1)+(\partial_{tr}v)^2(t,1)
				\right)\mathrm{d}t\\
				&+\int_0^t\int_{1}^{+\infty}\bigg((\frac{1}{2}r^2(\partial_{rr}\tilde{S}_1)^2+(\partial_{r}\tilde{S}_1)^2+\frac{1}{ r^2}(\tilde{S}_1)^2)
				+\frac{1}{2}\left(r\partial_{rr} \tilde{S}_2+2\partial_{r} \tilde{S}_2\right)^2\bigg)\mathrm{d}r\mathrm{d}t\\
				\leq &
				\frac{\tau C}{\mu}\epsilon\int_{1}^{+\infty}r^2(\partial_{rr}v)^2\mathrm{d}r
				+C(\eta)\epsilon\int_{0}^{t}\int_{1}^{+\infty}r^2((\partial_{rr}\tilde{S}_1)^2+(\partial_{rr}\tilde{S}_2)^2)\mathrm{d}r\mathrm{d}t\\
				&+C \left(E(0)+E^{\frac{3}{2}}(t)+E^{\frac{1}{2}}(t) \int_{0}^{t}\mathcal{D}(s)\mathrm{d}s\right).	
			\end{aligned}
		\end{equation}
		For a fixed $\eta$, by choosing a sufficiently small $\epsilon$, and applying Lemma \ref{lem7_tx}, we can obtain the desired result.
	\end{proof}
	On the other hand, using $(\ref{22})_2$, Lemma \ref{lem8} and Corollary \ref{cor1}, we can easily get the second order
	dissipation of $\rho$.
	\begin{corollary}
		There exists some constant C such that
		\begin{equation}
			\int_{0}^{t}\int_{1}^{+\infty}r^2(\partial_{rr}\rho)^2\mathrm{d}r\mathrm{d}t
			\leq C \left(E(0)+E^{\frac{3}{2}}(t)+E^{\frac{1}{2}}(t) \int_{0}^{t}\mathcal{D}(s)\mathrm{d}s\right).
		\end{equation}
	\end{corollary}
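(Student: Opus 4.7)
The plan is to use the once–$r$-differentiated momentum equation $(\ref{22})_2$ as an algebraic relation that isolates $\partial_{rr}\rho$. Expanding $\partial_{rr}P=P'(\rho)\partial_{rr}\rho+P''(\rho)(\partial_r\rho)^2$ and rearranging gives
\[
P'(\rho)\,\partial_{rr}\rho = -\rho\partial_{tr}v-\rho v\partial_{rr}v+\tfrac{2}{3}\partial_{rr}\tilde{S}_1+\tfrac{2}{r}\partial_{r}\tilde{S}_1-\tfrac{2}{r^2}\tilde{S}_1+\partial_{rr}\tilde{S}_2+f_2-P''(\rho)(\partial_r\rho)^2.
\]
Since $P'(\rho)$ is bounded below by a positive constant thanks to \eqref{hu3.4}, multiplying by $r/P'(\rho)$, squaring, and integrating over $(0,t)\times[1,+\infty)$ reduces the desired bound to a sum of $L^2$-in-time integrals of the right-hand side.

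I would then bound each contribution by quantities already controlled. The linear term $r\partial_{tr}v$ is estimated by Lemma \ref{lem5}, the term $r\partial_{rr}v$ is estimated by the same lemma together with Corollary \ref{cor1} (which closes the loop by absorbing the auxiliary $\epsilon\int_0^t\|r\partial_{rr}\tilde{S}_i\|_{L^2}^2$ term appearing in Lemma \ref{lem5}). The stress contributions $r\partial_{rr}\tilde{S}_1$, $\partial_r\tilde{S}_1$, $\tilde{S}_1/r$, and $r\partial_{rr}\tilde{S}_2+2\partial_r\tilde{S}_2$ are supplied directly by Corollary \ref{cor1}; the remaining piece $r\partial_{rr}\tilde{S}_2$ follows after combining with the already-controlled $\partial_r\tilde{S}_2$-dissipation of Lemma \ref{lem2-1}.

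The remaining terms — the convective piece $\rho v\partial_{rr}v$, the nonlinear remainder $f_2$, and $P''(\rho)(\partial_r\rho)^2$ — are all cubic in small quantities. They can be absorbed into $CE^{1/2}(t)\int_0^t\mathcal{D}(s)\mathrm{d}s$ by pulling one factor out in $L^\infty$ using the one-dimensional Sobolev embedding $H^1\hookrightarrow L^\infty$, together with the smallness hypothesis $E(t)\le\delta$. Since the argument is purely algebraic and needs no new integration by parts, multiplier construction, or boundary analysis, there is no genuine obstacle here — this corollary is simply the recovery of the second-order spatial dissipation of the density from the momentum equation, once the second-order dissipations of $v$ and $(\tilde{S}_1,\tilde{S}_2)$ are in hand.
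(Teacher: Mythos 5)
Your proposal is correct and follows essentially the same route as the paper, which likewise obtains this corollary by reading $\partial_{rr}\rho$ off the $r$-differentiated momentum equation $(\ref{22})_2$ and invoking the already-established second-order dissipation bounds (Lemma \ref{lem8}, Corollary \ref{cor1}, together with the $D^2(v)$ dissipation). The only difference is that you spell out the term-by-term bookkeeping that the paper leaves implicit.
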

	Combining Lemmas Lemma \ref{lem3}-\ref{lem8} and Corollary \ref{cor1},  we get
	\begin{lemma}\label{lem10}
		There exists some constant C such that
		\begin{equation}
			\begin{aligned}
				&\left\|rD(\rho, v, \sqrt{\tau} \tilde{S}_1, \sqrt{\tau} \tilde{S}_2)\right\|_{H^{1}}^2
				+\tau^2\left\|r\partial_{t}^2(\rho, v, \sqrt{\tau} \tilde{S}_1, \sqrt{\tau} \tilde{S}_2)\right\|_{L^{2}}^2\\
				&+\int_{0}^{t}\bigg(\left\|r D^2(\rho, v)\right\|_{L^2}^2
				+\left\|rD(\tilde{S}_1, \tilde{S}_2)\right\|_{H^{1}}^2
				+\tau^2\left\|r\partial_{t}^2(\tilde{S}_1,\tilde{S}_2)\right\|_{L^{2}}^2\bigg)\mathrm{~d} t\\&
				\leq C \left(E(0)+E^{\frac{3}{2}}(t)+E^{\frac{1}{2}}(t) \int_{0}^{t}\mathcal{D}(s)\mathrm{d}s\right).
			\end{aligned}
		\end{equation}
	\end{lemma}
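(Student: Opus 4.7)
The plan is to assemble Lemma \ref{lem10} by adding together the estimates obtained in Lemmas \ref{lem3}--\ref{lem8} and Corollary \ref{cor1}, together with the second-order dissipation corollary for $\partial_{rr}\rho$. The lemma is a repackaging step: each component of the norms on the left-hand side has already been controlled individually in the preceding lemmas by the right-hand side $C(E(0)+E^{3/2}(t)+E^{1/2}(t)\int_0^t \mathcal D(s)\,\mathrm d s)$, so no new energy identity is required.

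Concretely, I would first expand the compound norm $\|rD(\rho,v,\sqrt{\tau}\tilde S_1,\sqrt{\tau}\tilde S_2)\|_{H^1}^2$ as a sum of $\|r\partial_{tt}\cdot\|_{L^2}^2$, $\|r\partial_{tr}\cdot\|_{L^2}^2$, $\|r\partial_{rr}\cdot\|_{L^2}^2$, together with the corresponding first-order pieces $\|r\partial_t\cdot\|_{L^2}^2$ and $\|r\partial_r\cdot\|_{L^2}^2$ (which were already absorbed in Lemma \ref{lem3.5}). The $\partial_{tt}$ piece (with the $\tau^2$ weight) comes directly from Lemma \ref{lem3}; the $\partial_{tr}$ piece and the $\partial_t$ first-order piece of $v,\tilde S_1$ come from Lemma \ref{lem4}; the $\partial_{rr}$ piece, in the weighted combination $(r\partial_{rr}\rho+r\partial_r\rho)^2$, $(r\partial_{rr}\tilde S_2+2\partial_r\tilde S_2)^2$ etc., comes from Lemma \ref{lem8} after the boundary terms $(\partial_{rr}v)^2(t,1)$ and $(\partial_{tr}v)^2(t,1)$ are absorbed using Lemmas \ref{lem7_tx} and \ref{lem7_xx}, which is precisely Corollary \ref{cor1}. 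Using $\rho\ge 3/4$ from \eqref{hu3.4} and the pointwise inequality $a^2+b^2 \le 2(a+b)^2 + 2b^2$ etc., the weighted combinations dominate the unweighted sums of squares up to a universal constant.

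For the dissipation integral, $\int_0^t \|rD^2(\rho,v)\|_{L^2}^2 \mathrm{d}t$ is bounded by Lemma \ref{lem5} together with the final corollary in Section 3 giving $\int_0^t\int_1^{+\infty}r^2(\partial_{rr}\rho)^2\,\mathrm d r\mathrm d t$. The $\partial_{rr}v$ dissipation requires using Corollary \ref{cor1} to replace the still-$\epsilon$-coupled bound of Lemma \ref{lem5} by a clean bound. The $H^1$-in-time dissipation of $(\tilde S_1,\tilde S_2)$ is assembled from Lemmas \ref{lem4} and \ref{lem8} plus Corollary \ref{cor1}, and the $\tau^2\|r\partial_{tt}(\tilde S_1,\tilde S_2)\|_{L^2}^2$ dissipation from Lemma \ref{lem3}.

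There is essentially no obstacle here beyond bookkeeping: the nontrivial work (handling the boundary terms from integration by parts and closing the coupled boundary estimates) has already been done in Lemmas \ref{lem8}, \ref{lem7_tx}, \ref{lem7_xx} and absorbed into Corollary \ref{cor1}. The only point where one has to be a little careful is that several of the component lemmas produce quadratic cross-terms (for instance $2r\rho\partial_{rr}v\,\partial_r v$ or $\tfrac{2r}{\lambda}\partial_{rr}\tilde S_2\partial_r \tilde S_2$) in the energy functional rather than the pure sum of squares; these must be shown to be equivalent to the pure squares via Cauchy--Schwarz and \eqref{hu3.4}, as was already done inside the proofs of Lemmas \ref{lem2-1} and \ref{lem8}. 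After these equivalences are noted, summing the displayed inequalities of Lemmas \ref{lem3}--\ref{lem8} and Corollary \ref{cor1} yields the claimed bound directly.
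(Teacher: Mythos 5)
Your proposal matches the paper exactly: the paper derives Lemma \ref{lem10} with no separate argument, simply by combining Lemmas \ref{lem3}--\ref{lem8}, Corollary \ref{cor1}, and the $\partial_{rr}\rho$ dissipation corollary, which is precisely the assembly you describe. Your additional remarks on converting the weighted combinations and cross-terms into pure sums of squares via \eqref{hu3.4} are the right (and only) bookkeeping points, and they are indeed already handled inside the proofs of the component lemmas.
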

	Therefore, combining Lemma \ref{lem3.5} and \ref{lem10}, the Proposition 3.1 follows immediately.
	
	\section{Passing to the Limit and proof of Main Theorems}
	
	[\textbf{Proof of Theorem \ref{thm1}}]
	According to Proposition \ref{prop1}, the local solution $(\rho^\epsilon,v^\epsilon,\tilde{S}_1^{\epsilon},\tilde{S}_2^{\epsilon})$ can
	be extend to $[0,+\infty)$ by usual continuation methods. Thus, for fixed $\epsilon$, we get a global solution
	$(\rho^\epsilon,v^\epsilon,\tilde{S}_1^{\epsilon},\tilde{S}_2^{\epsilon})$ to system (\ref{9})-(\ref{11}) satisfying
	\begin{equation}\label{4.1}
		\begin{aligned}
			&\sup _{0 \leq s \leq t}\bigg( \sum_{k=0}^1\left\|r\partial_{t}^k(\rho-1, v, \sqrt{\tau} \tilde{S}_1, \sqrt{\tau} \tilde{S}_2)\right\|_{H^{2-k}}^2
			+\tau^2\left\|r\partial_{t}^2(\rho-1, v, \sqrt{\tau} \tilde{S}_1, \sqrt{\tau} \tilde{S}_2)\right\|_{L^{2}}^2\bigg)\\
			&+\int_{1}^{+\infty}\bigg(\sum_{\alpha=1}^2\left\|r D^\alpha(\rho, v)\right\|_{L^2}^2
			+\sum_{k=0}^1\left\|r\partial_{t}^k(\tilde{S}_1, \tilde{S}_2)\right\|_{H^{2-k}}^2
			+\tau^2\left\|r\partial_{t}^2(\tilde{S}_1,\tilde{S}_2)\right\|_{L^{2}}^2\bigg)\mathrm{~d} t
			\leq C E(0)
		\end{aligned}
	\end{equation}
	where C is a constant independent of $\tau$ and $\epsilon$. Therefore, for fixed $\tau$, the uniform bounds of $(\rho^\epsilon-1,v^\epsilon, \sqrt{\tau}\tilde{S}_1^{\epsilon}, \sqrt{\tau}\tilde{S}_2^{\epsilon})$ in $L^{\infty}\left([0,\infty), H^2(\Omega)\right)$
	implies that there exists $(\rho^\epsilon-1,v^\epsilon, \sqrt{\tau}\tilde{S}_1^{\epsilon}, \sqrt{\tau}\tilde{S}_2^{\epsilon})
	\in  L^{\infty}\left([0,\infty), H^2(\Omega)\right)$ such that
	
	$$
	(\rho^\epsilon,v^\epsilon, \sqrt{\tau}\tilde{S}_1^{\epsilon}, \sqrt{\tau}\tilde{S}_2^{\epsilon}) \rightharpoonup
	(\rho,v, \sqrt{\tau}\tilde{S}_1, \sqrt{\tau}\tilde{S}_2) \quad \text { weakly - } * \quad \text { in } \quad L^{\infty}\left(\mathbb{R}^{+} ; H^2(\Omega)\right).
	$$
	
	Further, the uniform boundedness of $\partial_{t}^k(\rho-1, v, \sqrt{\tau} \tilde{S}_1, \sqrt{\tau} \tilde{S}_2)$, $k=1,2$ in
	$L^{\infty}\left([0, T], H^{2-k}\right) \cap L^2\left([0, T], H^{2-k}\right)$ for any $T>0$ ensures, via established compactness theorems, that the sequence $(\rho^\epsilon,v^\epsilon, \sqrt{\tau}\tilde{S}_1^{\epsilon}, \sqrt{\tau}\tilde{S}_2^{\epsilon})$ is relative compact in
	$C^0\left([0, T], H^{2-\delta_0}\right) \cap C^1\left([0, T], H^{1-\delta_0}\right)$ for any $\delta_0>0$. 
	As a consequence, as $\delta\rightarrow 0$ and up to subsequences,
	$$
	(\rho^\epsilon,v^\epsilon, \tilde{S}_1^{\epsilon}, \tilde{S}_2^{\epsilon}) \rightarrow(\rho,v, \tilde{S}_1, \tilde{S}_2), \quad \text { strongly } \quad \text { in } \quad C^0\left([0, T], H^{2-\delta_0}\right) \cap C^1\left([0, T], H^{1-\delta_0}\right).
	$$
	
	This strong convergence suffices to pass to the limit in (\ref{9}) and (\ref{4.1}), thereby confirming that 
	$(\rho,v, \tilde{S}_1, \tilde{S}_2)$ is a classical solution to system $(\ref{1.4})-(\ref{1.6})$ satisfying (\ref{1.8}). The uniqueness follows directly from the regularity
	$$
	(\rho, v, \tilde{S}_1, \tilde{S}_2) \in C\left([0, \infty), H^{2-\delta_0}(\Omega)\right) \cap L^{\infty}\left((0, \infty), H^2(\Omega)\right) \cap L^2\left((0, \infty), H^1(\Omega)\right),
	$$
	which completes the proof of Theorem \ref{thm1}.

	[\textbf{Proof of Theorem \ref{thm2}}]
	Let $(\rho^{\tau},v^{\tau},\tilde{S}_1^{\tau},\tilde{S}_2^{\tau})$ be the global solutions obtained in Theorem \ref{thm1} satisfying (\ref{1.8}). 
	
	Then there exist limit functions
	$
	(\rho^{0},v^{0}) \in L^{\infty}\left(\mathbb{R}^{+} ; H^2(\Omega)\right), (\tilde{S}_1^{0},\tilde{S}_2^{0}) \in L^2\left(\mathbb{R}^{+} ; H^2(\Omega)\right),
	$
	such that along a subsequence as $\tau\rightarrow 0$:
	\begin{equation}
		\begin{aligned}
			& \left(\rho^{\tau},v^{\tau}\right) \rightharpoonup^*\left(\rho^{0},v^{0}\right) \quad \text { weakly-* in } L^{\infty}\left([0,\infty), H^2(\Omega)\right), \\
			& \left(\tilde{S}_1^{\tau},\tilde{S}_2^{\tau}\right) \rightharpoonup\left(\tilde{S}_1^{0},\tilde{S}_2^{0}\right) \quad \text { weakly in } L^2\left([0,\infty), H^2(\Omega)\right) .
		\end{aligned}
	\end{equation}
	The uniform boundedness of $\partial_{t}^k(\rho^{\tau},v^{\tau})$ in $L^2\left([0,\infty), H^{2-k}(\Omega)\right)$ for $k=1,2$ ensures, via classical compactness arguments, that the sequence $(\rho^{\tau},v^{\tau})$ is relatively compact in
	$C^0\left([0, T], H^{2-\delta_0}\right) \cap C^1\left([0, T], H^{1-\delta_0}\right)$ for any $\delta_0>0$ and $T>0$. Consequently, along further subsequences as $\tau$$\rightarrow$0, 
	$$
	(\rho^\tau,v^\tau) \rightarrow(\rho^0,v^0), \quad \text { strongly } \quad \text { in } \quad C^0\left([0, T], H^{2-\delta_0}(\Omega)\right) \cap C^1\left([0, T], H^{1-\delta_0}(\Omega)\right).
	$$
	
	From the uniform estimate (\ref{1.8}) and the constitutive relations $(\ref{1.4})_4-(\ref{1.4})_5$, we derive  
	$$
	\sup_{t \geq 0} \|(\tilde{S}_1^\tau, \tilde{S}_2^\tau)\|_{H^1} \leq C E(0).
	$$
	
	Since $\partial_{t}(\tilde{S}_1^{\tau},\tilde{S}_2^{\tau})$ are bounded in $L^2([0,T], H^1(\Omega))$,
	the sequence $(\tilde{S}_1^{\tau},\tilde{S}_2^{\tau})$ is relatively compact in $C^0\left([0, T], H^{1-\delta}(\Omega)\right)$
	for any $T > 0$. Therefore, as $\tau$$\rightarrow$0 and up to subsequences,
	$$
	\left(\tilde{S}_1^{\tau},\tilde{S}_2^{\tau}\right) \rightarrow\left(\tilde{S}_1^{0},\tilde{S}_2^{0}\right) \quad \text { strongly in } C^0\left([0, T], H^{1-\delta}(\Omega)\right).
	$$
	
	The uniform bound on $\sqrt{\tau}(\tilde{S}_1^{\tau},\tilde{S}_2^{\tau})$ implies:
	\[
	\tau(\tilde{S}_1^{\tau},\tilde{S}_2^{\tau}) \to (0, 0) \quad \text{in } L^\infty([0, \infty), H^2(\Omega)),
	\]
	and consequently in the distributional sense:
	\[
	\tau (\partial_t\tilde{S}_1^\tau, \partial_t \tilde{S}_2^\tau) \to (0, 0) \quad \text{in } D'((0, \infty) \times \Omega)
	\quad \text{as} \quad \tau\rightarrow0.
	\]
	Passing to the limit in equations $(\ref{1.4})_3$ and $(\ref{1.4})_4$, we have
	\begin{equation}\label{4.3}
		\tilde{S}_1^0=2\mu(\partial_{r}v^0-\frac{v^0}{r}), \quad \tilde{S}_2^0=\lambda(\partial_{r}v^0+\frac{2v^0}{r}). \quad \text{a.e. in } (0, \infty) \times \Omega.
	\end{equation}

	Finally, passing to the limit in $(\ref{1.4})_1-(\ref{1.4})_2$ and using (\ref{4.3}), the limit functions \((\rho^0, v^0)\) satisfy the classical compressible Navier-Stokes system. This completes the proof of Theorem \ref{thm2}.


\end{document}